\newcommand\specialsectioning{\setcounter{secnumdepth}{-2}}
\DeclareTextSymbolDefault{\textquotedbl}{T1}
\theoremstyle{plain}
\newtheorem{thm}{\protect\theoremname}[section]
\theoremstyle{plain}
\newtheorem*{thm*}{\protect\theoremname}
\theoremstyle{plain}
\newtheorem{lem}[thm]{\protect\lemmaname}
\theoremstyle{definition}
\newtheorem{defn}[thm]{\protect\definitionname}
\theoremstyle{definition}
\newtheorem{rem}[thm]{\protect\remarkname}
\theoremstyle{plain}
\newtheorem{prop}[thm]{\protect\propositionname}
\theoremstyle{definition}
\newtheorem*{rem*}{\protect\remarkname}
\theoremstyle{plain}
\newtheorem{cor}[thm]{\protect\corollaryname}
\theoremstyle{plain}
\newtheorem*{fact*}{\protect\factname}
\theoremstyle{definition}
\newtheorem{fact}[thm]{\protect\factname}
\newenvironment{lyxlist}[1]
	{\begin{list}{}
		{\settowidth{\labelwidth}{#1}
		 \setlength{\leftmargin}{\labelwidth}
		 \addtolength{\leftmargin}{\labelsep}
		 }}
	{\end{list}}
\providecommand*{\strong}[1]{\textbf{#1}}
\providecommand{\corollaryname}{Corollary}
  \providecommand{\definitionname}{Definition}
  \providecommand{\factname}{Fact}
  \providecommand{\lemmaname}{Lemma}
  \providecommand{\propositionname}{Proposition}
  \providecommand{\remarkname}{Remark}
\providecommand{\theoremname}{Theorem}
\def\blfootnote{\xdef\@thefnmark{}\@footnotetext}
\newcommand{\Z}{\mathbb{Z}}
\newcommand{\N}{\mathbb{N}}
\newcommand{\h}{\hat}
\DeclareMathOperator{\nrp}{NRP}
\DeclareMathOperator{\sgn}{sgn}
\DeclareMathOperator{\supp}{supp}
\newcommand{\id}{\mathrm{Id}}
\newcommand{\cfnil}{CF-Nil}
\newcommand{\haar}{\mathrm{Haar}}
\providecommand{\corollaryname}{Corollary}
  \providecommand{\definitionname}{Definition}
  \providecommand{\factname}{Fact}
  \providecommand{\lemmaname}{Lemma}
  \providecommand{\propositionname}{Proposition}
  \providecommand{\remarkname}{Remark}
\providecommand{\theoremname}{Theorem}
\providecommand{\corollaryname}{Corollary}
  \providecommand{\definitionname}{Definition}
  \providecommand{\factname}{Fact}
  \providecommand{\lemmaname}{Lemma}
  \providecommand{\propositionname}{Proposition}
  \providecommand{\remarkname}{Remark}
\providecommand{\theoremname}{Theorem}
\providecommand{\corollaryname}{Corollary}
  \providecommand{\definitionname}{Definition}
  \providecommand{\factname}{Fact}
  \providecommand{\lemmaname}{Lemma}
  \providecommand{\propositionname}{Proposition}
  \providecommand{\remarkname}{Remark}
\providecommand{\theoremname}{Theorem}
\providecommand{\corollaryname}{Corollary}
  \providecommand{\definitionname}{Definition}
  \providecommand{\factname}{Fact}
  \providecommand{\lemmaname}{Lemma}
  \providecommand{\propositionname}{Proposition}
  \providecommand{\remarkname}{Remark}
\providecommand{\theoremname}{Theorem}
\providecommand{\corollaryname}{Corollary}
  \providecommand{\definitionname}{Definition}
  \providecommand{\factname}{Fact}
  \providecommand{\lemmaname}{Lemma}
  \providecommand{\propositionname}{Proposition}
  \providecommand{\remarkname}{Remark}
\providecommand{\theoremname}{Theorem}
\providecommand{\corollaryname}{Corollary}
  \providecommand{\definitionname}{Definition}
  \providecommand{\factname}{Fact}
  \providecommand{\lemmaname}{Lemma}
  \providecommand{\propositionname}{Proposition}
  \providecommand{\remarkname}{Remark}
\providecommand{\theoremname}{Theorem}
\providecommand{\corollaryname}{Corollary}
  \providecommand{\definitionname}{Definition}
  \providecommand{\factname}{Fact}
  \providecommand{\lemmaname}{Lemma}
  \providecommand{\propositionname}{Proposition}
  \providecommand{\remarkname}{Remark}
\providecommand{\theoremname}{Theorem}
\providecommand{\corollaryname}{Corollary}
  \providecommand{\definitionname}{Definition}
  \providecommand{\factname}{Fact}
  \providecommand{\lemmaname}{Lemma}
  \providecommand{\propositionname}{Proposition}
  \providecommand{\remarkname}{Remark}
\providecommand{\theoremname}{Theorem}
\providecommand{\corollaryname}{Corollary}
  \providecommand{\definitionname}{Definition}
  \providecommand{\factname}{Fact}
  \providecommand{\lemmaname}{Lemma}
  \providecommand{\propositionname}{Proposition}
  \providecommand{\remarkname}{Remark}
\providecommand{\theoremname}{Theorem}
\providecommand{\corollaryname}{Corollary}
  \providecommand{\definitionname}{Definition}
  \providecommand{\factname}{Fact}
  \providecommand{\lemmaname}{Lemma}
  \providecommand{\propositionname}{Proposition}
  \providecommand{\remarkname}{Remark}
\providecommand{\theoremname}{Theorem}
\providecommand{\corollaryname}{Corollary}
  \providecommand{\definitionname}{Definition}
  \providecommand{\factname}{Fact}
  \providecommand{\lemmaname}{Lemma}
  \providecommand{\propositionname}{Proposition}
  \providecommand{\remarkname}{Remark}
\providecommand{\theoremname}{Theorem}
\providecommand{\corollaryname}{Corollary}
  \providecommand{\definitionname}{Definition}
  \providecommand{\factname}{Fact}
  \providecommand{\lemmaname}{Lemma}
  \providecommand{\propositionname}{Proposition}
  \providecommand{\remarkname}{Remark}
\providecommand{\theoremname}{Theorem}
\providecommand{\corollaryname}{Corollary}
\providecommand{\definitionname}{Definition}
\providecommand{\factname}{Fact}
\providecommand{\lemmaname}{Lemma}
\providecommand{\propositionname}{Proposition}
\providecommand{\remarkname}{Remark}
\providecommand{\theoremname}{Theorem}
\title{Strictly ergodic distal models and a new approach to the Host-Kra factors}
\newcommand{\Addresses}{{
  \bigskip
  \footnotesize
 
   \textsc{Institute of Mathematics, Polish Academy of Sciences, ul. \'{S}niadeckich 8, 00-656 Warszawa, Poland.}\par\nopagebreak
  
  Yonatan Gutman: \texttt{y.gutman@impan.pl}
   
  \medskip

  \textsc{ School of Mathematical Sciences, Xiamen University, Xiamen, Fujian 361005, P.R. China;}
  
 Zhengxing Lian: \texttt{lianzx@mail.ustc.edu.cn; lianzx@xmu.edu.cn}

}}
\author{
  Yonatan Gutman and Zhengxing Lian
}
\begin{document}
\maketitle
\blfootnote{Y. G. and Z.L. were partially supported by the National Science Centre (Poland) grant 2016/22/E/ST1/00448. Y.G. was partially supported by the National Science Centre (Poland) grant
2020/39/B/ST1/02329. Z. L. was partially supported by the Xiamen Youth Innovation Foundation No. 3502Z20206037, the Presidential Research Foundation of Xiamen University No. 20720210034 and the NNSF of China (Award No. 12101517).}
\blfootnote{\textit{Keywords}: Host-Kra factor, pronilfactor, nonconventional ergodic average, measurable distal, strictly ergodic distal, nilspace, cocycle, nilcycle, topological model.}
\begin{abstract}
\textit{Cocycles} are a key object in Antol\'{i}n Camarena and Szegedy's theory of nilspaces. We introduce variants of these cocycles, named \textit{nilcycles}, enabling us to give conditions which guarantee that an ergodic group extension of a strictly ergodic distal system admits a strictly ergodic distal topological model. In particular we show that if the base space is a dynamical nilspace then a dynamical nilspace topological model may be chosen for the extension.  This approach combined with a structure theorem of Gutman, Manners and Varj\'{u}  applied to the ergodic group extensions between successive Host-Kra characteristic factors gives a new proof that these factors are inverse limit of nilsystems for $\Z^k$-actions. 
\end{abstract}

\tableofcontents{}\label{content}

\section{Introduction.}
The \textit{nonconventional average}
\begin{equation}\label{nonconventional averages}
\lim_{N\rightarrow\infty}\frac{1}{N}\sum_{n=0}^{N-1} f_1(T^nx)\cdots f_k(T^{kn}x),
\end{equation}
for a measure-preserving system $(X,\mathcal{X},\mu,T)$ and $f_1,\ldots,f_k\in L^\infty(X,\mu)$ became prominent due to Furstenberg's proof \cite{F77} of Szemer\'{e}di's theorem \cite{Szemeredi1975sets}. Furstenberg proved that for $f=1_A$ with $\mu(A)>0$,
$$\liminf_{N\rightarrow\infty}\frac{1}{N}\sum_{n=0}^{N-1}\int_X 1_A(x)1_A(T^nx)\cdots 1_A(T^{kn}x) d\mu(x)>0,$$
which implies Szemer\'{e}di's theorem by what is known today as the Furstenberg correspondence principle \cite{F77}. About 30 years after Furstenberg's original proof, in what can only be described as a tour de force, Host and Kra proved that Equation (\ref{nonconventional averages}) converges in norm \cite{HK05} for all $k\geq 1$.\footnote{This was also proved independently by Ziegler \cite{Z07} and was known for $k=3$ for totally ergodic systems  by earlier work of Conze and Lesigne \cite{CL87}. Almost sure convergence for general $k$ is still unknown.}
Host and Kra's proof uses the \textit{characteristic factors} approach pioneered by Furstenberg and Weiss \cite{FW96}. A factor $(X,\mathcal{X},\mu,T)\rightarrow (Y,\mathcal{Y},\nu,T)$ is called characteristic for (\ref{nonconventional averages}) if
$$\frac{1}{N}\sum_{n=0}^{N-1}T^{n} f_1\cdots T^{kn}f_k- \frac{1}{N}\sum_{n=0}^{N-1} T^{n}\mathbb{E}(f_1|\mathcal{Y})\cdots T^{kn}\mathbb{E}(f_k|\mathcal{Y})$$ converges to zero in $L^2$-norm.
The striking discovery of Host and Kra was that (\ref{nonconventional averages}) possesses a characteristic factor $Z_{k-1}$ which is an inverse limit of  nilsystems. Thus a problem in measure theory gives rise to smooth structure.

The construction of a characteristic factor $Z_k$ which is an inverse limit of  nilsystems  is complicated and is the technical heart of both of \cite{HK05} and \cite{HK2018}. The main goal of this article is to put forward a conceptual  simplification of the main step of this construction. This approach was announced in \cite{G15(2)}. 

The $L^2$-convergence problem of the nonconventional average can be reduced to the ergodic case by using ergodic decomposition (\cite[p. 337]{HK2018}). Let us thus assume $(X,\mathcal{X},\mu,T)$  is ergodic. In \cite{HK05} the Host and
Kra measures $\mu^{[k]}$ and the Host-Kra-Gowers seminorms $
|||\cdot|||_{k}$ are introduced. These
are defined in the following way.  Denote $\mu^{[0]}=\mu$.  Let
$\mathcal{I}_{T}^{[k]}$ be the $\underbrace{T\times\cdots\times  T}_{2^k}$  invariant
$\sigma$-algebra of the product $\sigma$-algebra of
$(X^{[k]}:= X^{\{0,1\}^{k}},\mu^{[k]})$.
Define $\mu^{[k+1]}$ to be the relative independent joining of two
copies of $\mu^{[k]}$ over $\mathcal{I}_{T}^{[k]}$. Now introduce for
real-valued  $f\in L^{\infty}(\mu)$ the seminorms\footnote{The fact that these expressions are seminorms is proven in  \cite[Section 3.5]{HK05}. }:
$$
|||f|||_{k}=(\int_{X^{[k]}}\prod_{v\in\{0,1\}^{k}}f(x_v)d\mu^{[k]})^{1/2^{k}},\,\,
k=1,2,\ldots
$$
The factor $(X,\mu,T) \rightarrow (Z_{k-1},\mu_{k-1},T)$ is identified as the
maximal  factor for which $|||f|||_{k}$ is a norm (\cite[Lemma 4.3]{HK05}). In \cite{HK05}, Sections 6.3, 7, 9 and 10 (see also \cite[Chapters 18-20]{HK2018})  are devoted to an involved analysis of the factor map $Z_{k} \rightarrow Z_{k-1}$ which eventually establishes the structure of $Z_{k}$.

The main idea of our approach is to find a topological model for
$(Z_k,\mu_k,T)$ which is a dynamical nilspace. The concept of
\emph{nilspaces}, originating in Host and Kra's
\emph{parallelepiped structures} of \cite{HK08}, was introduced by  Antol\'{i}n
Camarena and Szegedy in \cite{CS12} (see also \cite{candela2016cpt_notes,candela2016alg_notes}). A (compact) nilspace is a compact metric space $X$ together
with  closed collections of \textit{cubes} $C^n(X)\subseteq X^{\{0,1\}^n}$,
$n=1,2,\ldots$, satisfying certain topological axioms. There is a canonical way of associating to a minimal action by an arbitrary group, certain nilspaces, known as \textit{dynamical nilspaces} (see Subsection \ref{subsec:Dynamical cubespaces}). According to a
structure theorem of Gutman,  Manners, and Varj\'{u} (\cite{GMVIII}) a dynamical nilspace is  an inverse limit of
nilsystems (for an alternative proof see \cite{candela2020nilspace}). Using this structure theorem one is
left with verifying topological properties of the topological model.
This task is much easier than proving directly the nilsystems inverse
limit representation and  results with a major simplification. The structure theorem for $\mathbb{Z}$-dynamical nilspaces was first proven by Host, Kra and  Maass (\cite[Theorem 1.2]{HKM10}), however the proof uses the full force of the structure theorem of \cite{HK05}. As our goal is to give a new proof of this theorem, we cannot use \cite{HKM10}. The proof in \cite{GMVIII} is independent of both \cite{HKM10} and \cite{HK05} and is essentially topological. Moreover it holds in broader generality which enables us to prove the  structure theorem in the generality of finitely generated abelian group actions, in particular $\Z^k$-actions. 

By an elegant and relatively short proof it is possible to show that  $Z_{k} \rightarrow
Z_{k-1}$ is an abelian  group extension (\cite[Section 6]{HK05}, \cite[Chapter 18]{HK2018}). In particular there is a natural measurable identification
$Z_k=Z_{k-1}\times A$ where $A$ is a compact abelian  group. However it is
important to note that in general $Z_{k-1}\times A$ equipped with the
product topology is not a topological model  as $T$ fails to be
continuous w.r.t this representation. It is thus  natural  to attempt
to  identify the  appropriate topological model and as mentioned above
our approach is to find a nilspace topological model.

To show that $Z_k$ admits a nilspace (topological) model we assume by induction that
$Z_{k-1}$ is a nilspace and as such has well-defined cubes $C^n(Z_{k-1})\subset Z_{k-1}^{\{0,1\}^n}$,
$n=1,2,\ldots$. Inspired by the nilspace-theoretic notion of \textit{cocycle} introduced in \cite{CS12}, we construct a special function $\rho:C^{k+1}(Z_{k-1})\rightarrow A$ which we name a \textit{nilcycle}. The key geometric-algebraic property of a nilcycle $\rho$ is that for \textit{almost all pairs} (w.r.t.\  a natural measure defined in Definition \ref{def:meaure on glueing pair}) of cubes $c_1,c_2\in C^{k+1}(Z_{k-1})$ which have a common face (e.g.\  $(c_1)_{\{1\}\times \{0,1\}^k}=(c_2)_{\{0\}\times \{0,1\}^k}$), the value of $\rho$ on the cube resulting from glueing on the common face
$$c_1\|c_2:=((c_1)_{\{0\}\times \{0,1\}^k},(c_2)_{\{1\}\times \{0,1\}^k})$$
satisfies $\rho(c_1\|c_2)=\rho(c_1)+\rho(c_2)$. This step is motivated by the fact that if we were to know that $Z_k$ admits a nilspace model then it would be possible (and in fact not hard) to construct a nilcycle which furthermore satisfies the nilcycle properties \textit{everywhere} instead of \textit{almost everywhere}.

Our approach depends on the surprising fact that a nilcycle determines a nilspace model $\hat{Z}_k$ for $Z_k$. The construction is not entirely straightforward and uses the concept of
\textit{function bundles} induced by \cite[Section 3.3]{CS12} (see also \cite[Section 2.3]{candela2016cpt_notes}) modeled on the concept of \textit{Banach bundles} (see \cite[Chapter 10]{Fell1977}). Specifically, we consider the 2-parameter family of functions $\rho_x+a:C^{k+1}_x(Z_{k-1})\rightarrow A$ for $a\in A,x\in X'\subset  Z_{k-1}$, where $X'$ is some (carefully chosen) set of full measure and
$$C^{k+1}_x(Z_{k-1}):=\{c\in C^{k+1}(Z_{k-1}):c_{\vec{0}}=x\}, \rho_x:=\rho|_{C^{k+1}_x(Z_{k-1})},$$
and take the closure $\hat{Z}_k$ w.r.t.\ a topology related to the natural continuous projection map $\mathfrak{p}_0:C^{k+1}(Z_{k-1})\rightarrow Z_{k-1}:c\rightarrow c_{\vec{0}}$. Crucially, $\hat{Z}_k$ is shown to be compact and using the properties of $\rho$ one may define a homeomorphism $\hat{T}$ such that $(\hat{Z}_k,\hat{T})$ is a (uniquely
 ergodic) topological nilspace model for $(Z_k,\mu_k,T)$. As remarked above this establishes the desired structure result. 
 
 We notice that recently a new and different approach to Host-Kra factors through nilspaces was put forth in \cite{CS2018}.

  The above considerations naturally lead us to consider the question when an ergodic (measurable) abelian group extension $Y$ of a strictly ergodic distal system $X$ (thus automatically measurable distal) admits a strictly ergodic topological model, that is, a uniquely ergodic and minimal model. Note that  $(\hat{Z}_k,T)$ is such a model, as being an inverse limit of nilsystems, it is distal which in turn, together with unique ergodicity, implies minimality.

    On one hand the Jewett-Krieger theorem \cite{jewett1970prevalence, krieger1972unique} implies that any ergodic system has a strictly ergodic model. On the other hand Zimmer's theorem implies that \textit{any} ergodic measure of a minimal distal system induces a measurable distal system (\cite{zimmer1976extensions}). Nevertheless the question if an ergodic measurable distal system has a strictly ergodic distal model\footnote{Salehi \cite{Salehi91} provides an abundance of examples of strictly ergodic distal systems.} has a negative answer. Indeed the classical Morse system $(M,S),M\subset\{0,1\}^\mathbb{Z}$
    which is known to be strictly ergodic is an abelian group extension of an odometer and thus measurable distal (\cite[Chapter III, 2.27]{dV93}); however by \cite[Proposition 13.5]{GW06} it admits \textit{no} strictly ergodic distal model. 
    In particular it follows that not every ergodic m.p.s.\ which is an abelian group extension of a strictly ergodic system admits a strictly ergodic distal model. Yet Lindenstrauss \cite[Claim 5.5]{L99} showed that such a model exists if the group extension is by a \textit{connected group}. We now state our main theorem which deals with a very particular setting in which one is able to show the existence of a strictly ergodic distal model of a measurable abelian group extension $(Y,\nu,G)\rightarrow (X,\mu,G)$, through a different method than the one employed by Lindenstrauss, in particular not assuming connectivity for the extension group (see also Remark \ref{rem:clarifications} 
    for clarification of terms in the theorem). It is by verifying this condition that we demonstrate the existence of nilspace models for the Host-Kra factors.

\begin{thm}
\label{thm:main theorem} (Main Theorem) Let $G$ be a  finitely generated abelian group (e.g., $G=\Z^k$, $k\in \N$). Suppose the measurable group
extension

$$
Y=(X\times A,\nu=\mu\times  m_{\haar(A)},G)\xrightarrow{\pi}(X,\mu,G)
$$
satisfies:
\begin{enumerate}
    \item $Y$ is an ergodic m.p.s.\ and $(X,G)$ is a distal $(2k+2)$-cube uniquely ergodic system;
    \item $A$ is a compact metrizable abelian group and there exists a nilcycle $\rho:C^{k+1}(X)\rightarrow A$ of degree $k$ 
 w.r.t.\  the extension.
\end{enumerate}
Then the extension $\pi$  has a topological model $\h{\pi}:(M,G)\rightarrow (X,G)$ such that  $(M,G)$ is a strictly ergodic distal t.d.s. and  such that $\h{\pi}$
is a topological group extension of $X$ by $A$, which is a fibration of order at most $k$. In particular, $\nrp_{G}^{[k]}(M\rightarrow X)=\triangle:=\{(x,x)|x\in X\}$.
\end{thm}
\begin{rem}\label{rem:clarifications}
We clarify several notions appearing in the statement of Theorem \ref{thm:main theorem}:
\begin{enumerate}
    \item The $m$-cube uniquely ergodic systems which constitute a natural subclass of strictly ergodic systems are defined in Definition \ref{k-cube uniquely ergodic} and are studied in the paper \cite{GL2019(2)}. The reason why $m=2k+2$ is required in the theorem is explained by Proposition \ref{uniquely ergodic for T(X)}.
    \item
Nilcycles are defined in Definition \ref{def:nilcycle}. Fibrations are defined in Section \ref{subsec:Cube-space,-nilspace}.
     \item
    The equivalence relation $\nrp_G^{[k]}(M\rightarrow X)$ is defined in Definition \ref{NRP(M-G)} and is extensively studied in the article \cite{GL2019}.
\end{enumerate}
\end{rem}
We note that Theorem \ref{thm:main theorem} is applied in Section \ref{new proof of the Host-Kra structure theorem} in order to show the existence of nilspace models for Host-Kra factors in the generality of finitely generated abelian group actions, in particular $\Z^k$-actions.

\subsection*{Acknowledgements.}
The present work relies essentially on
the groundbreaking papers \cite{HK05,CS12}.
The first author is grateful to Bernard
Host and Bal\'{a}zs Szegedy who introduced
him to these articles and helped him to
take his first steps in the related
theory. We are grateful to Tomasz Downarowicz for helpful discussions. We are grateful to Bingbing Liang for a careful reading of a previous version. We are thankful to the two reviewers for a detailed reading and many useful comments. 

\section{Preliminaries.}

In this section, we introduce some basic definitions and properties.
For extensive background we recommend \cite{G03}, \cite{HK05},  \cite{CS12}, \cite{candela2016cpt_notes}, \cite{candela2016alg_notes}, \cite{GMVI}, \cite{GMVII}, \cite{GMVIII} and \cite{HK2018}.

\subsection{Dynamical background.}\label{subsec:Dynamical-background.}

Throughout this article we assume every topological space to be metrizable. A \strong{topological dynamical system (t.d.s.)}\ is a pair $(X,G)$,
where $X$ is a compact metric space and $G$ is finitely generated abelian group (e.g., $G=\Z^k$, $k\in \N$), equipped with the discrete topology acting on $X$ continuously (i.e., the pairing $G\times X\rightarrow X$ is continuous). The action of $g\in G$ on $x\in X$ is denoted by $gx$. Denote by $\mathcal{M}(X)$ ($\mathcal{M}_G(X)$) the set of ($G$-invariant) Borel probability measures of $X$. The \strong{orbit $\mathcal{O}(x)$ }of $x\in X$ is the set $\mathcal{O}(x)=\{tx:t\in G\}$. A t.d.s.\ is \strong{minimal} if $\overline{\mathcal{O}(x)}=X$ for all $x\in X$. A t.d.s.\ $(X,G)$ is \strong{distal} if for a compatible metric $d_{X}$
of $X$, for any $x\neq y\in X$, $\inf_{g\in G}d_{X}(gx,gy)>0$. Let $H$ be a finitely generated abelian group. We say $\pi:(Y,G)\rightarrow(X,H)$ is a \strong{(topological) factor map} w.r.t.\ 
a group epimorphism $\phi:G\rightarrow H$ if $\pi$ is a continuous and
surjective map such that for any $g\in G$ and any $x\in X$, $\pi(gx)=\phi(g)\pi(x)$. We sometimes denote it by $(\pi,\phi):(Y,G)\rightarrow(X,H)$. If  $G=H$ and $\phi=\id$
, we just write $\pi:(Y,G)\rightarrow(X,G)$. A map $\pi:(Y,G)\rightarrow(X,H)$ is called a \strong{distal factor map}
if there is a compatible metric $d_{Y}$ of $Y$ such that for any $y_{1}\neq y_{2}\in Y$
with $\pi(y_{1})=\pi(y_{2})$, $\inf_{g\in G}d_{Y}(gy_{1},gy_{2})>0$.
\begin{prop}\label{prop:distality}(\cite[Chapter IV, Proposition 2.27]{dV93}, \cite[Chapter 5, Theorem 6]{A}; see also \cite[Lemma 7.11]{GGY2018})
\begin{enumerate}
    \item If $\pi:(X,G)\rightarrow(Y,H)$ is a factor map and $(X,G)$ is distal, then $(Y,H)$ is distal.
    \item
     If $(X,G)$ is distal and $k\in \N$, then the  product action $(X^k,G^k)$ is distal. If $H\subset G^k$ is a subgroup and $Y\subset X^k$ is a closed $H$-invariant subset  then the induced action $(Y,H)$ is distal.
\end{enumerate}
\end{prop}
 A t.d.s.\ $(Y,G)$ is called a \strong{topological group extension} of $(X,H)$ by a compact group $K$ if there exists a continuous action $\alpha:K\times Y\rightarrow Y$ such that the actions $G$ and $K$ commute and  for all $x,y\in X$, $\pi(x)=\pi(y)$ iff there
exists a unique $k\in K$ such that $kx = y$. 

Suppose a sequence of t.d.s.\ $\{(X_{m},G)\}_{m\in\mathbb{N}}$ and factor maps  $\pi_m:(X_m,G)\rightarrow(X_{m-1},G)$, $m\geq 2$, are given. Define the \strong{inverse limit} of $\{(X_{m},G)\}_{m\in\mathbb{N}}$ by

\[
\underleftarrow{\lim}(X_{m},G):=\{(x_{m})\in\prod_{m\in\mathbb{N}}X_{m}:\ \pi_{m+1}(x_{m+1})=x_{m}\text{ for }m\geq1\},
\]
equipped with the product topology and $G$-action given by $t(x_{m})_{m\in\mathbb{N}}=(tx_{m})_{m\in\mathbb{N}}$, $t\in G$.

Throughout this article we assume every probability space $(X,\mathcal{X},\mu)$ to be a standard Borel space. We denote by $\textrm{Aut}(X,\mathcal{X},\mu)$ the group of invertible measurable measure-preserving maps $(X,\mathcal{X},\mu)\rightarrow (X,\mathcal{X},\mu)$. A \strong{measure-preserving probability system (m.p.s.)}\ is a quadruple $(X,\mathcal{X},\mu,G)$, where $(X,\mathcal{X},\mu)$
is a probability space and $G$ is a finitely generated abelian subgroup of $\textrm{Aut}(X,\mathcal{X},\mu)$. A m.p.s.\ $(X,\mathcal{X},\mu,G)$ is \strong{ergodic}
if for every set $C\in\mathcal{X}$ such that $tC=C$ for all $t\in G$ (up to measure zero),
one has $\mu(C)=0$ or $1$. The $G$-invariant sub-$\sigma$-algebras of $\mathcal{X}$ induce \strong{(measurable) factor maps}  $(X,\mathcal{X},\mu,G)\rightarrow (Y,\mathcal{Y},\nu,G)$  and vice versa (\cite[Chapter 2.2]{G03}). Let $(Y,\mathcal{Y},\nu,G)$ be an ergodic m.p.s. A \textbf{skew-product} $(Y\times K,\mathcal{Y}\otimes\mathcal{B}, \nu\times m_{\haar},G)$ of $Y$ with a compact metrizable group $K$, equipped with its Borel $\sigma$-algebra $\mathcal{B}$, is given by the action $t(y,u)=(ty,\beta(t,y)u)$, $t\in G$, where the measurable map $\beta:G\times Y\rightarrow K$ is a \strong{cocycle}, that is, it has the property that for any $t,t'\in G$ and a.e. $y\in Y$, $\beta(tt',y)=\beta(t,t'y)\beta(t',y)$. The projection $(Y\times K,\mathcal{Y}\otimes \mathcal{B}, \nu\times m_{\haar},G)\rightarrow (Y,\mathcal{Y},\nu,G)$ given by $(y,a)\mapsto y$ is called a \textbf{(measurable) group extension} (cf. \cite[Theorem 3.29]{G03}).

When no confusion arises, we sometimes omit the $\sigma$-algebra from the notation of an m.p.s., writing $(X,\mu,G)$.

A t.d.s.\ $(X,G)$ is called \textbf{uniquely ergodic} if $|\mathcal{M}_G(X)|=1$. If in addition it is minimal then it is called \textbf{strictly ergodic}.

\begin{rem}\label{rem:strictly ergodic distal}
We note that a uniquely ergodic distal t.d.s.\ $(X,G)$ is automatically strictly ergodic. This is an easy consequence of the fact that a distal system is \textit{pointwise minimal}, i.e.\ it decomposes as a disjoint union of minimal subsystems (see \cite[Proposition 2.5-8]{glasner2007}). Minimality in turn implies that the unique ergodic measure has full support. 
\end{rem}

\subsection{Conditional expectation.}\label{Sec: Con exp}

Let  $(X,\mathcal{X},\mu)$ be a  probability space and let $\mathcal{B}$ be a sub-$\sigma$-algebra of $\mathcal{X}$. For $f\in L^1(\mu)$, the \strong{conditional expectation} of $f$ w.r.t.\  $\mathcal{B}$ is the function $\mathbb{E}(f|\mathcal{B})\in L^1(X,\mathcal{B},\mu)$ satisfying
\begin{equation}\label{conditional expection}\int_Bfd\mu=\int_B\mathbb{E}(f|\mathcal{B})d\mu,
\end{equation}
for every $B\in \mathcal{B} $. For $f\in L^1(\mu)$ and $g\in L^{\infty}(X,\mathcal{B},\mu)$, it holds (see \cite[Chapter 2, Section 2.4]{HK2018}):
\begin{equation}\label{con exp commute}
\int_Xfgd\mu=\int_X\mathbb{E}(f|\mathcal{B})gd\mu.
\end{equation}
Let $(X,\mathcal{X},\mu)$ and $(Y,\mathcal{Y},\nu)$ be probability spaces and let $\pi:X\rightarrow Y$ be a measurable map such that the pushforward $\pi_{*}\mu$ is $\nu$. Denote by $\mathbb{E}(f|Y)\in L^1(Y,\nu)$ w.r.t.\  $\pi$ the function such that $\mathbb{E}(f|Y)\circ\pi=\mathbb{E}(f|\pi^{-1}(\mathcal{Y}))$. Note this is well-defined. Thus the difference between $\mathbb{E}(f|Y)$ and $\mathbb{E}(f|\pi^{-1}(\mathcal{Y}))$ is that the first function is considered as a function on $Y$ and the second as a function on $X$.

\subsection{Measure disintegration.}
\begin{thm}
\label{thm:(measure-disintegration-theorem)}(Measure disintegration
theorem) \cite[Theorem A.7]{G03} Let $Y$ and $X$ be two standard
Borel spaces, and $\pi:X\rightarrow Y$ a Borel map. Let $\mu\in \mathcal{M}(X)$
be a Borel probability measure of $X$ and $\nu=\pi_*(\mu)$ its image in $\mathcal{M}(Y)$, then there is a Borel map $y\rightarrow\mu_{y}$,
from $Y$ into $\mathcal{M}(X)$ such that:
\end{thm}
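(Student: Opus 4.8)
The plan is to reduce to the case of a compact metric space and then build the measures $\mu_y$ fiberwise via the Riesz representation theorem applied to conditional expectations. First I would invoke the fact that every standard Borel space is Borel isomorphic to a Borel subset of a compact metric space (indeed of $[0,1]$); transporting $\mu$ along such an isomorphism, we may assume $X$ is itself a compact metric space and $\mu\in\mathcal{M}(X)$, at the cost of obtaining the conclusion only up to $\mu$-null sets of $X$, which is harmless since the $\mu_y$ will turn out to live on fibers. Fix a countable $\mathbb{Q}$-linear subalgebra $\mathcal{D}\subset C(X)$ containing the constants and uniformly dense in $C(X)$. For each $f\in\mathcal{D}$ choose a Borel representative $\Phi_f:Y\to\mathbb{R}$ of the conditional expectation $\mathbb{E}(f|Y)\in L^1(Y,\nu)$ taken with respect to $\pi$ as in Section~\ref{Sec: Con exp}.

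Next I would pin down a single $\nu$-conull Borel set $Y_0\subseteq Y$ on which all relevant identities hold simultaneously. Since $\mathcal{D}$ is countable, we may choose $Y_0$ with $\nu(Y_0)=1$ so that for every $y\in Y_0$ the assignment $f\mapsto\Phi_f(y)$ is $\mathbb{Q}$-linear on $\mathcal{D}$, is nonnegative when $f\ge0$, and satisfies $\Phi_1(y)=1$ and $|\Phi_f(y)|\le\|f\|_\infty$; each such property for a fixed $f$ (or pair $f,f'$) is the a.e.\ consequence of the corresponding property of conditional expectation, and there are only countably many instances to intersect. For $y\in Y_0$ the functional $f\mapsto\Phi_f(y)$ is then uniformly continuous on $\mathcal{D}$, extends uniquely to a positive normalized linear functional on $C(X)$, and hence by the Riesz representation theorem equals integration against a probability measure $\mu_y\in\mathcal{M}(X)$; for $y\notin Y_0$ put $\mu_y=\mu$ say. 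By construction $\int_X f\,d\mu_y=\mathbb{E}(f|Y)(y)$ for all $f\in\mathcal{D}$, $y\in Y_0$, hence for all $f\in C(X)$ by density; since $y\mapsto\int_X f\,d\mu_y$ is Borel for each $f\in C(X)$ and the Borel structure of $\mathcal{M}(X)$ is generated by these evaluation maps, $y\mapsto\mu_y$ is Borel.

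It remains to verify the two defining properties. The identity $\int_X f\,d\mu=\int_Y\bigl(\int_X f\,d\mu_y\bigr)\,d\nu(y)$ holds for $f\in C(X)$ by the defining property $\eqref{conditional expection}$ of conditional expectation, and extends to all bounded Borel $f$ by a monotone-class argument (using that $y\mapsto\int_X f\,d\mu_y$ is Borel for such $f$, again via monotone classes starting from $C(X)$). For the fiber concentration $\mu_y(\pi^{-1}(y))=1$ for $\nu$-a.e.\ $y$, I would use that for bounded Borel $g$ on $Y$ one has $\mathbb{E}\bigl((g\circ\pi)\cdot f\,\big|\,Y\bigr)=g\cdot\mathbb{E}(f|Y)$; taking $f\equiv1$ and $g=1_B$ gives $\mu_y(\pi^{-1}(B))=1_B(y)$ for $\nu$-a.e.\ $y$, and running $B$ through a countable algebra generating the Borel $\sigma$-algebra of $Y$ (available since $Y$ is standard Borel) produces a conull set on which $\mu_y$ is carried by $\pi^{-1}(y)$. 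Uniqueness $\nu$-a.e.\ is then immediate: two disintegrations must agree on $\int_X f\,d\mu_y$ for every $f\in\mathcal{D}$ outside a $\nu$-null set, hence define the same element of $\mathcal{M}(X)$ there. The one place where genuine care is needed is the uniform passage from ``for each test function, for $\nu$-a.e.\ $y$'' to ``for $\nu$-a.e.\ $y$, for every test function,'' which is precisely what separability of $C(X)$ and countability of $\mathcal{D}$ provide; the rest is bookkeeping with conditional expectations and Riesz representation.
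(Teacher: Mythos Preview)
The paper does not give its own proof of this theorem; it simply quotes the statement and cites \cite[Theorem A.7]{G03}. Your proposal is the standard textbook argument (and is essentially what one finds in the cited reference): pass to a compact metric model, apply Riesz representation to the conditional expectations $f\mapsto\mathbb{E}(f|Y)(y)$ evaluated on a countable uniformly dense $\mathbb{Q}$-subalgebra of $C(X)$, and clean up on a single $\nu$-conull set using countability. The argument is correct as written; the only cosmetic point is that after embedding $X$ as a Borel subset of a compact metric space $\widehat X$, you should note explicitly that $\mu_y(X)=1$ for $\nu$-a.e.\ $y$ (which follows from $\int_Y\mu_y(X)\,d\nu(y)=\mu(X)=1$), so that the $\mu_y$ genuinely live on the original $X$ and not just on $\widehat X$.
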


\begin{itemize}
\item For $\nu$ almost every $y\in Y$, $\mu_{y}(\pi^{-1}(y))=1$;
\item $\mu=\int_{Y}\mu_{y}d\nu(y)$.
\end{itemize}
Moreover such a map is unique in the following sense: If $y\rightarrow\mu_{y}'$
is another such map, then $\mu_{y}=\mu_{y}'$, $\nu$-a.e. The collection
$\{\mu_{y}\}_{y\in Y}$ is called a \strong{measure disintegration of $\mu$}
w.r.t $\pi$.

\subsection{Nilsystems.} \label{subsec:Nilsystem-and-a system at most d}

A (real) \strong{Lie group} is a group that is also a finite-dimensional
real smooth manifold such that the group operations of multiplication
and inversion are smooth. Let $L$ be a Lie group. Let $L_1=L$ and $L_{k}=[L_{k-1},L]$
for $k\geq 2$, where $[L,H]=\{[g,h]:g\in L,h\in H\}$ and $[g,h]=g^{-1}h^{-1}gh$.
If there exists some $d\geq1$ such that $L_{d}\neq\{e\}$ and $L_{d+1}=\{e\}$,
$L$ is called a \strong{$\mathbf{d}$-step nilpotent} Lie group. We say that a discrete subgroup $\Gamma$ of a Lie group $L$ is 
\strong{cocompact} if $L/\Gamma$, endowed with the quotient topology,
is compact. We say that the quotient $X=L/\Gamma$ is a \strong{$d$-step nilmanifold}
if $L$ is a $d$-step nilpotent Lie group and
$\Gamma$ is a discrete, cocompact subgroup. The group $L$ acts naturally on $X=L/\Gamma$ by \strong{left translations}, $g(a\Gamma)=ga\Gamma$, for $g, a\in L$. A \strong{nilsystem} $(X,G)$ of degree at most $d$ is given by an $\ell$-step nilmanifold
$X=L/\Gamma$ where $\ell\leq d$  and a subgroup $G\subset L$ acting on $X$ by left translations.   Note that $X=G/\Gamma$ has a unique Borel probability measure $\mu_{\haar}$ which is invariant under all left translations. The measure $\mu_{\haar}$ is called the \strong{Haar measure} of $X=L/\Gamma$ (see \cite[Chapter 10, Section 2.1]{HK2018}).

\subsection{Fibrant cubespaces and nilspaces.\label{subsec:Cube-space,-nilspace}}
The theory of nilspaces originating in Host and Kra's theory of
parallelepiped structures of \cite{HK08}, was introduced by  Antol\'{i}n Camarena and Szegedy in \cite{CS12}. Oftentimes in this paper we follow the exposition of  \cite{candela2016cpt_notes, candela2016alg_notes}.
\begin{defn}\label{def:cubespace}
A map $p=(p_1,\ldots p_{\ell}):\{0,1\}^{k}\rightarrow\{0,1\}^{\ell}$ is called a \strong{morphism}  (of discrete cubes)
 if each coordinate function
$p_{j}(\omega_{1},\ldots,\omega_{k})$ is either identically  $0$,
identically $1$, or it equals either $\omega_{i}$
or $\overline{\omega_{i}}=1-\omega_{i}$ for some $1\leq i=i(j)\le k$. A morphism $\sigma:\{0,1\}^{k}\rightarrow\{0,1\}^{k}$ is called an \strong{isomorphism}
if it is bijective. Each isomorphism
can be written as 
$$\sigma(c)(j)=\begin{cases}
c(\delta(j)) & j\in I_{0}\\
1-c(\delta(j)) & j\in I_{1}
\end{cases}$$ 
for some $I_{0}\sqcup I_{1}=\{1,\ldots,k\}$ and bijection $\delta:\{1,\ldots,k\}\rightarrow\{1,\ldots,k\}$. We denote\footnote{This function is denoted $r(\cdot)$ in \cite[\S 3.3.3]{candela2016alg_notes}.} $\sgn(\sigma)=(-1)^{|I_{1}|}$. Let $[k]=\{0,1\}^{k}$. A \strong{cubespace} $(X,C^{\bullet}(X))$ is a topological space $X$, together with closed
sets $C^{k}(X)\subset X^{[k]}$ of \emph{$k$-cubes} for $k\geq0$ with $C^{0}(X)=X$,
satisfying the following condition. Suppose $p:\{0,1\}^{k}\rightarrow\{0,1\}^{\ell}$
is a morphism of discrete cubes and $c:\{0,1\}^{\ell}\rightarrow X$
is in $C^{\ell}(X)$. Then $c\circ p:\{0,1\}^{k}\rightarrow X$ is in
$C^{k}(X)$. A cubespace $(X,C^{\bullet}(X))$ is called \strong{$k$-ergodic} if $C^k(X)=X^{[k]}$. In particular, $(X,C^{\bullet}(X))$ is called \strong{ergodic} if $C^1(X)=X^{2}$. Suppose $(X,C^{\bullet}(X))$ and $(Y,C^{\bullet}(Y))$ are cubespaces. Let $f:X\rightarrow Y$ be a continuous map. We say  $f$ is a \strong{cubespace morphism} or just \strong{morphism} if $f^{[k]}(C^k(X))\subset C^k(Y)$ for all $k\geq 0$. An $n$-\strong{face} of the discrete cube $\{0,1\}^{k}$  is a subcube obtained by fixing $k-n$ of the
coordinates.  
\end{defn}

\begin{defn}\label{k ergodic fibration} Denote $\vec{1}=(1,\ldots,1)$. We call a map $\lambda:  \{0,1\}^\ell \setminus \{\vec 1\}\to X$ an $\ell$-{\bf corner} if $\lambda|_{\omega_i=0}$ is an $(\ell-1)$-cube for all $1\leq i\leq \ell$. We say that a morphism $f:(X,C^{\bullet}(X))\rightarrow (Y,C^{\bullet}(Y))$ is a \strong{fibration} if the following holds for all $\ell\geq 1$. Let $\lambda:\{0,1\}^\ell \setminus \{\vec 1\}\to X$ be an $\ell$-corner and $c\in C^k(Y)$ a compatible cube, in the sense that
$f\circ\lambda=c|_{\{0,1\}^\ell\setminus\{\vec 1\}}$.
Then there is a completion $c'\in C^\ell(X)$ of $\lambda$ compatible with $c$, i.e.\  it holds $c'|_{\{0,1\}^\ell \setminus \{\vec1\}} = \lambda$ and $f\circ c'=c$. We say a fibration $f$ is $k$-\strong{ergodic} if $C^{k}(X)=(f^{[k]})^{-1}(C^k(Y))$. In particular, if $f:X\rightarrow \bullet$ is a fibration, then $(X,C^{\bullet}(X))$ is called a \strong{fibrant} cubespace.
\end{defn}
\begin{defn}\label{def:nilspace}
We say a fibration $f:X\rightarrow Y$ is a fibration of order at most $k$ if the following
holds: whenever $c,c'\in C^{k}(X)$, $f^{[k]}(c)=f^{[k]}(c')$ and $c(w)=c'(w)$ for all $w\in\{0,1\}^{k}\setminus\{\vec{1}\}$
then $c=c'$. In particular, if $f:X\rightarrow \bullet$ is a fibration of order at most $k$, then $(X,C^{\bullet}(X))$ is called a \strong{nilspace} of order at most $k$. 
\end{defn}

\begin{defn}
A cubespace $(X,C^{\bullet}(X))$ is called \strong{compact} if (the topological space) $X$ is compact and metric. A cubespace $(X,C^{\bullet}(X))$ is called \strong{abstract} if $X$ is discrete. Note that no axiom of countability is assumed in the latter definition making it essentially a "topology-free" category. In this paper we mostly use compact cubespaces. Therefore, in order to simplify notation, \strong{in this paper we refer to compact cubespaces (nilspaces) simply as cubespaces (nilspaces)}. In contrast abstract cubespaces are always referred to as \textit{abstract}.   
\end{defn}

We identify $\{0,1\}^{n}$ with the collection
of all subsets of $\{1,\ldots,n\}$ and write $v'\subset v$ for $v',v\in\{0,1\}^{n}$
if $v'_i\leq v_i$ for all $i$. Define $|v|=\#\{j:v_j=1\}$. Let $V\subset\{0,1\}^{n}$ be a
\strong{downward-closed} subset, i.e.\  if $v\in V$ and $v'\subset v$
then $v'\in V$. Denote by $\text{Hom}(V,X)$ the set of maps $\alpha:V\rightarrow X$
such that for all $v\in V$, $\alpha|_{\{v':\ v'\subset v\}}$ is
a cube of $X$. We say that
$\text{Hom}(V,X)$ has the \strong{extension property} if for every
$\alpha\in\text{Hom}(V,X)$, there exists $c\in C^{n}(X)$ such that
$c|_{V}=\alpha$.
\begin{lem} (cf.
\cite[Lemma 7.12]{GGY2018}\cite[Lemma 3.1.5]{candela2016alg_notes}\label{lem: Extension property}) Let $(X,C^{\bullet}(X))$ be a fibrant cubespace and $V\subset\{0,1\}^{d}$ be a downward-closed
subset, then $\text{Hom}(V,X)$ has the extension property.
\end{lem}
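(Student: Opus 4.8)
The plan is to prove that for a fibrant cubespace $(X,C^\bullet(X))$, any $\alpha\in\text{Hom}(V,X)$ with $V\subset\{0,1\}^d$ downward-closed extends to a full cube $c\in C^d(X)$, by induction on the ``size'' of the complement $\{0,1\}^d\setminus V$, adding one vertex at a time. The base case is $V=\{0,1\}^d$, where there is nothing to prove. For the inductive step, suppose $V\neq\{0,1\}^d$; I would pick a maximal element $w\in\{0,1\}^d\setminus V$, i.e. a vertex not in $V$ all of whose strict successors already lie in $V$. Such a $w$ exists because $\{0,1\}^d\setminus V$ is upward-closed and nonempty, so take any element of it minimal with respect to reverse inclusion; equivalently a maximal element of the poset $\{0,1\}^d$ lying outside $V$. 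Set $V'=V\cup\{w\}$, which is again downward-closed, and $\#(\{0,1\}^d\setminus V')<\#(\{0,1\}^d\setminus V)$.

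The heart of the argument is to extend $\alpha$ to $V'$, after which the inductive hypothesis applied to $V'$ (with one fewer missing vertex) finishes the job. To define $\alpha$ at $w$, I would use the fibration property of the unique map $f:X\rightarrow\bullet$ (the terminal cubespace), applied at an appropriate level $\ell=|w|$. Concretely, identify the face $F=\{v\in\{0,1\}^d: v\subset w\}$ with a copy of the discrete cube $\{0,1\}^\ell$ via the coordinates on which $w$ is $1$ (so that $w$ corresponds to $\vec 1$ in this identification). By downward-closedness of $V$ and maximality of $w$, every $v\subsetneq w$ lies in $V$, so $\alpha$ restricted to $F\setminus\{w\}$ is defined; moreover for each such $v$, $\alpha|_{\{v':v'\subset v\}}$ is a cube of $X$, which is exactly the statement that $\alpha|_{F\setminus\{w\}}$ is an $\ell$-corner in the sense of the corner definition. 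Since $f$ is a fibration and the ``compatible cube'' condition is vacuous over the terminal object, there is a completion $c'\in C^\ell(X)$ of this corner. Defining $\alpha(w):=c'(\vec 1)$ (transported back along the identification) yields a map $\alpha:V'\rightarrow X$; one checks that $\alpha\in\text{Hom}(V',X)$, since the only new constraint is that $\alpha|_{\{v':v'\subset w\}}$ be a cube, and that face is precisely the completion $c'$.

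The main obstacle I anticipate is the bookkeeping in the reduction of ``extend to the single new vertex $w$'' to a clean application of the fibration/corner axiom: one must verify that the restriction of $\alpha$ to $F\setminus\{w\}$ really satisfies the hypotheses of the corner definition ($\lambda|_{\omega_i=0}$ is an $(\ell-1)$-cube for each $i$, after the coordinate identification), and that this matches the indexing conventions relating faces of $\{0,1\}^d$ to the abstract cube $\{0,1\}^\ell$. This is routine but needs care because $w$ need not be $\vec 1$ in $\{0,1\}^d$, so a coordinate permutation/reflection (a discrete-cube isomorphism, as in the definition of morphism) is needed to bring the face $F$ into standard position; the cubespace axiom closure under morphisms guarantees the transported objects are still cubes. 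I would also remark that no topology or closedness of the $C^k(X)$ is used here beyond what is packaged into ``fibration''; the argument is purely combinatorial, matching the cited references \cite[Lemma 7.12]{GGY2018} and \cite[Lemma 3.1.5]{candela2016alg_notes}.
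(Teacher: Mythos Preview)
Your overall strategy---induct on the number of missing vertices, add one vertex $w$ to $V$ at a time, and fill in $\alpha(w)$ by completing the corner on the face $\{v:v\subset w\}$ using the fibrant axiom---is exactly the approach of the paper (and of the cited references). The paper chooses $w$ to have minimal $|w|$ among $\{0,1\}^d\setminus V$, which guarantees $V\cup\{w\}$ is downward-closed, and then invokes corner completion just as you do.

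There is, however, a genuine slip in your choice of $w$. You write ``pick a maximal element $w\in\{0,1\}^d\setminus V$'' and justify it as ``a vertex not in $V$ all of whose strict successors already lie in $V$.'' Since $V$ is downward-closed, its complement is upward-closed, so the only maximal element of $\{0,1\}^d\setminus V$ is $\vec 1$; with that choice the face $F=\{v:v\subset w\}$ is all of $\{0,1\}^d$, and the corner hypothesis ``every $v\subsetneq w$ lies in $V$'' fails unless only one vertex is missing. What you actually need---and what you tacitly use two sentences later when you assert ``every $v\subsetneq w$ lies in $V$''---is a \emph{minimal} element of $\{0,1\}^d\setminus V$. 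With that correction the argument goes through and matches the paper's proof; your discussion of the coordinate identification sending $F$ to $\{0,1\}^{|w|}$ via a discrete-cube morphism is in fact more explicit than the paper's.
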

\begin{proof} We prove by induction on $i$, where $|V|=2^{d}-i$. For $i=1$, by the definition of fibrant cubespace, $\text{Hom}(V,X)$ has 
the extension property. Assume the claim holds for $i$.  For any downward-closed
subset $V$ such that $|V|=2^{d}-i-1$, let $w\in \{0,1\}^{d}$ be an element such that $|w|=\min_{v\in\{0,1\}^d\setminus V}|v|$. This implies that $V\cup\{w\}$ is a downward-closed subset. Let $c\in \text{Hom}(V,X)$. As $X$ is a fibrant cubespace, there exists $\hat{c}_w\in X$ such that $b=(c_v,\hat{c}_w)_{v\subset w,v\neq w}\in C^{|w|}(X)$. Note that it may hold $\{v|\,v\subset w\}\subsetneq V$, however it is easy to see that defining $\hat{b}$ by $\hat{b}\in X^{V\cup\{w\}}$ by $\hat{b}_v=c_v$ for $v\in V$ and $\hat{b}_w=\hat{c}_w$, one has $\hat{b}\in\text{Hom}(V\cup\{w\},X)$.  Using the inductive assumption, there exists $b'\in C^{d}(X)$ such that $b'|_{V\cup\{w\}}=b$ which implies $b'|_V=c$. Q.E.D.
\end{proof}

\subsection{\label{subsec:Definition-of-the mu k}The Host-Kra cube group and the Host-Kra measures.}

Host and Kra introduced \textit{cube measures} defined on $X^{[k]}$ for ergodic $\Z$-actions.
These definitions easily generalize to the context of ergodic finitely generated abelian actions, in particular ergodic $\Z^k$-actions.

\begin{defn}\label{host-kra cube group and face group}
Let $k\geq 1$. For $1\leq j\leq k$, let $\overline{\alpha}_{j}=\{v\in\{0,1\}^{k}:v_j=1\}$ be
the $j$-th upper face of $\{0,1\}^{k}$.   For any face $F\subset \{0,1\}^k$ and $g\in G$, define $$(g^{F})_v=\begin{cases} g & v\in F\\
\id& v\notin F.
\end{cases}$$
Define the \strong{face group}   $\mathcal{F}^k(G)$ to be the subgroup of $G^{[k]}$ generated by $\{g^{\overline{\alpha}_{j}}:g\in G,1\leq j\leq k\}$. Define the \strong{diagonal subgroup} by $\triangle_{G}^{[k]}={\{g^{[k]}|\,g\in G\}}$. Define the $k$-th \strong{Host-Kra cube group} $\mathcal{HK}^k(G)$ to be the subgroup of $G^{[k]}$ generated by $\mathcal{F}^k(G)$ and $\triangle_{G}^{[k]}$. Also set $\mathcal{HK}^0(G)=G$.
\end{defn}

\begin{lem}\cite[Corollary A.15]{GMVI}\label{fibrant cubespace for G}
$(G,\mathcal{HK}^{\bullet}(G))$ is a fibrant abstract cubespace. 
\end{lem}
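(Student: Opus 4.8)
Since the statement is \cite[Corollary A.15]{GMVI}, one could simply cite it; but let me sketch how I would prove it directly, as the argument is instructive. The cubespace here is $G$ equipped with $C^{k}(G):=\mathcal{HK}^{k}(G)$, regarded as a (trivially closed, since $G$ is discrete) collection of maps $\{0,1\}^{k}\to G$ inside $G^{[k]}$, with $C^{0}(G)=G$. First I would check that this is a cubespace at all, i.e.\ that $c\circ p\in C^{k}(G)$ whenever $c\in C^{\ell}(G)$ and $p:\{0,1\}^{k}\to\{0,1\}^{\ell}$ is a morphism of discrete cubes. Since $c\mapsto c\circ p$ is a group homomorphism $G^{[\ell]}\to G^{[k]}$, it suffices to check this on the generators of $\mathcal{HK}^{\ell}(G)$ from Definition \ref{host-kra cube group and face group}: $g^{[\ell]}\circ p=g^{[k]}$, while $g^{\bar\alpha_{j}}\circ p$ is, according to the form of $p_{j}$, the identity, the diagonal $g^{[k]}$, an upper face generator $g^{\bar\alpha_{i}}$, or a lower face generator $g^{\{v:\,v_{i}=0\}}=g^{[k]}(g^{-1})^{\bar\alpha_{i}}$; all of these lie in $\mathcal{HK}^{k}(G)$, the last one explaining why the diagonals are built into $\mathcal{HK}^{k}(G)$.

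The substance is fibrancy: every $\ell$-corner $\lambda:\{0,1\}^{\ell}\setminus\{\vec 1\}\to G$ admits a completion in $C^{\ell}(G)$. The plan is a recursion on $\ell$ along the split short exact sequence $1\to N_{\ell}\to\mathcal{HK}^{\ell}(G)\xrightarrow{\ r\ }\mathcal{HK}^{\ell-1}(G)\to1$, where $r$ is restriction to the facet $\{\omega_{\ell}=0\}$, split by ``extend constantly in the $\ell$-th direction'' (this splitting sends $g^{\bar\alpha_{j}}\mapsto g^{\bar\alpha_{j}}$ and $g^{[\ell-1]}\mapsto g^{[\ell]}$), and where restriction to the opposite facet $\{\omega_{\ell}=1\}$ identifies $N_{\ell}$ with $\mathcal{HK}_{+1}^{\ell-1}(G)$ (the Host--Kra cube group of the upward-shifted lower central series). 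Given $\lambda$, its restriction to $\{\omega_{\ell}=0\}$ is an $(\ell-1)$-cube, hence equals $r(c_{0})$ for the constant extension $c_{0}\in\mathcal{HK}^{\ell}(G)$ of that cube; replacing $\lambda$ by $c_{0}^{-1}\lambda$ I may assume $\lambda\equiv e$ on $\{\omega_{\ell}=0\}$, i.e.\ $\lambda$ takes values in $N_{\ell}$ off $\vec1$. Transporting along $N_{\ell}\cong\mathcal{HK}_{+1}^{\ell-1}(G)$, the remaining data of $\lambda$ on $\{\omega_{\ell}=1\}\setminus\{\vec1\}$ together with the requirement that the facets $\lambda|_{\omega_{i}=0}$ $(i<\ell)$ be cubes amounts exactly to the statement that $\lambda|_{\{\omega_{\ell}=1\}}$ is an $(\ell-1)$-corner in the cubespace $(G,\mathcal{HK}_{+1}^{\bullet}(G))$; completing it — by the same recursion, one dimension lower and one filtration-shift up, bottoming out at dimension $1$ where $\mathcal{HK}^{1}(G)=G\times G$ is $1$-ergodic — produces the missing value at $\vec1$, and multiplying back by $c_{0}$ finishes.

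The hard part is the facet identification $N_{\ell}\cong\mathcal{HK}_{+1}^{\ell-1}(G)$ together with the compatible transport of the corner/facet data through it. The inclusion $N_{\ell}|_{\{\omega_{\ell}=1\}}\subseteq\mathcal{HK}_{+1}^{\ell-1}(G)$ comes out of the commutator identity $[g^{\bar\alpha_{\ell}},h^{F}]=[g,h]^{F\cap\bar\alpha_{\ell}}$ combined with $[G_{1},G_{c}]\subseteq G_{c+1}$; the reverse inclusion, however, requires knowing that $h^{F}\in\mathcal{HK}^{\ell}(G)$ for every face $F$ and every $h\in G_{\mathrm{codim}(F)}$ — that is, that the face group of the lower central series filtration is as large as one expects — which is the Host--Kra commutator combinatorics of \cite[Chapter 6]{HK2018}. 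Setting up the recursion cleanly, and in a form robust enough to still run when $G$ carries a topology and $G_{\bullet}$ is replaced by an arbitrary filtration (as needed in the later applications of this paper), is precisely what is carried out in \cite[Appendix A]{GMVI}.
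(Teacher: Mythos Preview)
The paper gives no proof of this lemma at all; it is stated with the citation \cite[Corollary A.15]{GMVI} and nothing more. Your sketch is correct and is essentially the argument carried out in that reference (and in \cite[\S2.2]{candela2016alg_notes}): verify the cubespace axioms on generators, then prove fibrancy by the inductive splitting $\mathcal{HK}^{\ell}(G)\cong\mathcal{HK}^{\ell-1}(G)\ltimes N_{\ell}$ with $N_{\ell}\cong\mathcal{HK}_{+1}^{\ell-1}(G)$, reducing corner-completion at level $\ell$ for the filtration $G_{\bullet}$ to corner-completion at level $\ell-1$ for the shifted filtration $G_{\bullet+1}$. You have correctly isolated the only nontrivial ingredient --- that $g^{F}\in\mathcal{HK}^{\ell}(G)$ for every face $F$ and every $g\in G_{\mathrm{codim}(F)}$ --- and pointed to the right source for it (\cite[Chapter~6]{HK2018}).

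One small remark: your base case ``$\ell=1$ where $\mathcal{HK}^{1}(G)=G\times G$'' is stated for the original filtration, but after several shifts you are completing $1$-corners in $\mathcal{HK}^{1}$ of a shifted filtration, which need not be all of $G\times G$. The argument still goes through --- a $1$-corner is a single point $g_{0}\in G_{0}$ and the diagonal $(g_{0},g_{0})$ always lies in the relevant cube group --- but the phrasing ``$1$-ergodic'' is not quite what you use; all you need is that the diagonal is a cube, which is automatic for any filtration.
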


\begin{defn} \label{def: muk definition} 
Let $(X,\mathcal{B},\mu,G)$ be an ergodic m.p.s. Let $\mu^{[0]}=\mu$.  Let $\mathcal{I}_{G}^{[k]}$ be the $\triangle_{G}^{[k]}$-invariant
$\sigma$-algebra of $(X^{[k]}:= X^{\{0,1\}^{k}},\mathcal{B}^{[k]},\mu^{[k]})$.
Define $\mu^{[k+1]}$ to be the relative independent
joining of two copies of $\mu^{[k]}$ over $\mathcal{I}^{k}=\mathcal{I}_{G}^{[k]}$,
that is, for $f_{v}\in L^{\infty}(\mu)$, $v\in\{0,1\}^{k+1}$:
\[
\int_{X^{[k+1]}}\prod_{v\in\{0,1\}^{k+1}}f_{v}(x_v)d\mu^{[k+1]}=\int_{X^{[k]}}\mathbb{{E}}(\prod_{v\in\{0,1\}^{k}}f_{v0}(x_{v})|\mathcal{I}_{G}^{[k]})\mathbb{{E}}(\prod_{v\in\{0,1\}^{k}}f_{v1}(x_{v})|\mathcal{I}_{G}^{[k]})d\mu^{[k]}(\mathbf{x})
.\]

The measures $\{\mu^{[k]}\}_{k\geq 0}$ are referred to as the \textbf{Host-Kra measures}.
\end{defn}

From the previous definition it is easy to see that for any measurable functions $H_1,H_2\in L^\infty(X^{[k]},\mu^{[k]})$,
\begin{equation}\label{def muk 2}\int_{X^{[k]}}\mathbb{{E}}(H_1|\mathcal{I}_{G}^{[k]})(c)\mathbb{{E}}(H_2|\mathcal{I}_{G}^{[k]})(c)d\mu^{[k]}(c)= \int_{X^{[k]}}\mathbb{{E}}(H_1|\mathcal{I}_{G}^{[k]})(c)H_2(c)d\mu^{[k]}(c).\end{equation}

Note $\mu^{[k]}$ is  $\mathcal{HK}^{k}(G)$-invariant (cf.\ \cite[Chapter 9, Proposition 2]{HK2018}).

\subsection{Dynamical cubespaces.} \label{subsec:Dynamical cubespaces}

\begin{defn}
\label{def: k- equivalent} Let $(X,G)$ be a minimal t.d.s. Define the \textbf{induced dynamical cubespace} $(X,C_G^{\bullet}(X))$ by ($k\geq 0$): $$C_G^{k}(X)=\overline{\{\mathbf{g}\mathbf{x}:\mathbf{x}=(x,\ldots,x)\in X^{[k]},\mathbf{g}\in\mathcal{HK}^{k}(G)\}}.$$

\end{defn}
\begin{thm}\label{thm:distal->fibrant} (\cite[Theorem 7.10]{GGY2018}\label{thm: distality implies extension property})
Let $(G,X)$ be a minimal distal t.d.s.,\ then the cubespace $(X,C_G^\bullet(X))$ is ergodic and fibrant.
\end{thm}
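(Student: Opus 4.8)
The statement has two parts, ergodicity ($C_G^{1}(X)=X^{2}$) and fibrancy, and I would dispatch ergodicity first since it needs only minimality. Here $\mathcal{HK}^{1}(G)$ contains both the diagonal elements $g^{[1]}=(g,g)$ and the upper-face elements $g^{\overline{\alpha}_{1}}=(Id,g)$, so the $\mathcal{HK}^{1}(G)$-orbit of a diagonal point $(x,x)$ contains every point $(hx,ghx)$ with $g,h\in G$; given $(a,b)\in X^{2}$ and $\varepsilon>0$, minimality produces $h$ with $d(hx,a)<\varepsilon$ and then, applied to $hx$, an element $g$ with $d(g(hx),b)<\varepsilon$, so $(a,b)\in C_G^{1}(X)$.

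For fibrancy I must show that for every $n\geq 1$ each $n$-corner $\lambda\colon\{0,1\}^{n}\setminus\{\vec 1\}\to X$ completes to some $c\in C_G^{n}(X)$, and I would first record two facts. (i) For each $n$ the $\mathcal{HK}^{n}(G)$-action on $X^{[n]}$ is distal, being the restriction to a subgroup of the product action of the distal system $(X^{[n]},G^{[n]})$; hence every $\mathcal{HK}^{n}(G)$-orbit closure in $X^{[n]}$ is minimal, and in particular $C_G^{n}(X)$ is itself a minimal $\mathcal{HK}^{n}(G)$-system. (ii) For any factor map $\pi\colon(X,G)\to(Y,G)$ of minimal distal systems, $\pi^{[n]}$ is $\mathcal{HK}^{n}(G)$-equivariant and carries diagonals to diagonals, so $\pi^{[n]}(C_G^{n}(X))=C_G^{n}(Y)$ by continuity and compactness, and $\pi^{[n]}$ sends a completion of $\lambda$ to a completion of the corner $\pi\circ\lambda$.

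The plan for fibrancy is a transfinite induction along the Furstenberg structure theorem: write $X=\underleftarrow{\lim}_{\eta<\kappa}X_{\eta}$ for a countable ordinal $\kappa$, with $X_{0}$ a point, each $X_{\eta+1}\to X_{\eta}$ an isometric (equicontinuous) extension of minimal distal systems, and $X_{\mu}=\underleftarrow{\lim}_{\eta<\mu}X_{\eta}$ at limit ordinals, every $X_{\eta}$ being a minimal distal factor of $X$. I would prove by transfinite induction the conjunction: every $(X_{\eta},C_G^{\bullet}(X_{\eta}))$ is fibrant, and every factor map $X_{\eta'}\to X_{\eta}$ ($\eta\leq\eta'$) is a fibration of cubespaces. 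A point is trivial. At a limit $\mu$ I would first establish $C_G^{n}(X_{\mu})=\{c\in X_{\mu}^{[n]}:\pi^{[n]}_{\eta}(c)\in C_G^{n}(X_{\eta})\ \forall\,\eta<\mu\}$: for $c$ in the right-hand side, minimality of each $C_G^{n}(X_{\eta})$ forces $\pi^{[n]}_{\eta}\big(\overline{\mathcal{HK}^{n}(G)\cdot c}\big)=C_G^{n}(X_{\eta})$ to contain the diagonal of $X_{\eta}^{[n]}$, and an inverse-limit compactness argument then places the diagonal of $X_{\mu}^{[n]}$ inside the minimal set $\overline{\mathcal{HK}^{n}(G)\cdot c}$, whence this set equals $C_G^{n}(X_{\mu})$ and $c\in C_G^{n}(X_{\mu})$. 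Granting this, a corner $\lambda$ of $X_{\mu}$ projects to corners $\lambda_{\eta}$ of the $X_{\eta}$; the completion sets $K_{\eta}=\{c\in C_G^{n}(X_{\eta}):c|_{\{0,1\}^{n}\setminus\{\vec 1\}}=\lambda_{\eta}\}$ are nonempty compacts by the inductive fibrancy, the connecting maps $K_{\eta'}\to K_{\eta}$ are onto by the inductive fibration property, so $\underleftarrow{\lim}_{\eta}K_{\eta}\neq\emptyset$ and any element of it completes $\lambda$ inside $C_G^{n}(X_{\mu})$; the fibration claims at $\mu$ follow likewise, using that fibrations compose and pass to inverse limits.

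The remaining and hardest case is the successor step: an isometric extension $\pi\colon X'\to Y'$ of minimal distal systems with $Y'$ already fibrant, where I must show $X'$ fibrant and $\pi$ a fibration. I would try to exhibit $C_G^{n}(X')$ as an affine bundle over $C_G^{n}(Y')$ — reducing, after a further isometric extension if needed, to a group extension $X'=Y'\times_{\sigma}K$ and showing that the set of lifts to $X'^{[n]}$ of a fixed cube of $C_G^{n}(Y')$ is a coset of a closed subgroup of $K^{[n]}$ which itself admits corner-completions (the fibre direction of a group extension being equicontinuous, hence fibrant). A corner $\lambda$ of $X'$ would then be completed by completing $\pi^{[n]}\circ\lambda$ inside $C_G^{n}(Y')$ and translating within the associated coset to match $\lambda$ along the corner, with the fibration property read off the same description — though some care is needed to break the apparent circularity between "$X'$ is fibrant" and "$\pi$ is a fibration", e.g. by first proving that a group extension of a fibrant base yields a fibration map and only then deducing fibrancy. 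Carrying out this affine-bundle analysis of isometric extensions is the technical heart of the argument; ergodicity, the inverse-limit behaviour, and the transfinite bookkeeping are all routine once the minimality in fact (i) is in place.
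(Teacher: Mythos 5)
Note first that the paper itself contains no proof of this statement: it is quoted verbatim from \cite[Theorem 7.10]{GGY2018}, so your argument has to stand on its own. The soft parts of your proposal do: ergodicity of $C^1_G(X)$ from minimality, distality of the $\mathcal{HK}^{n}(G)$-action on $X^{[n]}$ making $C^{n}_G(X)$ the (minimal) orbit closure of a diagonal point, surjectivity of $\pi^{[n]}$ on cube spaces, and the limit-ordinal step (the inverse-limit description of $C^n_G(X_\mu)$ via the nested-compactness/minimality argument, and corner completion from nonempty compact completion sets with connecting maps surjective by the inductive fibration hypothesis) are all correct and can be written out along the lines you indicate.

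The genuine gap is exactly where you yourself locate the ``technical heart'': the successor step is a plan, not a proof, and the claims it rests on are the actual content of the theorem. Concretely: (a) an isometric extension $X'\to Y'$ of minimal distal systems is in general only a \emph{factor} of a minimal group extension $\tilde X=Y'\times_\sigma K\to Y'$, say $X'=\tilde X/H$; to pass fibrancy from $\tilde X$ down to $X'$ you must lift corners of $X'$ to corners of $\tilde X$, i.e.\ you need the map $\tilde X\to X'$ to have a fibration-type lifting property, which is essentially the statement being proved --- you flag this circularity but do not break it; (b) the structural claim that the set of lifts in $\tilde X^{[n]}$ of a fixed cube of $C^{n}_G(Y')$ is a coset of one fixed closed subgroup of $K^{[n]}$ is nontrivial (its usual proof needs minimality of the cube systems, tricube/glueing manipulations and the algebra of face groups) and is asserted rather than proved; and (c) even granting (b), ``the fibre direction is equicontinuous, hence fibrant'' is not an argument that this subgroup of $K^{[n]}$ admits corner completions --- for that one needs to identify it as a Host--Kra-type cube group of a filtration on $K$ (containing the relevant face subgroups), which again is unestablished. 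So the transfinite scaffolding and the limit step are fine, but the isometric-extension step --- the substance of \cite[Theorem 7.10]{GGY2018} --- is missing, and as written the argument does not constitute a proof.
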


\begin{defn}\label{NRP(M-G)}
Let $c^{k+1}_y(x):\{0,1\}^{k+1}\rightarrow X$ be the configuration given by $c^{k+1}_y(x)_{\vec{0}}=y$ and $c^{k+1}_y(x)_{\nu}=x$ for $\nu\neq \vec{0}$. Define the relation $\sim_{k}$ for $k\geq 0$ by $
x\sim_{k}y\text{ if }c^{k+1}_y(x)\in C_G^{k+1}(X).$ According to \cite[Section 5]{GGY2018}, $\sim_{k}$ is an closed $G$-invariant  equivalence relation. Suppose that $(M,G)\rightarrow (X,G)$ is a topological factor map.
Following \cite[Definition 1.33]{GL2019} define $\nrp_G^{[k]}(M\rightarrow X)$ for $k\geq 0$ by
$$
(x,y)\in \nrp_G^{[k]}(M\rightarrow X)\text{ if }\pi(x)=\pi(y)\text{ and }x\sim_{k}y.
$$
Clearly $\nrp_G^{[k]}(M\rightarrow X)$ is a closed $G$-invariant equivalence relation. The equivalence relation $\nrp_G^{[k]}(M\rightarrow X)$ is called the \strong{relative nilpotent regionally proximal relation} of order $k$ w.r.t.\  $M\rightarrow X$.
Define $\nrp_G^{[k]}(X)=\nrp_G^{[k]}(X\rightarrow \bullet)$. By \cite[Theorem 7.15]{GGY2018} $(X/\nrp_G^{[k]}(X), C_G^{\bullet}(X/\nrp_G^{[k]}(X)))$ is a nilspace of order at most $k$. It is referred to as the 
$k$-th \textbf{dynamical nilspace} associated with $(G,X)$.
\end{defn}

The following theorem was proven in the generality of an acting group 
which has a dense subgroup generated by a compact set. We specialize to the case of a finitely generated abelian acting group. 

\begin{thm}
\label{thm:dynamical sturcture strong ver}\cite[Theorem 1.29]{GMVIII} Let $G$ be a finitely generated abelian group (e.g., $G=\Z^k$, $k\in \N$) and $(X,G)$ a minimal
t.d.s. The following are equivalent for $d\geq 1$:
\begin{enumerate}
\item The t.d.s.\ $(X,G)$ is an inverse limit of nilsystems of degree at most $d$;

\item It holds that $\nrp_G^{[d]}(X)=\triangle$.
\end{enumerate}
\end{thm}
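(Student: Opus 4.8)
The plan is to prove the two implications separately. The implication (1)$\Rightarrow$(2) is short; (2)$\Rightarrow$(1) is the substantial direction and ultimately rests on the structure theorem for dynamical nilspaces of \cite{GMVIII}, which we take as given.

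For (1)$\Rightarrow$(2), I would first record that $NRP_G^k$ is compatible with factor maps and with inverse limits. Indeed, a $G$-equivariant surjection $(X',G)\to(X'',G)$ carries the set $\{\mathbf g\mathbf x:\mathbf x\in\triangle_{X'},\ \mathbf g\in\mathcal{HK}^{k+1}(G)\}$ onto the corresponding set for $X''$, hence $C_G^{k+1}(X')$ into $C_G^{k+1}(X'')$ (Definition \ref{def: k- equivalent}), so that $(x,y)\in NRP_G^d(\underleftarrow{\lim}X_i)$ forces $(x_i,y_i)\in NRP_G^d(X_i)$ for every $i$. This reduces matters to a single nilsystem $(X,G')$ of degree at most $d$, say $X=G/\Gamma$ with $G$ being $\ell$-step nilpotent, $\ell\le d$, and $G'\subseteq G$. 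There $C_{G'}^{d+1}(X)$ is contained in the Host--Kra parallelepiped set of the nilmanifold $G/\Gamma$ attached to the lower central series of $G$, which, $G$ being $\le d$-step nilpotent, is a nilspace of order at most $d$; in particular two $(d+1)$-cubes agreeing off $\vec{1}$ coincide. Comparing a cube $(x,\dots,x,y)\in C_{G'}^{d+1}(X)$ that witnesses $x\sim_d y$ (with $y$ at $\vec{1}$) with the constant cube, which agrees with it off $\vec{1}$, then gives $y=x$. Hence $\sim_d=\triangle$ and $NRP_{G'}^d(X)=\triangle$.

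For (2)$\Rightarrow$(1), the goal is to show $(X,C_G^\bullet(X))$ is a nilspace of order at most $d$ on which $G$ acts by automorphisms, and then to invoke the structure theorem of \cite{GMVIII}. The first and hardest step is to prove that $(X,G)$ is \emph{distal}. I would note that each $NRP_G^k(X)$ is closed (being the preimage of the closed set $C_G^{k+1}(X)$ under $(x,y)\mapsto(x,\dots,x,y)$), $G$-invariant (since $\mathcal{HK}^{k+1}(G)$ contains the diagonal $\{g^{[k+1]}\}$ and preserves $C_G^{k+1}(X)$), and an equivalence relation (\cite{GGY2018}), and that $\sim_k\subseteq\sim_{k-1}$ — as one sees by restricting a $(k+1)$-cube witnessing $x\sim_k y$ to the $k$-face $\{\omega_1=1\}$. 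Thus $NRP_G^d(X)=\triangle$ produces a finite tower of minimal topological factors
\[
X=X/NRP_G^d(X)\longrightarrow X/NRP_G^{d-1}(X)\longrightarrow\cdots\longrightarrow X/NRP_G^1(X),
\]
whose base is a minimal system of order at most $1$, hence equicontinuous, and each of whose steps is an isometric extension; this last point is the real work (for $G=\mathbb Z$ it is in essence \cite[Theorem 1.2]{HKM10}, and for finitely generated abelian $G$ it is part of the cubespace analysis of \cite{GMVIII}). A finite tower of isometric extensions over a distal base is distal, so $(X,G)$ is distal. (Alternatively one could try to verify fibrancy of $(X,C_G^\bullet(X))$ directly from $NRP_G^d(X)=\triangle$ by cube-surgery, bypassing distality.)

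With distality in hand, Theorem \ref{thm:distal->fibrant} makes $(X,C_G^\bullet(X))$ an ergodic fibrant cubespace, so it has the extension property of Lemma \ref{lem: Extension property}. For a minimal distal system the conditions ``$NRP_G^d(X)=\triangle$'' and ``$(X,C_G^\bullet(X))$ is a nilspace of order at most $d$'' are equivalent (\cite{GGY2018}): given fibrancy, the uniqueness axiom for arbitrary $(d+1)$-cubes $c,c'$ agreeing off $\vec{1}$ reduces — by using the extension property to form the ``difference'' of $c$ and $c'$ along the last coordinate — to the special case $\sim_d=\triangle$. Thus $(X,C_G^\bullet(X))$ is a dynamical nilspace of order at most $d$, and the structure theorem of Gutman, Manners and Varj\'u (\cite{GMVIII}) concludes that $(X,G)$ is an inverse limit of nilsystems of degree at most $d$. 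The main obstacle throughout is, as indicated, the distality step: for general finitely generated abelian $G$ this needs the full cubespace machinery of \cite{GMVIII}, the remaining steps — the inverse-limit compatibility in (1)$\Rightarrow$(2) and the order-$d$ reduction above — being comparatively routine once fibrancy is available, and the structure theorem being the deep external input we take as given.
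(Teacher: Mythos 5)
You should first note that the paper contains no proof of this statement at all: Theorem \ref{thm:dynamical sturcture strong ver} is imported verbatim from \cite[Theorem 1.29]{GMVIII} and used as a black box (the introduction's footnote even explains why \cite{HKM10} cannot be substituted for it, since that proof relies on \cite{HK05}). So there is no internal argument to compare yours against. Judged on its own, your implication (1)$\Rightarrow$(2) is essentially sound: cube sets are functorial under factor maps, which reduces the claim to a single nilsystem, and there $C_{G'}^{d+1}(X)$ sits inside the algebraic cube set of the nilmanifold equipped with its lower central series filtration, a nilspace of order at most $d$, so the uniqueness axiom applied to $(x,\dots,x,y)$ and the constant cube gives $y=x$.

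The direction (2)$\Rightarrow$(1), however, has a genuine gap: every step that carries real weight is deferred to the very reference whose Theorem 1.29 is the statement being proved. You offer no argument that $NRP_G^d(X)=\triangle$ forces distality of the minimal system $(X,G)$ — you explicitly outsource the isometric-extension claim for the $NRP$ tower, and for general finitely generated abelian $G$ the distality itself, to ``the cubespace analysis of \cite{GMVIII}'' — yet everything downstream hinges on it: Theorem \ref{thm:distal->fibrant} needs distality as input, the extension property of Lemma \ref{lem: Extension property} needs fibrancy, and the final invocation of ``the structure theorem of \cite{GMVIII}'' is again the source of the theorem under proof. As written, the proposal is at best a reduction of \cite[Theorem 1.29]{GMVIII} to the internals of \cite{GMVIII}, i.e.\ circular (or empty) at precisely the decisive point. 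A genuine proof would require an independent derivation, from $NRP_G^d(X)=\triangle$ alone, of distality (or directly of fibrancy together with the inverse-limit representation); that derivation is exactly the deep content for which the present paper cites \cite{GMVIII} rather than arguing it.
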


\subsection{The Host-Kra factors.}
\label{subsec:The Host-Kra factors}
The following lemmas and definitions are stated in \cite{HK2018} for ergodic $\Z$-actions but easily generalize to the context of finitely generated abelian actions, in particular $\Z^k$-actions.

\begin{lem}(\cite[Chapter 9, Proposition 2]{HK2018})
\label{lem:muk is ergodic} Let $k\in \mathbb{N}$. Let $G$ be a finitely generated abelian group and $(X,G)$ an ergodic m.p.s. Then the m.p.s.\ $(X^{[k]},\mu^{[k]},\mathcal{HK}^{k}(G))$ is ergodic.
\end{lem}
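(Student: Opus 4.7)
The plan is induction on $k$. The case $k=0$ is immediate since $\mathcal{HK}^0(G)=G$ and $(X,\mu,G)$ is ergodic by assumption.

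For the inductive step, I identify $X^{[k+1]}\cong X^{[k]}\times X^{[k]}$ by splitting $\{0,1\}^{k+1}$ on its last coordinate. Under this identification, $g^{\overline\alpha_j^{(k+1)}}$ for $1\le j\le k$ acts diagonally as $(g^{\overline\alpha_j^{(k)}},g^{\overline\alpha_j^{(k)}})$; the full diagonal $g^{[k+1]}$ acts as $(g^{[k]},g^{[k]})$; and the extra face generator $g^{\overline\alpha_{k+1}^{(k+1)}}$ acts as $(\mathrm{Id},g^{[k]})$. Consequently $\mathcal{HK}^{k+1}(G)$ is generated by a diagonal copy of $\mathcal{HK}^k(G)$ together with $\mathrm{Id}\times\triangle_G$.

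Because $G$ is abelian, all higher commutators $G_n$ with $n\ge 2$ are trivial, so Definition \ref{host-kra cube group and face group} forces $\mathcal{HK}_{+1}^k(G)=\triangle_G$. Thus by Definition \ref{def: muk definition} the measure $\mu^{[k+1]}$ is the relative independent self-joining of $\mu^{[k]}$ over $\mathcal{I}_{\triangle_G}$. Letting $\pi:X^{[k]}\to Z$ denote the factor map corresponding to $\mathcal{I}_{\triangle_G}$, a short calculation using the defining identity for $\mu^{[k+1]}$ applied to bounded $\mathcal{I}_{\triangle_G}$-measurable $f$ yields $\int |f(x)-f(y)|^2\,d\mu^{[k+1]}=0$, so $\pi(x)=\pi(y)$ holds $\mu^{[k+1]}$-almost surely; simultaneously both factor marginals of $\mu^{[k+1]}$ coincide with $\mu^{[k]}$.

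Now let $F\in L^\infty(\mu^{[k+1]})$ be $\mathcal{HK}^{k+1}(G)$-invariant. Invariance under $\mathrm{Id}\times\triangle_G$ makes $y\mapsto F(x,y)$ a $\triangle_G$-invariant, hence $\mathcal{I}_{\triangle_G}$-measurable, function for a.e. $x$; write $F(x,y)=H(x,\pi(y))$. The a.s. identity $\pi(y)=\pi(x)$ collapses this to $F(x,y)=\bar F(x)$ with $\bar F(x)=H(x,\pi(x))$. This collapse is the only delicate step, relying on the a.s. coincidence of $\pi(x)$ and $\pi(y)$ under a relative independent joining over a sub-$\sigma$-algebra. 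The remaining invariance of $F$ under the diagonal copy of $\mathcal{HK}^k(G)$ then reads as $\mathcal{HK}^k(G)$-invariance of $\bar F$ on $(X^{[k]},\mu^{[k]})$, and the induction hypothesis forces $\bar F$, hence $F$, to be a.e. constant. The abelian hypothesis is used precisely to force $\mathcal{HK}_{+1}^k(G)$ down to $\triangle_G$, exactly the subgroup whose invariants the extra face generator $(\mathrm{Id},g^{[k]})$ is tailor-made to kill.
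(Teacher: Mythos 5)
The paper does not give its own proof of this lemma; it simply cites \cite[Chapter 9, Proposition 2]{HK2018}. So I cannot compare your argument line-by-line to "the paper's proof." What you have written, however, is a correct and self-contained inductive argument, and it captures the expected mechanism: identify $X^{[k+1]}\cong X^{[k]}\times X^{[k]}$, observe that for abelian $G$ the group $\mathcal{HK}^{k+1}(G)$ is generated by the diagonal $\mathcal{HK}^k(G)$ together with $\mathrm{Id}\times\triangle_G$, kill the fiber direction using the extra face generator and the relative product structure over $\mathcal{I}_{\triangle_G}$, and conclude by the inductive hypothesis. Your generator bookkeeping (the actions of $g^{\overline\alpha_j}$, $g^{\overline\alpha_{k+1}}$ and $g^{[k+1]}$ under the splitting) is correct, and the computation showing $\pi(x)=\pi(y)$ holds $\mu^{[k+1]}$-a.s.\ is the standard one for relative independent joinings.

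One phrasing point is worth tightening, though the substance is fine. You write that invariance under $\mathrm{Id}\times\triangle_G$ "makes $y\mapsto F(x,y)$ a $\triangle_G$-invariant, hence $\mathcal{I}_{\triangle_G}$-measurable, function for a.e.\ $x$." The fiber function $F(x,\cdot)$ is only defined $\nu_x$-a.e.\ for the conditional measure $\nu_x$ of the disintegration $\mu^{[k+1]}=\int\delta_x\otimes\nu_x\,d\mu^{[k]}(x)$, not $\mu^{[k]}$-a.e., so "$\mathcal{I}_{\triangle_G}$-measurable" (as a sub-$\sigma$-algebra of $(X^{[k]},\mu^{[k]})$) is slightly off. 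What is actually being invoked is the ergodic decomposition: $\nu_x=\mu^{[k]}_{\pi(x)}$ is $\triangle_G$-ergodic for a.e.\ $x$, so the $\triangle_G$-invariant function $F(x,\cdot)$ is $\nu_x$-a.e.\ constant, which is exactly your $F(x,y)=\bar F(x)$. The subsequent step, that invariance of $F$ under the diagonal $\mathcal{HK}^k(G)$ transfers (via invariance of $\mu^{[k+1]}$ under $\mathcal{HK}^{k+1}(G)$, i.e., Lemma \ref{HK-invariant}, or a direct check) to $\mathcal{HK}^k(G)$-invariance of $\bar F$ on $(X^{[k]},\mu^{[k]})$, is correct, and the induction closes. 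So: correct, and in the spirit of the cited reference; only the measure-theoretic wording of the collapse step should be cleaned up to explicitly invoke ergodicity of the fiber measures.
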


Denote by $\mathcal{S}_{k}$ the group of $k$-discrete cube isomorphisms.

\begin{lem}
\label{lem:muk is invariant under symmetries}(\cite[Chapter 8, Proposition 8]{HK2018}) Let $k\in \mathbb{N}$. Let $G$ be a finitely generated abelian group and $(X,G)$ an ergodic m.p.s. Then the m.p.s.\ $(X^{[k]},\mu^{[k]})$ is invariant under $S_{k}$.
\end{lem}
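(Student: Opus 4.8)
The plan is to proceed by induction on $k$. The cases $k=0,1$ are immediate: $\mu^{[0]}=\mu$ carries no symmetry to check, and since $(X,\mu,G)$ is ergodic one has $\mu^{[1]}=\mu\times\mu$, which is invariant under the coordinate flip --- the only nontrivial element of $\mathcal{S}_1$. Assume $\mu^{[k]}$ is $\mathcal{S}_k$-invariant; we must deduce that $\mu^{[k+1]}$ is $\mathcal{S}_{k+1}$-invariant. Regard the $(k{+}1)$-st digit as the ``last'' one, so that $\{0,1\}^{k+1}=\{0,1\}^{k}\times\{0,1\}$ and $X^{[k+1]}=X^{[k]}\times X^{[k]}$. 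The first step is to reduce the claim to two families of generators of $\mathcal{S}_{k+1}$: the copy of $\mathcal{S}_k$ acting on the first $k$ digits (diagonally on the two $X^{[k]}$-factors), and the single transposition $\tau=(k\ k{+}1)$ of the digits $k$ and $k{+}1$. These generate $\mathcal{S}_{k+1}$: indeed $\langle\mathrm{Sym}(k),\tau\rangle=\mathrm{Sym}(k{+}1)$, and conjugating by $\tau$ the digit-$k$ reflection (which lies in $\mathcal{S}_k$) yields the digit-$(k{+}1)$ reflection, so together with the digit-reflections already present in $\mathcal{S}_k$ one obtains all of $\mathcal{S}_{k+1}$. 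Since invariance under a family of maps implies invariance under the group they generate, it suffices to treat these two families.

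Invariance under the first family is routine. Fix $\sigma\in\mathcal{S}_k$ and also write $\sigma$ for the induced coordinate map $X^{[k]}\to X^{[k]}$. By the inductive hypothesis $\sigma_{*}\mu^{[k]}=\mu^{[k]}$, and $\sigma$ commutes with the diagonal action $g\mapsto g^{[k]}$; since $G$ is abelian, $\mathcal{HK}_{+1}^{k}(G)=\triangle_G$, so $\sigma$ preserves the $\sigma$-algebra $\mathcal{I}_{\mathcal{HK}_{+1}^{k}(G)}=\mathcal{I}_{\triangle_G}$ and commutes with the conditional expectation onto it. Since the action of $\sigma$ on the first $k$ digits sends each $f_{(\eta,\epsilon)}$ to $f_{(\sigma\eta,\epsilon)}$, it precomposes each of the two products $\prod_{\eta}f_{(\eta,\epsilon)}$ occurring in the defining formula of Definition \ref{def: muk definition} with $\sigma^{-1}$; feeding this together with the invariance of $\mu^{[k]}$ into that formula gives the claim. (The reflection in the digit $k{+}1$, though not needed, is even easier: it merely exchanges the two $X^{[k]}$-factors, and a relative-independent self-joining is symmetric.)

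The transposition $\tau$ is the crux. A direct reindexing of the formula in Definition \ref{def: muk definition} shows that $\tau_{*}\mu^{[k+1]}$ is exactly the measure produced by the same relative-independent-joining recipe but splitting off the digit $k$ rather than the digit $k{+}1$; thus it suffices to show that the construction is independent of which of the two inner digits is split off last. Decomposing $X^{[k+1]}\cong X^{[k-1]}\times X^{[k-1]}\times X^{[k-1]}\times X^{[k-1]}$ by splitting the digits $k$ and $k{+}1$, this amounts to the identity
\begin{equation*}
\int_{X^{[k]}}\mathbb{E}(P_0\otimes Q_0\mid\mathcal{I})\,\mathbb{E}(P_1\otimes Q_1\mid\mathcal{I})\,d\mu^{[k]}=\int_{X^{[k]}}\mathbb{E}(P_0\otimes P_1\mid\mathcal{I})\,\mathbb{E}(Q_0\otimes Q_1\mid\mathcal{I})\,d\mu^{[k]}
\end{equation*}
for all $P_i,Q_i\in L^{\infty}(\mu^{[k-1]})$, where $\mathcal{I}=\mathcal{I}_{\triangle_G}$ is the diagonal-invariant $\sigma$-algebra of $X^{[k]}=X^{[k-1]}\times X^{[k-1]}$ and $\mu^{[k]}$ is its relative-independent self-joining over the diagonal-invariant $\sigma$-algebra $\mathcal{I}'$ of $X^{[k-1]}$ (this last being Definition \ref{def: muk definition} for $k-1\to k$). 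To prove this identity I would expand each conditional expectation --- first those with respect to $\mathcal{I}$, then those with respect to $\mathcal{I}'$ --- as a Ces\`{a}ro average over a F\o lner sequence of $G$, which is legitimate by the mean ergodic theorem since a finitely generated abelian group is amenable, and then unwind the relative-independent-joining formula (\ref{def muk 2}) at both levels. After these manipulations both sides become iterated Ces\`{a}ro limits of one and the same function of two group variables, namely $(g,h)\mapsto\int A((g{+}h)\mathbf{w})B(g\mathbf{w})C(h\mathbf{w})D(\mathbf{w})\,d\mu^{[k-1]}(\mathbf{w})$, the two sides differing only in which of the two F\o lner sequences is attached to which ``inner slot'' and in which of the two nested limits is the outer one; since $G$ is abelian these iterated limits coincide. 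In the lowest case $k=1$ this is the elementary computation on the Kronecker factor: each side reduces to the indicator that a system of relations among four characters holds, and the relations common to both sides already force the two discrepant relations to coincide.

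The main obstacle is precisely this splitting-invariance identity: justifying the interchange of the two nested F\o lner averages --- equivalently, keeping careful track of which averaging variable is tied to which block of coordinates after unwinding two nested relative-independent joinings --- and verifying that the two iterated limits match term by term. Everything else (the reduction to generators, the invariance under the first family) is formal manipulation with conditional expectations. An alternative route, which makes $\mathcal{S}_k$-invariance transparent but shifts the work elsewhere, is to first establish the standard description of $\mu^{[k]}$ as the weak-$*$ limit of the iterated face-group averages $\mathrm{Avg}_{g_1\in F_{N_1},\dots,g_k\in F_{N_k}}(g_1^{\overline{\alpha}_1}\cdots g_k^{\overline{\alpha}_k})_{*}\Delta^{[k]}$ of the diagonal measure $\Delta^{[k]}=\int_X\delta_{(x,\dots,x)}\,d\mu(x)$: digit permutations merely relabel the face-generators among themselves, while reflections replace an upper face $\overline{\alpha}_j$ by the opposite lower face, changing the averaging family only by post-composition with a diagonal element that fixes $\Delta^{[k]}$.
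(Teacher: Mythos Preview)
The paper does not supply its own proof of this lemma: it is stated with a bare citation to \cite[Chapter 8, Proposition 8]{HK2018} and used as a black box. So there is nothing in the paper to compare your argument against.

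That said, your outline is the standard one and is essentially how the cited reference proceeds: induct on $k$, reduce to generators of $\mathcal{S}_{k+1}$, handle the diagonal copy of $\mathcal{S}_k$ by the inductive hypothesis together with commutation with $\triangle_G$, and isolate the adjacent transposition $\tau=(k\ k{+}1)$ as the only nontrivial step. Your reformulation of $\tau$-invariance as the statement that the relative-independent-joining recipe is insensitive to which of the two innermost digits is split off last is correct, and your unwinding via the mean ergodic theorem (legitimate since $G$ is finitely generated abelian, hence amenable) indeed reduces both sides to iterated F{\o}lner averages of the same integrand $\int P_0((g+h)\mathbf{w})P_1(h\mathbf{w})Q_0(g\mathbf{w})Q_1(\mathbf{w})\,d\mu^{[k-1]}$, with only the order of the two limits exchanged.

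You correctly flag the interchange of these iterated limits as the crux. As written it is not yet justified: ``since $G$ is abelian these iterated limits coincide'' is the conclusion, not the argument. One clean way to close the gap is to observe that the inner limit is an $L^2$-projection (onto $\mathcal{I}'$-invariant functions), so the double limit can be rewritten as a single integral against $\mu^{[k]}$ of a product of conditional expectations, and then invoke the $g^{[k]}$-invariance of $\mu^{[k]}$ to transfer the outer average from one factor to the other --- this is how Host and Kra organise it. Your ``alternative route'' through the averaged-diagonal description of $\mu^{[k]}$ is also perfectly valid and arguably more transparent for the symmetry claim; either completion would make the proof rigorous.
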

\begin{defn}
(\cite[Chapter 9, Section 1]{HK2018}\label{def:corner}) For $k\geq1$,
we write $[k]^{*}=[k]\setminus\{\vec{0}\}$. Points of $X_{*}^{[k]}$
are written $x^{*}=(\mathbf{x}_{v}:v\in[k]^{*})$. The natural projection
from $X^{[k]}$ to $X_{*}^{[k]}$ obtained by removing the coordinate
$\vec{0}$ is denoted by $x\rightarrow x^{*}$. The image of $\mu^{[k]}$
under the projection $x\rightarrow x^{*}$ is denoted by $\mu^{[k]*}$. Recall the definition of $\mathcal{F}^{k}(G)$ in Definition \ref{host-kra cube group and face group}. Let $\mathcal{J}_{*}^{[k]}$ be the $\sigma$-algebras of sets invariant
under $\mathcal{F}^{k}(G)$ on $X_{*}^{[k]}$.
\end{defn}
Now we can define the factor $Z_{k}(X)$.
\begin{defn}
\label{subsec:Z_k}   \cite[Subsection 4.2 on p. 411]{HK05}
\cite[Subsection 9.1]{HK2018} Let $\mathcal{Z}_{k}(X)$ be the $\sigma$-algebra consisting of measurable sets $B$ such that there exists a $\mathcal{J}_{*}^{[k+1]}$-measurable
set $A\subset X^{[k+1]^{*}}:= X^{\{0,1\}_{*}^{k}}$ so that
up to $\mu^{[k+1]}$- measure zero it holds:
\begin{equation}
X\times A=B\times X^{[k+1]^{*}}.\label{eq:measurable unique completeness}
\end{equation}
Let $Z_k(X)$ be the measurable factor of $X$ w.r.t.\  $\mathcal{Z}_{k}(X)$. Let $\mu_k$ be the projection of $\mu$ w.r.t.\  $X\rightarrow Z_{k}(X)$. The m.p.s.\
$(Z_{k}(X),\mu_k,G)$ is called the \strong{factor of order k of $(X,\mu,G)$}.
A system 
$(X,\mu,G)$ is called a \strong{system of order k} if  $(X,\mu,G)=(Z_{k}(X),\mu_k,G)$.
Note $Z_{0}(X)$ is the trivial factor and $Z_{1}(X)$ is the Kronecker
factor of $X$.
\end{defn}

\subsection{The tricube.\label{subsec:The-tri-cube(3-cube)}}

In this section we define special cubespaces, \textit{tricubes}, which will be useful
in many calculations. These will simply be $n$-cubes of side length two, divided into unit cubes. We will typically use them to form new cubes in fibrant cubespaces
by glueing together cubes into tricubes and considering the cube given by outer
vertices. One can learn more from \cite{candela2016cpt_notes,candela2016alg_notes}.

For every $v\in\{0,1\}^{n}$ , let $v_j$ be the $j$-th
coordinate of $v$. We define the injective maps $\Psi_{v}:\{0,1\}^{n}\rightarrow\{-1,0,1\}^{n}$
by
\[
\Psi_{v}(\varepsilon_1,\dots,\varepsilon_n)_j=(1-2v_j)(1-\varepsilon_j).
\]
The embedding $\Omega:\{0,1\}^{n}\rightarrow\{-1,0,1\}^{n}$ defined
by $\Omega(v)=\Psi_{v}(\vec{0})$ maps the cube $\{0,1\}^{n}$ to the
set $\{1,-1\}^{n}$ of ``outer vertices'' of $\{-1,0,1\}^{n}$.


\begin{defn}
Let $G$ be a finitely generated abelian group. The following subgroup $T^n(G)\subset G^{\{-1,0,1\}^{n}}$
is called the \strong{$n$-tricube group} of $G$:
\[
T^n(G)=\{\mathbf{t}\in G^{\{-1,0,1\}^{n}}:\ (\mathbf{t}_{\Psi_{v}(w)})_{w\in\{0,1\}^{n}}\in\mathcal{HK}^{n}(G)\text{ for all }v\in\{0,1\}^{n}\}.
\]

\end{defn}

\begin{defn}\label{def of tricube}
Let $(X,C^{\bullet}(X))$ be a fibrant cubespace. The following closed
subset $T^n(X)\subset X^{\{-1,0,1\}^{n}}$ is
called the \strong{$n$-tricube} of $X$:

\[
T^n(X)=\{\mathbf{t}\in X^{\{-1,0,1\}^{n}}:\ (\mathbf{t}_{\Psi_{v}(\tilde{v})})_{\tilde{v}\in\{0,1\}^{n}}\in C^{n}(X)\text{ for all }v\in\{0,1\}^{n}\}.
\]
\end{defn}
\begin{defn}
Let $\psi_{v}:T^{n}(X)\rightarrow C^{n}(X)$ (or $T^{n}(G)\rightarrow \mathcal{HK}^{n}(G)$) be
given by $\psi_{v}(\mathbf{t})=(\mathbf{t}_{\Psi_{v}(\tilde{v})})_{\tilde{v}\in\{0,1\}^{n}}$. Clearly by the definition of $T^{n}(X)$ (or $T^{n}(G)$), $\psi_{v}(\mathbf{t})$ is an $n$-cube for all $\mathbf{t}$.
\end{defn}
\begin{defn}
The \textbf{outer cube map} $\omega:T^{n}(X)\rightarrow C^{n}(X)$ (or $T^{n}(G)\rightarrow \mathcal{HK}^{n}(G)$) is given by $\omega(\mathbf{t})=(\mathbf{t}_{\Omega(\tilde{v})})_{\tilde{v}\in\{0,1\}^{n}}$.
\end{defn}
The fact that the outer cube map is well-defined is given by the following proposition:
\begin{prop}\label{prop:omega}
Let $(Y,C^{\bullet}(Y))$ be a fibrant abstract cubespace (e.g., $(G,\mathcal{HK}^{\bullet}(G))$) or a fibrant (compact) cubespace. Then for all $\mathbf{t}\in T^{n}(Y)$, $\omega(\mathbf{t})\in C^{n}(Y)$.
\end{prop}
\begin{proof}
 See \cite[Lemma 3.1.16]{candela2016alg_notes}.
\end{proof}

\subsection{Topological models.}
The terminology of \textit{models} in ergodic theory was introduced by Weiss in \cite{W85}, where a far-reaching generalization of the Jewett-Krieger theorem is given (\cite{jewett1970prevalence, krieger1972unique}).

\begin{defn}\label{topological model}
Let $(X,\mathcal{X},\mu,G)$ be a m.p.s. We say that a t.d.s.  $(\hat{X},G)$
is a \textbf{(topological) model} for $(X,\mathcal{X},\mu,G)$ w.r.t. to a $G-$invariant probability
measure $\hat{\mu}$ on $\hat{\mathcal{X}}$, the Borel $\sigma$-algebra of $X$,
if the system 
$(X,\mathcal{X},\mu,G)$ is isomorphic to $(\hat{X},\hat{\mathcal{X}},\hat{\mu},G)$ as m.p.s., that is, there exist a $G$-invariant Borel subset $C\subset X$ and  a $G$-invariant Borel subset $\hat{C}\subset \hat{X}$ of full measure and a (bi)measurable and equivariant measure-preserving bijective Borel map $p:C\rightarrow \hat{C}$. Notice that oftentimes in this article $(\hat{X},G)$ will be uniquely ergodic so that $\hat{\mu}$ will be the unique $G-$invariant probability
measure of $X$.
\end{defn}

\begin{defn}\label{def:top model for map} \sloppy
Let $(X,\mathcal{X},\mu,G)$, $(Y,\mathcal{Y}, \nu, G)$ be m.p.s. Let  $(\hat{X},G)$, $(\hat{Y},G)$ be t.d.s. which are topological models of $(X,\mathcal{X},\mu,G)$ and $(Y,\mathcal{Y}, \nu, G)$ w.r.t. measures $\h{\mu}$ and $\h{\nu}$ and maps $\phi$ and $\psi$ respectively. We say that $\h{\pi}: (\h{X},G)\rightarrow (\h{Y},G)$ is a \textbf{(topological)
model} for a factor map $\pi: (X,\mathcal{X},\mu, G)\rightarrow (Y,\mathcal{Y}, \nu, G)$ if
$\h{\pi}$ is a topological factor and the following diagram
\[
\begin{CD}
X @>{\phi}>> \h{X}\\
@V{\pi}VV      @VV{\h{\pi}}V\\
Y @>{\psi }>> \h{Y}
\end{CD}
\]
is commutative, i.e. $\h{\pi}\phi=\psi\pi$. If  $(\h{X},G)$ is strictly ergodic (and therefore $(\h{Y},G)$ is also strictly ergodic), then we say that  $\h{\pi}$ is a \textbf{strictly ergodic} (topological)
model.
\end{defn}

\section{Nilcycles and $k$-cube uniquely ergodic systems.\label{sec:Nil-distal-systems-of}}
\begin{defn}
\label{k-cube uniquely ergodic} A t.d.s.\ $(X,G)$ is called a $k$-\strong{cube uniquely ergodic system}
if $(C_G^{k}(X),\mathcal{HK}^{k}(G))$ is uniquely ergodic. Note that if $(X,G)$ is a $k$-cube uniquely ergodic system, then it is an $i$-cube uniquely ergodic system for
$0\leq i\leq k$. Given a $k$-cube uniquely ergodic system $(X,G)$, we define $\mu_{C_G^{k}(X)}$ to be the unique ergodic measure of $(C_G^{k}(X),\mathcal{HK}^{k}(G))$.
\end{defn}
\begin{rem}
There is an interesting equivalent characterization of $k$-cube uniquely ergodic $\Z$-systems developed in \cite{GL2019(2)}. A strictly ergodic t.d.s.\ $(X,\Z)$,  is said to be a \cfnil($k$) system if $(Z_{k}(X),\mu_{k},\Z)$ is isomorphic to (the uniquely ergodic)  $(X/\nrp_G^{[k]}(X),\Z)$ as m.p.s. According to \cite[Theorem C]{GL2019(2)}, a minimal t.d.s.\ $(X,\Z)$ is a $(k+1)$-cube uniquely ergodic system iff $(X,\Z)$ is a \cfnil($k$) system. This characterization will not be used in this article.
\end{rem}
\begin{defn}
 We say a cubespace $X$ has the \strong{glueing property}
if ``glueing'' two cubes along a common face yields another cube. Formally, let $k\in \mathbb{N}$. Suppose $c,c'\in C^{k}(X)$, and $c(v1)=c'(v0)$
for all $v\in\{0,1\}^{k-1}$ (here we use $v0$ to denote
$(v_{1},\dots,v_{k-1},0)$ and so on). Then the configuration
\[
c\bigparallel c'\colon v\mapsto\begin{cases}
c(v) & \colon v_{k}=0\\
c'(v) & \colon v_{k}=1
\end{cases}
\]
is in $C^{k}(X)$. In this case we say $c$ and $c'$ are \textbf{glueable}.
\end{defn}

\begin{defn}
\label{lem:RG is conditional product} $(X,C^{\bullet}(X))$ is a fibrant cubespace. Define the space of \textbf{glueable pairs} of cubes for cubespace $(X,C^{\bullet}(X))$.

\[
\mathcal{P}^{\ell}(X)=\{(c_{1},c_{2})\in C^{\ell}(X)\times C^{\ell}(X)|\,c_{1},\ c_{2}\ \mathrm{are}\ \mathrm{glueable}\}.
\]
Define the maps $\pi_{i}$ $i=L,U:$ $\pi_{L}(c)=(c{}_{w0})_{w\in\{0,1\}^{\ell-1}}$,
$\pi_{U}(c)=(c{}_{w1})_{w\in\{0,1\}^{\ell-1}}$.
\end{defn}
\begin{prop}
\label{prop:glueing pair uniquely ergodic} Suppose $(X,G)$ is a distal $(k+2)$-cube uniquely ergodic system. Let
$$ \mathcal{P}^{k+1}(G)=\{(\mathbf{g}_{1},\mathbf{g}_{2})\in\mathcal{HK}^{k+1}(G)\times\mathcal{HK}^{k+1}(G)|\,\mathbf{g}_{1},\mathbf{g}_{2}\ \mathrm{are}\ \mathrm{glueable}\}.
$$
Then $(\mathcal{P}^{k+1}(X),\mathcal{P}^{k+1}(G))$ is uniquely ergodic.
\end{prop}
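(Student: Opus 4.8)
The plan is to show that the glueing pair system is a factor of the tricube system $T^{k+1}(X)$ (or of a suitable sub-cube construction built out of $\mathcal{HK}^{k+2}(X)$), and that unique ergodicity passes down this factor map from a system we already know is uniquely ergodic. First I would observe that two glueable cubes $c_1,c_2\in C^{k+1}(X)$ assemble into a single cube of one higher dimension: the configuration on $\{0,1\}^{k+2}$ whose restriction to $\{v_{k+2}=0\}$ is $c_1$ and to $\{v_{k+2}=1\}$ is $c_2$ is precisely an element of $C^{k+2}(X)$ by the glueing property (which holds in any fibrant cubespace — here one should cite the standard fact, provable from Lemma \ref{lem: Extension property}, that fibrant cubespaces have the glueing property). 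Conversely, restricting a cube in $C^{k+2}(X)$ to the two opposite faces $\{v_{k+2}=0\}$ and $\{v_{k+2}=1\}$ produces a glueable pair. Thus there is a continuous surjection $\Phi: C^{k+2}(X)\to \mathcal{P}^{k+1}(X)$, equivariant for the obvious surjection $\mathcal{HK}^{k+2}(G)\to \mathcal{P}^{k+1}(G)$ obtained by restricting a cube-group element to the two faces. (One must check this group-level map is well-defined and surjective onto $\mathcal{P}^{k+1}(G)$; surjectivity follows since any glueable pair of $\mathcal{HK}^{k+1}(G)$-elements extends to a cube-group element of $\mathcal{HK}^{k+2}(G)$ by the analogue of the glueing property for $(G,\mathcal{HK}^\bullet(G))$, which is a fibrant cubespace by Lemma \ref{fibrant cubespace for G}.)

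Next I would invoke the hypothesis: $(X,G)$ is a distal $(k+1)$-cube uniquely ergodic system, so $(C_G^{k+1}(X),\mathcal{HK}^{k+1}(G))$ is uniquely ergodic; I would first upgrade this to the statement that $(C_G^{k+2}(X),\mathcal{HK}^{k+2}(G))$ is uniquely ergodic. This is where the tricube enters: using the maps $\psi_v,\omega:T^{k+2}(X)\to C^{k+2}(X)$ and the corresponding group maps on $T^{k+2}(G)$, together with the fact that $T^{k+2}(X)$ is, up to the extension property, built by gluing finitely many copies of $C^{k+1}(X)$-data, one expresses $\mu_{C_G^{k+2}(X)}$ in terms of $\mu_{C_G^{k+1}(X)}$ and concludes unique ergodicity of the higher cube system. (Alternatively, if the paper has already established that a distal $(k+1)$-cube uniquely ergodic system is automatically $(k+2)$-cube — this is hinted at by the $(2k+1)$ bookkeeping in the Main Theorem and Proposition \ref{uniquely ergodic for T(X)} — one simply cites that; but at this point in the text it seems cleaner to prove the one-step promotion directly via the tricube.) Once $(C_G^{k+2}(X),\mathcal{HK}^{k+2}(G))$ is known to be uniquely ergodic, the factor map $\Phi$ finishes the argument: the continuous equivariant image of a uniquely ergodic system under an equivariant map (with respect to a surjection of acting groups) is uniquely ergodic, because any invariant measure on $\mathcal{P}^{k+1}(X)$ pulls back to... more carefully, any $\mathcal{P}^{k+1}(G)$-invariant measure $\lambda$ on $\mathcal{P}^{k+1}(X)$ lifts to some $\mathcal{HK}^{k+2}(G)$-invariant measure on $C^{k+2}(X)$ whose pushforward is $\lambda$ (lift any measure and average/use that the fibers are compact and the $\ker$ of the group map acts), and by unique ergodicity upstairs this lift is $\mu_{C_G^{k+2}(X)}$, hence $\lambda = \Phi_*\mu_{C_G^{k+2}(X)}$ is forced.

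The main obstacle I anticipate is the promotion step — showing unique ergodicity of the $(k+2)$-cube system (or of the tricube $T^{k+2}(X)$) from that of the $(k+1)$-cube system. The combinatorics of how $T^{k+2}(X)$ decomposes into overlapping $(k+1)$-cubes, and verifying that the relative-independence structure of the cube measures is rigid enough that no freedom is introduced in passing from dimension $k+1$ to $k+2$, is the delicate point; distality is essential here (it gives, via Theorem \ref{thm:distal->fibrant}, that $(X,C_G^\bullet(X))$ is fibrant, so the extension/glueing machinery and the well-definedness of $\psi_v,\omega$ all apply). A secondary technical point is checking that the lift of an invariant measure along $\Phi$ can be taken invariant — this uses compactness of the fibers together with amenability or an explicit averaging against the kernel subgroup of $\mathcal{HK}^{k+2}(G)\to\mathcal{P}^{k+1}(G)$, and should be routine but deserves a line. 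Everything else — continuity and surjectivity of $\Phi$, equivariance, the glueing property in fibrant cubespaces — is standard and follows from results already in the excerpt.
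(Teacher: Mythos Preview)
Your overall strategy---realize $\mathcal{P}^{k+1}(X)$ as a factor of a higher cube system and push unique ergodicity down---is exactly the paper's. But two points go wrong.

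\textbf{The promotion step is a phantom.} You write that the hypothesis gives unique ergodicity of $(C_G^{k+1}(X),\mathcal{HK}^{k+1}(G))$ and that the hard part is to promote this to $(C_G^{k+2}(X),\mathcal{HK}^{k+2}(G))$. Reread Definition~\ref{k-cube uniquely ergodic}: a system is \emph{$k$-cube uniquely ergodic} if $(C_G^{k+1}(X),\mathcal{HK}^{k+1}(G))$ is uniquely ergodic. Hence ``$(k+1)$-cube uniquely ergodic'' already \emph{means} that $(C_G^{k+2}(X),\mathcal{HK}^{k+2}(G))$ is uniquely ergodic. There is nothing to promote; the paper simply uses the hypothesis directly. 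Your proposed promotion argument via tricubes is not only unnecessary but would in general be false: $k$-cube unique ergodicity does not imply $(k+1)$-cube unique ergodicity (this is precisely why the definition is stratified and why the Main Theorem carries a $(2k+1)$ rather than a $k$).

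\textbf{The factor map is mis-specified.} You take $c\in C_G^{k+2}(X)$ and restrict to the \emph{parallel} faces $\{v_{k+2}=0\}$ and $\{v_{k+2}=1\}$. These faces are disjoint, so the two resulting $(k+1)$-cubes share no face and are not a glueable pair; your map does not land in $\mathcal{P}^{k+1}(X)$. (Relatedly, glueing $c_1\Vert c_2$ produces a $(k+1)$-cube, not a $(k+2)$-cube.) The paper instead restricts to the \emph{adjacent} faces $\{v_{k+2}=0\}$ and $\{v_{k+1}=0\}$, which intersect in a common $k$-face; this yields a glueable pair, and surjectivity onto $\mathcal{P}^{k+1}(X)$ follows because the union $V=\{v:v_{k+2}=0\text{ or }v_{k+1}=0\}$ is downward-closed, so Lemma~\ref{lem: Extension property} (with Theorem~\ref{thm:distal->fibrant} and Lemma~\ref{fibrant cubespace for G}) applies. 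Once you have this factor map, unique ergodicity passes down immediately (in the distal setting this is Theorem~\ref{thm:factor of u.e.}); your measure-lifting discussion is then unnecessary.
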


\begin{proof}
Let 
\[
V=\{v\in\{0,1\}^{k+2}:v_{k+2}=0\text{ or }v_{k+1}=0\}.
\]
There is a projection $p:C_G^{k+2}(X)\rightarrow\mathcal{P}^{k+1}(X)$ given by:
\[
c\mapsto((c_{v})_{v_{k+2}=0,v\in\{0,1\}^{k+2}},(c_{v})_{v_{k+1}=0,v\in\{0,1\}^{k+2}}).
\]
Similarly we define $\tilde{p}:\mathcal{HK}^{k+2}(G)\rightarrow\mathcal{P}^{k+1}(G)$,
then it holds $p(\mathbf{g}c)=\tilde{p}(\mathbf{g})p(c)$. Note $V$ is a downward-closed subset. From Theorem \ref{thm: distality implies extension property} and Lemma \ref{lem: Extension property},
for any $(b_{1},b_{2})\in\mathcal{P}^{k+1}(X)$,
there is $c\in C_G^{k+2}(X)$ such that $p(c)=(b_{1},b_{2})$. Thus
$p$ is surjective and therefore $p$ is a factor map. As $(X,G)$
is a distal $(k+2)$-cube uniquely ergodic system, $(C_G^{k+2}(X),\mathcal{HK}^{k+2}(G))$
is uniquely ergodic, which implies that $(\mathcal{P}^{k+1}(X),\mathcal{P}^{k+1}(G))$
is uniquely ergodic.
\end{proof}
\begin{defn}
\label{def:meaure on glueing pair} Suppose $(X,G)$ is a distal $(k+2)$-cube uniquely ergodic system. Define $\mu_{\mathcal{P}^{k+1}(X)}$ as
 the unique invariant measure on $(\mathcal{P}^{k+1}(X),\mathcal{P}^{k+1}(G))$.
\end{defn}

\begin{defn}
\label{def:nilcycle}Let $(X,G,\mu)$ be a distal $(k+2)$-cube uniquely ergodic system. Suppose $Y=(X\times A,\nu=\mu\times m_{\haar(A)},G)$ is an ergodic m.p.s.\ which is an abelian group
extension of $X$. A \textbf{nilcycle of degree $k$} is a Borel
map $\rho\colon C_G^{k+1}(X)\to A$ satisfying

\begin{enumerate}
\item Cube invariance:
$\rho\circ\sigma(c)=\sgn(\sigma)\cdot\rho(c)$ for $\mu_{C_G^{k+1}(X)}$-a.e $c$ and all discrete cube isomorphisms $\sigma\colon\{0,1\}^{k+1}\to\{0,1\}^{k+1}$.
\item Glueing: $\rho(b\bigparallel c)=\rho(b)+\rho(c)$ for $\mu_{\mathcal{P}^{k+1}(X)}$-a.e.
$(b,c)$.
\item Measure decomposition: Let $\theta_{k+1}(\mathbf{a})=\underset{v\in\{0,1\}^{k+1}}{\sum}(-1)^{|v|}\mathbf{a}_{v}$ for $\mathbf{a}\in A^{[k+1]}$.\footnote{Sometimes we write $\theta(\mathbf{a})=\theta_{k+1}(\mathbf{a})$ if there no confusion arises .} There exists a closed subgroup $L$ of the group $\{\mathbf{a}\in A^{[k+1]}:\theta (\mathbf{a})=0\}$ and a Borel map $C_G^{k+1}(X)\rightarrow A^{[k+1]}:c\mapsto \mathbf{a}_c$ such that  $\rho(c)=\theta (\mathbf{a}_{c})$, $\{\mathbf{a}_c+m_{\haar(L)}\}_{c\in C_G^{k+1}(X)}$ is a measure disintegration for $\nu^{[k+1]}$.
\end{enumerate}
\end{defn}

\begin{rem}
The definition is modeled on the definition of \textit{cocycle} of
degree $k$ by Antol\'{i}n Camarena and Szegedy. The key difference
is that we allow properties (1) and (2) to hold almost everywhere (see \cite[Definition 3.3.14]{candela2016alg_notes}). We choose
the name "nilcycle" rather than "weak cocycle" as the term "cocycle" is traditionally reserved in the theory of dynamical systems for another purpose (see Section \ref{subsec:Dynamical-background.}).
\end{rem}

\begin{lem}\label{lem:cocycle_eq}
\label{lem:the action on M}Let $(X,G,\mu)$ be as above. Suppose $Y=(X\times A,\mu\times m_{\haar(A)},G)$ is an ergodic m.p.s.\ which is an abelian group extension of $X$ with cocycle $\beta$ and
$\rho\colon C_G^{k+1}(X)\to A$ is a map which has property (3) of nilcycles of degree $k$. Then for
$\mu^{[k+1]}$-a.e. $c\in C_G^{k+1}(X)$ and any $\mathbf{g}=(\mathbf{g}_{v})_{v\in\{0,1\}^{k+1}}\in\mathcal{HK}^{k+1}(G)$,

\[
\rho(\mathbf{g}c)=\rho(c)+\sum_{v\in\{0,1\}^{k+1}}(-1)^{|v|}\beta(\mathbf{g}_{v},c_{v}).
\]
\end{lem}
\begin{rem}
In particular, if $\mathbf{g}=h^{[k+1]}$ for some $h\in G$,
\begin{equation}
\rho(h^{[k+1]}c)=\rho(c)+\sum_{v\in\{0,1\}^{k+1}}(-1)^{|v|}\beta(h,c_{v}).\label{eq:the action on M}
\end{equation}
\end{rem}

\begin{proof}[Proof of Lemma \ref{lem:cocycle_eq}]
By a.s. uniqueness of measure disintegration and the fact that $(\mu\times m_{\haar(A)})^{[k+1]}$ is $\mathcal{HK}^{k+1}(G)$-invariant (see Subsection \ref{subsec:Definition-of-the mu k}),
for $\mu^{[k+1]}$-a.e. $c\in X^{[k+1]}$ and any $\mathbf{g}=(\mathbf{g}_{v})_{v\in\{0,1\}^{k+1}}\in\mathcal{HK}^{k+1}(G)$,
\[
\mathbf{a}_{\mathbf{g}c}+m_{\haar(L)}=\mathbf{g}_{*}(\mathbf{a}_{c}+m_{\haar(L)}).
\]
For any $\mathbf{b}\in L+\mathbf{a}_{c},$ identifying $Y^{[k+1]}=X^{[k+1]}\times A^{[k+1]}$,
\[
\mathbf{g}(c,\mathbf{b})=(\mathbf{g}c,\mathbf{b}+(\beta(\mathbf{g}_{v},c_{v}))_{v\in\{0,1\}^{k+1}}).
\]
Thus $\mathbf{a}_{\mathbf{g}c}=\mathbf{a}_{c}+(\beta(\mathbf{g}_{v},c_{v}))_{v\in\{0,1\}^{k+1}}$.
 By Definition \ref{def:nilcycle} (3), we have $$\nu^{[k+1]}=\int_{C_G^{k+1}(X)} \mathbf{a}_c+m_{\haar(L)} d\mu_{C_G^{k+1}(X)}(c).$$ In particular, one has that $\nu^{[k+1]}((\pi^{[k+1]})^{-1}(C_G^{k+1}(X)))=1$, where $\pi:Y\rightarrow X$ is the factor map. By the definition of the Host-Kra measures, one has that $(\pi^{[k+1]})_*\nu^{[k+1]}=\mu^{[k+1]}$ and therefore $\mu^{[k+1]}(C_G^{k+1}(X))=1$. Thus for
$\mu^{[k+1]}$-a.e. $c\in C_G^{k+1}(X)$ and any $\mathbf{g}=(\mathbf{g}_{v})_{v\in\{0,1\}^{k+1}}\in\mathcal{HK}^{k+1}(G)$,
\begin{equation}
\begin{array}{ll}
\rho(\mathbf{g}c) & =\sum_{v\in\{0,1\}^{k+1}}(-1)^{|v|}(\mathbf{a}_{\mathbf{g}c})_{v}\\
  &  =\sum_{v\in\{0,1\}^{k+1}}(-1)^{|v|}\left((\mathbf{a}_{c})_{v}+\beta(\mathbf{g}_{v},c_{v})\right)\\
  &  =\rho(c)+\sum_{v\in\{0,1\}^{k+1}}(-1)^{|v|}\beta(\mathbf{g}_{v},c_{v}).
\end{array}\label{eq: the action of HK}
\end{equation}

\end{proof}

\subsection{Continuous  measure disintegration (CMD) maps}\label{subsec:Relative-invariant-measures(RIM)}\label{subsec:Continuous-systems-of}
\begin{defn}
\label{def:CSM and CSM factor map} Let $\pi:B\rightarrow C$ be a
continuous and surjective map between two compact metric spaces. A
collection (system) of Borel probability measures on $B$, $\mu_{\pi}^{\bullet}=\{\mu_{\pi}^{c}\}_{c\in C}$,
is called a \textbf{continuous system of measures for $\pi$
}if
\begin{enumerate}
\item For every $c\in C$, $\mu_{\pi}^{c}$ is supported in $\pi^{-1}(c)$.
\item For every continuous function $f:B\rightarrow\mathbb{C}$ the function
$\mu_{\pi}^{(\cdot)}(f):C\rightarrow\mathbb{C}$ given by $\mu_{\pi}^{c}(f):=\int_{B}f(b)d\mu_{\pi}^{c}(b)$
is continuous; i.e.\  the map $C\rightarrow \mathcal{M}(B)$: $c\rightarrow\mu_{\pi}^{c}$
is continuous.
\end{enumerate}
Suppose in addition that $\{\mu_\pi^c\}_{c\in C}$ is a measure disintegration of some measure $\mu$ on $B$ (see Theorem \ref{thm:(measure-disintegration-theorem)}), then we say that $\pi:(B,\mu)\rightarrow (C,\pi_*\mu)$ is a \strong{continuous  measure disintegration (CMD) map}. 
\end{defn}

\begin{rem}\label{uniqueness of CSM}
Note that $\pi:(B,\mu)\rightarrow (C,\pi_*\mu)$ may have more than one continuous system of measures which is a measure disintegration. However if $\mu_\pi^{(\cdot)}$ and $\hat{\mu}_\pi^{(\cdot)}$ are continuous systems of measures such that $\mu_\pi^{x}=\hat{\mu}_\pi^{x}$ for $x\in C'$ for a dense subset $C'\subset C$, then by continuity  $\mu_\pi^{(\cdot)}=\hat{\mu}_\pi^{(\cdot)}$. In particular, if $\pi_*\mu$ has full support, then $\pi$ has only one measure disintegration which is a continuous system of measures by the a.s. uniqueness of measure disintegration.
\end{rem}

\begin{defn}
\cite[Section 3]{Shmuel1975}\label{def:continuous section} Let $(\pi,\tilde{\pi}):(X,G)\rightarrow(Y,H)$ be a factor map. A collection (system) of Borel probability measures $\{\lambda_y\}_{y\in Y}$ on $X$ is called a \strong{continuous equivariant measure section} if 
\begin{enumerate}
\item
The map $\lambda:Y\rightarrow\mathcal{M}(X)$ given by $y\rightarrow\lambda_{y}$ is continuous.
\item
For each $y\in Y$, $\lambda_{y}(\pi^{-1}(y))=1$.
\item
For each $y\in Y$ and $g\in G$, $\lambda_{\tilde{\pi}(g)y}=g_{*}\lambda_{y}$.   
\end{enumerate}
Note that in particular it follows that $\{\lambda_y\}_{y\in Y}$ is a continuous system of measures for $\pi$.

\end{defn}

\begin{lem}
\cite[Proposition 3.1]{Shmuel1975}\label{lem:invariant measure of section}
Let $(\pi,\tilde{\pi}):(X,G)\rightarrow(Y,H)$
be a factor map. Let $\{\lambda_y\}_{y\in Y}$ be a continuous equivariant measure section. If $\nu$ is a $H$-invariant
measure on $Y$, then $\int_{Y}\lambda_{y}d\nu$ is a $G$-invariant
measure on $X$.
\end{lem}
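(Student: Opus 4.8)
The plan is to verify $G$-invariance of $\mu:=\int_Y\lambda_y\,d\nu(y)$ directly, by testing against continuous functions and using the two defining properties of a continuous measure section together with the $H$-invariance of $\nu$.

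First I would make sense of $\mu$ as a Borel probability measure on $X$. Since $X$ is compact metrizable, by the Riesz representation theorem it suffices to produce a positive normalized linear functional on $C(X)$. For $f\in C(X)$ the function $y\mapsto\lambda_y(f)=\int_X f\,d\lambda_y$ is continuous on $Y$, because $\{\lambda_y\}_{y\in Y}$ is a continuous system of measures for $\pi$ (see the final remark in Definition \ref{def:continuous section} together with item (2) of Definition \ref{def:CSM and CSM factor map}); hence it is $\nu$-integrable, and $\ell(f):=\int_Y\lambda_y(f)\,d\nu(y)$ is a well-defined positive linear functional with $\ell(1)=\int_Y\lambda_y(1)\,d\nu(y)=\int_Y 1\,d\nu=1$, since each $\lambda_y$ is a probability measure. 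Let $\mu$ denote the associated Borel probability measure, so that $\int_X f\,d\mu=\int_Y\bigl(\int_X f\,d\lambda_y\bigr)d\nu(y)$ for all $f\in C(X)$.

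Next, fix $g\in G$ and put $h=\tilde\pi(g)\in H$. For $f\in C(X)$ I would compute, in order: the definition of pushforward, the definition of $\mu$ applied to $f\circ g$ together with the identity $(g_*\lambda_y)(f)=\lambda_y(f\circ g)$, the equivariance $g_*\lambda_y=\lambda_{\tilde\pi(g)y}=\lambda_{hy}$, the change of variables $y\mapsto hy$ applied to the continuous function $\Phi\colon y\mapsto\lambda_y(f)$, and finally the $H$-invariance $h_*\nu=\nu$:
\begin{align*}
\int_X f\,d(g_*\mu)
&=\int_X (f\circ g)\,d\mu
=\int_Y (g_*\lambda_y)(f)\,d\nu(y)
=\int_Y \lambda_{hy}(f)\,d\nu(y)\\
&=\int_Y \Phi(hy)\,d\nu(y)
=\int_Y \Phi\,d(h_*\nu)
=\int_Y \Phi\,d\nu
=\int_X f\,d\mu.
\end{align*}
Since this holds for every $f\in C(X)$, a further application of the Riesz representation theorem gives $g_*\mu=\mu$; as $g\in G$ was arbitrary, $\mu$ is $G$-invariant, as claimed. (One also obtains $\pi_*\mu=\nu$ from $\lambda_y(\pi^{-1}(y))=1$, but this is not needed for the statement.)

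The argument is entirely routine and I do not expect any genuine obstacle. The only points requiring a little attention are: (i) the well-definedness of the barycenter $\int_Y\lambda_y\,d\nu(y)$, which rests precisely on the continuity clause in the definition of a continuous measure section; and (ii) keeping the direction of the change-of-variables identity $\int_Y\Phi(hy)\,d\nu(y)=\int_Y\Phi\,d(h_*\nu)$ straight, so that the $H$-invariance of $\nu$ can be invoked — for which one must remember to pass through $h=\tilde\pi(g)\in H$, since $\nu$ lives on $Y$ and $\lambda$ is equivariant only with respect to the induced $H$-action.
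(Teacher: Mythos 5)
Your argument is correct: the barycenter $\mu=\int_Y\lambda_y\,d\nu$ is well defined via the continuity of $y\mapsto\lambda_y$, and the chain $g_*\mu \leftrightarrow \lambda_{\tilde\pi(g)y}=g_*\lambda_y \leftrightarrow$ pushforward change of variables $\leftrightarrow$ $H$-invariance of $\nu$ is exactly what is needed. Note that the paper itself gives no proof of this lemma --- it is imported verbatim from \cite[Proposition 3.1]{Shmuel1975} --- and your direct verification is the standard argument one would find there, so there is nothing to reconcile; the only point worth keeping in mind is that the epimorphism $\tilde\pi:G\to H$ is what lets you pass from the $G$-action on $X$ to the $H$-invariance hypothesis on $\nu$, which you handle correctly.
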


\begin{prop}
\label{RIM for distal}\cite[Proposition 3.8]{Shmuel1975} Let $\pi:(X,G)\rightarrow(Y,H)$
be a distal factor map. Then there exists a continuous equivariant measure section w.r.t.\  $\pi$.
\end{prop}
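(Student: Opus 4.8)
The plan is to reduce the proposition, via the structure theorem for distal extensions, to the cases of isometric extensions and of inverse limits, and then to assemble the continuous measure section by transfinite recursion up the resulting tower. A couple of routine reductions first: it suffices to treat $G=H$ with $\phi=\mathrm{id}$ (in general one merely replaces ``$g$'' by ``$\phi(g)$'' in the equivariance identities below), and one may assume $(X,G)$, hence $(Y,G)$, is minimal, which is the situation relevant here. Recall then that a distal factor map $\pi\colon(X,G)\to(Y,G)$ between minimal systems possesses a Furstenberg tower
\[
Y=X_{0}\xleftarrow{\ \theta_{1}\ }X_{1}\xleftarrow{\ \theta_{2}\ }\cdots\xleftarrow{\ \theta_{\alpha}\ }X_{\alpha}\longleftarrow\cdots\longleftarrow X_{\eta}=X,
\]
in which each one-step map $\theta_{\alpha+1}\colon X_{\alpha+1}\to X_{\alpha}$ is an isometric extension and, for every limit ordinal $\lambda$, $X_{\lambda}=\underleftarrow{\lim}_{\alpha<\lambda}X_{\alpha}$ (see \cite{furstenberg1963structure}, and \cite{dV93} for the relative form). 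Write $\varpi_{\alpha}\colon X_{\alpha}\to Y$ for the composite $\theta_{1}\circ\cdots\circ\theta_{\alpha}$, so $\pi=\varpi_{\eta}$. I would construct, by transfinite recursion on $\alpha\le\eta$, continuous measure sections $\tau_{\alpha}\colon Y\to\mathcal{M}(X_{\alpha})$ for $\varpi_{\alpha}$ that are \emph{compatible}, meaning $(\theta_{\alpha+1})_{*}\circ\tau_{\alpha+1}=\tau_{\alpha}$; then $\lambda:=\tau_{\eta}$ is the required section. We start with $\tau_{0}(y)=\delta_{y}$.

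The successor step hinges on the building block that \textbf{every isometric extension carries a continuous measure section}. By the structure theory of isometric extensions of minimal flows (Furstenberg), $\theta_{\alpha+1}\colon X_{\alpha+1}\to X_{\alpha}$ is isomorphic to a subextension of a group extension: there are a compact metrizable group $K$, a closed subgroup $F\le K$, and a minimal topological group extension $q\colon Z\to X_{\alpha}$ by $K$ (so $G$ and $K$ commute on $Z$) with $X_{\alpha+1}=Z/F$ and $\theta_{\alpha+1}$ induced by $Z\to Z/K=X_{\alpha}$. For $x\in X_{\alpha}$, transport the Haar measure $m_{K}$ onto the $K$-torsor $q^{-1}(x)$ by $k\mapsto k\cdot z$ for an arbitrary $z\in q^{-1}(x)$ --- the resulting measure $\nu_{x}$ on $Z$ is independent of $z$ --- and push $\nu_{x}$ down along $\mathrm{pr}\colon Z\to Z/F$ to obtain $\kappa_{x}\in\mathcal{M}(X_{\alpha+1})$, supported on $\theta_{\alpha+1}^{-1}(x)$. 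Continuity of $x\mapsto\kappa_{x}$ I would check directly: given $x_{n}\to x$, lift along a subsequence to $z_{n}\in q^{-1}(x_{n})$ with $z_{n}\to z\in q^{-1}(x)$; then for each $g\in C(X_{\alpha+1})$, by joint continuity of the $K$-action on $Z$ and of $\mathrm{pr}$,
\[
\int g\,d\kappa_{x_{n}}=\int_{K}g\bigl(\mathrm{pr}(k\cdot z_{n})\bigr)\,dm_{K}(k)\ \longrightarrow\ \int_{K}g\bigl(\mathrm{pr}(k\cdot z)\bigr)\,dm_{K}(k)=\int g\,d\kappa_{x},
\]
the limit being independent of the chosen lift, which forces convergence along the whole sequence. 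Equivariance $\kappa_{gx}=g_{*}\kappa_{x}$ holds because $g$ intertwines the $K$-actions on the fibres and commutes with $\mathrm{pr}$.

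Given $\tau_{\alpha}$ and the section $\kappa$ of $\theta_{\alpha+1}$ just produced, I would set $\tau_{\alpha+1}(y):=\int_{X_{\alpha}}\kappa_{x}\,d(\tau_{\alpha}(y))(x)\in\mathcal{M}(X_{\alpha+1})$. Then $\tau_{\alpha+1}(y)$ is supported on $\bigcup_{x\in\varpi_{\alpha}^{-1}(y)}\theta_{\alpha+1}^{-1}(x)=\varpi_{\alpha+1}^{-1}(y)$; pushing forward by $\theta_{\alpha+1}$ returns $\int\delta_{x}\,d\tau_{\alpha}(y)(x)=\tau_{\alpha}(y)$, giving the compatibility; equivariance follows from that of $\tau_{\alpha}$ and of $\kappa$ by a change of variables; and $y\mapsto\tau_{\alpha+1}(y)$ is continuous because $y\mapsto\tau_{\alpha}(y)$ is and the barycentre map $\mu\mapsto\int\kappa_{x}\,d\mu(x)$ is weak-$*$ continuous (it sends $\mu$ to its integral against the continuous function $x\mapsto\int g\,d\kappa_{x}$, for each $g\in C(X_{\alpha+1})$). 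At a limit ordinal $\lambda$, the compatible family $\{\tau_{\alpha}(y)\}_{\alpha<\lambda}$ determines a unique $\tau_{\lambda}(y)\in\mathcal{M}(X_{\lambda})$ under the canonical identification $\mathcal{M}\bigl(\underleftarrow{\lim}_{\alpha<\lambda}X_{\alpha}\bigr)\cong\underleftarrow{\lim}_{\alpha<\lambda}\mathcal{M}(X_{\alpha})$; support and equivariance are inherited coordinatewise, and continuity of $y\mapsto\tau_{\lambda}(y)$ holds since $\mathcal{M}(X_{\lambda})$ embeds into $\prod_{\alpha<\lambda}\mathcal{M}(X_{\alpha})$ with each coordinate map $y\mapsto\tau_{\alpha}(y)$ continuous. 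This closes the recursion and yields $\lambda=\tau_{\eta}$.

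I expect the isometric base case to be the main obstacle: identifying an isometric extension as (a subextension of) a group extension is exactly where the structure theory is invoked, and the continuity of the fibrewise Haar measures has to be argued as above rather than quoted, since $q\colon Z\to X_{\alpha}$ need not be presented as a locally trivial bundle. The only other delicate point is the coherence of the transfinite recursion across limit stages --- but this is forced by the compatibility identity $(\theta_{\alpha+1})_{*}\tau_{\alpha+1}=\tau_{\alpha}$, which is precisely what makes the inverse limit of measures at each limit ordinal well defined.
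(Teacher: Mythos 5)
The paper never proves this proposition: it is quoted directly from Glasner \cite[Proposition 3.8]{Shmuel1975}, so there is no internal proof to measure you against. Your reconstruction is the classical route — resolve the distal factor map into a transfinite Furstenberg tower of isometric extensions and inverse limits, obtain a continuous measure section for each isometric step by presenting it as a homogeneous-space quotient of a minimal group extension and transporting fibrewise Haar measure, then climb the tower by barycentres at successor stages and inverse limits of measures at limit stages. Those successor and limit steps are sound: the barycentre map is weak-$*$ continuous, functions pulled back from the stages $X_{\alpha}$ are uniformly dense in $C(X_{\lambda})$ because the index set is a chain, the compatible family determines the limit measure, and the continuity and $G$-equivariance of the fibrewise Haar section on a group extension are argued correctly (the sequence-based continuity argument is legitimate under the paper's standing metrizability assumption).

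The genuine gap is the opening sentence ``one may assume $(X,G)$, hence $(Y,G)$, is minimal.'' In Proposition \ref{RIM for distal} distality is a hypothesis on the \emph{map} $\pi$ only; $X$ itself need not be distal, need not be minimal, and need not decompose continuously into minimal subsystems, so this reduction is not a routine normalization and you give no argument for it. Moreover the relative Furstenberg structure theorem you invoke (a tower of isometric extensions interpolating between $X$ and $Y$) is a theorem about distal extensions of \emph{minimal} flows, so without minimality the very first step of your construction is unavailable, and one cannot simply restrict attention to a minimal subset because the section must be defined, continuous and equivariant over all of $Y$. This is not merely cosmetic for the paper: the proposition is applied in Theorem \ref{thm:factor of u.e.} under the sole hypotheses that $\pi$ is distal and $(X,G)$ is uniquely ergodic — unique ergodicity alone does not give minimality; minimality is only guaranteed in the later application, Proposition \ref{CMD for distal}, where $(X,G)$ is in addition distal. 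As written, then, your argument establishes the proposition only in the minimal (metrizable) case; to get the stated generality you would need either a structure theorem for distal maps over a non-minimal base or a different argument, e.g. the one in \cite{Shmuel1975}.
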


The following theorem is proven for amenable group actions without the distality assumption in \cite[Proof of Proposition 8.1]{AKL2014}. When the factor map is distal, one can prove the theorem without the amenability assumption.
\begin{thm}\label{thm:factor of u.e.}
Let $\pi:(X,G)\rightarrow(Y,H)$
be a distal factor map and assume $(X,G)$ is uniquely ergodic. Then $(Y,H)$ is uniquely ergodic.
\end{thm}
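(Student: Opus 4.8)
The plan is to exploit the distality of $\pi$ to produce a continuous measure section $\lambda\colon Y\to\mathcal M(X)$ and then pull invariant measures on $Y$ back to invariant measures on $X$, where unique ergodicity forces uniqueness on $Y$. First I would invoke Proposition \ref{RIM for distal} to obtain a continuous measure section $\lambda$ with respect to $\pi$; this is where the distality hypothesis is used and is essentially the only nontrivial input. Next, let $\nu_1,\nu_2$ be two $H$-invariant Borel probability measures on $Y$. By Lemma \ref{lem:invariant measure of section}, each $\mu_i\triangleq\int_Y\lambda_y\,d\nu_i(y)$ is a $G$-invariant probability measure on $X$. Since $(X,G)$ is uniquely ergodic, $\mu_1=\mu_2$.

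It then remains to conclude $\nu_1=\nu_2$ from $\int_Y\lambda_y\,d\nu_1 = \int_Y\lambda_y\,d\nu_2$. For this I would push forward along $\pi$: since $\lambda_y$ is supported on $\pi^{-1}(y)$, we have $\pi_*\lambda_y=\delta_y$, hence for any continuous $f\colon Y\to\mathbb C$,
\[
\int_Y f\,d\nu_i = \int_Y \Big(\int_X f\circ\pi\,d\lambda_y\Big)\,d\nu_i(y) = \int_X f\circ\pi\,d\mu_i.
\]
Since $\mu_1=\mu_2$, the right-hand side is independent of $i$, so $\int_Y f\,d\nu_1=\int_Y f\,d\nu_2$ for all continuous $f$, and therefore $\nu_1=\nu_2$. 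This shows $(Y,H)$ has a unique invariant measure; combined with the standard fact that a factor of a t.d.s.\ (here assumed compact metric, and possessing an invariant measure as the image of one on $X$) admits at least one invariant measure, we get unique ergodicity of $(Y,H)$.

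The only subtlety — and the step I would treat most carefully — is the measurability/Fubini justification needed to interchange the integral over $Y$ with the integral over $X$ in the displayed identity; this uses that $y\mapsto\lambda_y$ is continuous (hence Borel) into $\mathcal M(X)$ with the weak-$*$ topology, so $y\mapsto\int_X f\circ\pi\,d\lambda_y$ is continuous, and the defining property $\lambda_y(\pi^{-1}(y))=1$ makes that inner integral equal to $f(y)$. Everything else is a direct application of the cited results, so there is no real obstacle beyond invoking Proposition \ref{RIM for distal} correctly.
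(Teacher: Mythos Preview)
Your proposal is correct and follows essentially the same argument as the paper: invoke Proposition~\ref{RIM for distal} to obtain a continuous measure section, lift any two invariant measures on $Y$ to $X$ via Lemma~\ref{lem:invariant measure of section}, use unique ergodicity of $(X,G)$ to identify the lifts, and then push forward (using $\lambda_y(\pi^{-1}(y))=1$, so $\int f\circ\pi\,d\lambda_y=f(y)$) to conclude $\nu_1=\nu_2$. The paper's proof is the same line-for-line, only phrased with two ergodic measures and general measurable test functions rather than continuous ones.
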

\begin{proof}
From Proposition \ref{RIM for distal}, there exists a continuous equivariant measure section $\{\lambda_y\}_{y\in Y}$ w.r.t.\  $\pi$. Assume that $(Y,H)$ has two possibly distinct ergodic measures $\nu_1,\nu_2$. Let $\mu$ be the unique ergodic measure of $(X,G)$. From Lemma \ref{lem:invariant measure of section}, as $(X,G)$ is uniquely ergodic,
$\mu=\int_Y \lambda_y d\nu_1(y)=\int_Y \lambda_y d\nu_2(y).$
 For any measurable function $F:Y\rightarrow\mathbb{C},$  notice that $$\int_X F( \pi(x))d\mu(x)=\int_Y \int_{\pi^{-1}(y)}F(\pi(x))d\lambda_y(x) d\nu_1(y)=\int_Y \int _{\pi^{-1}(y)}F(\pi(x))d\lambda_y(x) d\nu_2(y),$$ and $\int _{\pi^{-1}(y)}F(\pi(x))d\lambda_y(x)=F(y)$. Therefore one has that $\int_Y F(y)d\nu_1(y)=\int_Y F(y)d\nu_2(y)$ for any measurable function $F:X\rightarrow \mathbb{C}$, thus $\nu_1=\nu_2$. 
\end{proof}

\begin{prop}\label{CMD for distal}
Let $(X,G)$ be a uniquely ergodic distal system. Let $\pi:(X,G)\rightarrow(Y,H)$ be a factor map. Then $\pi$ is a CMD map w.r.t.\  the unique ergodic measure $\mu_X$ of $(X,G)$  and there exists a unique continuous system of measures for $\pi$ which is a continuous equivariant measure section.
\end{prop}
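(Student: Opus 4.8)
The plan is to deduce this directly from the preceding results, since all the ingredients are now on the table. The statement combines two claims: (a) $\pi$ is a CMD map with respect to $\mu_X$, and (b) the continuous system of measures realizing this disintegration is unique. Claim (b) is essentially immediate: by Remark \ref{rem:strictly ergodic distal} a uniquely ergodic distal system is strictly ergodic, so $\mu_X$ has full support; since $\pi$ is a factor map, $\pi_*\mu_X$ also has full support on $Y$; and then by the last sentence of Remark \ref{uniqueness of CSM} any measure disintegration of $\mu_X$ over $Y$ which happens to be a continuous system of measures is unique. So the real content is claim (a): producing \emph{some} continuous system of measures for $\pi$ that is a measure disintegration of $\mu_X$.

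For claim (a), first I would observe that a factor map between topological dynamical systems out of a distal system is automatically a distal factor map (this follows from the distality of $(X,G)$ itself), so Proposition \ref{RIM for distal} applies and yields a continuous measure section $\{\lambda_y\}_{y\in Y}$ with respect to $\pi$, i.e. $\lambda_y(\pi^{-1}(y))=1$ and $\lambda_{\tilde\pi(g)y}=g_*\lambda_y$. By Lemma \ref{lem:invariant measure of section}, $\int_Y \lambda_y\, d\nu$ is a $G$-invariant measure on $X$ for any $H$-invariant $\nu$ on $Y$. Now $(Y,H)$ is uniquely ergodic by Theorem \ref{thm:factor of u.e.} (the hypothesis there — distal factor map out of a uniquely ergodic system — is exactly our situation), so there is a unique such $\nu$, and then $\int_Y \lambda_y\, d\nu$ is a $G$-invariant measure on the uniquely ergodic system $(X,G)$, hence equals $\mu_X$. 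This identity $\mu_X = \int_Y \lambda_y\, d\nu$ says precisely, in view of the uniqueness clause of the measure disintegration theorem (Theorem \ref{thm:(measure-disintegration-theorem)}), that $\{\lambda_y\}_{y\in Y}$ is \emph{the} measure disintegration of $\mu_X$ with respect to $\pi$ ($\nu$-a.e.); since it is by construction a continuous system of measures, $\pi:(X,\mu_X)\to(Y,\nu)$ is a CMD map.

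Finally I would assemble the two claims. Having shown $\{\lambda_y\}$ is a continuous system of measures which is a measure disintegration of $\mu_X$, claim (a) is done; and since $\pi_*\mu_X = \nu$ has full support, Remark \ref{uniqueness of CSM} forces this continuous system of measures to be the only one, giving claim (b). I do not anticipate a genuine obstacle here — every step is a citation of a result stated above, and the only mild care needed is the verification that a factor map out of a distal system is a distal factor map (so that Proposition \ref{RIM for distal} and Theorem \ref{thm:factor of u.e.} are applicable) and the bookkeeping that "$\nu$-a.e.\ equality of disintegrations" combined with full support of $\nu$ upgrades to the clean statement. If anything is slightly delicate it is making sure the a.e.-defined section from Proposition \ref{RIM for distal} is genuinely everywhere-continuous as its statement asserts, so that "continuous system of measures" holds on the nose rather than off a null set — but this is built into the cited proposition.
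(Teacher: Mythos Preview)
Your proposal is correct and follows essentially the same route as the paper: invoke Proposition~\ref{RIM for distal} to obtain a continuous measure section $\{\lambda_y\}$, use Lemma~\ref{lem:invariant measure of section} together with unique ergodicity of $(X,G)$ to conclude $\int_Y\lambda_y\,d(\pi_*\mu_X)=\mu_X$, and then apply Remark~\ref{uniqueness of CSM} via full support of $\pi_*\mu_X$. The only cosmetic difference is that the paper deduces full support of $\pi_*\mu_X$ by first noting $(Y,H)$ is distal and uniquely ergodic (hence strictly ergodic), whereas you push forward the full support of $\mu_X$ directly; both are equally valid.
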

\begin{proof}
From Proposition \ref{RIM for distal}, there exists a continuous equivariant measure section $\{\lambda_y\}_{y\in Y}$ w.r.t.\  $\pi$. Let  $\nu_Y=\pi_*\mu_X$. From Lemma \ref{lem:invariant measure of section}, $\int_{Y}\lambda_{y}d\nu_Y=\mu_X$. As a continuous equivariant measure section is a continuous system of measures, $\pi:(X,G,\mu_X)\rightarrow (Y,H,\nu_Y)$ is a CMD map. Moreover as $Y=\pi(X)$ is distal and uniquely ergodic (see Subsection \ref{subsec:Dynamical-background.} and Theorem \ref{thm:factor of u.e.}), by Remark \ref{rem:strictly ergodic distal}, $\nu_Y$ has full support. Therefore by Remark \ref{uniqueness of CSM}, $\{\lambda_y\}_{y\in Y}$ is a unique continuous system of measures for $\pi$. 
\end{proof}

\begin{cor}\label{cor:p_0 cont meas sec}
Let $(X,G,\mu)$ be a distal $(k+1)$-cube uniquely ergodic system and let $\mathfrak{p}_0:(C_G^{k+1}(X),\mathcal{HK}^{k+1}(G))\rightarrow (X,G)$ be given by $ c\rightarrow c_{\vec{0}}$. Then there exists a unique continuous system of measures for $\mathfrak{p}_0$ which is a continuous equivariant measure section.
 \end{cor}
\begin{proof}
As $(C_G^{k+1}(X),\mathcal{HK}^{k+1}(G))$ is distal by Proposition \ref{prop:distality}(2) and strictly ergodic by assumption and $\mathfrak{p}_0$ is a factor map, the result follows from Proposition \ref{CMD for distal}.
\end{proof}

\begin{defn}\label{CSM for p0}
Let $(X,G,\mu)$ be a distal $(k+1)$-cube uniquely ergodic system. Denote by $\{\mu_{C_G^{k+1}(X)}^x\}_{x\in X}$ the unique continuous system of measures for $\mathfrak{p}_0:(C_G^{k+1}(X),\mathcal{HK}^{k+1}(G))\rightarrow (X,G)$.  
\end{defn}
\begin{lem}\cite[Proposition 3.3]{CG14}
\label{lem:CSM circ} Suppose $\pi_{1}:(B,\mu_{B})\rightarrow(C,\mu_{C})$,
$\pi_{2}:(C,\mu_{C})\rightarrow(D,\mu_{D})$, $\pi_{3}:(B,\mu_{B})\rightarrow(D,\mu_{D})$
are continuous surjective measure-preserving maps between compact spaces. If $\pi_{3}=\pi_{2}\circ\pi_{1}$ and $\pi_{1}$, $\pi_{2}$
are CMD maps,
then $\pi_{3}$ is a CMD map w.r.t.\  the continuous system of measures  $\{\gamma^{d}=\int_{C}\mu_{B}^{c}d\mu_{C}^{d}(c)\}_{d\in D}$.
\end{lem}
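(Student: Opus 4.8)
The plan is to verify directly that the family $\gamma^{\bullet}=\{\gamma^{d}\}_{d\in D}$ satisfies both requirements of Definition \ref{def:CSM and CSM factor map} for $\pi_{3}$: that it is a continuous system of measures supported on the fibres of $\pi_{3}$, and that it disintegrates $\mu_{B}$ over $(\pi_{3})_{*}\mu_{B}=\mu_{D}$. I would test everything against continuous functions $f\in C(B)$ and use the Riesz representation theorem, so that $\gamma^{d}$ is the probability measure attached to the positive norm-one functional $f\mapsto\int_{C}\mu_{B}^{c}(f)\,d\mu_{C}^{d}(c)$. This is meaningful because the hypothesis that $\pi_{1}$ is a CMD map makes $c\mapsto\mu_{B}^{c}$ weak-$*$ continuous, hence Borel, so the $\mathcal{M}(B)$-valued integrand may be integrated against the probability measure $\mu_{C}^{d}$.

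Next I would check the fibre condition and the continuity condition, both of which are short. For the fibres: $\mu_{C}^{d}$ is carried by $\pi_{2}^{-1}(d)$, and for every $c\in\pi_{2}^{-1}(d)$ the measure $\mu_{B}^{c}$ is carried by $\pi_{1}^{-1}(c)\subseteq\pi_{1}^{-1}(\pi_{2}^{-1}(d))=\pi_{3}^{-1}(d)$; integrating gives $\gamma^{d}(B\setminus\pi_{3}^{-1}(d))=0$. For continuity, the idea is to handle the two layers of continuity one at a time: fix $f\in C(B)$, set $g(c):=\mu_{B}^{c}(f)$, and use continuity of the system for $\pi_{1}$ to get $g\in C(C)$; then $\gamma^{d}(f)=\int_{C}g\,d\mu_{C}^{d}=\mu_{C}^{d}(g)$, and continuity of the system for $\pi_{2}$ applied to the continuous function $g$ shows $d\mapsto\gamma^{d}(f)$ is continuous. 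Hence $\gamma^{\bullet}$ is a continuous system of measures for $\pi_{3}$.

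Finally, I would establish the disintegration identity. For $f\in C(B)$, with $g(c)=\mu_{B}^{c}(f)$ as above, a two-step Fubini-type computation gives
\[
\int_{D}\gamma^{d}(f)\,d\mu_{D}(d)=\int_{D}\mu_{C}^{d}(g)\,d\mu_{D}(d)=\int_{C}g\,d\mu_{C}=\int_{C}\mu_{B}^{c}(f)\,d\mu_{C}(c)=\int_{B}f\,d\mu_{B},
\]
where the second equality is the hypothesis that $\{\mu_{C}^{d}\}$ disintegrates $\mu_{C}$ over $\mu_{D}$ and the fourth is the hypothesis that $\{\mu_{B}^{c}\}$ disintegrates $\mu_{B}$ over $\mu_{C}$. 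Since this holds for all $f\in C(B)$, one has $\mu_{B}=\int_{D}\gamma^{d}\,d\mu_{D}(d)$, and together with $(\pi_{3})_{*}\mu_{B}=(\pi_{2})_{*}(\pi_{1})_{*}\mu_{B}=\mu_{D}$ this says exactly that $\pi_{3}$ is a CMD map with continuous system of measures $\gamma^{\bullet}$. The whole argument is essentially bookkeeping; the only delicate point is making sure that integrating a continuously varying family of measures produces a genuine Borel probability measure and that the order of integration may legitimately be exchanged, which is why I would phrase everything through $C(B)$ and the Riesz representation theorem rather than juggling Borel sets directly. I do not expect any substantive obstacle.
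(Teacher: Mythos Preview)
Your proof is correct and is the standard direct verification. The paper does not actually prove this lemma; it simply cites it from \cite[Proposition 3.3]{CG14}, so there is nothing to compare against beyond noting that your argument is precisely the expected one.
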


\subsection{Relations between tricube measures}\label{subsec:measure}

\begin{defn}
\label{def:definition of the product space} Let $(X,G)$ be a t.d.s. For $v\in \{0,1\}^{n}$, define the spaces
\[
C_G^{n}(X)\times_{X}^v C_G^{n}(X):=\{(c_1,c_2)\in C_G^{n}(X)\times C_G^{n}(X):(c_1)_v=(c_2)_{\vec{0}}\},
\]
\[
\mathcal{HK}^{n}(G)\times_{G}^v \mathcal{HK}^{n}(G):=\{(\mathbf{g}_{1},\mathbf{g}_{2})\in\mathcal{HK}^{n}(G)\times\mathcal{HK}^{n}(G):\ (\mathbf{g}_{1})_{v}=(\mathbf{g}_{2})_{\vec{0}}\}.
\]
If $v=\vec{0}$, we sometimes write $C_G^{n}(X)\times_{X}C_G^{n}(X)$ and $\mathcal{HK}^{n}(G)\times_{G} \mathcal{HK}^{n}(G)$ instead of $C_G^{n}(X)\times_{X}^{\vec{0}}C_G^{n}(X)$ and $\mathcal{HK}^{n}(G)\times_{G}^{\vec{0}} \mathcal{HK}^{n}(G)$ respectively. We 
define the maps\footnote{The fact that the maps $\tilde{\pi}_v,\tilde{\phi}_v$ are well-defined is proven in Lemma \ref{factor map lemmma}.}:
$$\begin{array}{l}
\pi':C_G^{n}(X)\times_{X}C_G^{n}(X)\rightarrow X:(c_{1},c_{2})\mapsto(c_{1})_{\vec{0}},\\
\tilde{\pi}_v:T^{n}(X)\rightarrow C_G^{n}(X)\times_{X}^v C_G^{n}(X):\mathbf{t}\mapsto (\omega(\mathbf{t}),\psi_{v}(\mathbf{t})),\\
\tilde{\phi}_v:T^{n}(G)\rightarrow\mathcal{HK}^{n}(G)\times_{G}^v \mathcal{HK}^{n}(G): \mathbf{g}\mapsto(\omega(\mathbf{g}),\psi_{v}(\mathbf{g})),\\
\pi_{L,v}:C_G^{n}(X)\times_{X}^vC_G^{n}(X)\rightarrow C_G^{n}(X) : (c_1,c_2)\mapsto c_{1},\\
\phi_{L,v}:\mathcal{HK}^{n}(G)\times_{G}^v \mathcal{HK}^{n}(G)\rightarrow \mathcal{HK}^{n}(G): (\mathbf{g}_1,\mathbf{g}_2)\mapsto \mathbf{g}_1.
\end{array}$$
\end{defn}
\noindent
We denote by $(\tilde{\pi}_v,\tilde{\phi}_v)$ and $(\pi_{L,v},\phi_{L,v})$ the maps  $(\mathbf{t},\mathbf{g})\mapsto(\tilde{\pi}_v(\mathbf{t}),\tilde{\phi}_v(\mathbf{g}))$ and $((c_1,c_2),(\mathbf{g}_1,\mathbf{g}_2))\mapsto(\pi_{L,v}((c_1,c_2)),\phi_{L,v}((\mathbf{g}_1,\mathbf{g}_2))$ respectively. If $v=\vec{0}$, we sometimes write $\tilde{\pi}$, $\tilde{\phi}$ and $\pi_{L}$ instead of $\tilde{\pi}_{\vec{0}}$,  $\tilde{\phi}_{\vec{0}}$ and $\pi_{L,\vec{0}}$ respectively.
\begin{lem}\label{factor map lemmma}
Let $(X,G)$ be a minimal distal t.d.s.\ and $v\in \{0,1\}^n$.  Then  $\omega$ and $\psi_{v}$  are surjective maps. The map  $(\tilde{\pi}_v,\tilde{\phi}_v)$ is well-defined. The maps $(\tilde{\pi}_v,\tilde{\phi}_v)$ and $(\pi_{L,v},\phi_{L,v})$ are factor maps.
\end{lem}

\begin{proof}

Let $\sigma:\{-1,0,1\}\rightarrow\{0,1\}^{2}$ be the function $\sigma(1)=(0,0),\sigma(0)=(1,0),\sigma(-1)=(0,1).$
Then $q=\sigma^{n}:v\rightarrow(\sigma(v_1),\ldots,\sigma(v_n))$
is an injective map $\{-1,0,1\}^n\rightarrow\{0,1\}^{2n}$. Now we prove that for any fibrant  abstract or compact cubespace $(Y,C^\bullet(Y))$ (e.g., $(X,C^\bullet(X))$ or  $(G,\mathcal{HK}^\bullet(G))$; see Lemma \ref{fibrant cubespace for G} and Theorem \ref{thm:distal->fibrant}), $\omega$ is a surjective map. Let $U=q\left(\Omega(\{0,1\}^n)\right)$.  Notice that $U$ is a downward-closed subset. By Lemma \ref{lem: Extension property}, for any $b\in C^n(Y)$, there exists $c\in C^{2n}(Y)$ such that $(c_{q\circ\Omega(v)})_{v\in\{0,1\}^{n}}=b$. As $\mathbf{t}=(c_{q(w)})_{w\in\{-1,0,1\}^{n}}$ is
a tricube, one has that $\omega(\mathbf{t})=b$, therefore $\omega: T^n(Y)\rightarrow C^n(Y)$ is surjective. Similarly, one has that $\psi_{v}$ is surjective for any $v\in \{0,1\}^n$.

We now show that $(\tilde{\pi}_v,\tilde{\phi}_v)$ is well-defined. One has to verify that $\omega(\mathbf{t})_v=\psi_{v}(\mathbf{t})_{\vec{0}}$. Indeed, by Subsection \ref{subsec:The-tri-cube(3-cube)},
  $\omega(\mathbf{t})_v=\mathbf{t}_{\Omega(v)}=\mathbf{t}_{\Psi_{v}(\vec{0})}=\psi_{v}(\mathbf{t})_{\vec{0}}$. Next we show that $(\tilde{\pi}_v,\tilde{\phi}_v)$ is a factor map. 
 It is clear that $\tilde{\pi}_v,\tilde{\phi}_v$ are continuous and $\tilde{\pi}_v(\mathbf{gt})=\tilde{\phi}_v(\mathbf{g})\tilde{\pi}_v(\mathbf{t})$. Now we prove that the map $\tilde{\pi}_v$ is surjective. i.e.\  for any $(b_1,b_2)\in C_G^{n}(X)\times_{X}^v C_G^{n}(X)$, there exists $\mathbf{t}\in T^n(X)$ such that $\tilde{\pi}_v(\mathbf{t})=(b_{1},b_{2})$. First we treat the case $v=\vec{0}$. Let $V=q\left(\Omega(\{0,1\}^{n})\cup\Psi_{\vec{0}}(\{0,1\}^{n})\right)\subset\{0,1\}^{2n}$. Note that
$V$ is a downward-closed subset. Arguing as above, we see that for any $(b_{1},b_{2})\in C_G^{n}(X)\times_{X}C_G^{n}(X)$, there is $c\in C_G^{2n}(X)$
such that $(c_{q\circ\Omega(v)})_{v\in\{0,1\}^{n}}=b_{1}$, $(c_{q\circ\Psi_{\vec{0}}(v)})_{v\in\{0,1\}^{n}}=b_{2}$\footnote{From $\Omega(\vec{0})=\Psi_{\vec{0}}(\vec{0})$ (see Subsection \ref{subsec:The-tri-cube(3-cube)}), it follows that the condition $(b_1)_{\vec{0}}=(b_2)_{\vec{0}}$ poses no obstruction.}. As $\mathbf{t}=(c_{q(w)})_{w\in\{-1,0,1\}^{n}}$ is
a tricube, one has that $\tilde{\pi}(\mathbf{t})=(b_{1},b_{2})$,
i.e.\  $\tilde{\pi}$ is surjective. 

Exploiting the case $v=\vec{0}$ we will now prove the surjectivity of $\tilde{\pi}_v$, for $v\in \{0,1\}^{n}\setminus \{\vec{0}\}$, working directly with tricubes (making the proof hopefully more intuitive, see \cite[Remark 3.1.18]{candela2016alg_notes}).  Define the map $\tau=\tau(v):\{-1,0,1\}^{n}\rightarrow \{-1,0,1\}^{n}$ by
$$\tau(\epsilon_1,\ldots \epsilon_{n})_i=\begin{cases}
\epsilon_i & \,\, v_{i}=0\\

-\epsilon_i & \,\, v_{i}=1\\
\end{cases}$$
We claim that $\tau$ induces a map  $\tilde{\tau}:T^{n}(X)\rightarrow T^{n}(X)$ defined by $$\tilde{\tau}\big((t_w)_{w\in \{-1,0,1\}^{n}}\big)=(t_{\tau(w)})_{w\in \{-1,0,1\}^{n}}.$$
Indeed let us verify that  for any $\mathbf{t}\in T^{n}(X)$, $\tilde{\tau}(\mathbf{t})\in T^{n}(X)$. By definition one has to show that for every
$v'\in \{0,1\}^n$, it holds $(\tilde{\tau}(\mathbf{t})_{\Psi_{v'}(\delta)})_{\delta\in\{0,1\}^{n}}\in C^{n}(X)$. Define the map $\hat{\tau}=\hat{\tau}(v):\{0,1\}^n\rightarrow \{0,1\}^n$ by:
 $$\hat{\tau}(\delta_1,\ldots \delta_{n})_i=\begin{cases}
\delta_i & \,\, v_{i}=0,\\

1-\delta_i & \,\, v_{i}=1.\\
\end{cases}$$ 
Note $(\tau(\Psi_{v'}(\delta)))_i=(1-2v'_i)(1-\delta_i)$ if $v_i=0$ and $(\tau(\Psi_{v'}(\delta)))_i=(1-2(1-v'_i))(1-\delta_i)$ if $v_i=1$. Thus $\tau(\Psi_{v'}(\delta))=\Psi_{\hat{\tau}(v')}(\delta)$ which implies $(\tilde{\tau}(\mathbf{t})_{\Psi_{v'}(\delta)})_{\delta\in\{0,1\}^{n}}=(\mathbf{t}_{\Psi_{\hat{\tau}(v')}(\delta)})_{\delta\in\{0,1\}^{n}}\in C_G^{n}(X)$.

We will now investigate $\tilde{\pi}_v(\tilde{\tau}(\mathbf{t}))$ for $\mathbf{t}\in T^{n}(X)$. We will show that as $\tilde{\tau}$ sends the outer cube $\Omega(\{0,1\}^n)$ to itself and the cube $\Psi_{\vec{0}} (\{0,1\}^n)$ to $\Psi_{v}(\{0,1\}^n)$, the surjectivity of  $\tilde{\pi}_v$ follows. Indeed fix  $(b_1,b_2)\in C_G^{n}(X)\times_{X}C_G^{n}(X)$. We have established there exists $\mathbf{t}\in T^{n}(X)$ with $\tilde{\pi}_{\vec{0}}(\mathbf{t})=(b_1,b_2)$. As $\Omega(\{0,1\}^n)=\{-1,1\}^n$, it follows  $\tau (\Omega(\{0,1\}^n))=\Omega(\{0,1\}^n)$. An explicit calculation shows $\Omega^{-1}\circ\tau \circ\Omega=\hat{\tau}$. Thus $\omega(\tilde{\tau}(\mathbf{t}))=\hat{\tau}(b_1)$. As $\Psi_{\vec{0}} (\{0,1\}^n)=\{0,1\}^n$, it follows $\mathbf{p}\in \tau(\Psi_{\vec{0}} (\{0,1\}^n))$ if and only if $\mathbf{p}_i\in \{0,1\}$ for $v_i=0$ and 
$\mathbf{p}_i\in \{-1,0\}$ for  $v_i=1$. This implies  that $\Psi_{v}(\{0,1\}^n)= \tau(\Psi_{\vec{0}} (\{0,1\}^n))$. Moreover an explicit calculation shows $\Psi_{v}^{-1}\circ\tau \circ \Psi_{\vec{0}}=\id$. Thus $\psi_{v}(\tilde{\tau}(\mathbf{t}))=b_2$. It is easy to see that $(\hat{\tau},\id)$ induces a bijective map $C_G^{n}(X)\times_{X}C_G^{n}(X)\rightarrow C_G^{n}(X)\times_{X}^v C_G^{n}(X)$. Conclude $\tilde{\pi}_v$ is surjective.
Similarly one establishes surjectivity for $\tilde{\phi}_v$ using Lemma \ref{fibrant cubespace for G}.

Finally we show that $(\pi_{L,v},\phi_{L,v})$ is a factor map. 
 It is clear that $\pi_{L,v},\phi_{L,v}$ are continuous and $\pi_{L,v}((\mathbf{g}_1,\mathbf{g}_2)(c_1,c_2))=\phi_{L,v}((\mathbf{g}_1,\mathbf{g}_2))\pi_{L,v}((c_1,c_2))$. Let $\sigma'$ be an isomorphism of discrete cubes such that $\sigma' (\vec{0})=v$. Note that for all $c\in C_G^{n}(X)$, $(c,\sigma'(c))\in C_G^{n}(X)\times_{X}^v C_G^{n}(X)$. This shows that the map $\pi_{L,v}$ is surjective.
 Similarly one establishes surjectivity for  $\phi_{L,v}$.
\end{proof}

 Now we assume that $(X,G)$ is a distal $(2k+2)$-cube uniquely ergodic system (i.e.\  $(X,G)$ is distal and $(C_G^{2k+2}(X),\mathcal{HK}^{2k+2}(G))$ is uniquely ergodic). Then by the next lemma there is a canonical measure for $T^{k+1}(X)$.
\begin{lem}\label{uniquely ergodic for T(X)}
If $(X,G,\mu)$ is a distal $(2k+2)$-cube uniquely ergodic system, then $(T^{k+1}(X),T^{k+1}(G))$ and $\left(C_G^{k+1}(X)\times_{X}C_G^{k+1}(X),\mathcal{HK}^{k+1}(G)\times_{G}\mathcal{HK}^{k+1}(G)\right)$ are uniquely ergodic. 
\end{lem}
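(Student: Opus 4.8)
The plan is to realize $(T^{k+1}(X),T^{k+1}(G))$ as a topological factor of the cube system $(C_G^{2k+2}(X),\mathcal{HK}^{2k+2}(G))$ --- which is uniquely ergodic precisely because $(X,G)$ is assumed to be $(2k+1)$-cube uniquely ergodic --- and then to deduce the assertion about $C_G^{k+1}(X)\times_{X}C_G^{k+1}(X)$ from the factor map already produced in Lemma \ref{factor map lemmma}. We use throughout that $(X,G)$ is a minimal distal t.d.s.\ (minimality being implicit in the notion of a $(2k+1)$-cube uniquely ergodic system), so that by Theorem \ref{thm:distal->fibrant} the cubespace $(X,C_G^{\bullet}(X))$ is ergodic and fibrant, $(G,\mathcal{HK}^{\bullet}(G))$ is fibrant by Lemma \ref{fibrant cubespace for G}, and the extension property of Lemma \ref{lem: Extension property} is available for both.

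First I would set up the candidate factor map. Let $q=\sigma^{k+1}\colon\{-1,0,1\}^{k+1}\hookrightarrow\{0,1\}^{2k+2}$ be the injective map introduced in the proof of Lemma \ref{factor map lemmma}, and let $V=q(\{-1,0,1\}^{k+1})$, a downward-closed subset of $\{0,1\}^{2k+2}$ (namely the $w$ with $(w_{2j-1},w_{2j})\neq(1,1)$ for every $j$). Define $p\colon C_G^{2k+2}(X)\to X^{\{-1,0,1\}^{k+1}}$ by $p(c)=(c_{q(w)})_{w}$ and $\phi\colon\mathcal{HK}^{2k+2}(G)\to G^{\{-1,0,1\}^{k+1}}$ by $\phi(\mathbf{g})=(\mathbf{g}_{q(w)})_{w}$. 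For each $v\in\{0,1\}^{k+1}$ the composite $q\circ\Psi_{v}\colon\{0,1\}^{k+1}\to\{0,1\}^{2k+2}$ is a morphism of discrete cubes (its $(2j-1)$-st coordinate is the input variable $\varepsilon_{j}$, and its $(2j)$-th coordinate is identically $0$ or is $\overline{\varepsilon_{j}}$ according as $v_{j}=0$ or $v_{j}=1$), so $(p(c)_{\Psi_{v}(\tilde v)})_{\tilde v\in\{0,1\}^{k+1}}=c\circ(q\circ\Psi_{v})\in C_G^{k+1}(X)$; hence $p$ takes values in $T^{k+1}(X)$, and the identical computation inside $(G,\mathcal{HK}^{\bullet}(G))$ shows $\phi$ takes values in $T^{k+1}(G)$. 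Moreover $p$ is continuous, $\phi$ is a group homomorphism, and $p(\mathbf{g}c)=\phi(\mathbf{g})p(c)$.

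Next I would prove surjectivity of $p$ and $\phi$ and then conclude. Given $\mathbf{t}\in T^{k+1}(X)$, one checks that the induced map $V\to X$, $q(w)\mapsto\mathbf{t}_{w}$, lies in $\text{Hom}(V,X)$; granting this, the extension property furnishes $c\in C_G^{2k+2}(X)$ with $c|_{V}$ equal to that map, i.e.\ $p(c)=\mathbf{t}$, and the same argument with Lemma \ref{fibrant cubespace for G} shows $\phi$ is surjective, hence a group epimorphism. Therefore $(p,\phi)$ is a factor map. Since $C_G^{2k+2}(X)$ is a closed $\mathcal{HK}^{2k+2}(G)$-invariant subset of the distal system $(X^{\{0,1\}^{2k+2}},\mathcal{HK}^{2k+2}(G))$, it is itself distal, so $p$ is a distal factor map; as $(C_G^{2k+2}(X),\mathcal{HK}^{2k+2}(G))$ is uniquely ergodic by hypothesis (Definition \ref{k-cube uniquely ergodic}), Theorem \ref{thm:factor of u.e.} yields that $(T^{k+1}(X),T^{k+1}(G))$ is uniquely ergodic. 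Applying Theorem \ref{thm:factor of u.e.} a second time to the factor map of Lemma \ref{factor map lemmma}, whose domain $T^{k+1}(X)$ is distal (a closed invariant subset of $X^{\{-1,0,1\}^{k+1}}$) so that the map is distal, we conclude that $(C_G^{k+1}(X)\times_{X}C_G^{k+1}(X),\mathcal{HK}^{k+1}(G)\times_{G}\mathcal{HK}^{k+1}(G))$ is uniquely ergodic as well.

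The main obstacle, I expect, is the membership $\mathbf{t}\in T^{k+1}(X)\Rightarrow(q(w)\mapsto\mathbf{t}_{w})\in\text{Hom}(V,X)$, i.e.\ that the down-set in $\{0,1\}^{2k+2}$ of every vertex of $V$ carries a cube of $X$. For a vertex $q(u)$ with $u\in\{0,1\}^{k+1}$ this down-set lies inside $q(\Psi_{\vec{0}}(\{0,1\}^{k+1}))$, so it is a face of a symmetry of the defining cube $(\mathbf{t}_{\Psi_{\vec{0}}(\tilde v)})_{\tilde v}$ and there is nothing to check; but when some coordinate $u_{j}=-1$ the down-set of $q(u)$ is not contained in $q(\Psi_{v}(\{0,1\}^{k+1}))$ for any $v$, so it cannot be obtained by a mere reindexing of the defining cubes of $T^{k+1}(X)$. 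This is precisely the tricube-composition phenomenon, resolved by an inductive use of fibrancy and the glueing axioms exactly as in \cite[Lemma 3.1.16]{candela2016alg_notes}, which I would cite rather than reprove. Everything else --- continuity, equivariance, and the two applications of Theorem \ref{thm:factor of u.e.} --- is routine.
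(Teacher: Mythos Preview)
Your proposal is correct and follows essentially the same approach as the paper's proof: realize $T^{k+1}(X)$ as a factor of $C_G^{2k+2}(X)$ via the map $c\mapsto(c_{q(w)})_{w}$ and the extension property for the downward-closed set $V=q(\{-1,0,1\}^{k+1})$, then pass unique ergodicity downward. The paper's proof is terser (it does not name Theorem~\ref{thm:factor of u.e.} or discuss the $\text{Hom}(V,X)$ membership) and handles the fibre product by a direct ``Similarly'' (i.e.\ as another factor of $C_G^{2k+2}(X)$, using the downward-closed set from Lemma~\ref{factor map lemmma}), whereas you route it through $T^{k+1}(X)$ via Lemma~\ref{factor map lemmma}; both routes are valid. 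Your discussion of the ``main obstacle''---that for $u$ with a coordinate $-1$ the down-set of $q(u)$ is not contained in any single $\Psi_{v}$-subcube, so one genuinely needs fibrancy/glueing to know the restriction is a cube---is a point the paper leaves implicit, and your instinct to appeal to the tricube-composition argument (cf.\ \cite[Lemma~3.1.16]{candela2016alg_notes}) is the right one; just be aware that what you need is slightly more than the statement ``$\omega(\mathbf t)\in C^{k+1}(X)$'' and is really the stronger assertion that every tricube arises as $p(c)$ for some $c\in C_G^{2k+2}(X)$, which is established in the course of that same argument.
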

\begin{proof}
Following the notation of the proof of Lemma \ref{factor map lemmma} and using the extension property for the downward-closed set $q(\{-1,0,1\}^{k+1})$, one has that $(T^{k+1}(X),T^{k+1}(G))$ is a factor of $(C_G^{2k+2}(X),\mathcal{HK}^{2k+2}(G))$ w.r.t.\  the projection $C_G^{2k+2}(Y)\rightarrow T^{k+1}(Y):c\rightarrow (c_{q(w)})_{w\in\{-1,0,1\}^{k+1}}$, where $Y=X,G$. As $(C_G^{2k+2}(X),\mathcal{HK}^{2k+2}(G))$ is uniquely ergodic, $(T^{k+1}(X),T^{k+1}(G))$ is uniquely ergodic.  Similarly,  $\left(C_G^{k+1}(X)\times_{X}C_G^{k+1}(X),\mathcal{HK}^{k+1}(G)\times_{G}\mathcal{HK}^{k+1}(G)\right)$ is uniquely ergodic since it is a factor of $(T^{k+1}(X),T^{k+1}(G))$ by Lemma \ref{factor map lemmma}. 
\end{proof}
\begin{defn}\label{definition of the measures for T(X) and CXC}
Let $(X,G,\mu)$ be a distal $(2k+2)$-cube uniquely ergodic system.
Denote the unique invariant measures of $(T^{k+1}(X),T^{k+1}(G))$ and $\left(C_G^{k+1}(X)\times_{X}C_G^{k+1}(X),\mathcal{HK}^{k+1}(G)\times_{G}\mathcal{HK}^{k+1}(G)\right)$ by $\mu_{T^{k+1}(X)}$ and $\mu_{C_G^{k+1}(X)\times_{X}C_G^{k+1}(X)}$ respectively. 
\end{defn}
\begin{cor} Let $(X,G,\mu)$ be a distal $(2k+2)$-cube uniquely ergodic system. Then
$\mu_{C_G^{k+1}(X)\times_{X}C_G^{k+1}(X)}=\int_X\mu_{C_G^{k+1}(X)}^x\times \mu_{C_G^{k+1}(X)}^xd\mu(x)$.
\end{cor}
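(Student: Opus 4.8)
The plan is to show that the measure on the right-hand side, call it $\nu:=\int \mu_{C_G^{k+1}(X)}^x\times \mu_{C_G^{k+1}(X)}^x\,d\mu(x)$, is an invariant measure for the system $\left(C_G^{k+1}(X)\times_{X}C_G^{k+1}(X),\mathcal{HK}^{k+1}(G)\times_{G}\mathcal{HK}^{k+1}(G)\right)$. Since by Lemma \ref{uniquely ergodic for T(X)} (and Definition \ref{definition of the measures for T(X) and CXC}) this system is uniquely ergodic with invariant measure $\mu_{C_G^{k+1}(X)\times_{X}C_G^{k+1}(X)}$, the identity follows at once.

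First I would check $\nu$ is a well-defined Borel probability measure on $C_G^{k+1}(X)\times_{X}C_G^{k+1}(X)$. By Definition \ref{CSM for p0} the map $x\mapsto \mu_{C_G^{k+1}(X)}^x$ is the continuous measure section w.r.t. $\mathfrak{p}_0:(C_G^{k+1}(X),\mathcal{HK}^{k+1}(G))\to(X,G)$ (this uses that a distal $(2k+1)$-cube uniquely ergodic system is in particular a $k$-cube uniquely ergodic system, so that $(C_G^{k+1}(X),\mathcal{HK}^{k+1}(G))$ is distal and strictly ergodic). In particular $x\mapsto\mu_{C_G^{k+1}(X)}^x$ is continuous into $\mathcal{M}(C_G^{k+1}(X))$, and $\mu_{C_G^{k+1}(X)}^x$ is supported on $\mathfrak{p}_0^{-1}(x)$; composing with the continuous map $\lambda\mapsto\lambda\times\lambda$ shows $x\mapsto\mu_{C_G^{k+1}(X)}^x\times\mu_{C_G^{k+1}(X)}^x$ is continuous and $\mu_{C_G^{k+1}(X)}^x\times\mu_{C_G^{k+1}(X)}^x$ is supported on $\mathfrak{p}_0^{-1}(x)\times\mathfrak{p}_0^{-1}(x)=\pi'^{-1}(x)\subset C_G^{k+1}(X)\times_{X}C_G^{k+1}(X)$. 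Hence integrating against $\mu$ produces a genuine probability measure on the fiber product.

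Next, for invariance, take $(\mathbf{g}_1,\mathbf{g}_2)\in\mathcal{HK}^{k+1}(G)\times_{G}\mathcal{HK}^{k+1}(G)$, so that $(\mathbf{g}_1)_{\vec 0}=(\mathbf{g}_2)_{\vec 0}=:h\in G$. Using $(\mathbf{g})_*(\lambda_1\times\lambda_2)=(\mathbf{g})_*\lambda_1\times(\mathbf{g})_*\lambda_2$ together with the defining equivariance of the continuous measure section, namely $(\mathbf{g}_i)_*\mu_{C_G^{k+1}(X)}^x=\mu_{C_G^{k+1}(X)}^{(\mathbf{g}_i)_{\vec 0}x}=\mu_{C_G^{k+1}(X)}^{hx}$, one computes
\[
(\mathbf{g}_1,\mathbf{g}_2)_*\nu=\int \mu_{C_G^{k+1}(X)}^{hx}\times\mu_{C_G^{k+1}(X)}^{hx}\,d\mu(x)=\int \mu_{C_G^{k+1}(X)}^{y}\times\mu_{C_G^{k+1}(X)}^{y}\,d(h_*\mu)(y)=\nu,
\]
the last step using the $G$-invariance of $\mu$ (valid since a uniquely ergodic distal system is strictly ergodic, Remark \ref{rem:strictly ergodic distal}). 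Thus $\nu$ is $\mathcal{HK}^{k+1}(G)\times_{G}\mathcal{HK}^{k+1}(G)$-invariant, and unique ergodicity forces $\nu=\mu_{C_G^{k+1}(X)\times_{X}C_G^{k+1}(X)}$.

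I do not expect a substantive obstacle here; the only points needing care are (i) identifying the group homomorphism underlying $\mathfrak{p}_0$ as $\mathbf{g}\mapsto\mathbf{g}_{\vec 0}$, and (ii) checking that the product action $(\mathbf{g}_1,\mathbf{g}_2)$ indeed preserves the fiber-product constraint $(c_1)_{\vec 0}=(c_2)_{\vec 0}$ — which holds precisely because $(\mathbf{g}_1)_{\vec 0}=(\mathbf{g}_2)_{\vec 0}$, so that both coordinates get moved by the same element $h$.
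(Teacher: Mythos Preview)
Your proposal is correct and follows exactly the same approach as the paper: verify that $\int \mu_{C_G^{k+1}(X)}^x\times \mu_{C_G^{k+1}(X)}^x\,d\mu(x)$ is $\mathcal{HK}^{k+1}(G)\times_{G}\mathcal{HK}^{k+1}(G)$-invariant and then invoke the unique ergodicity from Lemma~\ref{uniquely ergodic for T(X)}. Your version simply fills in more details (well-definedness, the explicit equivariance computation via the continuous measure section property, and preservation of the fiber-product constraint) that the paper leaves implicit in a single sentence.
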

\begin{proof}
As $(X,G,\mu)$ is a distal $(k+1)$-cube uniquely ergodic system, by Corollary \ref{cor:p_0 cont meas sec}, $\{\mu_{C_G^{k+1}(X)}^x\}_{x\in X}$ (defined in Definition \ref{CSM for p0})  is a continuous equivariant measure section. Thus for any $\mathbf{g}=(\mathbf{g}_{v})_{v\in\{0,1\}^{k+1}}\in\mathcal{HK}^{k+1}(G)$ one has that $\mathbf{g}_*\mu_{C_G^{k+1}(X)}^x=\mu_{C_G^{k+1}(X)}^{\mathbf{g}_{\vec{0}}x}$ for all $x\in X$. Note that for any $(\mathbf{g},\mathbf{h})\in \mathcal{HK}^{k+1}(G)\times_{G}\mathcal{HK}^{k+1}(G)$, one has that $\mathbf{g}_{\vec{0}}=\mathbf{h}_{\vec{0}}$, and thus $(\mathbf{g},\mathbf{h})_*(\mu_{C_G^{k+1}(X)}^x\times \mu_{C_G^{k+1}(X)}^{x})=\mu_{C_G^{k+1}(X)}^{\mathbf{g}_{\vec{0}}x}\times \mu_{C_G^{k+1}(X)}^{\mathbf{g}_{\vec{0}}x}$ for all $x\in X$. Therefore, as $\mu$ is $G$-invariant, by Lemma \ref{lem:invariant measure of section}, $\int_X \mu_{C_G^{k+1}(X)}^x\times \mu_{C_G^{k+1}(X)}^xd\mu(x)$ is an $\mathcal{HK}^{k+1}(G)\times_{G}\mathcal{HK}^{k+1}(G)$-invariant measure for $\left(C_G^{k+1}(X)\times_{X}C_G^{k+1}(X),\mathcal{HK}^{k+1}(G)\times_{G}\mathcal{HK}^{k+1}(G)\right)$. By Lemma \ref{uniquely ergodic for T(X)}, $\left(C_G^{k+1}(X)\times_{X}C_G^{k+1}(X),\mathcal{HK}^{k+1}(G)\times_{G}\mathcal{HK}^{k+1}(G)\right)$ is uniquely ergodic and therefore $\mu_{C_G^{k+1}(X)\times_{X}C_G^{k+1}(X)}=\int_X\mu_{C_G^{k+1}(X)}^x\times \mu_{C_G^{k+1}(X)}^xd\mu(x)$.
\end{proof}

\begin{prop} \label{prop:CMD for tri cube} 
Let $(X,G,\mu)$ be
a distal $(2k+2)$-cube uniquely ergodic system. Then the following maps are CMD maps:
\begin{enumerate}
   \item 
$(C_G^{k+1}(X),\mu_{C_G^{k+1}(X)})\xrightarrow{\mathfrak{p}_{0}:c\rightarrow c_{\vec{0}}}(X,\mu)$,
\item 
for any $v\in \{0,1\}^{k+1}$, $(T^{k+1}(X),\mu_{T^{k+1}(X)})\xrightarrow{\tilde{\pi}_v:\mathbf{t}\rightarrow(\omega(\mathbf{t}),\psi_{v}(\mathbf{t}))} (C_G^{k+1}(X)\times_{X}^vC_G^{k+1}(X),(\tilde{\pi}_v)_* \mu_{T^{k+1}(X)})$,
\item 
for any $v\in \{0,1\}^{k+1}$, $ (C_G^{k+1}(X)\times_{X}^vC_G^{k+1}(X),(\tilde{\pi}_v)_* \mu_{T^{k+1}(X)})\xrightarrow{\pi_{L,v}:(c_1,c_2)\mapsto c_{1}} (C_G^{k+1}(X),\mu_{C_G^{k+1}(X)})$.
\end{enumerate}

\end{prop}

\begin{proof}
from Lemma \ref{uniquely ergodic for T(X)}, the maps 1 and 2 are factor maps (in particular surjective) from the uniquely ergodic systems $(C_G^{k+1}(X),\mathcal{HK}^{k+1}(G),\mu_{C_G^{k+1}(X)})$ and $(T^{k+1}(X),T^{k+1}(G),\mu_{T^{k+1}(X)})$ respectively. By Proposition \ref{prop:distality}(2) these systems are distal. The system $(C_G^{k+1}(X)\times_{X}^vC_G^{k+1}(X),\mathcal{HK}^{k+1}(G)\times_{G}^v\mathcal{HK}^{k+1}(G), (\tilde{\pi}_v)_* \mu_{T^{k+1}(X)})$, being the image in factor map 2 of a uniquely ergodic distal system is uniquely ergodic and distal by Proposition \ref{prop:distality}(2) and Theorem \ref{thm:factor of u.e.}. We can now apply Proposition \ref{CMD for distal} to conclude that maps 1,2,3 are CMD maps.

\end{proof}

\begin{defn}\label{definition of eta and Txx}
Let $(X,G,\mu)$ be
a distal $(2k+2)$-cube uniquely ergodic system. Denote the continuous equivariant measure section w.r.t.\   $\omega:(T^{k+1}(X),T^{k+1}(G))\rightarrow (C_G^{k+1}(X),\mathcal{HK}^{k+1}(G))$ by $\{\eta^{c}\}_{c\in C_G^{k+1}(X)}$. Denote the continuous equivariant measure section w.r.t.\  $\pi_{T}:(T^{k+1}(X),T^{k+1}(G))\rightarrow (X,G):\mathbf{t}\rightarrow\mathbf{t}_{\vec{1}}$ by 
 $\{\mu_{T^{k+1}(X)}^{x}\}_{x\in X}$.
\end{defn}
\begin{cor}\label{cor:The CSM } Let $(X,G,\mu)$ be
a distal $(2k+2)$-cube uniquely ergodic system. Then 
 $\mu_{T^{k+1}(X)}^{x}=\int_{C_G^{k+1}(X)}\eta^{c}(\mathbf{t})d\mu_{C_G^{k+1}(X)}^{x}(c)$ for all $x\in X$.
\end{cor}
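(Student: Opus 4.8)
The statement to be proved is Corollary~\ref{cor:The CSM }, which asserts the identity $\mu_{T^{k+1}(X)}^{x}=\int\eta^{c}(\mathbf{t})\,d\mu_{C_G^{k+1}(X)}^{x}(c)$ for all $x\in X$. The strategy is to recognize both sides as continuous measure sections for the same factor map and then invoke uniqueness. First I would identify the relevant factor map: the composition $\pi_T = \mathfrak{p}_0\circ\omega : T^{k+1}(X)\to C_G^{k+1}(X)\to X$, where $\omega:\mathbf{t}\mapsto(\mathbf{t}_{\Omega(\tilde v)})_{\tilde v}$ and $\mathfrak{p}_0:c\mapsto c_{\vec 0}$; note that $\mathfrak{p}_0\circ\omega(\mathbf{t}) = (\mathbf{t}_{\Omega(\vec 0)}) = \mathbf{t}_{\vec 1}$ since $\Omega(\vec 0)=\vec 1$, so this composition is exactly the map $\pi_T:\mathbf{t}\to\mathbf{t}_{\vec 1}$ appearing in Definition~\ref{definition of eta and Txx}. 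Thus $\{\mu_{T^{k+1}(X)}^{x}\}_{x\in X}$ is by definition the (unique) continuous measure section for $\pi_T = \mathfrak{p}_0\circ\omega$.

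Next I would apply Lemma~\ref{lem:CSM circ}. We have two CMD maps: $\omega:(T^{k+1}(X),\mu_{T^{k+1}(X)})\to(C_G^{k+1}(X),\mu_{C_G^{k+1}(X)})$ with continuous system of measures $\{\eta^c\}_{c}$ (this is a CMD map by Proposition~\ref{CMD for distal}, since by Lemma~\ref{uniquely ergodic for T(X)} both spaces are uniquely ergodic distal — the continuous measure section $\{\eta^c\}$ disintegrates $\mu_{T^{k+1}(X)}$ over $\mu_{C_G^{k+1}(X)}$ by Lemma~\ref{lem:invariant measure of section} combined with unique ergodicity of $T^{k+1}(X)$), and $\mathfrak{p}_0:(C_G^{k+1}(X),\mu_{C_G^{k+1}(X)})\to(X,\mu)$ with continuous system of measures $\{\mu_{C_G^{k+1}(X)}^x\}_x$ (CMD by Definition~\ref{CSM for p0} / Proposition~\ref{CMD for distal}). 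Lemma~\ref{lem:CSM circ} then yields that $\pi_T=\mathfrak{p}_0\circ\omega$ is a CMD map with continuous system of measures $\{\gamma^x = \int_{C_G^{k+1}(X)}\eta^c\,d\mu_{C_G^{k+1}(X)}^x(c)\}_{x\in X}$, which is precisely the right-hand side of the claimed identity.

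Finally I would invoke uniqueness. Both $\{\mu_{T^{k+1}(X)}^x\}_x$ and $\{\gamma^x\}_x$ are continuous systems of measures that disintegrate $\mu_{T^{k+1}(X)}$ over $\mu=\pi_{T*}\mu_{T^{k+1}(X)}$; moreover each $\gamma^x$ is $\pi_T$-equivariant (it is a composition of equivariant systems of measures, hence a continuous measure section). By Proposition~\ref{CMD for distal} applied to the distal factor map $\pi_T$ between the uniquely ergodic distal systems $T^{k+1}(X)$ and $X$, there is a \emph{unique} continuous system of measures for $\pi_T$; hence $\mu_{T^{k+1}(X)}^x = \gamma^x$ for all $x$, which is the desired equality.

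I do not anticipate a serious obstacle here; the only point requiring a little care is the bookkeeping identity $\Omega(\vec 0)=\vec 1$ (so that the two descriptions of the base map $T^{k+1}(X)\to X$ genuinely agree) and checking that the hypotheses of Lemma~\ref{lem:CSM circ} — namely that $\omega$ and $\mathfrak{p}_0$ are CMD maps with respect to the stated measures — are all in place, which they are via Lemma~\ref{uniquely ergodic for T(X)}, Proposition~\ref{CMD for distal}, and the definitions in this subsection. One should also note in passing that $\omega$ and $\mathfrak{p}_0$ are distal factor maps (as factor maps between distal systems, or directly), so that Proposition~\ref{CMD for distal} indeed applies to each; the surjectivity of $\omega$ was established in Lemma~\ref{factor map lemmma}.
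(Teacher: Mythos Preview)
Your proposal is correct and follows essentially the same approach as the paper. The paper's proof is more terse: it notes $\pi_T=\mathfrak{p}_0\circ\omega$ (and $\omega=\pi_L\circ\tilde\pi$ to place $\omega$ among the CMD maps of Proposition~\ref{CSM for tri cube}), then simply invokes Lemma~\ref{lem:CSM circ} together with Proposition~\ref{CSM for tri cube}; uniqueness of the continuous system of measures (Proposition~\ref{CMD for distal}) is left implicit, whereas you spell it out explicitly.
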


\begin{proof}
Notice $\omega=\pi_L\circ \tilde{\pi}$ and  $\pi_{T}=\mathfrak{p}_{0}\circ\omega$ and the involved maps are factor maps between uniquely ergodic distal systems. Therefore the involved maps are measure-preserving. Now apply Lemma \ref{lem:CSM circ} and Proposition \ref{prop:CMD for tri cube}.
\end{proof}

\begin{lem}\label{measure-preserving}
Let $(X,G,\mu)$ be a distal $(2k+2)$-cube uniquely ergodic system. Then $\psi_v,\omega:T^{k+1}(X)\rightarrow C_G^{k+1}(X)$ are measure-preserving for $v\in \{0,1\}^{k+1}$. Moreover, $\omega_*\mu_{T^{k+1}(X)}^x=\mu_{C_G^{k+1}(X)}^x,\  (\psi_{\vec{0}})_*\mu_{T^{k+1}(X)}^x=\mu_{C_G^{k+1}(X)}^x,\ (\psi_{v})_*\eta^c=\mu_{C_G^{k+1}(X)}^{c_v}$  
for all $x\in X$. If $v\neq \vec{0}$,  $(\psi_{v})_{*}\mu_{T^{k+1}(X)}^{x}=\mu_{C_G^{k+1}(X)}$ for all $x\in X$. In addition for all $x\in X$ and $g\in G$, $(g^{[k+1]})_*\mu_{C_G^{k+1}(X)}^x=\mu_{C_G^{k+1}(X)}^{gx}$.
\end{lem}
\begin{proof}

As $(X,G,\mu)$ is a distal $(2k+2)$-cube uniquely ergodic system, in particular a $(k+1)$-cube uniquely ergodic system, $(C_G^{k+1}(X),\mathcal{HK}^{k+1}(G))$ is uniquely ergodic. As $\psi_v(T^{k+1}(G))=\mathcal{HK}^{k+1}(G)$ for $v\in \{0,1\}^{k+1}$ and $\omega(T^{k+1}(G))=\mathcal{HK}^{k+1}(G)$ (see Lemma \ref{factor map lemmma}),  $(\psi_v)_*\mu_{T^{k+1}(X)}$ and $\omega_*\mu_{T^{k+1}(X)}$ are $\mathcal{HK}^{k+1}(G)$-invariant measures on $(C_G^{k+1}(X),\mathcal{HK}^{k+1}(G))$, we have:
\begin{equation}\label{equation for measures}(\psi_v)_*\mu_{T^{k+1}(X)}=\omega_*\mu_{T^{k+1}(X)}=\mu_{C_G^{k+1}(X)}.
\end{equation}
Notice that $ \mathfrak{p}_0\circ \psi_{\vec{0}}=\pi_T$. By Equation \eqref{equation for measures}, $$\mu_{C_G^{k+1}(X)}=(\psi_{\vec{0}})_*\mu_{T^{k+1}(X)}=\int_{X}(\psi_{\vec{0}})_*\mu_{T^{k+1}(X)}^xd\mu
(x).$$
As $\psi_{\vec{0}}(T^{k+1}(X))=C_G^{k+1}(X)$, $\{(\psi_{\vec{0}})_*\mu_{T^{k+1}(X)}^x\}_{x\in X}$ is a measure disintegration w.r.t.\  $\mathfrak{p}_0$.
Notice that $\psi_{\vec{0}}$ is a continuous map. Therefore for any continuous function $F:C_G^{k+1}(X)\rightarrow \mathbb{C}$, $F\circ \psi_{\vec{0}}$ is a continuous function on $T^{k+1}(X)$. As $\{\mu^x_{T^{k+1}(X)}\}_{x\in X}$ is a continuous system of measures, 
$$x\mapsto \int_{T^{k+1}(X)}F\circ \psi_{\vec{0}}d\mu^x_{T^{k+1}(X)}$$
is a continuous map. Therefore $(\psi_{\vec{0}})_*\mu_{T^{k+1}(X)}^x$ is a continuous system of measures w.r.t.\  $\mathfrak{p}_0:C_G^{k+1}(X)\rightarrow X$. As $\mu_{C_G^{k+1}(X)}^x$ is a continuous system of measures for $\mathfrak{p}_0$ and $\mu$ has full support, it holds that $(\psi_{\vec{0}})_*\mu_{T^{k+1}(X)}^x=\mu_{C_G^{k+1}(X)}^x$ (see Remark \ref{uniqueness of CSM}). Similarly, as $\mathfrak{p}_0\circ \omega=\pi_T$ and $\mathfrak{p}_0,\  \omega$ are continuous maps, one has that $\omega_*\mu_{T^{k+1}(X)}^x=\mu_{C_G^{k+1}(X)}^x$. Notice that for $c\in C_G^{k+1}(X)$, $\mathfrak{p}_0(\psi_v(\mathbf{t}))=c_v$ for any $\mathbf{t}\in \omega^{-1}(c)$. Therefore $ (\psi_{v})_*\eta^c=\mu_{C_G^{k+1}(X)}^{c_v}$.

Let $T^{k+1}_{\id}(G)\subset T^{k+1}(G)$ be the subgroup such that
$$T^{k+1}_{\id}(G)=\{\mathbf{t}\in T^{k+1}(G):\mathbf{t}_{\vec{1}}=\id\}.$$
Notice that $\mu_{T^{k+1}(X)}$ is $T^{k+1}(G)$-invariant, therefore  
for every $t\in T^{k+1}_{\id}(G)$, $t_{*}\mu_{T^{k+1}(X)}^{x}$ is a measure disintegration w.r.t.\  $\pi_T$ and therefore $t_{*}\mu_{T^{k+1}(X)}^{x}=\mu_{T^{k+1}(X)}^x$ for a.s. $x$. As $t$ is a continuous function, $t_{*}\mu_{T^{k+1}(X)}^{x}$ is also a continuous system of measures. As $\mu$ has full support on $X$,  $t_{*}\mu_{T^{k+1}(X)}^{x}=\mu_{T^{k+1}(X)}^x$ for any $x$. For $v\neq \vec{0}$, as $\psi_{v}(T^{k+1}_{\id}(G))=\mathcal{HK}^{k+1}(G)$, one has that $(\psi_{v})_{*}\mu_{T^{k+1}(X)}^{x}$ is a $\mathcal{HK}^{k+1}(G)$-invariant
measure on $C_G^{k+1}(X)$. Notice that $(C_G^{k+1}(X),\mathcal{HK}^{k+1}(G))$
is uniquely ergodic, therefore $(\psi_{v})_{*}\mu_{T^{k+1}(X)}^{x}=\mu_{C_G^{k+1}(X)}$.

As $g^{[k+1]}$ is a continuous function, we conclude similarly as above that  for all $x\in X$ and $g\in G$, $(g^{[k+1]})_*\mu_{C_G^{k+1}(X)}^x=\mu_{C_G^{k+1}(X)}^{gx}$.
\end{proof}

\subsection{Alternating sum formula for nilcycles on tricubes.\label{subsec:measure on tricube}}

\begin{lem}
\label{lem: tricube measure nilcycle} Let $(X,G,\mu)$ be a distal $(2k+2)$-cube uniquely ergodic system and $\rho\colon C_G^{k+1}(X)\to A$ a nilcycle.
Then for $\mu$-a.e. $x$:
\begin{equation}
\sum_{v\in\{0,1\}^{k+1}}(-1)^{|v|}\rho(\psi_{v}(\mathbf{t}))=\rho(\omega(\mathbf{t})),\ \mu_{T^{k+1}(X)}^{x}-\mathrm{a.e.}\mathbf{t};\label{eq: 1-tri-cube nilcycle-1}
\end{equation}
and for $\mu_{C_G^{k+1}(X)}$-a.e. $c$:
\begin{equation}
\sum_{v\in\{0,1\}^{k+1}}(-1)^{|v|}\rho(\psi_{v}(\mathbf{t}))=\rho(c),\ \eta^{c}-\mathrm{a.e.}\mathbf{t}.\label{eq:2-tri-cube nilcycle-1}
\end{equation}
\end{lem}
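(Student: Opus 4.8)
\emph{Strategy.}
Both displayed identities reduce to the single assertion
\[
\sum_{v\in\{0,1\}^{k+1}}(-1)^{|v|}\rho(\psi_{v}(\mathbf{t}))=\rho(\omega(\mathbf{t}))\qquad\text{for }\mu_{T^{k+1}(X)}\text{-a.e.\ }\mathbf{t}.
\]
Indeed, $\{\mu_{T^{k+1}(X)}^{x}\}_{x\in X}$ and $\{\eta^{c}\}_{c\in C_G^{k+1}(X)}$ are continuous measure sections with respect to $\pi_{T}$ and $\omega$, so by Lemma \ref{lem:invariant measure of section} and the unique ergodicity of $(T^{k+1}(X),T^{k+1}(G))$ (Lemma \ref{uniquely ergodic for T(X)}) the measures $\int_{X}\mu_{T^{k+1}(X)}^{x}\,d\mu(x)$ and $\int_{C_G^{k+1}(X)}\eta^{c}\,d\mu_{C_G^{k+1}(X)}(c)$ both equal $\mu_{T^{k+1}(X)}$ and are disintegrations of it over $\pi_{T}$ and $\omega$. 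Applying Fubini to the single assertion then yields \eqref{eq: 1-tri-cube nilcycle-1} for $\mu$-a.e.\ $x$, and \eqref{eq:2-tri-cube nilcycle-1} for $\mu_{C_G^{k+1}(X)}$-a.e.\ $c$ (for the latter using that $\eta^{c}$ is carried by $\omega^{-1}(c)$, whence $\rho(\omega(\mathbf t))=\rho(c)$ $\eta^{c}$-a.e.).

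\emph{The combinatorial identity.}
To prove the single assertion I would exhibit $\omega(\mathbf{t})$ as the outcome of glueing the $2^{k+1}$ corner cubes $\psi_{v}(\mathbf{t})$ together one coordinate at a time --- the same decomposition that establishes the tricube alternating-sum identity for an everywhere-defined \emph{cocycle} in the sense of \cite[Definition 3.3.14]{candela2016alg_notes} (see also \cite{candela2016cpt_notes}). Concretely, processing coordinates $j=k+1,\dots,1$: at step $j$ pair the current $(k+1)$-cubes by their $j$-th coordinate, transpose coordinate $j$ into the last slot, reflect the $v_j=1$ member in that slot, and glue the pair; after $k+1$ steps one cube remains, equal to $\omega(\mathbf{t})$. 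Each transposition and reflection is a discrete cube isomorphism of $sgn$ $+1$ and $-1$, and tracking the accumulated signs shows $\psi_{v}(\mathbf{t})$ enters with weight $(-1)^{|v|}$. The routine but error-prone part here is the sign bookkeeping, together with checking that each intermediate pair really is glueable and that the final cube is $\omega(\mathbf{t})$ and not a reflection of it.

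\emph{Making the a.e.\ axioms applicable.}
The decomposition invokes cube invariance (property (1) of Definition \ref{def:nilcycle}) on finitely many cubes and the glueing axiom (property (2)) on finitely many glueable pairs, and these hold only a.e.; the main obstacle is to arrange that all of them are in force for $\mu_{T^{k+1}(X)}$-a.e.\ $\mathbf{t}$ simultaneously. For this, note that each cube-valued map $f\colon T^{k+1}(X)\to C_G^{k+1}(X)$ and pair-valued map $f\colon T^{k+1}(X)\to\mathcal{P}^{k+1}(X)$ occurring in the decomposition is continuous, lands in its target (for pair-valued $f$ because each constructed pair is glueable; for cube-valued $f$ by the glueing property of the fibrant cubespace $C_G^{\bullet}(X)$, which is fibrant by Theorem \ref{thm:distal->fibrant}), and is equivariant with respect to a \emph{surjective} homomorphism $T^{k+1}(G)\to\mathcal{HK}^{k+1}(G)$, resp.\ $T^{k+1}(G)\to\mathcal{P}^{k+1}(G)$; the surjectivity follows exactly as in the proof of Lemma \ref{factor map lemmma} from the fibrancy of $(G,\mathcal{HK}^{\bullet}(G))$ (Lemma \ref{fibrant cubespace for G}) and the extension property (Lemma \ref{lem: Extension property}) applied to appropriate downward-closed subsets of $\{0,1\}^{2(k+1)}$. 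Consequently each $f$ is a factor map between uniquely ergodic systems --- the domain by Lemma \ref{uniquely ergodic for T(X)}, which is where the hypothesis that $(X,G)$ is $(2k+1)$-cube uniquely ergodic enters; the target $C_G^{k+1}(X)$ by the $k$-cube unique ergodicity it implies; and $\mathcal{P}^{k+1}(X)$ by Proposition \ref{prop:glueing pair uniquely ergodic} --- hence measure-preserving, so $f_{*}\mu_{T^{k+1}(X)}$ is the unique invariant measure of the target. Since there are only finitely many such $f$, for $\mu_{T^{k+1}(X)}$-a.e.\ $\mathbf{t}$ all the cubes and pairs $f(\mathbf{t})$ lie in the respective full-measure sets where (1) and (2) of $\rho$ hold; on that set the decomposition gives the single assertion term by term, completing the proof.
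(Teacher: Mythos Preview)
Your proposal is correct and follows essentially the same approach as the paper: both reduce to the single $\mu_{T^{k+1}(X)}$-a.e.\ identity, invoke the tricube ``glueing sum'' decomposition of \cite[Lemma 3.3.31]{candela2016alg_notes} for the combinatorics, and justify the passage from the a.e.\ nilcycle axioms by showing that each map from $T^{k+1}(X)$ into $C_G^{k+1}(X)$ or $\mathcal{P}^{k+1}(X)$ arising in the decomposition pushes $\mu_{T^{k+1}(X)}$ forward to the unique invariant measure, so that finitely many full-measure pullbacks intersect in a full-measure set. Your write-up is in fact more explicit than the paper's about the Fubini reduction and the equivariance/surjectivity checks, but the ideas coincide.
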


\begin{proof}
Let $\mathbf{t}\in T^{k+1}(X)$ be a tricube. Let $\alpha=\{v,v'\}$ be an edge in $\{0,1\}^{k+1}$, i.e.\  there exists $i_{1}\in\{1,\ldots,k+1\}$ such that
$v_{i_{1}}\neq v'_{i_{1}},v'(i)=v'(i)$ for $i\neq i_{1}$. By the definition
of $T^{k+1}(X)$, $\psi_{v}(\mathbf{t}),\psi_{v'}(\mathbf{t})\in C_G^{k+1}(X)$.
We also notice that $\psi_{v}(\mathbf{t})\text{ and }\psi_{v'}(\mathbf{t})$
have a common $(k+1)$-face, which means that there exist two cube isomorphisms
$\sigma_{1},\sigma_{2}:C_G^{k+1}(X)\rightarrow C_G^{k+1}(X)$ such that
$(\sigma_{1}(\psi_{v}(\mathbf{t})),\sigma_{2}(\psi_{v'}(\mathbf{t})))\in\mathcal{P}^{k+1}(X)$.
Let $\tau_{\alpha}:T^{k+1}(X)\rightarrow\mathcal{P}^{k+1}(X)$ be
the map defined by $\tau_{\alpha}(\mathbf{t})=(\sigma_{1}(\psi_{v}(\mathbf{t})),\sigma_{2}(\psi_{v'}(\mathbf{t})))$. By Lemma \ref{measure-preserving}, as $X$ is a distal $(2k+2)$-cube uniquely ergodic system, $(\psi_{v})_{*}\mu_{T^{k+1}(X)}=\omega_{*}\mu_{T^{k+1}(X)}=\mu_{C_G^{k+1}(X)}$. Similarly, as $X$ is a distal $(2k+2)$-cube uniquely ergodic system, $(\tau_{\alpha})_{*}\mu_{T^{k+1}(X)}=\mu_{\mathcal{P}^{k+1}(X)}$.

Following the proof of \cite[Lemma 3.3.31]{candela2016alg_notes}, we see that if the 'glueing property' (i.e.\  $\rho(c_1\|c_2)=\rho(c_1)+\rho(c_2)$) of $\rho$ holds for \textit{every} $(c_1,c_2)\in \mathcal{P}^{k+1}(X)$, then the outer cube $\omega(\mathbf{t})$ may be expressed as a 'glueing sum' of the cubes $\psi_v(\mathbf{t})$. Thus under this assumption, (\ref{eq: 1-tri-cube nilcycle-1})
and (\ref{eq:2-tri-cube nilcycle-1}) hold for every $c$ and every $x$. Now let $D\subset T^{k+1}(X)$ be the set of measure $1$ which
corresponds to the intersection of pullbacks by $\tau_{\alpha}$
of full measure sets which have properties 1 and 2 of nilcycles
for all edges $\alpha$. Using the set $D$, \eqref{eq: 1-tri-cube nilcycle-1}
and (\ref{eq:2-tri-cube nilcycle-1}) follow from a similar argument to the one in the proof of \cite[Lemma 3.3.31]{candela2016alg_notes} (however  in this case equality holds only \textit{almost} everywhere), where one also uses Lemma \ref{lem:CSM circ} which implies $\mu_{T^{k+1}(X)}^{x}=\int_{\mathfrak{p}_{0}^{-1}(x)}\eta^{c}d\mu_{C_G^{k+1}(X)}^{x}$. 
\end{proof}
\subsection{Function bundles.}
\label{subsec:Function-bundles}\label{subsec:Iterated-Haar-measure.}
\begin{defn}
\label{def:Let--be function bundle}Let $\pi:X\rightarrow Y$ be a
continuous and surjective map between two compact metric spaces. Let $\mu_{\pi}^{\bullet}=\{\mu_{\pi}^{y}\}_{y\in Y}$
be a continuous system of measures. Let $A$ be a compact space. The \strong{function bundle} of $\pi:X\rightarrow Y$ is the space $\mathcal{L}(X\xrightarrow{\pi}Y,A)=\stackrel{\circ}{\bigcup}_{y\in Y}\mathcal{B}(\pi^{-1}(y),A,\mu_{\pi}^{y})$,
where $\mathcal{B}(B,S,\nu_{B})$ is the quotient of the set of Borel
measurable functions $f:B\rightarrow S$ by the equivalence relation
$\sim$ defined by $f\sim g\Longleftrightarrow\nu_{B}(b\in B:f(b)\neq g(b))=0$.
In this article, the equivalence classes of $f$ will be denoted by
$[f]$. Let $\hat{\pi}:\mathcal{L}(X\xrightarrow{\pi}Y,A)\rightarrow Y$ be the map 
defined by $\hat{\pi}(f)=y$ if $f\in\mathcal{B}(\pi^{-1}(y),A,\mu_{\pi}^{y})$. According to \cite[Proposition 2.3.3]{candela2016cpt_notes}, the map $\hat{\pi}$ is continuous.

The topology of $\mathcal{L}(X\xrightarrow{\pi}Y,A)$ is the coarsest
topology making the functions 

\[
\phi_{F_{1},F_{2}}:f\rightarrow\int_{\pi^{-1}(\hat{\pi}(f))}F_{1}(f(v))F_{2}(v)d\mu_{\pi}^{\hat{\pi}(f)}(v),
\]
continuous for every pair of continuous functions $F_{1}:A\rightarrow\mathbb{C}$
and $F_{2}:X\rightarrow\mathbb{C}$. In particular, if $A$ is a compact
abelian group, we may assume that $F_{1}$ is a character of $A$,
i.e.\  $F_{1}:A\rightarrow S^{1}\text{ and }F_{1}(a+b)=F_{1}(a)F_{1}(b)$.\footnote{Let $\hat{A}$ be the group of  characters of $A$, i.e.\  the dual
group of $A$. By Pontryagin duality theorem, $A=\hat{\hat{A}}$.
Thus for $a\neq b\in A$, $\hat{\hat{a}}\neq \hat{\hat{b}}$,
i.e.\  there exists a character $\chi\in\hat{A}$ such that $a(\chi)\neq b(\chi)$.
As $a(\chi)=\chi(a)$, one has that $\hat{A}$ separates $A$. By Stone-Weierstrass
theorem, the subalgebra generated by $\hat{A}$ is dense in $C(A)$.}
\end{defn}
\section{Proof of the main theorem.\label{sec:The-topological-nilspace}}

In this section, we prove Theorem \ref{thm:main theorem}. An overview of the steps of the proof is given by the captions of the various subsections of this section.
\subsection{Definitions}\label{subsec:Definitions}
Recall that in Theorem \ref{thm:main theorem} we are given a distal $(2k+2)$-cube uniquely ergodic system $(X,G,\mu)$. Using Lemma \ref{lem: tricube measure nilcycle}, we define a full
measure set $X'$ such that for all $x\in X'$, for $\mu_{T^{k+1}(X)}^{x}$-a.e.
$\mathbf{t}\in T^{k+1}(X)$,
\begin{equation}\label{def:X'}
\sum_{\nu\in\{0,1\}^{k+1}}(-1)^{|\nu|}\rho(\psi_{\nu}(\mathbf{t}))=\rho(\omega(\mathbf{t})).
\end{equation}

\noindent
Since the acting group $G$ is countable, we can assume that $X'$
is $G$-invariant. As $\mu$ is the unique ergodic measure of a
distal system, $\mu$ has full support. Thus we know that $$X=\overline{X'}.$$ 
\noindent
 For $v\in \{0,1\}^{n}$, define $\mathfrak{p}_{v,n}:C_G^{n}(X)\rightarrow X:\ \mathbf{x}\rightarrow\mathbf{x}_{v}$ as the
projection from $C_G^{n}(X)$ to the $v$-coordinate. We also write $\mathfrak{p}_{v}=\mathfrak{p}_{v,n}$ if no confusion arises, as well as $\mathfrak{p}_{0}$ instead of $\mathfrak{p}_{\vec{0}}$. In particular, for $v=\vec{0}$, $n=k+1$, define 
$$M'=\{-\rho_{x}+a:\ x\in X',a\in A\}\subset\mathcal{L}(C_G^{k+1}(X)\xrightarrow{\mathfrak{p}_{0}}X,\ A),\footnote{The reason why we use $-\rho_x+a$ and not $\rho_x+a$ is elucidated by the proof of Lemma \ref{fact:well defined and continuous}.}$$ where $\rho_{x}:\mathfrak{p}_0^{-1}(x)\rightarrow A:c\rightarrow \rho(c)$ is an element in $\mathcal{B}(\mathfrak{p}_0^{-1}(x),A,\mu_{C_G^{k+1}(X)}^x)$. Define $$M=\overline{M'}.$$
\noindent
Notice that $\mathfrak{p}_0$ also induces a natural map:
$\hat{\mathfrak{p}}_{0}:\mathcal{L}(C_G^{k+1}(X)\xrightarrow{\mathfrak{p}_{0}}X,A)\rightarrow X$ (see Definition \ref{def:Let--be function bundle}).

\subsection{The space $M$ is compact.}\label{subsec:The-compactness-of}
In this section we establish that $M$ is compact. Our approach is heavily influenced by \cite{CS12} and we use extensively \cite{candela2016cpt_notes} and \cite{candela2016alg_notes}. See Remark \ref{rem for candela}.

\begin{defn}
\label{def:the definition of E} Define $\mathcal{E}:\mathcal{L}(C_G^{k+1}(X)\xrightarrow{\mathfrak{p}_{0}}X,\ A)\rightarrow\mathcal{L}(C_G^{k+1}(X)\times_{X}C_G^{k+1}(X)\xrightarrow{\pi'}X,\ A)$ by
$\mathcal{E}(f)(c_{0},c_{1})=f(c_{0})-f(c_{1})$.
\end{defn}

\begin{lem}
\label{lem:E-is-continuous.}$\mathcal{E}$ is continuous.
\end{lem}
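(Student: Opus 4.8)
The plan is to check continuity of $\mathcal{E}$ directly against the defining subbasis of the function bundle topology on the target space $\mathcal{L}(C_G^{k+1}(X)\times_X C_G^{k+1}(X)\xrightarrow{\pi'}X,A)$. Recall that this topology is the coarsest one making all maps of the form
\[
\phi_{F_1,F_2}:g\mapsto\int_{(\pi')^{-1}(\hat{\pi}(g))}F_1\bigl(g(c_0,c_1)\bigr)F_2(c_0,c_1)\,d\mu_{\pi'}^{\hat{\pi}(g)}(c_0,c_1)
\]
continuous, where $F_1:A\to\mathbb{C}$, $F_2:C_G^{k+1}(X)\times_X C_G^{k+1}(X)\to\mathbb{C}$ are continuous, and by the remark following Definition \ref{def:Let--be function bundle} we may take $F_1$ to be a character $\chi\in\hat A$ when $A$ is a compact abelian group. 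Since $A$ is abelian, $\chi\bigl(\mathcal{E}(f)(c_0,c_1)\bigr)=\chi(f(c_0)-f(c_1))=\chi(f(c_0))\overline{\chi(f(c_1))}$, so it suffices to show that for every character $\chi$ and every continuous $F_2$ the composition $\phi_{\chi,F_2}\circ\mathcal{E}$ is continuous on $\mathcal{L}(C_G^{k+1}(X)\xrightarrow{\mathfrak{p}_0}X,A)$.

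First I would use the corollary preceding Proposition \ref{CSM for tri cube}, namely $\mu_{C_G^{k+1}(X)\times_X C_G^{k+1}(X)}=\int \mu_{C_G^{k+1}(X)}^x\times\mu_{C_G^{k+1}(X)}^x\,d\mu(x)$, to identify the continuous system of measures $\mu_{\pi'}^x$ for $\pi'$ with the product $\mu_{C_G^{k+1}(X)}^x\times\mu_{C_G^{k+1}(X)}^x$; this is legitimate because by Proposition \ref{CMD for distal} (applied to the distal uniquely ergodic system $C_G^{k+1}(X)\times_X C_G^{k+1}(X)$, which is uniquely ergodic by Lemma \ref{uniquely ergodic for T(X)}) there is a \emph{unique} continuous system of measures for $\pi'$, and the fibered product of continuous measure sections is visibly such a system. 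Then for $f\in\mathcal{L}(C_G^{k+1}(X)\xrightarrow{\mathfrak{p}_0}X,A)$ with $\hat{\mathfrak{p}}_0(f)=x$,
\[
\phi_{\chi,F_2}\bigl(\mathcal{E}(f)\bigr)=\int_{\mathfrak{p}_0^{-1}(x)}\int_{\mathfrak{p}_0^{-1}(x)}\chi(f(c_0))\overline{\chi(f(c_1))}\,F_2(c_0,c_1)\,d\mu_{C_G^{k+1}(X)}^x(c_0)\,d\mu_{C_G^{k+1}(X)}^x(c_1).
\]
If $F_2$ had the special product form $F_2(c_0,c_1)=H_0(c_0)H_1(c_1)$ with $H_0,H_1:C_G^{k+1}(X)\to\mathbb{C}$ continuous, this factors as $\phi_{\chi,H_0}(f)\cdot\overline{\phi_{\bar\chi,\overline{H_1}}(f)}$ (a product of two coordinates of the subbasis defining the topology on the source bundle), hence is continuous in $f$. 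To handle a general continuous $F_2$, I would approximate it uniformly on the compact space $C_G^{k+1}(X)\times_X C_G^{k+1}(X)$ — or more simply on the larger compact product $C_G^{k+1}(X)\times C_G^{k+1}(X)$, then restrict — by finite linear combinations of such product functions $H_0\otimes H_1$, using the Stone--Weierstrass theorem; the uniform estimate $\bigl|\phi_{\chi,F_2}(\mathcal{E}(f))-\phi_{\chi,F_2'}(\mathcal{E}(f))\bigr|\le\|F_2-F_2'\|_\infty$ (since $|\chi|\le 1$ and the measures are probability measures) shows $\phi_{\chi,F_2}\circ\mathcal{E}$ is a uniform limit of continuous functions, hence continuous.

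The main obstacle is the bookkeeping around the function-bundle topology: one must be careful that $\mathcal{E}$ is genuinely well-defined on equivalence classes (i.e. $f\mapsto\mathcal{E}(f)$ does not depend on the $\mu_{C_G^{k+1}(X)}^x$-a.e. representative of $f$, which uses that the fiber measure of $\pi'$ is the product measure so that a null set for one factor stays null), and that continuity of $\hat{\mathfrak{p}}_0$ together with the compatibility of the continuous systems of measures lets one write the above iterated integral with $x=\hat{\mathfrak{p}}_0(f)$ varying continuously. Once these compatibilities are in place the argument is the Stone--Weierstrass reduction sketched above. I would therefore organize the proof as: (i) identify $\mu_{\pi'}^x$ as the product measure via the uniqueness of CMD systems; (ii) verify $\mathcal{E}$ is well-defined; (iii) prove continuity for product test functions $F_2=H_0\otimes H_1$; (iv) conclude by Stone--Weierstrass and the uniform bound.
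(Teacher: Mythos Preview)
Your proposal is correct and follows essentially the same approach as the paper: reduce via Stone--Weierstrass to product test functions $F_2=H_0\otimes H_1$, then use that $\chi(f(c_0)-f(c_1))=\chi(f(c_0))\overline{\chi(f(c_1))}$ to factor the integral over the product fiber measure $\mu_{C_G^{k+1}(X)}^x\times\mu_{C_G^{k+1}(X)}^x$ into a product of two subbasic functions on the source bundle. (One small slip: your factorization should read $\phi_{\chi,H_0}(f)\cdot\phi_{\bar\chi,H_1}(f)$ rather than $\phi_{\chi,H_0}(f)\cdot\overline{\phi_{\bar\chi,\overline{H_1}}(f)}$, but this does not affect the argument since $\bar\chi$ is still a character and $H_1$ is still continuous.) You are somewhat more explicit than the paper in justifying that the CMD system for $\pi'$ is the fibered product measure, which the paper simply uses without comment.
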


\begin{proof}
Let $\{g_{n}\}_{n\in \N}$ converge to $g$ in $\mathcal{L}(C_G^{k+1}(X)\xrightarrow{\mathfrak{p}_{0}}X,\ A)$,
i.e., for any character $\chi\in\hat{A}$ and any continuous function
$f:\ C_G^{k+1}(X)\rightarrow\mathbb{C}$,
\begin{equation}
\lim_{n\rightarrow\infty}\int_{\mathfrak{p}_{0}^{-1}(x_{n})}\chi( g_{n}(c))f(c)d\mu_{C_G^{k+1}(X)}^{x_{n}}(c)=\int_{\mathfrak{p}_{0}^{-1}(x)}\chi(g(c))f(c)d\mu_{C_G^{k+1}(X)}^{x}(c).\label{eq:*1}
\end{equation}
The continuous functions of the form $F=F_{1}(c_{0})F_{2}(c_{1})$
are dense in $C(C_G^{k+1}(X)\times_{X}C_G^{k+1}(X))$ in the uniform norm by the Stone-Weierstrass theorem. Thus to establish the continuity of $\mathcal{E}$, we just need to show that
for any continuous function $F=F_{1}(c_{0})F_{2}(c_{1}):\ C_G^{k+1}(X)\times_X C_G^{k+1}(X)\rightarrow\mathbb{C}$,

\begin{equation}
\begin{array}{c}
\underset{n\rightarrow\infty}{\lim}\int_{\mathfrak{p}_{0}^{-1}(x_{n})\times\mathfrak{p}_{0}^{-1}(x_{n})}\chi(g_{n}(c_{0})-g_{n}(c_{1}))F(c_{0},c_{1})d(\mu_{C_G^{k+1}(X)}^{x_{n}}\times\mu_{C_G^{k+1}(X)}^{x_{n}})(c_0,c_1)\\
=\int_{\mathfrak{p}_{0}^{-1}(x)\times\mathfrak{p}_{0}^{-1}(x)}\chi(g(c_{0})-g(c_{1}))F(c_{0},c_{1})d(\mu_{C_G^{k+1}(X)}^{x}\times\mu_{C_G^{k+1}(X)}^{x})(c_0,c_1).
\end{array}\label{continuous 1}
\end{equation}
Since the right-hand side of \eqref{continuous 1} equals the following product of integrals
\begin{equation}\label{eq:product}
\begin{array}{l}
\int_{\mathfrak{p}_{0}^{-1}(x_{n})}\overline{\chi(g_{n}(c_{1}))}F_{2}(c_{1})d\mu_{C_G^{k+1}(X)}^{x_{n}}(c_1)\int_{\mathfrak{p}_{0}^{-1}(x_{n})}\chi(g_{n}(c_{0}))F_{1}(c_{0})d\mu_{C_G^{k+1}(X)}^{x_{n}}(c_0),
\end{array}
\end{equation}
we choose $f=F_{1}$ in Equation \eqref{eq:*1} to show that the latter integral in the product of integrals \eqref{eq:product} converges to
\[
\begin{array}{l}
\int_{\mathfrak{p}_{0}^{-1}(x)}\chi(g(c_0))F_{1}(c_{0})d\mu_{C_G^{k+1}(X)}^{x}(c_{0}),\end{array}
\]
and similarly for the first integral in \eqref{eq:product}. This establishes (\ref{continuous 1}).
\end{proof}
The following lemma is used to prove Lemma \ref{lem:cov is cont}.
\begin{lem}
\label{lem: separate measurable set} Let $\pi:(X,\lambda_X)\rightarrow (Y,\lambda_Y)$ be
a CMD map w.r.t.\  the continuous system of measures $\{\gamma_{y}\}_{y\in Y}$.
Let $g:\ X\rightarrow A$ be a measurable map, where $A$ is a compact
abelian group. If there exists a measurable set $V_{0}\subset Y$
such that $g|_{\pi^{-1}(y)}$ is not a.e.-$\gamma_{y}$ constant for
any $y\in V_{0}$ and $\lambda_{Y}(V_{0})>0$, then there exist a
character $\chi$ of $A$, $b\in\mathbb{R}$ and measurable sets $U_{2},U_{3}\subset X$
with $\lambda_{X}(U_{2}),\lambda_{X}(U_{3})>0$ such that:
\begin{enumerate}
\item
 $\pi(U_{2})=\pi(U_{3})$;
\item
$0<\gamma_{y}(U_{3})\leq\gamma_{y}(U_{2})\text{ for any }y\in\pi(U_{2})$;
\item $\chi(g(U_{2}))\subset e([b,b+1/4]),\ \chi(g(U_{3}))\subset e([b+1/2,b+3/4])$.
\end{enumerate}
Here $e(x)=e^{2\pi i x}$ and $e([c,d])=\{e(x):x\in [c,d]\}$.
\end{lem}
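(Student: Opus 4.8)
The plan is to exploit countability of the dual group $\hat{A}$ in order to promote a construction carried out on each fibre $\pi^{-1}(y)$ to a uniform one over a positive-$\lambda_Y$-measure set of $y$'s. Write $S^1=\{z\in\mathbb{C}:|z|=1\}$, let $d$ be the normalized circle distance (total length $1$), and write $\exp(I)=\{e^{2\pi i t}:t\in I\}$ as in the statement. Fix $y\in V_0$. Since $g|_{\pi^{-1}(y)}$ is not $\gamma_y$-a.e.\ constant, the push-forward $\tilde{\mu}_y:=(g|_{\pi^{-1}(y)})_*\gamma_y$ is not a point mass, so its support contains two distinct points $a_y\neq a_y'$. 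The map $\hat{A}\to S^1,\ \chi\mapsto\chi(a_y-a_y')$, is a homomorphism with nontrivial image, and a nontrivial subgroup of $S^1$ is either dense or finite cyclic of order $m\geq2$; in every case it contains an element at $d$-distance $\geq1/3$ from $1$. Hence there is $\chi_y\in\hat{A}$ with $d(\chi_y(a_y),\chi_y(a_y'))=\delta_y\in[1/3,1/2]$. An elementary check (normalize $\chi_y(a_y)\leftrightarrow0$, $\chi_y(a_y')\leftrightarrow\delta_y$ on $\mathbb{R}/\mathbb{Z}$: the constraints $b\in(-1/4,0)$ and $b\in(\delta_y-3/4,\delta_y-1/2)$ have a common solution precisely because $1/4<\delta_y\leq1/2$) produces $b_y\in\mathbb{Q}$ with $\chi_y(a_y)\in\exp((b_y,b_y+1/4))$ and $\chi_y(a_y')\in\exp((b_y+1/2,b_y+3/4))$. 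Then $O_y:=\chi_y^{-1}(\exp((b_y,b_y+1/4)))$ and $O_y':=\chi_y^{-1}(\exp((b_y+1/2,b_y+3/4)))$ are disjoint open subsets of $A$ which are neighbourhoods of $a_y$ and $a_y'$ respectively, so (as $a_y,a_y'\in\operatorname{supp}\tilde{\mu}_y$) both $\gamma_y(g^{-1}(O_y)\cap\pi^{-1}(y))$ and $\gamma_y(g^{-1}(O_y')\cap\pi^{-1}(y))$ are positive.

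Now uniformize. For each $(\chi,b)\in\hat{A}\times\mathbb{Q}$ put $B_2^{\chi,b}=g^{-1}(\chi^{-1}(\exp((b,b+1/4))))$ and $B_3^{\chi,b}=g^{-1}(\chi^{-1}(\exp((b+1/2,b+3/4))))$, which are Borel subsets of $X$, and let $V_0^{\chi,b}=\{y\in V_0:\gamma_y(B_2^{\chi,b})>0\text{ and }\gamma_y(B_3^{\chi,b})>0\}$. Because $\pi$ is a CMD map, $\{\gamma_y\}$ is a continuous system of measures and a disintegration of $\lambda_X$, so $y\mapsto\gamma_y(E)$ is $\lambda_Y$-measurable for every Borel $E\subseteq X$; hence each $V_0^{\chi,b}$ is measurable, and by the first paragraph $\bigcup_{(\chi,b)}V_0^{\chi,b}\supseteq V_0$. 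By countability of $\hat{A}\times\mathbb{Q}$ there is a pair $(\chi,b)$ with $\lambda_Y(V_0^{\chi,b})>0$; fix it, write $B_2=B_2^{\chi,b}$, $B_3=B_3^{\chi,b}$, $W_0=V_0^{\chi,b}$, and split $W_0=W_0'\sqcup W_0''$ according to whether $\gamma_y(B_2)\geq\gamma_y(B_3)$ or not. One of $W_0',W_0''$ has positive $\lambda_Y$-measure. If it is $W_0'$, set $W=W_0'$, $U_2=B_2\cap\pi^{-1}(W)$, $U_3=B_3\cap\pi^{-1}(W)$ and keep $b$; if it is $W_0''$, set $W=W_0''$, $U_2=B_3\cap\pi^{-1}(W)$, $U_3=B_2\cap\pi^{-1}(W)$ and replace $b$ by $b+1/2$ (this merely relabels the two quarter-arcs, since $\exp$ has period $1$). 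In either case: $\pi(U_2)=\pi(U_3)=W$ (each fibre over $W$ meets $U_2$ and $U_3$ in a set of positive $\gamma_y$-measure, hence nonempty), $0<\gamma_y(U_3)\leq\gamma_y(U_2)$ for every $y\in W$ by construction, $\lambda_X(U_i)=\int_W\gamma_y(U_i)\,d\lambda_Y(y)>0$ since $\lambda_X=\int\gamma_y\,d\lambda_Y$, and $\chi(g(U_2))\subseteq\exp([b,b+1/4])$, $\chi(g(U_3))\subseteq\exp([b+1/2,b+3/4])$ directly from the definitions of $B_2,B_3$. This verifies (1)--(3).

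The main obstacle is exactly this uniformization step: the separating character $\chi_y$ and the placement parameter $b_y$ genuinely depend on $y$ — no single character can work, because a fibre measure $\tilde\mu_y$ concentrated near one value of $g$ cannot be separated by any fixed character into two antipodal quarter-arcs — so it is essential both that $\hat{A}$ is countable (to fix $\chi$ and $b$ on a positive-measure set) and that two distinct points of a compact metrizable abelian group are always separated by \emph{some} character by $d$-distance at least $1/3$, the latter being what makes the quarter-arc placement of the first paragraph possible. The role of the CMD hypothesis is to guarantee the measurability of the sets $V_0^{\chi,b}$ and the disintegration identity $\lambda_X=\int\gamma_y\,d\lambda_Y$ used to pass between positivity of fibre-measures and positivity of $\lambda_X$-measures; the rest is bookkeeping.
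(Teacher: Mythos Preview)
Your proof is correct and takes a cleaner route than the paper's. The paper works with the product measure $\gamma_y\times\gamma_y$: it first finds a short arc $[\delta_1,\delta_1+\epsilon_1]\subset(0,1/2]$ capturing $\chi_{n_y}(g(x))\overline{\chi_{n_y}(g(x'))}$ for a positive-measure set of pairs, then refines to pin down each of $\chi_{n_y}(g(x))$ and $\chi_{n_y}(g(x'))$ separately, and---if the separation $\delta_1$ falls below $1/3$---replaces $\chi_{n_y}$ by a power $\chi_{n_y}^M$ to push the gap into $[1/3,2/3)$; finally it discretizes $b_y$ into the finite grid $\{j/20:0\le j\le 19\}$. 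Your argument replaces this with two observations: (i) the pushforward $(g|_{\pi^{-1}(y)})_*\gamma_y$ has two distinct support points $a_y,a_y'$, and (ii) the image of $\chi\mapsto\chi(a_y-a_y')$ is a nontrivial subgroup of $S^1$, which automatically contains an element at $d$-distance $\ge 1/3$ from $1$---this is exactly what the paper's power trick is engineered to achieve. You then discretize $b_y$ via $\mathbb{Q}$ rather than a finite grid; either works since only countability of $\hat A\times(\text{grid})$ is needed for the pigeonhole step. The two proofs rejoin at the final swap $b\leftrightarrow b+1/2$ to arrange $\gamma_y(U_3)\le\gamma_y(U_2)$. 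Your route is more conceptual and substantially shorter; the paper's is more hands-on but involves a somewhat delicate case analysis on $\delta_1$.
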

\begin{proof}
Let $\{\chi_{n}\}_{n\in\mathbb{N}}$ be the set of characters of $A$.
Fix $y\in V_{0}$. Since $\{\chi_{n}\}_{n\in\mathbb{N}}$ separate
$A$, we can find $\chi_{n_{y}}$ such that $\chi_{n_{y}}(g|_{\pi^{-1}(y)})$
is not constant, i.e.\ 
\[
\gamma_{y}\times \gamma_{y}(\{(x,x')\in\pi^{-1}(y)\times\pi^{-1}(y):\ \chi_{n_{y}}(g(x))\overline{\chi_{n_{y}}(g(x'))}\in e((0,1))\})>0,
\]
w.l.o.g.,
$$
\gamma_{y}\times\gamma_{y}\{(x,x')\in\pi^{-1}(y)\times\pi^{-1}(y):\ \chi_{n_{y}}(g(x))\overline{\chi_{n_{y}}(g(x'))}\in e((0,\frac{1}{2}])\}>0.
$$
Since $(0,\frac{1}{2}]=\bigcup_{n\geq 6,30\leq k\leq2^{n-1}-1.}[\frac{k}{2^{n}},\frac{k+1}{2^{n}}]$,
there exists $[\delta_{1},\delta_{1}+\epsilon_{1}]$ such that $0<\delta_{1}\leq\frac{1}{2},\ 0<\epsilon_{1}\leq\frac{\delta_{1}}{30}$
and
\[
\gamma_{y}\times\gamma_{y}\{(a,a')\in\pi^{-1}(y)\times\pi^{-1}(y):\ \chi_{n_{y}}(g(x))\overline{\chi_{n_{y}}(g(x')})\in e([\delta_{1},\delta_{1}+\epsilon_{1}])\}>0.
\]
Since
\[
\begin{array}{l}
\{(b,b')\in[0,1]\times[0,1]:b-b'\in[\delta_{1},\delta_{1}+\epsilon_{1}]\}\\
\subset\bigcup_{1\leq k\leq2^{n}-1,2^{n}\geq2/\epsilon_{1}}[\frac{k}{2^{n}}+\delta_{1},\frac{k+1}{2^{n}}+\delta_{1}+\epsilon_{1}]\times[\frac{k}{2^{n}},\frac{k+1}{2^{n}}],
\end{array}
\]
there exists $[\delta_{2},\delta_{2}+\epsilon_{2}]$ such that $\epsilon_{2}<\frac{1}{2}\epsilon_{1}$
and
\[
\begin{array}{l}
\gamma_{y}\{x\in\pi^{-1}(y):\ \chi_{n_{y}}(g(x))\in e([\delta_{2},\delta_{2}+\epsilon_{2}])\}>0\text{ and }\\
\gamma_{y}\{x\in\pi^{-1}(y):\ \chi_{n_{y}}(g(x))\in e([\delta_{2}+\delta_{1},\delta_{2}+\epsilon_{2}+\delta_{1}+\epsilon_{1}])\}>0.
\end{array}
\]

If $\delta_{1}\in[\frac{1}{3},\frac{1}{2}]$, since $\epsilon_{1}\leq\frac{\delta_{1}}{30}<\frac{1}{60}$,
$\epsilon_{2}<\frac{1}{2}\epsilon_{1}$, we can define $b_{y}=\delta_{2}+\epsilon_{1}+\epsilon_{2}-\frac{1}{5}$. Then one has that  $[\delta_{2},\delta_{2}+\epsilon_{2}]\subset[b_{y},b_{y}+\frac{1}{5}]$
and $[\delta_{1}+\delta_{2},\delta_{1}+\delta_{2}+\epsilon_{1}+\epsilon_{2}]\subset[b_{y}+\frac{2}{4},b_{y}+\frac{7}{10}]$.

If $\delta_{1}<\frac{1}{3}$, let $M$ be a positive integer such
that $M\delta_{1}\in[\frac{1}{3},\frac{2}{3})$. Let
\[
b_{y}=\begin{cases}
M(\delta_{2}+\epsilon_{1}+\epsilon_{2})-\frac{1}{5} & \text{if }M\delta_{1}\leq\frac{1}{2}\\
M\delta_{2} & \text{if }M\delta_{1}>\frac{1}{2}
\end{cases}.
\]
Since $M\epsilon_{1}\leq\frac{M\delta_{1}}{30}<\frac{1}{45}$, $M\epsilon_{2}<\frac{1}{2}M\epsilon_{1}$, one
has $[M\delta_{2},M(\delta_{2}+\epsilon_{2})]\subset[b_{y},b_{y}+\frac{1}{5}]$
and $[M(\delta_{1}+\delta_{2}),M(\delta_{1}+\delta_{2}+\epsilon_{1}+\epsilon_{2})]\subset[b_{y}+\frac{2}{4},b_{y}+\frac{7}{10}]$.
In this case, we replace $\chi_{n_{y}}$ by $\chi_{n_{y}}^{M}$. Note
that $\chi_{n_{y}}^{M}$ is still a character. 

Summarizing the two cases, one has that for each $y\in V_0$ there exist a character $\chi_{n_{y}}$ and $b_{y}\in[0,1)$
such that
\[
\begin{array}{l}
\gamma_{y}\{x\in\pi^{-1}(y):\ \chi_{n_{y}}(g(x))\in e([b_{y},b_{y}+\frac{1}{5}])\}>0,\\
\gamma_{y}\{x\in\pi^{-1}(y):\ \chi_{n_{y}}(g(x))\in e([b_{y}+\frac{1}{2},b_{y}+\frac{7}{10}])\}>0.
\end{array}
\]
Notice that for each $b_y$, we can find $0\leq j_y\leq19$ such that $b_{y}\in[\frac{j_y}{20},\frac{j_y+1}{20})$.
Thus for any $b_{y}\in V_{0}$, there exist $0\leq j_y\leq19$ and a
character $\chi_{n_{y}}$ such that
\begin{equation}
\begin{array}{l}
\gamma_{y}\{x\in\pi^{-1}(y):\ \chi_{n_{y}}(g(x))\in e([\frac{j_y}{20},\frac{j_y}{20}+\frac{1}{4}])\}>0,\\
\gamma_{y}\{x\in\pi^{-1}(y):\ \chi_{n_{y}}(g(x))\in e([\frac{j_y}{20}+\frac{1}{2},\frac{j_y}{20}+\frac{3}{4}])\}>0.
\end{array}\label{eq:1/4 to 3/4}
\end{equation}
For any $n,j$, define:
\[
\hat{V}_{n,j}=\left\{ y\in V_{0}:\ \begin{array}{c}
\gamma_{y}\{x\in\pi^{-1}(y):\ \chi_{n}(g(x))\in e([\frac{j}{20},\frac{j}{20}+\frac{1}{4}])\}>0\text{ and }\\
\gamma_{y}\{x\in\pi^{-1}(y):\ \chi_{n}(g(x))\in e([\frac{j}{20}+\frac{1}{2},\frac{j}{20}+\frac{3}{4}])\}>0.
\end{array}\right\}.
\]
The set $\hat{V}_{n,j}$ is measurable since $\chi_{n},g$ are measurable
functions and for any measurable set $U\subset A$, $y\rightarrow\gamma_{y}(U)$
is a measurable function from $Y$ to $[0,1]$(see Theorem \ref{thm:(measure-disintegration-theorem)}).
By Equation (\ref{eq:1/4 to 3/4}), $V_{0}\subset \bigcup_{n\in\mathbb{N},0\leq j\leq19}\hat{V}_{n,j}$.
Since $\lambda_{Y}(V_{0})>0$, there exists $n,j$ such that $\hat{V}_{n,j}$
has positive $\lambda_{X}$-measure. We replace $V_{0}$ by $\hat{V}_{n,j}$. Let $\chi=\chi_{n}$. Choosing $b=\frac{j}{20}$, for any $y\in V_{0}=\hat{V}_{n,j}$,
\[
\begin{array}{l}
\gamma_{y}\{x\in\pi^{-1}(y):\ \chi(g(x))\in e([b,b+\frac{1}{4}])\}>0,\\
\gamma_{y}\{x\in\pi^{-1}(y):\ \chi(g(x))\in e([b+\frac{2}{4},b+\frac{3}{4}])\}>0.
\end{array}
\]
Let $U_{2}'=\{x\in\pi^{-1}(V_{0}):\ \chi(g(x))\in e([b,b+\frac{1}{4}])\}$,
$U_{3}'=\{x\in\pi^{-1}(V_{0}):\ \chi(g(x))\in e([b+\frac{1}{2},b+\frac{3}{4}])\}$.
one has
\[
\begin{array}{l}
\bullet\ \pi(U_{2})=\pi(U_{3})=V_{0};\\
\bullet\ \gamma_{y}(U_{2}),\gamma_{y}(U_{3})>0\text{ for any }y\in\pi(U_{2});\\
\bullet\ \chi(g(U_{2}))\subset e([b,b+1/4]),\ \chi(g(U_{3}))\subset e([b+1/2,b+3/4]).
\end{array}
\]
We notice that we can replace $b$ by $b+\frac{1}{2}$ to exchange
$U_{2}'$ and $U_{3}'$. Thus we can assume that $V_{1}=\{y\in V_{0}:\ \gamma_{y}(U_{2}')\geq\gamma_{y}(U_{3}')>0.\}$
has positive $\lambda_{y}$-measure. Let $U_{2}=\pi^{-1}(V_{1})\cap U_{2}'$
and $U_{3}=\pi^{-1}(V_{1})\cap U_{3}'$, then one has properties $(1),(2),(3)$ in the statement of the Lemma.
\end{proof}
\begin{lem}
\label{lem:cov is cont} Let $X,Y,Z$ be compact spaces and $A$ a compact abelian group. Let $\mu_{\phi}^{\bullet}$ be a
continuous system of measures for $\phi:X\rightarrow Z$, $\nu_{\psi}^{\bullet}$ a continuous system of measures for
$\psi:Y\rightarrow Z$ and $\gamma_{\pi}^{\bullet}$ a continuous system of measures for $\pi:X\rightarrow Y$
such that $\phi=\psi\circ\pi$ and $\int_{y\in\psi^{-1}(z)}\gamma_{\pi}^{y}d\nu_{\psi}^{z}=\mu_{\phi}^{z}$.
Define
\[
\mathcal{{L}}_{\pi}(X\xrightarrow{\phi}Z,A)=\{[f\circ\pi]|\,f\in\mathcal{{L}}(Y\xrightarrow{\psi}Z,A)\}\subset\mathcal{{L}}(X\xrightarrow{\phi}Z,A),
\]
then $\mathcal{{L}}_{\pi}(X\xrightarrow{\phi}Z,A)$ is closed in $\mathcal{{L}}(X\xrightarrow{\phi}Z,A)$
and $i:\mathcal{{L}}_{\pi}(X\xrightarrow{\phi}Z,A)\rightarrow\mathcal{{L}}(Y\xrightarrow{\psi}Z,A)\}$
given by $i([f\circ\pi])=f$ is well-defined and continuous.
\end{lem}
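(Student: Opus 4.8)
The plan is to establish three things in turn: that $i$ is well defined; that $i$ is continuous; and that $\mathcal{L}_\pi(X\xrightarrow{\phi}Z,A)$ is a closed subset. The first two are soft consequences of a disintegration (Fubini) identity, while the closedness is the real content and is where Lemma \ref{lem: separate measurable set} enters.

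The starting observation is that for each $z\in Z$ the hypothesis $\int_{\psi^{-1}(z)}\gamma_\pi^y\,d\nu_\psi^z(y)=\mu_\phi^z$, together with $\gamma_\pi^y$ being a probability measure supported on $\pi^{-1}(y)$ and weak-$*$ continuous in $y$, says exactly that $\pi$ restricts to a CMD map $\pi\colon(X,\mu_\phi^z)\to(Y,\nu_\psi^z)$ with disintegration $\{\gamma_\pi^y\}_{y\in\psi^{-1}(z)}$ (Definition \ref{def:CSM and CSM factor map}). From this I would deduce that, for any representative $f$ over $z$, one has $\hat\phi(f\circ\pi)=\hat\psi(f)=z$ and, for all continuous $F_1\colon A\to\mathbb{C}$, $H\colon X\to\mathbb{C}$, $H_2\colon Y\to\mathbb{C}$,
\[
\phi_{F_1,H}(f\circ\pi)=\int_{\psi^{-1}(z)}F_1(f(y))\,\gamma_\pi^y(H)\,d\nu_\psi^z(y)\quad\text{and}\quad \phi_{F_1,H_2\circ\pi}(f\circ\pi)=\phi_{F_1,H_2}(f),
\]
the second following from the first and $\gamma_\pi^y(H_2\circ\pi)=H_2(y)$; here $y\mapsto\gamma_\pi^y(H)$ is continuous on $Y$. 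Well-definedness of $i$ is then immediate, since the disintegration gives $\mu_\phi^z(\{f_1\circ\pi\neq f_2\circ\pi\})=\int_Y\mathbf{1}_{\{f_1(y)\neq f_2(y)\}}\,d\nu_\psi^z(y)=\nu_\psi^z(\{f_1\neq f_2\})$, so $[f_1\circ\pi]=[f_2\circ\pi]$ forces $[f_1]=[f_2]$. Continuity of $i$ follows from the second identity, which reads $\phi_{F_1,H_2}\circ i=\phi_{F_1,H_2\circ\pi}|_{\mathcal{L}_\pi}$: each generating function of the topology on $\mathcal{L}(Y\xrightarrow{\psi}Z,A)$ pulls back along $i$ to the restriction of a continuous function on $\mathcal{L}(X\xrightarrow{\phi}Z,A)$.

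For closedness I would take $g_n=f_n\circ\pi$ with $[g_n]\to[g]$, set $z_n=\hat\phi(g_n)=\hat\psi(f_n)\to z=\hat\phi(g)$, and suppose towards a contradiction that $[g]\notin\mathcal{L}_\pi$; since $[g]=[f\circ\pi]$ would force $g=f\circ\pi$ $\mu_\phi^z$-a.e., this means $g$ fails to be $\gamma_\pi^y$-a.e. constant on $\pi^{-1}(y)$ for a set of $y$ of positive $\nu_\psi^z$-measure. A measurability-and-countability argument — the opening step of the proof of Lemma \ref{lem: separate measurable set}, using that $y\mapsto\gamma_\pi^y(\chi\circ g)$ is measurable (Theorem \ref{thm:(measure-disintegration-theorem)}) — then produces a character $\chi$ of $A$ for which $W:=\{y\colon|\Psi(y)|<1\}$ has $\nu_\psi^z(W)>0$, where $\Psi(y):=\gamma_\pi^y(\chi\circ g)$ (a probability measure on $S^1$ has barycenter of modulus $1$ only if it is a point mass). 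For $H\in C(X)$ set
\[
Q_z(H)=\int_X|H|^2\,d\mu_\phi^z-\Big|\int_X(\chi\circ g)H\,d\mu_\phi^z\Big|^2,\qquad R_z(H)=\int_Y\big(\gamma_\pi^y(|H|^2)-|\gamma_\pi^y(H)|^2\big)\,d\nu_\psi^z(y),
\]
and define $Q_{z_n},R_{z_n}$ analogously with $g_n$ and $z_n$. Since $g_n$ is fiberwise constant one has $\int_X(\chi\circ g_n)H\,d\mu_\phi^{z_n}=\int_Y\chi(f_n(y))\gamma_\pi^y(H)\,d\nu_\psi^{z_n}(y)$, whence Cauchy--Schwarz on $(Y,\nu_\psi^{z_n})$ and on each fiber gives $Q_{z_n}(H)\ge R_{z_n}(H)\ge0$; and passing to the limit — using $[g_n]\to[g]$ against the continuous test data $(1,|H|^2)$ and $(\chi,H)$, and the weak-$*$ convergence $\nu_\psi^{z_n}\to\nu_\psi^z$ against the continuous function $y\mapsto\gamma_\pi^y(|H|^2)-|\gamma_\pi^y(H)|^2$ — yields $Q_z(H)\ge R_z(H)$ for every $H\in C(X)$. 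Choosing, by Lusin and Tietze, continuous $H_n\colon X\to\mathbb{C}$ with $\|H_n\|_\infty\le1$ and $\mu_\phi^z(\{H_n\neq\overline{\chi\circ g}\})\to0$, one computes $Q_z(H_n)\to0$ while $R_z(H_n)\to\int_Y(1-|\Psi|^2)\,d\nu_\psi^z\ge\int_W(1-|\Psi|^2)\,d\nu_\psi^z>0$, the desired contradiction. Therefore $[g]\in\mathcal{L}_\pi$ and $\mathcal{L}_\pi(X\xrightarrow{\phi}Z,A)$ is closed.

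The disintegration bookkeeping and the Cauchy--Schwarz estimates are routine. The main obstacle is the closedness part, and specifically the fact that the base points $z_n$ genuinely move: this blocks a naive single-fiber $L^2$ argument, forces everything to be phrased through continuous test functions on $X$ and $Y$ and the weak-$*$ continuity of the measure systems, and is the reason one must first pass, via Lemma \ref{lem: separate measurable set}, from the qualitative ``$g$ is not fiberwise constant'' to the quantitative $\nu_\psi^z(\{|\gamma_\pi^{\bullet}(\chi\circ g)|<1\})>0$ before approximating $\overline{\chi\circ g}$ by the continuous functions $H_n$.
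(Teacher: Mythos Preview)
Your proof is correct. The treatment of well-definedness and continuity of $i$ is essentially the same as in the paper (both reduce to the disintegration identity $\mu_\phi^z=\int\gamma_\pi^y\,d\nu_\psi^z$), but your closedness argument is genuinely different and, in my view, cleaner.

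For closedness the paper invokes the full strength of Lemma~\ref{lem: separate measurable set}: it produces a character $\chi_1$, a real number $b$, and two measurable sets $U_2,U_3\subset\phi^{-1}(z)$ with $\pi(U_2)=\pi(U_3)$, $\gamma_\pi^y(U_3)\le\gamma_\pi^y(U_2)$, and $\chi_1\circ g$ landing in disjoint arcs on $U_2,U_3$. It then builds a continuous $\chi_0$ equal to $\pm1$ on these arcs, approximates $1_{U_2},1_{U_3}$ by continuous $F_2,F_3$, and compares the quantity $|\int\chi_0(g)(F_2-F_3)\,d\mu_\phi^z|$ from below ($\approx\mu_\phi^z(U_2)+\mu_\phi^z(U_3)$) and, via the limit along $g_n=f_n\circ\pi$, from above ($\approx\mu_\phi^z(U_2)-\mu_\phi^z(U_3)$). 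You instead use only the first, easy step of Lemma~\ref{lem: separate measurable set} (a countability argument to locate a single character $\chi$ with $\nu_\psi^z(\{|\gamma_\pi^\bullet(\chi\circ g)|<1\})>0$) and then run a variance/Cauchy--Schwarz inequality: $Q_{z_n}(H)\ge R_{z_n}(H)$ for the fiberwise-constant $g_n$, pass to the limit using the continuity of the measure systems, and test against $H_n\to\overline{\chi\circ g}$ to force $0\ge\int(1-|\Psi|^2)\,d\nu_\psi^z>0$. Your route avoids the delicate arc-splitting and the asymmetry condition $\gamma_\pi^y(U_3)\le\gamma_\pi^y(U_2)$ that the paper needs for its upper bound; what the paper's approach buys is that it is entirely elementary (no Cauchy--Schwarz, just indicator-function arithmetic), at the price of the somewhat intricate preparatory Lemma~\ref{lem: separate measurable set}.
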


\begin{proof}
First we note $\mathcal{{L}}_{\pi}(X\xrightarrow{\phi}Z,A)$ is well
defined as the composition of two Borel functions is Borel. To show
the map $i$ is well-defined, we need to show that $[f_{1}\circ\pi]=[f_{2}\circ\pi]$
implies $[f_{1}]=[f_{2}]$. By Definition \ref{def:Let--be function bundle}, one may associate to the maps $\phi:X\rightarrow Z$ and
$\psi:Y\rightarrow Z$, the maps   $\hat{\phi}:\mathcal{L}(X\xrightarrow{\phi}Z,A)\rightarrow Z$  and  $\hat{\psi}:\mathcal{L}(Y\xrightarrow{\psi}Z,A)\rightarrow Z$ respectively. Since $\hat{\psi}(f)=\hat{\phi}(f\circ\pi)$, if $\hat{\psi}(f_{1})\neq\hat{\psi}(f_{2})$,
then $[f_{1}\circ\pi]\neq[f_{2}\circ\pi]$. Thus we assume that $\hat{\psi}(f_{1})=\hat{\psi}(f_{2})=z$
and $[f]_{1}\neq[f_{2}]$. Thus for a compatible metric $d_A$ for $A$:
\[
\int_{\psi^{-1}(z)}d_A(f_{1}(y),f_{2}(y))d\nu_{\psi}^{z}(y)>0.
\]
Since $\int_{y\in\psi^{-1}(z)}\gamma_{\pi}^{y}d\nu_{\psi}^{z}=\mu_{\phi}^{z}$,
\[
\begin{array}{l}
\int_{\phi^{-1}(z)}d_A\left(f_{1}(\pi (x)),f_{2}(\pi(x))\right)d\mu_{\phi}^{c}(x)\\
=\int_{y\in\psi^{-1}(z)}\int_{\pi^{-1}(y)}d_A\left(f_{1}(y),f_{2}(y)\right) d\gamma_{\pi}^{y}(x)d\nu_{\psi}^{z}(y)
=\int_{\psi^{-1}(z)}d_A(f_{1}(y),f_{2}(y))d\nu_{\psi}^{z}(y)>0,
\end{array}
\]
which implies that $[f_{1}\circ\pi]\neq[f_{2}\circ\pi]$. Let us now show that $i$ is continuous. Assume $g_{i}=f_{i}\circ\pi\in\mathcal{{B}}_{z_{i}}(X,Z,A)\rightarrow g=f\circ\pi\in\mathcal{{B}}_{z}(X,Z,A)$,
where $z_{i},z\in Z$ with $z_{i}\rightarrow z$ and $f_{i},f\in\mathcal{{L}}(Y\xrightarrow{\psi}Z,A)$.
Given $\chi:A\rightarrow\mathbb{C}$ a character and $F:Y\rightarrow\mathbb{C}$
a continuous function, one has by the fact that $\int_{\pi^{-1}(y)}f\circ\pi d\gamma_{\pi}^{y}=f(y)$
and $\int_{y\in\psi^{-1}(z)}\gamma_{\pi}^{y}d\nu_{\psi}^{z}=\mu_{\phi}^{z}$ that

\[
\int_{y\in\psi^{-1}(z_{i})}\chi(f_{i}(y))F(y)~d\nu_{\psi}^{z_{i}}(y)\stackrel{y=\pi(x)}{=}\int_{x\in\phi^{-1}(z_{i})}\chi(g_{i}(x))F(\pi(x))~d\mu_{\phi}^{z_{i}}(x).
\]
Notice that $g_{i}\rightarrow g$. Thus the right-hand side of the last equation converges as $i\rightarrow\infty$
to $\int_{\phi^{-1}(z)}\chi(g(x))F(\pi(x))~d\mu_{\phi}^{z}(x)$. As this integral is equal to

\[
\begin{array}{l}
\int_{x\in\phi^{-1}(z)}\chi(f(\pi(x)))F(\pi(x))~d\mu_{\phi}^{z}(x)
\stackrel{y=\pi(x)}{=}\int_{y\in\psi^{-1}(z)}\chi(f(y))F(y)~d\nu_{\psi}^{z}(y),
\end{array}
\]
it follows that $f_{i}\rightarrow f$. Now we show that $\mathcal{{L}}_{\pi}(X\xrightarrow{\phi}Z,A)$
is closed in $\mathcal{{L}}(X\xrightarrow{\phi}Z,A)$. Let $f_{i}\circ\pi\in\mathcal{{L}}_{\pi}(X\xrightarrow{\phi}Z,A)$
such that $g_{i}:= f_{i}\circ\pi\rightarrow g\in\mathcal{{L}}(X\xrightarrow{\phi}Z,A)$.
We claim that there exists $f\in\mathcal{{L}}(Y\xrightarrow{\psi}Z,A)\}$
such that $g=f\circ\pi$, which implies $g\in\mathcal{{L}}_{\pi}(X\xrightarrow{\phi}Z,A)$,
which means that $\mathcal{{L}}_{\pi}(X\xrightarrow{\phi}Z,A)$ is
closed. Let us denote $z=\hat{\phi}(g)\in Z$. If the claim is wrong, there exists
a positive measure set $V_{0}\subset\psi^{-1}(z)$ such that for any
$y\in V_{0}$, $g|_{\pi^{-1}(y)}$ is not a constant function. From
Lemma \ref{lem: separate measurable set}, there exists a character
$\chi_{1}$, $\beta\in[0,1)$ and measurable sets $U_{2},U_{3}\subset\psi^{-1}(z)$
with $\mu_{\phi}^{z}(U_{2}),\mu_{\phi}^{z}(U_{3})>0$ such that

\begin{equation}\begin{array}{l}\label{eq:< measure}
\bullet\ \pi(U_{2})=\pi(U_{3});\\
\bullet\ 0<\gamma_{\pi}^y(U_{3})\leq\gamma_{\pi}^{y}(U_{2})\text{ for any }b\in\pi(U_{2});\\
\bullet\ \chi_{1}(g(U_{2}))\subset e([\beta,\beta+1/4]),\ \chi_{1}(g(U_{3}))\subset e([\beta+1/2,\beta+3/4]).
\end{array}
\end{equation}
As $0<\gamma_{\pi}^y(U_{3})$ for $y\in V_0$ and $V_0$ has positive measure, $\mu_{\phi}^{z}(U_{3})>0$. Let $\epsilon_2>0$ such that 
\begin{equation}\label{ep2measure U3}
\epsilon_{2}<\mu_{\phi}^{z}(U_{3})/2.
\end{equation}
Let $F_{2},F_{3}$ be continuous functions such that
\[
\|F_{2}-1_{U_{2}}\|_{L^{1}(\phi^{-1}(z))}<\epsilon_{2}\text{ and }\|F_{3}-1_{U_{3}}\|_{L^{1}(\phi^{-1}(z))}<\epsilon_{2}.
\]

Consider the continuous function\[
G:S^{1}\rightarrow S^{1}:G(e(\alpha))=\begin{cases}
1 & \alpha\in[\beta,\beta+\frac{1}{4}]\\
-1 & \alpha\in[\beta+\frac{2}{4},\beta+\frac{3}{4}]\\
e(2(\alpha-\beta)-\frac{1}{2}) & \alpha\in[\beta+\frac{1}{4},\beta+\frac{2}{4}]\\
e(2(\alpha-\beta)) & \alpha\in[\beta+\frac{3}{4},\beta+1]
\end{cases}
\]
and let $\chi_{0}=G\circ\chi_{1}$. By the
definition of $G$, $|\chi_0(a)|=1$ for any $a\in A$, $\chi_{0}(g(x))=1$ for any $x\in U_{2}$, $\chi_{0}(g(x))=-1$
for any $x\in U_{3}$. Let us estimate
\begin{equation}
\begin{array}{l}
M_1=|\int_{\phi^{-1}(z)}(\chi_{0}(g(x))(F_{2}(x)-F_{3}(x)))d\mu_{\phi}^{z}(x)|\\
\geq|\int_{\phi^{-1}(z)}(\chi_{0}(g(x))1_{U_{2}}(x)-\chi_{0}(g(x))1_{U_{3}}(x))d\mu_{\phi}^{z}(x)|-2\epsilon_{2}\\
=\mu_{\phi}^{z}(U_{2})+\mu_{\phi}^{z}(U_{3})-2\epsilon_{2}.
\end{array}\label{eq:F to U}
\end{equation}
Here we used strongly the properties of $U_{2}$ and $U_{3}$. As
$g_{i}=f_i\circ\pi \rightarrow g$, we deduce from the equality in \eqref{eq:F to U} that 

\begin{equation}\label{F to U 3}\begin{array}{ll}M_1=\underset{i\rightarrow\infty}{\lim}|\int_{\phi^{-1}(z_{i})}\chi_{0}(f_{i}(\pi(x)))(F_{2}(x)-F_{3}(x))d\mu_{\phi}^{z_{i}}(x)|\\
=\underset{i\rightarrow\infty}{\lim}|\int_{\psi^{-1}(z_{i})}\int_{\pi^{-1}(y)}\chi_{0}(f_{i}(\pi(x)))(F_{2}(x)-F_{3}(x))d\gamma^y_{\phi}(x)d\nu_{\psi}^{z_i}(y)|\\
\leq\underset{i\rightarrow\infty}{\lim}\int_{\psi^{-1}(z_{i})}|\chi_{0}(f_{i}(y))\int_{\pi^{-1}(y)}(F_{2}(x)-F_{3}(x))d\gamma^y_{\phi}(x)|d\nu_{\psi}^{z_i}(y)\\
= \underset{i\rightarrow\infty}{\lim}\int_{\psi^{-1}(z_{i})}|\int_{\pi^{-1}(y)}(F_{2}(x)-F_{3}(x))d\gamma_{\pi}^{y}(x)|d\nu_{\psi}^{z_{i}}(y).
\end{array}
\end{equation}
Now a standard estimate from above will lead to a contradiction. As $\gamma_{\pi}^{y}$ is continuous system of measures,  we know that 
\begin{equation}
y \rightarrow \int_{\pi^{-1}(y)}(F_{2}(x)-F_{3}(x)))d\gamma_{\pi}^{y}(x)\text{ is continuous}.\label{eq: continuous int}
\end{equation}
As $\{\nu_{\psi}^{z}\}$ is a continuous system of measures, by Equation \eqref{F to U 3} and \eqref{eq: continuous int}, one has that

\begin{equation}\label{eq:F to U 2}
\begin{array}{l}
M_1\leq \int_{\psi^{-1}(z)}|\int_{\pi^{-1}(y)}(F_{2}(x)-F_{3}(x)))d\rho_{\pi}^{y}(x)|d\nu_{\psi}^{z}(y)\\
\leq\int_{\psi^{-1}(z)}|\int_{\pi^{-1}(y)}(1_{U_{2}}(x)-1_{U_{3}}(x))d\gamma_{\pi}^{y}(x)|d\nu_{\psi}^{z}(y)+2\epsilon_{2}\\
\stackrel{(\ref{eq:< measure})}{=}\int_{\psi^{-1}(z)}(\gamma_{\pi}^{y}(U_{2})-\gamma_{\pi}^{y}(U_{3}))d\nu_{\psi}^{z}(y)+2\epsilon_{2}
=\mu_{\phi}^{z}(U_{2})-\mu_{\phi}^{z}(U_{3})+2\epsilon_{2}.
\end{array}
\end{equation}
By Equation \eqref{eq:F to U} and \eqref{eq:F to U 2}, $$\mu_{\phi}^{z}(U_{2})+\mu_{\phi}^{z}(U_{3})-2\epsilon_{2}\leq\mu_{\phi}^{z}(U_{2})-\mu_{\phi}^{z}(U_{3})+2\epsilon_{2}. $$ 
Therefore
$\mu_{\phi}^{z}(U_{3})\leq 2\epsilon_{2}$. Recall from Equation \eqref{ep2measure U3} that $\epsilon_{2}<\frac{\mu_{\phi}^{z}(U_{3})}{2}$. We thus have a contradiction.
\end{proof}
\begin{lem}
\label{lem:continuous full support into closed} Let $f:X\rightarrow Y$
be a continuous function between topological spaces and suppose $\mu$
is a measure of full support on $X$. Let $K\subset Y$ be closed. If there exists a Borel set of full $\mu$ measure $X_0$ such that $f(X_{0})\subset K$, then $f(X)\subset K$.
\end{lem}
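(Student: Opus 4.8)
The plan is to reduce everything to density: first I would show that the full-measure Borel set $X_{0}$ is in fact dense in $X$, and then conclude by continuity of $f$ together with closedness of $K$.

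To establish density, let $U\subset X$ be an arbitrary nonempty open set. Since $\mu$ has full support, $\mu(U)>0$. Since $X_{0}$ has full measure, $\mu(X\setminus X_{0})=0$, whence $\mu(U\cap X_{0})\geq \mu(U)-\mu(X\setminus X_{0})=\mu(U)>0$; in particular $U\cap X_{0}\neq\emptyset$. As $U$ was an arbitrary nonempty open set, this proves $\overline{X_{0}}=X$.

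Now continuity of $f$ gives $f(\overline{X_{0}})\subseteq\overline{f(X_{0})}$. By hypothesis $f(X_{0})\subseteq K$, and $K$ is closed, so $\overline{f(X_{0})}\subseteq K$. Combining these with $\overline{X_{0}}=X$ yields $f(X)=f(\overline{X_{0}})\subseteq\overline{f(X_{0})}\subseteq K$, which is the desired conclusion. There is no substantive obstacle in this argument; the only point requiring a little care is the implication ``full support $+$ full measure $\Rightarrow$ density of $X_{0}$'', carried out in the second paragraph, and everything else is a one-line consequence of continuity.
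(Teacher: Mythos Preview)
Your proof is correct and follows essentially the same approach as the paper: both establish that $X_{0}$ is dense (the paper does this pointwise via balls $B_{1/n}(x_{0})$, you do it via arbitrary open sets) and then use continuity of $f$ and closedness of $K$ to conclude. The only cosmetic difference is that the paper argues with sequences $y_{n}\to x_{0}$ while you use the closure inclusion $f(\overline{X_{0}})\subseteq\overline{f(X_{0})}$ directly.
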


\begin{proof}
Let $x_{0}\in X$. By assumption for all $n$, $\mu(B_{\frac{1}{n}}(x_{0}))>0$.
Thus $B_{\frac{1}{n}}(x_{0})\cap X_{0}\neq\emptyset$. Therefore we
may find $y_{n}\rightarrow x$ so that $f(y_{n})\in K$. As $K$ is
closed and $f$ continuous we conclude $f(x_{0})\in K$.
\end{proof}
\begin{rem}\label{rem for candela}Following \cite{CS12}, \cite[Definition 3.3.24]{candela2016alg_notes} defines $M=\bigcup_{x\in X}\rho_{x}+A$,
where we define $M=\overline{\bigcup_{x\in X'}-\rho_{x}+A}$. The
closure operation introduces additional complications. In particular, in the following
lemma, our $g(x)=\mathcal{E}(\rho_{x})$ has to be continuously extended
to $X$, for which Lemma \ref{lem:E-is-continuous.}-\ref{lem:continuous full support into closed}
are dedicated. In \cite{candela2016cpt_notes} this difficulty does
not arise and one may define directly $g(x)=\mathcal{E}(\rho_{x})$
for $x\in X$ (in \cite{candela2016cpt_notes} this function is called
$g'$). The last steps in establishing the compactness are not identical
but similar in spirit to the treatment in \cite{candela2016cpt_notes}.
\end{rem}
\begin{lem}
\label{lem:The-map-continuous} There exists a continuous function $D:X\rightarrow\mathcal{{L}}(C_G^{k+1}(X)\times_{X}C_G^{k+1}(X)\xrightarrow{\pi'}X,A)$
such that the map $g:\ X\rightarrow\mathcal{E}(M):\ g(x)=\rho_{x}(\cdot)-\rho_{x}(\cdot\cdot)$
satisfies $g(x)=D(x)$ for all $x\in X'$.
\end{lem}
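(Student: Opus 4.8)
The plan is to realize $D$ by a detour through the tricube function bundle. Write $\pi_T\colon T^{k+1}(X)\to X,\ \mathbf t\mapsto\mathbf t_{\vec 1}$, equipped with the continuous system of measures $\{\mu_{T^{k+1}(X)}^x\}_{x\in X}$ of Definition~\ref{definition of eta and Txx}, and recall $\tilde{\pi}\colon\mathbf t\mapsto(\omega(\mathbf t),\psi_{\vec 0}(\mathbf t))$ with $\pi'\circ\tilde{\pi}=\pi_T$. I would introduce the global Borel map $S\colon T^{k+1}(X)\to A$, $S(\mathbf t)=\sum_{v\in\{0,1\}^{k+1}\setminus\{\vec 0\}}(-1)^{|v|}\rho(\psi_v(\mathbf t))$, and for $x\in X$ the element $S_x=[\,S|_{\pi_T^{-1}(x)}\,]\in\mathcal L(T^{k+1}(X)\xrightarrow{\pi_T}X,A)$. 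The three steps are: (i) $x\mapsto S_x$ is continuous on all of $X$; (ii) for $x\in X'$, $S_x$ is the $\tilde{\pi}$-pullback of $g(x)$; (iii) push $S_\bullet$ down to the $\pi'$-bundle using Lemmas~\ref{lem:cov is cont} and~\ref{lem:continuous full support into closed}.

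For (i), fix a character $\chi$ of $A$ and a continuous $F\colon T^{k+1}(X)\to\mathbb C$; the task is the continuity of $x\mapsto\phi_{\chi,F}(S_x)=\int_{\pi_T^{-1}(x)}\chi(S(\mathbf t))\,F(\mathbf t)\,d\mu_{T^{k+1}(X)}^x(\mathbf t)$. Because $\chi$ is a homomorphism, $\chi\circ S$ is a product over $v\neq\vec 0$ of the functions $(\chi\circ\rho)\circ\psi_v$ and their complex conjugates, each of modulus one. Given $\varepsilon>0$, I would apply Lusin's theorem to $\chi\circ\rho$ with respect to $\mu_{C_G^{k+1}(X)}$, obtaining a closed $K_\varepsilon$ with $\mu_{C_G^{k+1}(X)}(C_G^{k+1}(X)\setminus K_\varepsilon)<\varepsilon$ on which $\chi\circ\rho$ is continuous, and extend it (Tietze, then a radial retraction onto the closed unit disc) to a continuous $\widetilde{R}_\varepsilon\colon C_G^{k+1}(X)\to\overline{\mathbb D}$ equal to $\chi\circ\rho$ on $K_\varepsilon$. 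Substituting $\widetilde{R}_\varepsilon$ (or $\overline{\widetilde{R}_\varepsilon}$, to avoid dividing by zeros) for the factors $\chi\circ\rho$ produces a continuous $P_\varepsilon$ on $T^{k+1}(X)$ with $|P_\varepsilon|\le 1$ that agrees with $\chi\circ S$ off the set where some $\psi_v(\mathbf t)$, $v\neq\vec 0$, leaves $K_\varepsilon$. The decisive point is Lemma~\ref{measure preserving}: $(\psi_v)_*\mu_{T^{k+1}(X)}^x=\mu_{C_G^{k+1}(X)}$ for every $x$ and every $v\neq\vec 0$, so that exceptional set has $\mu_{T^{k+1}(X)}^x$-measure $<(2^{k+1}-1)\varepsilon$ \emph{uniformly in $x$}; hence $|\phi_{\chi,F}(S_x)-\int_{\pi_T^{-1}(x)}P_\varepsilon F\,d\mu_{T^{k+1}(X)}^x|\le 2\|F\|_\infty(2^{k+1}-1)\varepsilon$ for all $x\in X$. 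Since $x\mapsto\int_{\pi_T^{-1}(x)}P_\varepsilon F\,d\mu_{T^{k+1}(X)}^x$ is continuous ($P_\varepsilon F$ continuous and $\{\mu_{T^{k+1}(X)}^x\}$ a continuous system of measures), letting $\varepsilon\to 0$ yields continuity of $x\mapsto\phi_{\chi,F}(S_x)$, and, as $\chi$ and $F$ run over a topology-generating family, of $x\mapsto S_x$. I expect this uniform Lusin estimate to be the main obstacle; it is only available because the alternating-sum identity of Lemma~\ref{lem: tricube measure nilcycle} has traded the base-point-dependent cubes $\omega(\mathbf t),\psi_{\vec 0}(\mathbf t)$ for the corners $\psi_v(\mathbf t)$, $v\neq\vec 0$, which are Haar-distributed by the fixed measure $\mu_{C_G^{k+1}(X)}$ in every fibre, so a single Lusin set controls all fibres at once.

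For (ii), the definition of $X'$ (Definition~\ref{definition of X'}, i.e.\ Lemma~\ref{lem: tricube measure nilcycle}) gives, for $x\in X'$, $S(\mathbf t)=\rho(\omega(\mathbf t))-\rho(\psi_{\vec 0}(\mathbf t))=g(x)(\tilde{\pi}(\mathbf t))$ for $\mu_{T^{k+1}(X)}^x$-a.e.\ $\mathbf t$, so $S_x=[\,g(x)\circ\tilde{\pi}\,]$ lies in $\mathcal L_{\tilde{\pi}}(T^{k+1}(X)\xrightarrow{\pi_T}X,A)$. For (iii), I would check the hypotheses of Lemma~\ref{lem:cov is cont} with $(\phi,\psi,\pi)=(\pi_T,\pi',\tilde{\pi})$: $\pi'$ is a CMD map by Proposition~\ref{CMD for distal} (since $C_G^{k+1}(X)\times_X C_G^{k+1}(X)$ is uniquely ergodic and distal by Lemma~\ref{uniquely ergodic for T(X)}), with continuous system of measures $\{\mu_{C_G^{k+1}(X)}^x\times\mu_{C_G^{k+1}(X)}^x\}$ (by the corollary following Definition~\ref{definition of the measures for T(X) and CXC} and uniqueness, Remark~\ref{uniqueness of CSM}); $\tilde{\pi}$ is a CMD map by Proposition~\ref{CSM for tri cube}(1); and the compatibility $\int_{\pi'^{-1}(x)}\lambda^{(c_0,c_1)}\,d(\mu_{C_G^{k+1}(X)}^x\times\mu_{C_G^{k+1}(X)}^x)=\mu_{T^{k+1}(X)}^x$ of the three systems follows from Lemma~\ref{lem:CSM circ} applied to $\pi_T=\pi'\circ\tilde{\pi}$ together with Remark~\ref{uniqueness of CSM}. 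Lemma~\ref{lem:cov is cont} then gives that $\mathcal L_{\tilde{\pi}}(T^{k+1}(X)\xrightarrow{\pi_T}X,A)$ is closed in $\mathcal L(T^{k+1}(X)\xrightarrow{\pi_T}X,A)$ and that $i\colon[\,f\circ\tilde{\pi}\,]\mapsto f$ is well-defined and continuous into $\mathcal L(C_G^{k+1}(X)\times_X C_G^{k+1}(X)\xrightarrow{\pi'}X,A)$. As $x\mapsto S_x$ is continuous, $X=\overline{X'}$, $\mu$ has full support, and $S_x$ lies in this closed set for all $x\in X'$, Lemma~\ref{lem:continuous full support into closed} forces $S_x\in\mathcal L_{\tilde{\pi}}(T^{k+1}(X)\xrightarrow{\pi_T}X,A)$ for every $x\in X$; setting $D:=i\circ(x\mapsto S_x)$ one obtains a continuous map with $D(x)=i([\,g(x)\circ\tilde{\pi}\,])=g(x)$ for all $x\in X'$, as required.
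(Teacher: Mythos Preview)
Your proposal is correct and follows essentially the same route as the paper's proof: both pass through the tricube bundle $\mathcal{L}(T^{k+1}(X)\xrightarrow{\pi_T}X,A)$, use Lusin's theorem on $\rho$ (or $\chi\circ\rho$) together with the crucial fact from Lemma~\ref{measure preserving} that $(\psi_v)_*\mu_{T^{k+1}(X)}^x=\mu_{C_G^{k+1}(X)}$ is \emph{independent of $x$} for $v\neq\vec 0$ to obtain a uniform-in-$x$ approximation, and then invoke Lemmas~\ref{lem:cov is cont} and~\ref{lem:continuous full support into closed} exactly as you do.

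The only cosmetic difference is that the paper treats each coordinate separately: it defines $g'_v(x)(\mathbf t)=\rho(\psi_v(\mathbf t))$, shows each $x\mapsto g'_v(x)$ is continuous (Lusin applied to $F_1\circ\rho$ for a general continuous $F_1$), and then sets $g'=\sum_{v\neq\vec 0}(-1)^{|v|}g'_v$, relying implicitly on continuity of addition in the function bundle. You instead work directly with the full alternating sum $S$, exploit multiplicativity of characters to turn $\chi\circ S$ into a product, and replace all factors at once; this is slightly more self-contained since it avoids appealing to continuity of the group law on the bundle. Your verification of the CMD compatibility $\int\lambda^{(c_0,c_1)}d(\mu_{C_G^{k+1}(X)}^x\times\mu_{C_G^{k+1}(X)}^x)=\mu_{T^{k+1}(X)}^x$ via Lemma~\ref{lem:CSM circ} and Remark~\ref{uniqueness of CSM} is also more explicit than the paper's citation of Proposition~\ref{CSM for tri cube}.
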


\begin{proof}
In this proof, let $\mu_{T}$ denote $\mu_{T^{k+1}(X)}$. For each $v\in\{0,1\}_{*}^{k+1}=\{0,1\}^{k+1}\setminus\{\vec{0}\}$,
let

\[
g'_{v:}\ X\rightarrow\mathcal{L}(T^{k+1}(X)\xrightarrow{\pi_{T}}X,A),\ x\mapsto(g'_{v}(x):\ \mathbf{t}\rightarrow\rho(\psi_{v}(\mathbf{t}))).
\]
We now show that $g'_{v}$ is continuous by showing that for any continuous
functions $F_{1}:\ A\rightarrow\mathbb{C}$, $F_{2}:\ T^{k+1}(X)\rightarrow\mathbb{C}$,

\begin{equation}
q:\ x\rightarrow\int_{\pi_{T}^{-1}(x)}F_{1}(g'_{v}(x)(\mathbf{t}))F_{2}(\mathbf{t})d\mu_{T^{k+1}(X)}^{x}
\end{equation}
is continuous. Indeed, by Lusin's Theorem \cite[Theorem 2.24]{rudin2006real}, for any $\epsilon>0$, there exists a
continuous function $F_{3}:\ C_G^{k+1}(X)\rightarrow\mathbb{C}$ such
that $\|F_{1}\circ\rho-F_{3}\|_{L_{1}(\mu_{C_G^{k+1}(X)})}<\epsilon$.
Let $q':\ x\rightarrow\int_{\pi_{T}^{-1}(x)}F_{3}(\psi_{v}(\mathbf{t}))F_{2}(\mathbf{t})d\mu_{T^{k+1}(X)}^{x}(\mathbf{t})$.
By Lemma \ref{measure-preserving}, one has that $(\psi_v)_*\mu_{T^{k+1}(X)}^x=\mu_{C_G^{k+1}(X)}$ for all $x\in X$. Notably $\mu_{C_G^{k+1}(X)}$ does not depend on $x\in X$. Thus we will be able to perform the following $\|\cdot\|_{\infty}$ estimate:
\[
\begin{array}{l}
\|q'(x)-q(x)\|_{\infty}=\|\int_{\pi_{T}^{-1}(x)}\left(F_{1}(g'_{v}(x)(\mathbf{t}))-F_{3}(\psi_{v}(\mathbf{t})\right)F_{2}(\mathbf{t})d\mu_{T^{k+1}(X)}^{x}(\mathbf{t})\|_{\infty}\\
\leq\|F_{2}\|_{\infty}\|\int_{\pi_{T}^{-1}(x)}\left(F_{1}(\rho(\psi_{v}(\mathbf{t})))-F_{3}(\psi_{v}(\mathbf{t})\right)d\mu_{T^{k+1}(X)}^{x}(\mathbf{t})\|_{\infty}\\
=\|F_{2}\|_{\infty}\|F_{1}\circ\rho-F_{3}\|_{L_{1}(\mu_{C_G^{k+1}(X)})}<\epsilon.
\end{array}
\]
Thus $q$ is a uniform limit of continuous functions, so $q$ is continuous. We conclude that the function $g':\ x\rightarrow\sum_{v\in\{0,1\}_{*}^{k+1}}(-1)^{|v|}g'_{v}(x)$
is continuous. By Equation \eqref{def:X'}, for $x\in X'$, for $\mu_{T^{k+1}(X)}^{x}$-a.e.
$\mathbf{t}\in T^{k+1}(X)$,
\begin{equation}
g'(x)(\mathbf{t})=\rho_{x}(\omega(\mathbf{t}))-\rho_{x}(\psi_{\vec{0}}(\mathbf{t})).\label{eq:D continuous -1}
\end{equation}
Recall the continuous function $\tilde{\pi}:T^{k+1}(X)\rightarrow C_G^{k+1}(X)\times_{X}C_G^{k+1}(X)$, given by $\tilde{\pi}(\mathbf{t})=(\psi_{\vec{0}}(\mathbf{t}),\omega(\mathbf{t}))$. Now apply Lemma \ref{lem:cov is cont} equating $X,Y,Z,\pi,\phi,\psi$ of this lemma in the following way: $X=T^{k+1}(X),Y=C_G^{k+1}(X)\times_{X}C_G^{k+1}(X),Z=X,$ $\pi=\tilde{\pi},\phi=\pi_T,\psi=\pi'$. By Proposition \ref{prop:CMD for tri cube}, the conditions required by Lemma \ref{lem:cov is cont} on maps and  measure disintegrations are satisfied. Thus we conclude that
\[
\begin{array}{l}
K=\mathcal{{L}}_{\tilde{\pi}}(T^{k+1}(X)\xrightarrow{\pi_{T}}X,A)
=\{[f\circ\tilde{\pi}]|\,f\in\mathcal{{L}}(C_G^{k+1}(X)\times_{X}C_G^{k+1}(X)\xrightarrow{\pi'}X,A)\}
\end{array}
\]
is closed and
\[
i:\mathcal{{L}}_{\tilde{\pi}}(T^{k+1}(X)),X,A)\rightarrow\mathcal{{L}}(C_G^{k+1}(X)\times_{X}C_G^{k+1}(X)\xrightarrow{\pi'}X,A)
\]
given by $i([f\circ\tilde{\pi}])=f$ is well defined and continuous. Notice that for $x\in X'$, $[g'(x)]=[g(x)\circ\tilde{\pi}]$. From Equation
\eqref{eq:D continuous -1}, for all $x\in X'$, $g'(x)\in K$. From
Lemma \ref{lem:continuous full support into closed}, $g'(x)\in K$
for all $x\in X$.

Define $D:X\rightarrow\mathcal{{L}}(C_G^{k+1}(X)\times_{X}C_G^{k+1}(X)\xrightarrow{\pi'}X,A)$
by $D=i\circ g'$. Notice that $D$ is continuous and $i\circ g'=g$
for $x\in X'$.
\end{proof}
\begin{lem}
\label{rem: convergent subsequence}  For any sequence $h_{n}\in\mathcal{L}(C_G^{k+1}(X)\xrightarrow{\mathfrak{p}_{0}}X,A)$
such that $\mathcal{E}(h_{n})$ converges, there exists a convergent
subsequence $\{h_{m_{n}}\}_{n\in\mathbb{N}}$.
\end{lem}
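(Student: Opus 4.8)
The plan is to pass to a subsequence along which the $h_n$ converge \emph{weakly} in the sense of the function bundle, and then to use the hypothesis on $\mathcal E(h_n)$ to upgrade this weak convergence to an honest limit inside $\mathcal L(C_G^{k+1}(X)\xrightarrow{\mathfrak p_0}X,A)$. Throughout write $\mu^x:=\mu^x_{C_G^{k+1}(X)}$ for the continuous measure section of Definition \ref{CSM for p0}, recall that the continuous system of measures for $\pi'$ is $x\mapsto\mu^x\times\mu^x$ (so that $\mathcal E$ of Definition \ref{def:the definition of E} makes sense and $\pi'^{-1}(x)=\mathfrak p_0^{-1}(x)\times\mathfrak p_0^{-1}(x)$), and recall from Definition \ref{def:Let--be function bundle} and its footnote that, since $A$ is compact metrizable, the dual $\hat A$ is countable and convergence in $\mathcal L(C_G^{k+1}(X)\xrightarrow{\mathfrak p_0}X,A)$ is tested by the functionals $\phi_{\chi,F}(f)=\int_{\mathfrak p_0^{-1}(\hat{\mathfrak p}_0 f)}\chi(f(c))F(c)\,d\mu^{\hat{\mathfrak p}_0 f}(c)$ with $\chi\in\hat A$ and $F\in C(C_G^{k+1}(X))$.

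First I would set $x_n=\hat{\mathfrak p}_0(h_n)$ and use compactness of $X$ to assume $x_n\to x_0$, so $\mu^{x_n}\to\mu^{x_0}$ weakly by continuity of the section. Since $|\phi_{\chi,F}(h_n)|\le\|F\|_\infty$ and $F\mapsto\phi_{\chi,F}(h_n)$ is $1$-Lipschitz in $\|\cdot\|_\infty$ uniformly in $n$, a diagonal argument over $\hat A$ and a countable dense subset of $C(C_G^{k+1}(X))$ produces a further subsequence along which $\phi_{\chi,F}(h_n)$ converges for all $\chi$ and $F$. The limit functional in $F$ is dominated by $\int|F|\,d\mu^{x_0}$ (here one uses $|\chi\circ h_n|\equiv1$ and $\mu^{x_n}\to\mu^{x_0}$), hence extends to $L^1(\mu^{x_0})$ with norm $\le1$ and so equals $F\mapsto\int F\phi_\chi\,d\mu^{x_0}$ for some $\phi_\chi\in L^\infty(\mu^{x_0})$ with $\|\phi_\chi\|_\infty\le1$ and $\phi_0\equiv1$. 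This much works for any sequence; the real content is to show these $\phi_\chi$ come from a genuine $A$-valued function. This is where the hypothesis enters: writing $E=\lim_n\mathcal E(h_n)$, which by assumption lies in $\mathcal L(C_G^{k+1}(X)\times_X C_G^{k+1}(X)\xrightarrow{\pi'}X,A)$ and hence is represented by an honest Borel map $\tilde E:\mathfrak p_0^{-1}(x_0)\times\mathfrak p_0^{-1}(x_0)\to A$, I would test $\mathcal E(h_n)\to E$ against a character $\chi$ and a product function $F_1(c_0)\overline{F_2(c_1)}$. Using $\chi(a-b)=\chi(a)\overline{\chi(b)}$ one gets $\phi_{\chi,F_1\otimes\overline{F_2}}(\mathcal E(h_n))=\phi_{\chi,F_1}(h_n)\overline{\phi_{\chi,F_2}(h_n)}$; letting $n\to\infty$ and using Stone--Weierstrass density of such products in $C(\mathfrak p_0^{-1}(x_0)\times\mathfrak p_0^{-1}(x_0))$ yields
\[
\phi_\chi(c_0)\overline{\phi_\chi(c_1)}=\chi\bigl(\tilde E(c_0,c_1)\bigr)\qquad\text{for }\mu^{x_0}\!\times\!\mu^{x_0}\text{-a.e. }(c_0,c_1),\ \forall\chi\in\hat A .
\]
Taking moduli and integrating forces $\bigl(\int|\phi_\chi|\,d\mu^{x_0}\bigr)^2=1$, hence $|\phi_\chi|=1$ $\mu^{x_0}$-a.e., for every $\chi$.

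Next I would convert weak into strong convergence. Since $|\chi\circ h_n|\equiv1$ and now $|\phi_\chi|=1$ a.e., the elementary expansion of $\int_{\mathfrak p_0^{-1}(x_n)}|\chi\circ h_n-\Phi|^2\,d\mu^{x_n}$ for continuous $\Phi$, together with $\phi_{\chi,\overline\Phi}(h_n)\to\int\phi_\chi\overline\Phi\,d\mu^{x_0}$ and $\int|\Phi|^2\,d\mu^{x_n}\to\int|\Phi|^2\,d\mu^{x_0}$, shows this integral converges to $\|\Phi-\phi_\chi\|_{L^2(\mu^{x_0})}^2$. Approximating $\phi_\chi,\phi_{\chi'}$ by continuous functions bounded by $1$ and using $(\chi+\chi')\circ h_n=(\chi\circ h_n)(\chi'\circ h_n)$ with the triangle inequality, this identity forces $\phi_{\chi+\chi'}=\phi_\chi\phi_{\chi'}$ $\mu^{x_0}$-a.e. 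Intersecting the countably many resulting full-measure sets, $\chi\mapsto\phi_\chi(c)$ is a homomorphism $\hat A\to S^1$ for a.e.\ $c$; since the closed subgroup of $(S^1)^{\hat A}$ of such homomorphisms is canonically $\hat{\hat A}=A$ (Pontryagin duality, as already invoked for the function bundle), composing $c\mapsto(\phi_\chi(c))_{\chi}$ with this homeomorphism gives a Borel $h:\mathfrak p_0^{-1}(x_0)\to A$ with $\chi\circ h=\phi_\chi$ a.e.\ for all $\chi$. Then $[h]$ lies over $x_0$ and $\phi_{\chi,F}([h])=\int F\phi_\chi\,d\mu^{x_0}=\lim_n\phi_{\chi,F}(h_n)$ for all $\chi,F$, so $h_n\to[h]$, which is the desired convergent subsequence.

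The main obstacle is exactly this passage from the weak limit to an honest $A$-valued limit: a priori the $\phi_\chi$ could have modulus strictly less than $1$ on a set of positive measure (this is precisely the oscillation phenomenon that makes the whole function bundle fail to be sequentially compact), in which case no limit point of $\{h_n\}$ exists at all. Ruling this out is where the assumption that $\mathcal E(h_n)$ converges \emph{inside} the function bundle — so that its limit has a genuine representative $\tilde E$ rather than merely a weak-type one — is essential, via the displayed identity and the Hilbert-space principle that weak convergence of unit vectors to a unit vector is norm convergence. A secondary, unavoidable nuisance throughout is the bookkeeping caused by the fibres $\mathfrak p_0^{-1}(x_n)$ varying with $n$, which is handled at each step using the continuity of the measure section $x\mapsto\mu^x$.
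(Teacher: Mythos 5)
Your argument is correct, and it is essentially the standard Antol\'in Camarena--Szegedy/Candela compactness argument that the paper invokes by citing the proof of \cite[Proposition 2.3.13]{candela2016cpt_notes}: extract weak limits $\phi_\chi$ of $\chi\circ h_n$ via a diagonal argument over the countable dual $\hat A$, use the factorization $\phi_{\chi,F_1\otimes\overline{F_2}}(\mathcal E(h_n))=\phi_{\chi,F_1}(h_n)\overline{\phi_{\chi,F_2}(h_n)}$ together with the assumed convergence of $\mathcal E(h_n)$ to an honest $A$-valued representative to force $|\phi_\chi|=1$ a.e., upgrade to norm convergence to obtain multiplicativity $\phi_{\chi+\chi'}=\phi_\chi\phi_{\chi'}$, and invoke Pontryagin duality to reassemble a Borel $h\colon\mathfrak p_0^{-1}(x_0)\to A$ with $h_n\to[h]$. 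The only cosmetic remark is that, since the full sequence $\mathcal E(h_n)$ is assumed to converge and $\hat{\pi}'$ is continuous, the base points $x_n=\hat{\mathfrak p}_0(h_n)$ already converge without passing to a subsequence, so your first extraction is harmless but superfluous.
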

\begin{proof}
This is proven as part of the proof of \cite[Proposition 2.3.13]{candela2016cpt_notes}.
\end{proof}
Recall that $M'=\bigcup_{x\in X'}(-\rho_{x}+A)$. Now we prove the
main theorem in this subsection:
\begin{thm}
\label{thm:M-is-compact.} $M=\overline{M'}$ is compact.
\end{thm}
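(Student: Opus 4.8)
The plan is to derive compactness of $M=\overline{M'}$ from the two preceding lemmas, Lemma~\ref{lem:The-map-continuous} and Lemma~\ref{rem: convergent subsequence}. The crucial structural fact is that the ambient function bundle $\mathcal{L}(C_G^{k+1}(X)\xrightarrow{\mathfrak{p}_0}X,A)$ is metrizable: by the footnote to Definition~\ref{def:Let--be function bundle} its topology is already generated by the countably many maps $\phi_{\chi,F}$ with $\chi$ ranging over the countable dual group $\hat A$ and $F$ over a countable dense subset of $C(C_G^{k+1}(X))$. Hence compactness of the closed subset $M$ is equivalent to sequential compactness, and it even suffices to check that every sequence in $M'$ has a subsequence converging inside $\mathcal{L}(C_G^{k+1}(X)\xrightarrow{\mathfrak{p}_0}X,A)$; the limit then lies in $M$ by closedness, and a standard $1/n$-approximation extends the conclusion to all sequences in $M=\overline{M'}$.

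So fix a sequence $(h_n)_{n\in\mathbb{N}}$ in $M'$ and write $h_n=-\rho_{x_n}+a_n$ with $x_n\in X'$ and $a_n\in A$. The decisive remark is that $\mathcal{E}$ annihilates the translation by $a_n$: for $(c_0,c_1)$ in the fibre of $\pi'$ over $x_n$,
\[
\mathcal{E}(h_n)(c_0,c_1)=h_n(c_0)-h_n(c_1)=-\rho(c_0)+\rho(c_1)=-\bigl(\rho_{x_n}(c_0)-\rho_{x_n}(c_1)\bigr),
\]
so $\mathcal{E}(h_n)=-g(x_n)$ in the notation of Lemma~\ref{lem:The-map-continuous}, and since $x_n\in X'$ that lemma gives $\mathcal{E}(h_n)=-D(x_n)$.

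Now invoke compactness of $X$: after passing to a subsequence we may assume $x_n\to x\in X$. The map $x\mapsto -D(x)$ is continuous (Lemma~\ref{lem:The-map-continuous}), so $\mathcal{E}(h_n)=-D(x_n)\to -D(x)$ in $\mathcal{L}(C_G^{k+1}(X)\times_X C_G^{k+1}(X)\xrightarrow{\pi'}X,A)$; in particular $\mathcal{E}(h_n)$ converges. Lemma~\ref{rem: convergent subsequence} then applies verbatim and produces a subsequence $(h_{m_n})$ converging in $\mathcal{L}(C_G^{k+1}(X)\xrightarrow{\mathfrak{p}_0}X,A)$, whose limit lies in $M=\overline{M'}$. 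Thus every sequence in $M'$ has a subsequence converging to a point of $M$; by the approximation remark the same holds for sequences in $M$, so $M$ is sequentially compact, hence compact.

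I do not anticipate a real obstacle here: the weight is already carried by Lemmas~\ref{lem:The-map-continuous} and~\ref{rem: convergent subsequence}, and the above is essentially bookkeeping. What deserves emphasis — and explains why the detour through $\mathcal{E}$ is necessary — is that $\mathcal{L}(C_G^{k+1}(X)\xrightarrow{\mathfrak{p}_0}X,A)$ is itself \emph{not} compact (a weak limit of modulus‑one sections may acquire strictly smaller modulus), so one cannot simply present $M$ as a closed subset of a compact ambient space. Compactness is recovered precisely because fixing both the base point $\hat{\mathfrak{p}}_0(h)$ and the value $\mathcal{E}(h)$ determines $h$ up to translation by the \emph{compact} group $A$; controlling $\mathcal{E}(h_n)$ through the continuous map $D$, together with compactness of $X$ (and of $A$, used inside Lemma~\ref{rem: convergent subsequence}), is exactly what makes the extraction go through.
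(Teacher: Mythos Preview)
Your proof is correct and takes essentially the same approach as the paper: both control $\mathcal{E}(h_n)$ via the continuous map $D$ on the compact space $X$ (Lemma~\ref{lem:The-map-continuous}) and then invoke Lemma~\ref{rem: convergent subsequence} to extract a convergent subsequence. The only cosmetic difference is that the paper handles sequences in $M$ directly by using continuity of $\mathcal{E}$ (Lemma~\ref{lem:E-is-continuous.}) to show $\mathcal{E}(M)\subset\overline{\mathcal{E}(M')}$, whereas you reduce from $M$ to $M'$ by the $1/n$-approximation argument---which is equally valid and in fact sidesteps Lemma~\ref{lem:E-is-continuous.} entirely.
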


\begin{proof}
From Lemma \ref{lem:The-map-continuous}, as $X$ is compact,
\[
D(X)\subset\{[f\circ\tilde{\pi}]|\,f\in\mathcal{{L}}(C_G^{k+1}(X)\times_{X}C_G^{k+1}(X),X,A)\}
\]
is compact. From Lemma \ref{lem:The-map-continuous}, there exists a full measure
subset $X'$ of $X$ such that $D(x)=\rho_{x}(\cdot)-\rho_{x}(\cdot\cdot)$
for $x\in X'$. As $\mathcal{E}(M')=-g(X')=-D(X')\text{ and }\overline{D(X')}\subset D(X)$
is compact, it holds that $\overline{\mathcal{E}(M')}$ is compact. For any $h\in M$, choose $\{h_{n}\}_{n\in\mathbb{N}}\subset M'$
such that ${\displaystyle \lim_{n\rightarrow\infty}h_{n}}=h$. From
Lemma \ref{lem:E-is-continuous.}, ${\displaystyle \lim_{n\rightarrow\infty}\mathcal{E}(h_{n}})=\mathcal{E}(h)$.
Thus $\mathcal{E}(M)\subset\overline{\mathcal{E}(M')}$. For any $\{g_{n}\}_{n\in\mathbb{N}}\subset M$, as $\overline{\mathcal{E}(M')}$
is compact and $\{\mathcal{E}(g_{n})\}_{n\in\mathbb{N}}\subset\mathcal{E}(M)\subset\overline{\mathcal{E}(M')}$,
there exists a subsequence $\{g_{m_{1,n}}\}_{n\in\mathbb{N}}$ of
$\{g_{n}\}_{n\in\mathbb{N}}$ such that $\mathcal{E}(g_{m_{1,n}})$
is convergent. From Lemma \ref{rem: convergent subsequence}, there
exists a subsequence $\{g_{m_{2,n}}\}_{n\in\mathbb{N}}$ of $\{g_{m_{1,n}}\}_{n\in\mathbb{N}}$
such that $\{g_{m_{2,n}}\}_{n\in\mathbb{N}}$ is convergent. As any
sequence in $M$ has a convergent subsequence, one has that $M=\overline{M'}$
is compact.
\end{proof}

\subsection{The extension $M\rightarrow X$ is a topological model for Theorem
\ref{thm:main theorem}.}\label{subsec:model}

\subsubsection{The action of $G$ is well defined and continuous.}\label{sec: continuous}
Recall from Subsection \ref{subsec:Dynamical-background.} that to the abelian group extension $Y=(X\times A,\mu\times m_{\haar(A)},G)\rightarrow (X,\mu,G)$ in Theorem \ref{thm:main theorem}, is associated a cocycle which we will denote by $\beta:G\times X\rightarrow A$. We define a Borel action $G\times M'\rightarrow M'$ of $G$ on $M'=\hat{\mathfrak{p}}_0^{-1}(X')$  by
\begin{equation}\label{definition of G on M'}
 g(-\rho_{x}+a)=-\rho_{gx}+\beta(g,x)+a.
\end{equation}
An important intermediate goal is to show this action extends to a continuous action $(M,G)$.
\begin{lem}
\label{fact:well defined and continuous} For any $g\in G$, $x_n\in X'$, $\{g(-\rho_{x_{n}}+a_{n})\}_{n\in\mathbb{N}}$
converges if $\{-\rho_{x_{n}}+a_{n}\}_{n\in\mathbb{N}}$
converges.
\end{lem}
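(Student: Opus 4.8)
The plan is to reduce the claim to the existence of $\lim_{n}\phi_{\chi,F}(gh_{n})$ for each character $\chi$ of $A$ and each continuous $F\colon C_G^{k+1}(X)\to\mathbb{C}$ (writing $\phi_{\chi,F}$ for the generating functional of Definition~\ref{def:Let--be function bundle} with $F_1=\chi$, $F_2=F$), and then to prove this by transporting the fibre integral over $\mathfrak{p}_0^{-1}(gx_n)$ to the fibre over $x_n$ through $g^{[k+1]}$, using Lemma~\ref{lem:the action on M} to cancel the one term obstructing a uniform estimate, and approximating the remainder by continuous functions uniformly in $n$. Set $h_{n}\triangleq -\rho_{x_{n}}+a_{n}$ and assume $h_{n}\to h$ in $M$. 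As $\hat{\mathfrak{p}}_{0}$ is continuous, $x_{n}=\hat{\mathfrak{p}}_{0}(h_{n})$ converges to some $x\in X$, so $\hat{\mathfrak{p}}_{0}(gh_{n})=gx_{n}\to gx$; also $gh_{n}=-\rho_{gx_{n}}+\beta(g,x_{n})+a_{n}\in M'\subset M$ since $X'$ is $G$-invariant. Since $M$ is compact (Theorem~\ref{thm:M-is-compact.}) and its topology is generated by the countably many $\phi_{\chi,F}$ (with $\chi$ ranging over $\hat{A}$ and $F$ over a countable dense subset of $C(C_G^{k+1}(X))$), which separate the points of $\mathcal{L}$, it suffices to show that $\lim_{n}\phi_{\chi,F}(gh_{n})$ exists for all such $\chi,F$: the cluster points of $\{gh_{n}\}$ in the compact metrizable space $M$ then all share these values (and the $\hat{\mathfrak{p}}_{0}$-value $gx$) and hence coincide, so $\{gh_{n}\}$ converges.

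I first record that for every $v\in\{0,1\}^{k+1}\setminus\{\vec 0\}$ and every $x\in X$ one has $(\mathfrak{p}_{v})_{*}\mu_{C_G^{k+1}(X)}^{x}=\mu$. Pick $j$ with $v_{j}=1$; the face-group generator $g^{\overline{\alpha}_{j}}\in\mathcal{F}^{k+1}(G)\subset\mathcal{HK}^{k+1}(G)$ satisfies $(g^{\overline{\alpha}_{j}})_{\vec 0}=Id$, so by equivariance of the continuous measure section (Definition~\ref{CSM for p0}) $(g^{\overline{\alpha}_{j}})_{*}\mu_{C_G^{k+1}(X)}^{x}=\mu_{C_G^{k+1}(X)}^{x}$, while $\mathfrak{p}_{v}\circ g^{\overline{\alpha}_{j}}=g\circ\mathfrak{p}_{v}$ because $v_{j}=1$. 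Hence $g_{*}(\mathfrak{p}_{v})_{*}\mu_{C_G^{k+1}(X)}^{x}=(\mathfrak{p}_{v})_{*}\mu_{C_G^{k+1}(X)}^{x}$ for all $g\in G$, and by unique ergodicity of $(X,G)$ the measure $(\mathfrak{p}_{v})_{*}\mu_{C_G^{k+1}(X)}^{x}$ must equal $\mu$; crucially it does not depend on $x$, which is what will make the approximation below uniform.

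Now $gh_{n}$ is the class of $c\mapsto -\rho(c)+\beta(g,x_{n})+a_{n}$ on $(\mathfrak{p}_{0}^{-1}(gx_{n}),\mu_{C_G^{k+1}(X)}^{gx_{n}})$, so substituting $c=g^{[k+1]}c'$ (valid since $(g^{[k+1]})_{*}\mu_{C_G^{k+1}(X)}^{x_{n}}=\mu_{C_G^{k+1}(X)}^{gx_{n}}$ by Lemma~\ref{measure preserving}),
\[
\phi_{\chi,F}(gh_{n})=\chi\big(\beta(g,x_{n})+a_{n}\big)\int_{\mathfrak{p}_{0}^{-1}(x_{n})}\overline{\chi(\rho(g^{[k+1]}c'))}\,F(g^{[k+1]}c')\,d\mu_{C_G^{k+1}(X)}^{x_{n}}(c').
\]
By Equation~\eqref{eq:the action on M} and $c'_{\vec 0}=x_{n}$, for $\mu_{C_G^{k+1}(X)}^{x_{n}}$-a.e.\ $c'$ one has $\rho(g^{[k+1]}c')=\rho(c')+\beta(g,x_{n})+\sum_{v\neq\vec 0}(-1)^{|v|}\beta(g,c'_{v})$ (using that $G$ is countable and that, by Lemma~\ref{lem:the action on M}, $X'$ may be taken such that for every $x\in X'$ and $g\in G$ this identity holds for $\mu_{C_G^{k+1}(X)}^{x}$-a.e.\ $c$). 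Inserting this, the factors $\chi(\beta(g,x_{n}))$ and $\overline{\chi(\beta(g,x_{n}))}$ cancel, so
\[
\phi_{\chi,F}(gh_{n})=\chi(a_{n})\int_{\mathfrak{p}_{0}^{-1}(x_{n})}\overline{\chi(\rho(c'))}\,R(c')\,d\mu_{C_G^{k+1}(X)}^{x_{n}}(c'),\quad R(c')=\Big(\prod_{v\neq\vec 0}\overline{\chi(\beta(g,c'_{v}))^{(-1)^{|v|}}}\Big)F(g^{[k+1]}c').
\]
It is precisely here that using $-\rho_{x}$ rather than $\rho_{x}$ in the definition of $M'$ matters: the fibre-constant term $\beta(g,x_{n})$ of the cocycle identity is forced to cancel the explicit prefactor.

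To conclude, fix $\epsilon>0$ and, by Lusin's theorem, choose for each $v\neq\vec 0$ a continuous $\Theta_{v}\colon X\to\mathbb{C}$ with $|\Theta_{v}|\le 1$ and $\|\overline{\chi(\beta(g,\cdot))^{(-1)^{|v|}}}-\Theta_{v}\|_{L^{1}(\mu)}<\epsilon$; put $R_{\epsilon}(c')=\big(\prod_{v\neq\vec 0}\Theta_{v}(c'_{v})\big)F(g^{[k+1]}c')$, which is continuous on $C_G^{k+1}(X)$. Telescoping a product of functions of modulus $\le 1$ and using the preliminary fact $(\mathfrak{p}_{v})_{*}\mu_{C_G^{k+1}(X)}^{x_{n}}=\mu$,
\[
\|R-R_{\epsilon}\|_{L^{1}(\mu_{C_G^{k+1}(X)}^{x_{n}})}\le\|F\|_{\infty}\sum_{v\neq\vec 0}\|\overline{\chi(\beta(g,\cdot))^{(-1)^{|v|}}}-\Theta_{v}\|_{L^{1}(\mu)}\le 2^{k+1}\|F\|_{\infty}\,\epsilon
\]
for \emph{every} $n$. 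Hence $|\phi_{\chi,F}(gh_{n})-\phi_{\chi,R_{\epsilon}}(h_{n})|\le 2^{k+1}\|F\|_{\infty}\epsilon$ for all $n$, while $\phi_{\chi,R_{\epsilon}}(h_{n})\to\phi_{\chi,R_{\epsilon}}(h)$ since $R_{\epsilon}$ is continuous and $h_{n}\to h$; letting $\epsilon\to 0$ shows that $\{\phi_{\chi,F}(gh_{n})\}_{n}$ is Cauchy, hence convergent, which completes the reduction above. I expect the main obstacle to be exactly this uniformity: the measures $\mu_{C_G^{k+1}(X)}^{x_{n}}$ live on the moving fibres $\mathfrak{p}_{0}^{-1}(x_{n})$, so $\rho(g^{[k+1]}\cdot)-\rho(\cdot)$ cannot be approximated uniformly as it stands, and the role of Lemma~\ref{lem:the action on M} is to split it into a fibre-constant piece (which cancels) and single-coordinate pieces whose laws under $\mu_{C_G^{k+1}(X)}^{x_{n}}$ equal $\mu$ independently of $n$.
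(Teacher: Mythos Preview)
Your proof is correct and follows the same overall skeleton as the paper's (transport the fibre integral from $gx_n$ to $x_n$ via $g^{[k+1]}$, apply the cocycle identity of Lemma~\ref{lem:the action on M}, then perform a Lusin approximation that is uniform in $n$ because the relevant pushforward measures do not depend on $x_n$), but you take a noticeably more direct route at the key step. The paper lifts further to the tricube $T^{k+1}(X)$ via $c=\omega(\mathbf t)$, invokes the defining property of $X'$ (Equation~\eqref{def:X'}) to replace $\rho(\omega(\mathbf t))$ by the alternating sum $\sum_v(-1)^{|v|}\rho(\psi_v(\mathbf t))$, and then uses $(\psi_v)_*\mu_{T^{k+1}(X)}^x=\mu_{C_G^{k+1}(X)}$ for $v\neq\vec 0$ (Lemma~\ref{measure preserving}) to make the approximation uniform. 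You stay on $C_G^{k+1}(X)$: after Lemma~\ref{lem:the action on M} the only non-continuous factors left are the single-coordinate functions $\chi(\beta(g,c'_v))^{\pm 1}$, and your observation that $(\mathfrak p_v)_*\mu_{C_G^{k+1}(X)}^x=\mu$ for $v\neq\vec 0$ (a one-line consequence of face-group equivariance of the continuous measure section and unique ergodicity of $(X,G)$) is exactly the $x$-independence needed for uniform approximation.

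What this buys you is that the tricube machinery and the alternating-sum property of $X'$ are not used at all in this lemma; your argument would go through with $X'$ taken to be any $G$-invariant full-measure set on which the fiberwise version of Lemma~\ref{lem:the action on M} holds. The paper's detour is consistent with its systematic use of tricubes elsewhere (notably in Subsection~\ref{subsec:extension of order k}), but for this particular lemma your approach is more elementary. One small remark: your parenthetical ``$X'$ may be taken such that\ldots'' is really an additional constraint on the choice of $X'$ made in Definition~\ref{definition of X'}; the paper's proof makes the same implicit move when it applies Equation~\eqref{eq:the action on M} fiberwise, so this is harmless, but it would be cleanest to record it once at the definition of $X'$.
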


\begin{proof}
For any continuous function $F:C_G^{k+1}(X)\rightarrow\mathbb{C}$,
$\|F\|_{\infty}\neq0$, character $\chi:A\rightarrow\mathbb{C}$, $g\in G$, $x\in X'$ and $a\in A$, consider:
$$\phi_{F,\chi,g,x,a}\stackrel{\triangle}{=}\int_{\mathfrak{p}_{0}^{-1}(gx)}F(c)\chi(-\rho_{gx}(c)+\beta(g,x)+a)d\mu_{C_G^{k+1}(X)}^{gx}(c).$$
Assume that $\{-\rho_{x_{n}}+a_{n}\}_{n\in\mathbb{N}}$ converges.  We will show that $\phi_{F,\chi,g,x_n,a_n}$ is a Cauchy sequence and thus is a converging sequence. We start by rewriting $\phi=\phi_{F,\chi,g,x,a}$ using the change of variable $c=g^{[k+1]}c'$ (which implies $x=c'_{\vec{0}}$) and Lemma \ref{measure-preserving}, defining $F'=F\circ g^{[k+1]}$ and using the abbreviation $\rho=\rho_x$,
$$\phi=\int_{\mathfrak{p}_{0}^{-1}(x)}F'(c')\chi(-\rho(g^{[k+1]}c')+\beta(g,c'_{\vec{0}})+a)d\mu_{C_G^{k+1}(X)}^{x}(c').$$
Using Equation
\eqref{eq:the action on M} on Page \pageref{eq:the action on M}, we have:
\begin{equation*}\begin{array}{ll}\phi=\\ \int_{\mathfrak{p}_{0}^{-1}(x)}F'(c)\chi(-\rho(c)+a{\displaystyle-\sum_{v\in\{0,1\}_{*}^{k+1}}(-1)^{|v|}\beta(g,c_{v}))}d\mu_{C_G^{k+1}(X)}^{x}.
\end{array}
\end{equation*}
By  Lemma \ref{measure-preserving}, we may use the change of variable $c=\omega(\bf{t})$ (which implies by Subsection \ref{subsec:The-tri-cube(3-cube)}, $c_{v}=\omega({\bf{t}})_{v}={\bf{t}}_{\Omega(v)}={\bf{t}}_{\psi_v(\vec{0})}=\psi_v({\bf{t}} )_{\vec{0}}$) to rewrite
$$\phi=\int_{\pi_{T}^{-1}(x)}F'(\omega({\bf{t}}))\chi(-\rho(\omega({\bf{t}}))+a{\displaystyle -\sum_{v\in\{0,1\}_{*}^{k+1}}(-1)^{|v|}\beta(g,\psi_v({\bf{t}} )_{\vec{0}}))}d\mu_{T^{k+1}(X)}^{x}({\bf{t}}).$$
As $x\in X'$, by Equation \eqref{def:X'} this in turn equals
\begin{equation*}
\begin{array}{l}
\phi=\int_{\pi_{T}^{-1}(x)}F'(\omega({\bf{t}}))\chi\big(-{\displaystyle \sum_{\nu\in\{0,1\}^{k+1}}(}-1)^{|\nu|}\rho(\psi_{\nu}(\mathbf{t}))+a\\
\ \ \ \ \ \ \ \ -{\displaystyle \sum_{v\in\{0,1\}_{*}^{k+1}}(}-1)^{|v|}\beta(g,\psi_v({\bf{t}} )_{\vec{0}})\big)d\mu_{T^{k+1}(X)}^{x}({\bf{t}}).
\end{array}\label{eq:continuous 1-2}
\end{equation*}
By the multiplicativity of characters, we have
\begin{equation}
\begin{array}{l}
\phi= \\
\int_{\pi_{T}^{-1}(x)}F'(\omega({\bf{t}})\chi(-\rho(\psi_{\vec{0}}(\mathbf{t}))+a)
{\displaystyle \prod_{\nu\in\{0,1\}_{*}^{k+1}}}\chi\left((-1)^{|\nu|+1}(\rho(\psi_{\nu}(\mathbf{t}))+\beta(g,\psi_{\nu}(\mathbf{t})_{\vec{0}})\right)d\mu_{T^{k+1}(X)}^{x}({\bf{t}}).
\end{array}\label{eq:continuous 1-2'}
\end{equation}
Let us write this expression as $\int_{\pi_{T}^{-1}(x)}A_{\vec{0}}({\bf{t}})\prod_{v\in\{0,1\}_{*}^{k+1}}A_{v}({\bf{t}})d\mu_{T^{k+1}(X)}^{x}({\bf{t}})$. Note $\|A_v\|_{\infty}\leq 1$ for $v\in\{0,1\}_{*}^{k+1}$. Let $A'_{v}:T^{k+1}\rightarrow \mathbb{C}$, be functions such that $\|A'_v\|_{\infty}\leq 1$. By \cite[Lemma 3.3]{GWSY2018}  
\begin{equation}
\begin{array}{l}{\displaystyle
|\int_{\pi_{T}^{-1}(x)}A_{\vec{0}}\big(\prod_{v\in\{0,1\}_{*}^{k+1}}A_{v}-\prod_{v\in\{0,1\}_{*}^{k+1}}A'_{v}\big)|\leq \|A_{\vec{0}}\|_{\infty}\sum_{v\in\{0,1\}_{*}^{k+1}}\|A_{v}-A'_{v}\|_{1}.
}
\end{array}\label{eq:AA'}
\end{equation}
We can thus approximate $\phi$ in the following way. Fix $\epsilon>0$. By Lusin's theorem \cite[Theorem 2.24]{rudin2006real}, there exist continuous functions $H_{v,\epsilon}:C_G^{k+1}(X)\rightarrow\mathbb{C}$ with $\|H_{v,\epsilon}\|_{\infty}\leq 1$
such that
\[
\|H_{v,\epsilon}(c)-\chi\left((-1)^{|\nu|+1}(\rho(c)+\beta(g,c_v))\right)\|_{L^{1}(\mu_{C_G^{k+1}(X)})}<\frac{\epsilon}{2^{k+1}\|F'\|_{\infty}}.
\]
By Lemma \ref{measure-preserving}, $(\psi_{\nu})_{*}\mu_{T^{k+1}(X)}^{x}=\mu_{C_G^{k+1}(X)}$, and therefore:
\begin{equation}
\begin{array}{l}
\|H_{v,\epsilon}(\psi_{\nu}(\mathbf{t}))-\chi\left((-1)^{|\nu|+1}(\rho(\psi_{\nu}(\mathbf{t}))+\beta(g,\psi_{\nu}(\mathbf{t})_{\vec{0}})\right)\|_{L^{1}(\mu_{T^{k+1}(X)}^{x})}<\frac{\epsilon}{2^{k+1}\|F'\|_{\infty}}.
\end{array}\label{eq:H}
\end{equation}
Denote:
$$\phi_{\epsilon,x}=\int_{T^{k+1}(X)}F'(\omega(\mathbf{t})){\displaystyle \chi(\rho(-\psi_{\vec{0}}(\mathbf{t}))+a))\prod_{\nu\in\{0,1\}_{*}^{k+1}}}H_{v,\epsilon}(\psi_{\nu}(\mathbf{t}))d\mu_{T^{k+1}(X)}^{x}(\mathbf{t}).$$
Therefore by Equations \eqref{eq:continuous 1-2'},\eqref{eq:AA'} and \eqref{eq:H} for all $x\in X'$,
\begin{equation}
\begin{array}{l}
|\phi-\phi_{\epsilon,x}|<\epsilon.
\end{array}\label{eq:continuous 1-4}
\end{equation}
We may now complete the proof. Recall that by assumption $\{-\rho_{x_{n}}+a_{n}\}_{n\in\mathbb{N}}$ converges in $\mathcal{L}(C_G^{k+1}(X)\xrightarrow{\mathfrak{p}_{0}}X,\ A)$. By Lemma \ref{measure-preserving}, for any $x\in X$, $(\psi_{\vec{0}})_{*}\mu_{T^{k+1}(X)}^{x}=\mu_{C_G^{k+1}(X)}^{x}$, so it follows that $\{-\rho_{x_{n}}(\psi_{\vec{0}}(\mathbf{t}))+a_{n}\}_{n\in\mathbb{N}}$ converges
in $\mathcal{L}(T^{k+1}(X)\stackrel{\pi_{T}}{\rightarrow}X,A)$. As $F'(\omega(\mathbf{t})){\displaystyle \prod_{\nu\in\{0,1\}_{*}^{k+1}}}H_{v}(\psi_{\nu}(\mathbf{t}))$ is a continuous function on $T^{k+1}(X)$, $\phi_{\epsilon,x_n}$ converges. Denote the limit by $C_{\epsilon}$. Denote $\phi_n=\phi_{F,\chi,g,x_n,a}$. By Equation  \eqref{eq:continuous 1-4}, $|\phi_n-\phi_{\epsilon,x_n}|<\epsilon$ for all $n$. Thus for any $\epsilon$, there exists $N_{\epsilon}$ such
that if $n>N_{\epsilon}$, $|\phi_n-C_{\epsilon}|<2\epsilon$. Thus $\phi_n$ is a Cauchy sequence as desired.
\end{proof}
Suppose $f\in M$ and fix $g\in G$. Assume $-\rho_{x_n}+a_n\rightarrow f$ for $-\rho_{x_n}+a_n\in M'$ and define
\begin{equation}\label{definition of G action1}
   gf:= \lim_{n\rightarrow\infty}g(-\rho_{x_{n}}+a_{n})\,\,(f\in M, \, g\in G).
\end{equation}
This definition does not depend on the choice of sequence. Indeed let $\{-\rho_{x_{n}}+a_{n}\}_{n\in\mathbb{N}},\{-\rho_{x_{n}'}+a_{n}'\}_{n\in\mathbb{N}}$
be two convergent sequences such that

\[
f=\lim_{n\rightarrow\infty}-\rho_{x_{n}}+a_{n}=\lim_{n\rightarrow\infty}-\rho_{x_{n}'}+a_{n}'.
\]
By combining the two sequences into one converging sequence, Lemma
\ref{fact:well defined and continuous} shows that the two limits $
\lim_{n\rightarrow\infty}g(-\rho_{x_{n}}+a_{n}),\ \lim_{n\rightarrow\infty}g(-\rho_{x_{n}'}+a_{n}')
$
exist and are equal. Thus  Equation \eqref{definition of G action1} gives rise to a paring $G\times M\rightarrow M$.

\begin{lem}\label{lem:well define and continuous}
The above pairing $G\times M\rightarrow M$ defines a continuous action $(M,G)$.
\end{lem}
\begin{proof}
The fact that the paring $G\times M\rightarrow M$ satisfies $\id x=x$ and $g(hx)=(gh)x$ for all $x\in M$ and $g,h\in G$ is straight-forward. 
Now we prove the continuity of $G\times M\rightarrow M$. Recall that $G$ is countable and equipped with the discrete topology. Thus it is enough to prove that each $g\in G$ defines a  continuous function $g:M\rightarrow M$. Suppose $f_{m}\rightarrow f$
in $M$ and $\underset{n\rightarrow\infty}{\lim}-\rho_{x_{m,n}}+a_{m,n}=f_{m}$
for $m\in\mathbb{N}$. Let $d$ be a compatible metric for $M$. There exists $\{N_{m}\}_{m\in\mathbb{N}}$
such that
\begin{equation}
d(-\rho_{x_{m,n}}+a_{m,n},f_{m})<\frac{1}{2^{m}},\ d(g(-\rho_{x_{m,n}}+a_{m,n}),gf_{m})<\frac{1}{2^{m}}\label{eq:limit of t}
\end{equation}
for any $n\geq N_{m}$ and $m\in\mathbb{N}$. Let $N_{\epsilon}$
be an integer such that $d(f_{m},f)<\epsilon$ for any $m>N_{\epsilon}$.
As for any $m,m'>N_{\epsilon}$,
\[
\begin{array}{l}
d(-\rho_{x_{m,N_{m}}}+a_{m,N_{m}},-\rho_{x_{m',N_{m'}}}+a_{m',N_{m'}})\\
\leq d(-\rho_{x_{m,N_{m}}}+a_{m,N_{m}},f_{m})+d(-\rho_{x_{m',N_{m'}}}+a_{m',N_{m'}},f_{m'})+d(f_{m},f_{m'})\\
<2\epsilon+\frac{1}{2^{m}}+\frac{1}{2^{m'}},
\end{array}
\]
thus $\{-\rho_{x_{m,N_{m}}}+a_{m,N_{m}}\}_{m\in\mathbb{N}}$ is a Cauchy
sequence. By Equation \eqref{eq:limit of t}, one has that
$\underset{m\rightarrow\infty}{\lim}-\rho_{x_{m,N_{m}}}+a_{m,N_{m}}=f$.
By the definition of $gf$, $\underset{m\rightarrow\infty}{\lim}g(-\rho_{x_{m,N_{m}}}+a_{m,N_{m}})=gf$. Equation \eqref{eq:limit of t} shows that $\underset{m\rightarrow\infty}{\lim}d(g(-\rho_{x_{m,N_{m}}}+a_{m,N_{m}}),gf_{m})=0$.
This implies that
\[
\lim_{m\rightarrow\infty}gf_{m}=gf,
\]
as desired.
\end{proof}
\subsubsection{The extension $M\rightarrow X$ is a topological group extension.}
\begin{lem}
\label{lem:subsequence converge to f} Let $f,g\in M$ such that $\hat{\mathfrak{p}}_{0}(f)=\hat{\mathfrak{p}}_{0}(g)=x$.
Then there exists $a\in A$ such that $g=f+a.$
\end{lem}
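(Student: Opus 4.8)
The plan is to reduce the statement to the continuous difference map $\mathcal{E}$ of Definition~\ref{def:the definition of E} together with the continuous extension $D$ of Lemma~\ref{lem:The-map-continuous}. The key observation is that $\mathcal{E}$ forgets the $A$-coordinate on $M'$: for $x\in X'$ and $a\in A$,
$$\mathcal{E}(-\rho_{x}+a)(c_{0},c_{1})=\bigl(-\rho(c_{0})+a\bigr)-\bigl(-\rho(c_{1})+a\bigr)=-\bigl(\rho(c_{0})-\rho(c_{1})\bigr),$$
so that $\mathcal{E}(-\rho_{x}+a)=-\bigl(\rho_{x}(\cdot)-\rho_{x}(\cdot\cdot)\bigr)=-D(x)$, independently of $a$, where the last equality is Lemma~\ref{lem:The-map-continuous}.

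First I would upgrade this identity to all of $M$. Given $f\in M$, pick $-\rho_{x_{n}}+a_{n}\in M'$ with $-\rho_{x_{n}}+a_{n}\to f$; since $\hat{\mathfrak{p}}_{0}$ is continuous, $x_{n}=\hat{\mathfrak{p}}_{0}(-\rho_{x_{n}}+a_{n})\to\hat{\mathfrak{p}}_{0}(f)=:x$. Using the continuity of $\mathcal{E}$ (Lemma~\ref{lem:E-is-continuous.}) and of $D$ (Lemma~\ref{lem:The-map-continuous}) we obtain
$$\mathcal{E}(f)=\lim_{n\to\infty}\mathcal{E}(-\rho_{x_{n}}+a_{n})=\lim_{n\to\infty}\bigl(-D(x_{n})\bigr)=-D(x).$$
Applying this to the two given elements $f,g\in M$, which share the base point $\hat{\mathfrak{p}}_{0}(f)=\hat{\mathfrak{p}}_{0}(g)=x$, gives $\mathcal{E}(f)=-D(x)=\mathcal{E}(g)$.

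It then remains to deduce $g=f+a$ from $\mathcal{E}(f)=\mathcal{E}(g)$. Both $f$ and $g$ are classes in $\mathcal{B}\bigl(\mathfrak{p}_{0}^{-1}(x),A,\mu_{C_{G}^{k+1}(X)}^{x}\bigr)$, and the continuous system of measures attached to $\pi'$ is the product system $\{\mu_{C_{G}^{k+1}(X)}^{x}\times\mu_{C_{G}^{k+1}(X)}^{x}\}_{x\in X}$ (the corollary preceding Proposition~\ref{CSM for tri cube}, together with the uniqueness in Remark~\ref{uniqueness of CSM} and the fact that $\mu$ has full support). Hence, picking Borel representatives and writing $h=f-g\colon\mathfrak{p}_{0}^{-1}(x)\to A$, the relation $\mathcal{E}(f)=\mathcal{E}(g)$ reads $h(c_{0})=h(c_{1})$ for $\bigl(\mu_{C_{G}^{k+1}(X)}^{x}\times\mu_{C_{G}^{k+1}(X)}^{x}\bigr)$-a.e.\ $(c_{0},c_{1})$. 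By Fubini there is $c_{1}^{\ast}$ with $h(c_{0})=h(c_{1}^{\ast})$ for $\mu_{C_{G}^{k+1}(X)}^{x}$-a.e.\ $c_{0}$, so $h$ equals the constant $a_{0}:=h(c_{1}^{\ast})\in A$ almost everywhere, i.e.\ $f-g=a_{0}$ in the function bundle. Setting $a:=-a_{0}$ yields $g=f+a$, as required.

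The only step I expect to require care is the first reduction, namely that $\mathcal{E}(-\rho_{x}+a)$ depends only on $x$ and extends continuously off the full-measure set $X'$; but this is precisely the purpose of Lemmas~\ref{lem:E-is-continuous.} and~\ref{lem:The-map-continuous}, and the only extra ingredient is that $\hat{\mathfrak{p}}_{0}$ is continuous, so that base points of a convergent sequence in $M$ converge in $X$. The closing Fubini argument is routine. As a remark, this lemma shows that each fibre of $\hat{\mathfrak{p}}_{0}\colon M\to X$ is a single orbit of the translation action of $A$, which is exactly what is needed to make $M\to X$ a topological group extension by $A$.
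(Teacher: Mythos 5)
Your proof is correct and follows essentially the same route as the paper: both use the continuity of $\mathcal{E}$ (Lemma \ref{lem:E-is-continuous.}), the continuous extension $D$ (Lemma \ref{lem:The-map-continuous}) to show $\mathcal{E}(f)=\mathcal{E}(g)$ when the base points coincide, and then a Fubini argument on the product system of measures to deduce that $f-g$ is a.e.\ constant on the fibre. Your version merely makes explicit the intermediate observation that $\mathcal{E}(-\rho_x+a)=-D(x)$ is independent of $a$, which the paper leaves implicit when it passes from the two approximating sequences to $f(\cdot)-f(\cdot\cdot)=g(\cdot)-g(\cdot\cdot)$.
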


\begin{proof}

Let $\{-\rho_{x_{n}}+a_{n}\}_{n\in\mathbb{N}},\{-\rho_{x_{n}'}+a_{n}'\}_{n\in\mathbb{N}}$ be two sequences in $M'$
such that $
\lim_{n\rightarrow\infty}-\rho_{x_{n}}+a_{n}=f$ and $\lim_{n\rightarrow\infty}-\rho_{x_{n}'}+a_{n}'=g.
$
From Lemma \ref{lem:The-map-continuous},
$
\lim_{n\rightarrow\infty}\rho_{x_{n}}(\cdot)-\rho_{x_{n}}(\cdot\cdot)=\lim_{n\rightarrow\infty}\rho_{x_{n}'}(\cdot)-\rho_{x_{n}'}(\cdot\cdot).
$
From Lemma \ref{lem:E-is-continuous.},
\[
\lim_{n\rightarrow\infty}(-\rho_{x_{n}}(\cdot)+a_{n})-(-\rho_{x_{n}}(\cdot\cdot)+a_{n})=\lim_{n\rightarrow\infty}(-\rho_{x_{n}'}(\cdot)+a_{n}')-(-\rho_{x_{n}'}(\cdot\cdot)+a_{n}'),
\]
therefore $f(\cdot)-f(\cdot\cdot)=g(\cdot)-g(\cdot\cdot)$ in $\mathcal{{L}}(C_G^{k+1}(X)\times_{X}C_G^{k+1}(X)\xrightarrow{\pi'}X,A)$.
That is, for $\mu_{C_G^{k+1}(X)}^{x}\times\mu_{C_G^{k+1}(X)}^{x}$-a.e.
$(c_{1},c_{2})$, $f(c_{1})-f(c_{2})=g(c_{1})-g(c_{2})$. By the definition
of function bundles,  there exists $c_{2}$ such that for $\mu_{C_G^{k+1}(X)}^{x}$-a.e.
$c_{1}$,
\[
g(c_{1})-f(c_{1})=g(c_{2})-f(c_{2}).
\]
Let $a=f(c_{2})-g(c_{2})$, therefore $g=f+a$.
\end{proof}
\begin{prop}\label{prop:M->X is group ext}
The extension $\hat{\mathfrak{p}}_0: (M,G)\rightarrow (X,G)$ is a topological group extension by the group $A$.
\end{prop}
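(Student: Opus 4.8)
The plan is to produce the compact group $A$ acting continuously on $M$, commuting with the $G$-action, so that the fibers of $\hat{\mathfrak p}_0$ are exactly the (free) $A$-orbits. First I would observe that there is an everywhere-defined $A$-action on the whole function bundle $\mathcal L(C_G^{k+1}(X)\xrightarrow{\mathfrak p_0}X,A)$ given by $a\cdot[f]=[f+a]$, i.e.\ adding the constant $a$ pointwise; in contrast to the $G$-action no passage to a closure is needed here, so well-definedness is automatic. Continuity follows from the defining subbasis of the function bundle topology: for a character $\chi$ of $A$ and a continuous $F\colon C_G^{k+1}(X)\to\mathbb C$, the corresponding generating function $\phi_{\chi,F}$ satisfies $\phi_{\chi,F}(f+a)=\chi(a)\,\phi_{\chi,F}(f)$, and since such $\phi_{\chi,F}$ generate the topology of the bundle (characters suffice, by the discussion in Definition \ref{def:Let--be function bundle}), the map $(a,f)\mapsto f+a$ is continuous; it acts by homeomorphisms, with inverse $f\mapsto f-a$. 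The set $M'=\bigcup_{x\in X'}(-\rho_x+A)$ is visibly $A$-invariant, hence so is its closure $M$, so $A$ acts continuously on the compact space $M$.

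Next I would verify that $\hat{\mathfrak p}_0\colon M\to X$ is a factor map and that the two actions commute. The map $\hat{\mathfrak p}_0$ is continuous, being the restriction of the natural map $\hat\pi$ of the bundle; it is surjective, since $\hat{\mathfrak p}_0(M)\supseteq\hat{\mathfrak p}_0(M')=X'$ is dense and $\hat{\mathfrak p}_0(M)$ is compact, hence closed, so it equals $\overline{X'}=X$; and it is $G$-equivariant: on $M'$ this is read off from \eqref{definition of G on M'}, as $g(-\rho_x+b)=-\rho_{gx}+\beta(g,x)+b$ lies in the fiber over $gx$, and it extends to all of $M$ by continuity of the $G$-action (Lemma \ref{lem:well define and continuous}), approximating an arbitrary $f\in M$ by elements of $M'$. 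For the commutation, on $M'$ both $g\bigl(a\cdot(-\rho_x+b)\bigr)$ and $a\cdot\bigl(g(-\rho_x+b)\bigr)$ equal $-\rho_{gx}+\beta(g,x)+b+a$, and since both the $A$- and the $G$-action on $M$ are continuous, passing to limits along sequences in $M'$ gives $g(a\cdot f)=a\cdot(gf)$ for every $f\in M$.

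It remains to check that $\hat{\mathfrak p}_0(f)=\hat{\mathfrak p}_0(g)$ holds if and only if $g=a\cdot f$ for a unique $a\in A$. Since the $A$-action preserves each fiber $\mathcal B\bigl(\mathfrak p_0^{-1}(x),A,\mu_{C_G^{k+1}(X)}^x\bigr)$, we have $\hat{\mathfrak p}_0(a\cdot f)=\hat{\mathfrak p}_0(f)$, giving one direction; the converse is precisely Lemma \ref{lem:subsequence converge to f}, which produces $a\in A$ with $g=f+a$ whenever $\hat{\mathfrak p}_0(f)=\hat{\mathfrak p}_0(g)$. Uniqueness of $a$ is freeness of the action: if $f+a=f$ in $\mathcal B\bigl(\mathfrak p_0^{-1}(x),A,\mu_{C_G^{k+1}(X)}^x\bigr)$, then $f(c)+a=f(c)$ for $\mu_{C_G^{k+1}(X)}^x$-a.e.\ $c$, in particular for at least one $c$, forcing $a=0$. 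Assembling these facts verifies every clause of the definition of a topological group extension. The only genuinely substantive input is Lemma \ref{lem:subsequence converge to f} (already proven), which supplies the transitivity of the $A$-action on each fiber; the remaining work is routine, the point requiring some care being the consistent bootstrapping of statements from $M'$, where explicit formulas are available, to the closure $M$ via the continuity results of the preceding subsections.
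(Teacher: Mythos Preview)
Your proof is correct and follows essentially the same approach as the paper's: the $A$-action and its continuity are established via the character identity $\phi_{\chi,F}(f+a)=\chi(a)\,\phi_{\chi,F}(f)$, transitivity on fibers is exactly Lemma~\ref{lem:subsequence converge to f}, freeness is immediate, and commutation of the $G$- and $A$-actions is checked on $M'$ and extended by continuity. You are somewhat more explicit than the paper about surjectivity of $\hat{\mathfrak p}_0$ and the $A$-invariance of $M$, but the argument is the same.
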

\begin{proof}
Recall the definition of topological group extension in Subsection \ref{subsec:Dynamical-background.}. By the definition
of function bundle, $\hat{\mathfrak{p}}_{0}:M\rightarrow X$ is continuous
(see Section \ref{subsec:Function-bundles}) and thus it is easy to see that $\hat{\mathfrak{p}}_{0}:(M,G)\rightarrow (X,G)$ is a factor map. By
Lemma \ref{lem:subsequence converge to f} for all $x\in X$, $\hat{\mathfrak{p}}_{0}^{-1}(x)=f+A$ for some $f\in M$. To see that the action of $A$ on $M:$ $A\times M\rightarrow M:\ (a,f)\rightarrow f+a$
is continuous and free, recall that $f_{n}\rightarrow f$ in $M$ if:
\[
\begin{array}{l}
\underset{n\rightarrow\infty}{\lim}\int_{\hat{\mathfrak{p}}_{0}^{-1}(\hat{\mathfrak{p}}_{0}(f_{n}))}F_{1}(f_{n}(v))F_{2}(v)d\mu_{C_G^{k+1}(X)}^{\hat{\mathfrak{p}}_{0}(f_{n})}(v)\\
=\int_{\hat{\mathfrak{p}}_{0}^{-1}(\hat{\mathfrak{p}}_{0}(f))}F_{1}(f(v))F_{2}(v)d\mu_{C_G^{k+1}(X)}^{\hat{\mathfrak{p}}_{0}(f)}(v),
\end{array}
\]
for every character $F_{1}:A\rightarrow S^{1}$ and continuous function
$F_{2}:X\rightarrow\mathbb{C}$. As $F_{1}$ is a character, it holds
that
\[
\begin{array}{l}
\int_{\mathfrak{p}_{0}^{-1}(\hat{\mathfrak{p}}_{0}(f))}F_{1}(f(v)+a)F_{2}(v)d\mu_{C_G^{k+1}(X)}^{\hat{\mathfrak{p}}_{0}(f)}(v)\\
=F_{1}(a)\int_{\mathfrak{p}_{0}^{-1}(\hat{\mathfrak{p}}_{0}(f))}F_{1}(f(v))F_{2}(v)d\mu_{C_G^{k+1}(X)}^{\hat{\mathfrak{p}}_{0}(f)}(v),
\end{array}
\]
which gives the continuity of the map: $(a,f)\rightarrow f+a$. The freeness of the action is trivial. Finally by the definition of the action of $G$ on $M'$ (Equation \eqref{definition of G on M'}), for any $m\in M'$, $gam=agm$ for any $a\in A$, $g\in G$. As $M=\overline{M'}$, one has that the  actions $A$ and $G$ commute.
\end{proof}
\subsubsection{The extension $M\rightarrow X$ is a strictly ergodic model.}

\begin{lem}\label{abelian group extension and uniquely ergodic}
Assume that $\pi:(Y,G)\rightarrow(X,G)$ is a topological group extension by a compact abelian group $A$, such that $(X,G)$ is uniquely ergodic with invariant measure $\mu$. Let $\nu$ be the associated measure on $Y$ w.r.t.\  the Haar measure of $A$, i.e.,  $\nu=\int_X\delta_x \times m_{\haar(A)} d\mu(x)$. If $(Y,\nu, G)$ is ergodic, then $(Y,G)$ is uniquely ergodic.
\end{lem}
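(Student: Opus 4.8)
The plan is to take an arbitrary $G$-invariant Borel probability measure $\lambda$ on $Y$ and show $\lambda=\nu$.

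First I would record two reductions. Since the $A$-action commutes with the $G$-action and $\pi\circ a=\pi$ for all $a\in A$, the measure $\bar\lambda:=\int_A a_*\lambda\,dm_A(a)$ (a weak-$*$ continuous average of probability measures, hence itself a well-defined probability measure) is again $G$-invariant, and it is $A$-invariant by translation invariance of $m_A$. Consequently $\pi_*\bar\lambda$ is a $G$-invariant measure on $X$, so $\pi_*\bar\lambda=\mu$ by unique ergodicity of $(X,G)$, and we may disintegrate $\bar\lambda=\int_X\bar\lambda_x\,d\mu(x)$ over $\pi$ (Theorem \ref{thm:(measure-disintegration-theorem)}). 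Using the a.e.\ uniqueness of the disintegration together with the fact that $A$ acts freely and transitively on each fibre $\pi^{-1}(x)$, one checks (invoking separability to pass from a fixed $a$ to all of $A$) that $\bar\lambda_x$ is an $A$-invariant probability measure on the $A$-torsor $\pi^{-1}(x)$ for $\mu$-a.e.\ $x$; the only such measure is the Haar measure, i.e.\ $\bar\lambda_x=\nu_x$, so $\bar\lambda=\nu$. The same disintegration argument (or the very definition of $\nu$) shows that $\nu$ is $A$-invariant, hence $(a^{-1})_*\nu=\nu$ for every $a\in A$.

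The heart of the matter is to upgrade the identity $\nu=\int_A a_*\lambda\,dm_A(a)$ to $a_*\lambda=\nu$; this is where ergodicity of $\nu$ is used, and I would run it through the ergodic decomposition. Let $\lambda=\int_\Omega\lambda_\omega\,dP(\omega)$ be the ergodic decomposition of $\lambda$ (see \cite{G03}); each $\lambda_\omega$ is $G$-ergodic, and since $a$ carries $G$-invariant sets to $G$-invariant sets, so is each $a_*\lambda_\omega$. Then $\nu=\int_A\int_\Omega a_*\lambda_\omega\,dP(\omega)\,dm_A(a)$, i.e.\ $\nu=\int\kappa\,d\Theta(\kappa)$, where $\Theta$ is the Borel pushforward of $m_A\otimes P$ under the measurable map $(a,\omega)\mapsto a_*\lambda_\omega$, a probability measure concentrated on the ergodic measures of $(Y,G)$. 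But the ergodic decomposition of the ergodic measure $\nu$ is $\delta_\nu$, so by uniqueness of the ergodic decomposition \cite{G03} we get $\Theta=\delta_\nu$. Hence $a_*\lambda_\omega=\nu$ for $(m_A\otimes P)$-a.e.\ $(a,\omega)$, so $\lambda_\omega=(a^{-1})_*\nu=\nu$; by Fubini $\lambda_\omega=\nu$ for $P$-a.e.\ $\omega$, whence $\lambda=\int_\Omega\lambda_\omega\,dP(\omega)=\nu$. Thus $\nu$ is the only $G$-invariant measure on $Y$.

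The step I expect to be the main obstacle is precisely the last one: deducing $a_*\lambda=\nu$ from the fact that $\nu$ is the $A$-average of the $G$-invariant measures $a_*\lambda$. This fails without ergodicity (the Haar measure of a torus is such an average of point masses), and it is exactly the extremality of the ergodic measure $\nu$ in the Choquet simplex of $G$-invariant measures --- equivalently, uniqueness of the ergodic decomposition --- that makes it go through. Everything else is routine bookkeeping with the commuting $A$-action and with measure disintegrations.
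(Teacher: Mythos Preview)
Your proof is correct and follows essentially the same approach as the paper: average a $G$-invariant measure over the $A$-action to recover $\nu$, then use the ergodicity (extremality) of $\nu$ to collapse the average. The only cosmetic difference is that the paper starts directly with an \emph{ergodic} measure $\nu'$ (so no preliminary ergodic decomposition is needed) and concludes $a_*\nu'=\nu$ immediately from $\nu=\int_A a_*\nu'\,dm_A$ and extremality; your detour through the ergodic decomposition of an arbitrary invariant $\lambda$ is harmless but unnecessary.
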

\begin{proof}

Let $\nu'$ be an ergodic measure of $(Y,G)$. One has that $a_*\nu'$ is ergodic for any $a\in A$. Let $\hat{\nu}=\int_Aa_*\nu'dm_{\haar(A)}(a)$. By considering the measure disintegration of $\hat{\nu}$ above $X$, it is easy to see that $\hat{\nu}$ is the associated measure on $Y$ w.r.t.\  the Haar measure of $A$, i.e.\  $\hat{\nu}=\nu$. As $\nu$ is ergodic, it holds that $\nu'=\nu$, i.e.\  $(Y,G)$ is uniquely ergodic.

\end{proof}

\begin{thm}\label{strictly ergodic model}
The extension $(M,G)\xrightarrow{\hat{\mathfrak{p}}_0}(X,G)$ is a strictly ergodic distal model for $(Y,\mu\times m_{\haar(A)},G)\xrightarrow{\pi}(X,\mu,G)$.
\end{thm}

\begin{proof}
 Note that $X'\times A$ is a $G$-invariant subset of $Y$ of full $\mu\times m_{\haar(A)}$ measure. The set $M'$ is a copy of $X'\times A$ in a natural way, $M'\leftrightarrow X'\times A:-\rho_x+a\leftrightarrow (x,a)$. Moreover the action of $G$ on $M'$ defined by \eqref{definition of G on M'} agrees with the action of $G$ on $X'\times A\subset Y$, and the action of $A$ on $X\times A$ agrees with the action of $A$ on $M'$. Thus the measure $\mu\times m_{\haar(A)}$ is well defined on $M$ and is $G$-invariant and moreover $(M,G)\xrightarrow{\hat{\mathfrak{p}}_0}(X,G)$ is a topological model for $(Y,\mu\times m_{\haar(A)},G)\xrightarrow{\pi}(X,\mu,G)$. By assumption $\mu\times m_{\haar(A)}$ is ergodic. Thus by Lemma \ref{abelian group extension and uniquely ergodic}, $(M,G)$ is uniquely ergodic. By Proposition \ref{prop:M->X is group ext}, $\hat{\mathfrak{p}}_0:(M,G)\rightarrow (X,G)$ is a topological group extension. As a group extension of a distal system is distal (\cite[Chapter V, Proposition 4.5]{dV93}), $(M,G)$ is distal. By Remark \ref{rem:strictly ergodic distal}, a distal uniquely ergodic system is strictly ergodic.
\end{proof}

\subsubsection{The extension $M\rightarrow X$ is a fibration of order at most $k$.}\label{subsec:extension of order k}

\begin{prop}
\label{thm:k+1 uniqueness} Let $\mathbf{f},\mathbf{g}\in C_G^{k+1}(M)$
such that $\mathbf{f}_{v}=\mathbf{g}_{v}$ for $v\in\{0,1\}_{*}^{k+1}$
and $\hat{\mathfrak{p}}_{0}(\mathbf{f}_{w})=\hat{\mathfrak{p}}_{0}(\mathbf{g}_{w})$
for $w\in\{0,1\}^{k+1}$. Then $\mathbf{f}_{0}=\mathbf{g}_{0}$.
\end{prop}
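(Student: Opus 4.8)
The plan is to reduce the statement to a single identity in $A$ and then read it off from the fibre structure of $\hat{\mathfrak{p}}_0^{[k+1]}\colon C_G^{k+1}(M)\to C_G^{k+1}(X)$. Since $\hat{\mathfrak p}_0(\mathbf f_{\vec 0})=\hat{\mathfrak p}_0(\mathbf g_{\vec 0})$, Lemma \ref{lem:subsequence converge to f} gives $a\in A$ with $\mathbf g_{\vec 0}=\mathbf f_{\vec 0}+a$; writing $c:=\hat{\mathfrak p}_0^{[k+1]}(\mathbf f)=\hat{\mathfrak p}_0^{[k+1]}(\mathbf g)\in C_G^{k+1}(X)$ for the common base cube and using that $(\hat{\mathfrak p}_0^{[k+1]})^{-1}(c)$ is an $A^{[k+1]}$-torsor (Proposition \ref{prop:M->X is group ext}), the hypotheses say exactly that $\mathbf g=a^{\{\vec 0\}}\cdot\mathbf f$, where $a^{\{\vec 0\}}\in A^{[k+1]}$ equals $a$ at $\vec 0$ and $0$ elsewhere. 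As $\theta_{k+1}(a^{\{\vec 0\}})=(-1)^{|\vec 0|}a=a$, it suffices to prove:
\begin{center}
$(\star)$\quad if $\mathbf f,\mathbf g\in C_G^{k+1}(M)$ and $\hat{\mathfrak p}_0^{[k+1]}(\mathbf f)=\hat{\mathfrak p}_0^{[k+1]}(\mathbf g)$, then the $\mathbf b\in A^{[k+1]}$ with $\mathbf g=\mathbf b\cdot\mathbf f$ lies in $L$,
\end{center}
where $L\subseteq\{\mathbf a\in A^{[k+1]}:\theta_{k+1}(\mathbf a)=0\}$ is the closed subgroup of property $(3)$ of the nilcycle; then $a^{\{\vec 0\}}\in L\subseteq\ker\theta_{k+1}$ forces $a=0$.

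To prove $(\star)$ I first treat $\mu_{C_G^{k+1}(X)}$-a.e.\ base cube $c$. Under the identification $M'\leftrightarrow X'\times A$, $-\rho_x+a\leftrightarrow(x,a)$, which is $G$- and $A$-equivariant (see the discussion before Lemma \ref{abelian group extension and uniquely ergodic}), $(M^{[k+1]},\mu_M^{[k+1]})$ becomes a topological model of $(Y^{[k+1]},\nu^{[k+1]})$, and by property $(3)$ the disintegration of $\nu^{[k+1]}$ over $\hat{\mathfrak p}_0^{[k+1]}$ is $\{\mathbf a_c+m_{Haar(L)}\}_c$. Since $M$ is distal (Theorem \ref{strictly ergodic model}) one has $\mu_M^{[k+1]}(C_G^{k+1}(M))=1$ (cf.\ Theorem \ref{fullsupport} and Remark \ref{muk=muC for countable group}); and — this is the crucial point — since $(X,G)$ is a distal $(2k+1)$-cube uniquely ergodic system, unique ergodicity rigidifies the picture enough that for $\mu_{C_G^{k+1}(X)}$-a.e.\ $c$ the topological fibre $S_c:=(\hat{\mathfrak p}_0^{[k+1]})^{-1}(c)\cap C_G^{k+1}(M)$ coincides with the (closed) support of the fibre measure $\mathbf a_c+m_{Haar(L)}$, namely the coset $\mathbf a_c+L$. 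Hence for a.e.\ $c$ any two elements of $S_c$ differ by an element of $L$, which is $(\star)$ for those $c$.

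The passage to every $c$ is by continuity. The fibre-difference map $\delta\colon C_G^{k+1}(M)\times_{C_G^{k+1}(X)}C_G^{k+1}(M)\to A^{[k+1]}$ given by ``$\delta(\mathbf g,\mathbf f)=\mathbf g-\mathbf f$'' is continuous (group-extension structure) and $L$ is closed, so $\{\delta\in L\}$ is closed; it thus suffices to show that pairs $(\mathbf f,\mathbf g)$ lying over an a.e.\ base cube are dense in $Q:=C_G^{k+1}(M)\times_{C_G^{k+1}(X)}C_G^{k+1}(M)$. For this one uses the orbit description $C_G^{k+1}(M)=\overline{\{\mathbf h\mathbf m:\mathbf h\in\mathcal{HK}^{k+1}(G)\}}$ together with the transformation rule of Lemma \ref{lem:the action on M}: for $m=-\rho_x+b$ with $x$ generic, the orbit cube $\mathbf h\mathbf m$ lies over $\mathbf h(x,\dots,x)$ and, in the local trivialisation, has fibre coordinates $\beta(\mathbf h_w,x)+b$, whose $\theta_{k+1}$ equals $\rho$ of the base cube; in fact it lies in $\mathbf a_c+L$ once one knows that $\rho$ vanishes on (a.e.) constant cubes and that $\{a^{[k+1]}:a\in A\}\subseteq L$ — the latter holding because $\nu^{[k+1]}$ is invariant under the diagonal $A$-action, whence the uniqueness of disintegration gives $a^{[k+1]}+L=L$. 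Combined with lower semicontinuity of $c\mapsto S_c$ (openness of $\hat{\mathfrak p}_0^{[k+1]}$ on $C_G^{k+1}(M)$, from the group-extension structure) this yields the density, hence $\delta(Q)\subseteq L$ and $(\star)$. Applying $(\star)$ to our $\mathbf f,\mathbf g$ gives $a^{\{\vec 0\}}\in L$, so $a=\theta_{k+1}(a^{\{\vec 0\}})=0$ and $\mathbf f_{\vec 0}=\mathbf g_{\vec 0}$.

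The main obstacle is the identification, for a.e.\ base cube $c$, of the topological fibre $S_c$ with the coset $\mathbf a_c+L$ seen by the Host--Kra measure: one must rule out ``extra'' points of $C_G^{k+1}(M)$ over a generic base cube. This is exactly where the hypothesis that $(X,G)$ is a distal $(2k+1)$-cube uniquely ergodic system is indispensable — unique ergodicity forces the topological and measure-theoretic fibrations to agree (compare Proposition \ref{muk=mu_C} and Theorem \ref{thm:equivalence for zk xk-1}), and degree $2k+1$ is precisely what makes the auxiliary systems $C_G^{2k+2}(X)$, $T^{k+1}(X)$, $\mathcal P^{k+1}(X)$ and the fibred products uniquely ergodic, so that the continuous--measure--section machinery of Section \ref{subsec:Relative-invariant-measures(RIM)} is available; the degenerate (constant) base cubes, which the a.e.\ nilcycle properties do not reach, must be controlled through the tricube alternating--sum identity of Lemma \ref{lem: tricube measure nilcycle}, exactly as in the proof of Lemma \ref{fact:well defined and continuous}.
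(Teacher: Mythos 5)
The high-level reduction — write $\mathbf g_{\vec 0}=\mathbf f_{\vec 0}+a$, note that the difference cube $a^{\{\vec 0\}}$ has $\theta_{k+1}(a^{\{\vec 0\}})=a$, and conclude $a=0$ once one knows the fibre difference has vanishing alternating sum — matches the spirit of the paper. But your argument for the crucial step has a genuine gap, and the form of your intermediate claim $(\star)$ is also subtly off.

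The gap is the assertion that, for $\mu_{C_G^{k+1}(X)}$-a.e.\ $c$, the \emph{topological} fibre $S_c=(\hat{\mathfrak p}_0^{[k+1]})^{-1}(c)\cap C_G^{k+1}(M)$ coincides with the \emph{support} $\mathbf a_c+L$ of the conditional measure. Unique ergodicity and full support of the total measure do not give this: for a continuous system of measures $\{\mu^c\}_c$, the map $c\mapsto\operatorname{supp}\mu^c$ is only lower semicontinuous (indeed $\{c:\mu^c(U)>0\}$ is open for open $U$), not upper semicontinuous — a point $s\in S_c$ arises from nearby conditional supports $\operatorname{supp}\mu^{c'}$ accumulating at $s$, and nothing forces $s\in\operatorname{supp}\mu^c$. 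Your acknowledgment of this obstacle does not resolve it; the appeal to Proposition~\ref{muk=mu_C} and Theorem~\ref{thm:equivalence for zk xk-1} addresses a different phenomenon (agreement of measures, not of topological and measure-theoretic fibres). Likewise, the ``density'' step — approximating an arbitrary pair $(\mathbf f,\mathbf g)\in C_G^{k+1}(M)\times_{C_G^{k+1}(X)}C_G^{k+1}(M)$ by pairs of orbit cubes over a \emph{common} generic base — needs more than lower semicontinuity and is not established by the cited unique ergodicity of the auxiliary systems.

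Separately, $(\star)$ as stated (that fibre differences lie in $L$) is stronger than what is needed and than what is true in general. The relevant coset structure is that of $\tilde L=\{\mathbf a\in A^{[k+1]}:\theta_{k+1}(\mathbf a)=0\}$, and the paper deliberately works with $\tilde L$: the set $Q^{k+1}(M_U)$ in Definition~\ref{definition of Q(M)} has fibres that are $\tilde L$-cosets, not $L$-cosets, and the remark after Theorem~\ref{thm:extension of order k} explicitly reserves the case $L=\tilde L$ for an extra conclusion ($k$-ergodicity). The paper avoids describing $S_c$ as any coset at all: Lemma~\ref{uniqueness for Q} directly compares the limits $\mathbf f_{\vec 0}$ and $\mathbf g_{\vec 0}$ by exploiting $\mathbf f_v=\mathbf g_v$ for $v\neq\vec 0$, converting this into equality of integrals against the tricube continuous measure sections $\eta^{b(n,\bullet)}$, and then invoking the alternating-sum identity \eqref{eq: a.e. Ck+1-1} on $T^{k+1}(X)$ to recover the $\vec 0$-coordinate; Proposition~\ref{thm:k+1 uniqueness} then reduces to this lemma by showing $C_G^{k+1}(M)\subset\overline{Q^{k+1}(M_U)}$ via $\sigma$-invariance, $\tilde L$-translation invariance of $\overline{Q^{k+1}(M_U)}$, and minimality. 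To repair your argument you would need to replace the support identification by this kind of integral/tricube manipulation, which is precisely the technical content of the paper's proof.
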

Before proving Proposition \ref{thm:k+1 uniqueness}, we need some
preparation. From Lemma \ref{lem: tricube measure nilcycle} for any $x\in X'$, $\mu_{C_G^{k+1}(X)}^{x}$-a.e. $c\in C_G^{k+1}(X)$,
\begin{equation}
\int_{\omega^{-1}(c)}d_{A}\left(\sum_{\nu\in\{0,1\}^{k+1}}(-1)^{|\nu|}\rho(\psi_{\nu}(\mathbf{t})),\rho(\omega(\mathbf{t}))\right)d\eta^{c}(\mathbf{t})=0.\label{eq: a.e. Ck+1-1}
\end{equation}
Notice that in Equation \eqref{eq: a.e. Ck+1-1}, $\omega(\mathbf{t})=c$
for $\mathbf{t}\in \omega^{-1}(c)$. There exists a full $\mu_{C_G^{k+1}(X)}$-measure
set $V\subset C_G^{k+1}(X)$, such that for $c\in V$, Equation (\ref{eq: a.e. Ck+1-1})
holds. Also notice that $\mathfrak{p}_v:C_G^{k+1}(X)\rightarrow X$ are measure-preserving for all $v\in \{0,1\}^{k+1}$. Therefore by replacing $V$ by $V\cap\bigcap_{v\in \{0,1\}^{k+1}}(\mathfrak{p}_v)^{-1}(X')$ we may assume that for every $c\in V$ and $v\in \{0,1\}^{k+1}$, it holds that $c_v\in X'$.

\begin{defn}\label{definition of Q(M)}

From Lemma \ref{lem:the action on M}, there exists $U\subset V$  such that $\mu^{[k+1]}(U)=1$ and for any $c\in U$, $\rho(\mathbf{g}c)=\rho(c)+\sum_{v\in\{0,1\}^{k+1}}(-1)^{|v|}\beta(\mathbf{g}_{v},c_{v})$ for any $\mathbf{g}\in \mathcal{HK}^{k+1}(G)$. In particular, we can assume that $U$ is $\mathcal{HK}^{k+1}(G)$-invariant and $\sigma$-invariant for any cube isomorphism $\sigma$. Define 
\begin{equation}\label{definiton of Q(MU)}
\begin{array}{ll}
 Q^{k+1}_U(M)=  & \{\mathbf{f}:\{0,1\}^{k+1}\rightarrow M^{[k+1]}:\nu\mapsto-\rho_{c_{\nu}}+\mathbf{a}_{\nu}|\\\, & c\in U
\text{ and }\sum_{\nu\in\{0,1\}^{k+1}}(-1)^{|\nu|}\mathbf{a}_{\nu}=\rho(c)\}.
\end{array}
\end{equation}
\end{defn}

\begin{lem}\label{uniqueness for Q} Let $\mathbf{f},\mathbf{g}\in \overline{Q^{k+1}_U(M)}$
such that $\mathbf{f}_{v}=\mathbf{g}_{v}$ for $v\in\{0,1\}_{*}^{k+1}$
and $\hat{\mathfrak{p}}_{0}(\mathbf{f}_{\vec{0}})=\hat{\mathfrak{p}}_{0}(\mathbf{g}_{\vec{0}})$. Then $\mathbf{f}_{\vec{0}}=\mathbf{g}_{\vec{0}}$.
\end{lem}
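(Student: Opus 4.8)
Here is how I would prove Lemma \ref{uniqueness for Q}.

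The plan is to pass, via the tricube, from the (almost everywhere) defining relation of $Q^{k+1}(M_U)$ to an \emph{exact} identity in a function bundle, and then use continuity to extend it to the closure. First I reduce to a statement about a single group element. Since $\mathbf{f}_v=\mathbf{g}_v$ for $v\in\{0,1\}_*^{k+1}$ and $\hat{\mathfrak{p}}_0(\mathbf{f}_{\vec0})=\hat{\mathfrak{p}}_0(\mathbf{g}_{\vec0})$, the configurations $\mathbf{f},\mathbf{g}$ lie over the same base point $c:=(\hat{\mathfrak{p}}_0(\mathbf{f}_v))_v=(\hat{\mathfrak{p}}_0(\mathbf{g}_v))_v$, which belongs to $C_G^{k+1}(X)$ because it is a limit of points of $U\subset C_G^{k+1}(X)$ (using continuity of $\hat{\mathfrak{p}}_0$). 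Applying Lemma \ref{lem:subsequence converge to f} to $\mathbf{f}_{\vec0},\mathbf{g}_{\vec0}$, which share the base point $c_{\vec0}$, produces $a\in A$ with $\mathbf{g}_{\vec0}=\mathbf{f}_{\vec0}+a$; it then suffices to prove $a=0$.

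The key computation is an exact alternating sum formula for elements of $Q^{k+1}(M_U)$ evaluated along the tricube. Let $\mathbf{f}\in Q^{k+1}(M_U)$, say $\mathbf{f}_v=-\rho_{c_v}+\mathbf{a}_v$ with $c=(c_v)_v\in U$ and $\sum_v(-1)^{|v|}\mathbf{a}_v=\rho(c)$. For $\mathbf{t}\in\omega^{-1}(c)\subset T^{k+1}(X)$ one has $\psi_v(\mathbf{t})_{\vec0}=\mathbf{t}_{\Omega(v)}=\omega(\mathbf{t})_v=c_v$, so, since $(\psi_v)_*\eta^c=\mu_{C_G^{k+1}(X)}^{c_v}$ by Lemma \ref{measure preserving}, the value $\mathbf{f}_v(\psi_v(\mathbf{t}))=-\rho(\psi_v(\mathbf{t}))+\mathbf{a}_v$ is defined for $\eta^c$-a.e.\ $\mathbf{t}$. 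As $c\in U\subseteq V$, the tricube formula \eqref{eq: a.e. Ck+1-1} gives $\sum_v(-1)^{|v|}\rho(\psi_v(\mathbf{t}))=\rho(\omega(\mathbf{t}))=\rho(c)$ for $\eta^c$-a.e.\ $\mathbf{t}$, whence
\[
\sum_{v\in\{0,1\}^{k+1}}(-1)^{|v|}\mathbf{f}_v(\psi_v(\mathbf{t}))=0\qquad\text{for }\eta^c\text{-a.e.\ }\mathbf{t}.
\]
I would encode this in the function bundle $\mathcal{L}(T^{k+1}(X)\xrightarrow{\omega}C_G^{k+1}(X),A)$ (using that $\{\eta^c\}_c$ is the continuous measure section for $\omega$, Definition \ref{definition of eta and Txx}) by defining $\Theta\colon\overline{Q^{k+1}(M_U)}\to\mathcal{L}(T^{k+1}(X)\xrightarrow{\omega}C_G^{k+1}(X),A)$ that sends $\mathbf{f}$, with base $c=(\hat{\mathfrak{p}}_0(\mathbf{f}_v))_v$, to the class of $\mathbf{t}\mapsto\sum_v(-1)^{|v|}\mathbf{f}_v(\psi_v(\mathbf{t}))$ in $\mathcal{B}(\omega^{-1}(c),A,\eta^c)$; this is well defined because $(\psi_v)_*\eta^c=\mu_{C_G^{k+1}(X)}^{c_v}$, so changing the representatives of the $\mathbf{f}_v$ on null sets does not change $\Theta(\mathbf{f})$. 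The identity above says $\Theta\equiv[0]$ on the dense subset $Q^{k+1}(M_U)$.

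The heart of the argument, and the step I expect to be the main obstacle, is to show that $\Theta$ is continuous; equivalently, that for every character $\chi$ of $A$ and every continuous $F\colon T^{k+1}(X)\to\mathbb{C}$ the function
\[
\mathbf{f}\longmapsto\int_{\omega^{-1}(c)}F(\mathbf{t})\prod_{v\in\{0,1\}^{k+1}}\chi_v\bigl(\mathbf{f}_v(\psi_v(\mathbf{t}))\bigr)\,d\eta^c(\mathbf{t}),\qquad c=(\hat{\mathfrak{p}}_0(\mathbf{f}_v))_v,
\]
is continuous on $\overline{Q^{k+1}(M_U)}$, where $\chi_v=\chi$ if $|v|$ is even and $\chi_v=\overline{\chi}$ otherwise. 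This is the nilspace analogue of the continuity estimates in the proofs of Lemma \ref{fact:well defined and continuous} and Lemma \ref{lem:The-map-continuous}: for $\mathbf{f}^{(n)}\to\mathbf{f}$, one uses the telescoping bound $|\int A_{\vec0}(\prod_vA_v-\prod_vA'_v)|\le\|A_{\vec0}\|_\infty\sum_v\|A_v-A'_v\|_1$ of \cite[Lemma 3.3]{GWSY2018} together with Lusin's theorem \cite[Theorem 2.24]{rudin2006real} to replace each bounded Borel section $\chi_v\circ\mathbf{f}_v$ by a continuous function on $C_G^{k+1}(X)$, then transports these continuous functions through $\psi_v$ and $\omega$ by means of $(\psi_v)_*\eta^c=\mu_{C_G^{k+1}(X)}^{c_v}$, $(\psi_v)_*\mu_{T^{k+1}(X)}^x=\mu_{C_G^{k+1}(X)}$ (Lemma \ref{measure preserving}) and Corollary \ref{cor:The CSM }, reducing to convergence of integrals of fixed continuous functions, which holds because $\mathbf{f}^{(n)}_v\to\mathbf{f}_v$ in the respective function bundles and the systems $\{\eta^c\}_c$, $\{\mu_{T^{k+1}(X)}^x\}_x$ are continuous. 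Once continuity is established, $\Theta\equiv[0]$ on all of $\overline{Q^{k+1}(M_U)}$, since a continuous map into a Hausdorff space vanishing on a dense set vanishes identically.

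Finally I would conclude as follows. Since $\mathbf{g}_v=\mathbf{f}_v$ for $v\ne\vec0$, $\mathbf{g}_{\vec0}=\mathbf{f}_{\vec0}+a$, and $(-1)^{|\vec0|}=1$, for $\eta^c$-a.e.\ $\mathbf{t}$ one has $\sum_v(-1)^{|v|}\mathbf{g}_v(\psi_v(\mathbf{t}))=\bigl(\sum_v(-1)^{|v|}\mathbf{f}_v(\psi_v(\mathbf{t}))\bigr)+a$, i.e.\ $\Theta(\mathbf{g})=\Theta(\mathbf{f})+a$ in $\mathcal{B}(\omega^{-1}(c),A,\eta^c)$. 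But $\Theta(\mathbf{f})=\Theta(\mathbf{g})=[0]$, so the constant function with value $a$ vanishes $\eta^c$-a.e.; as $\eta^c$ is a probability measure, $a=0$, hence $\mathbf{f}_{\vec0}=\mathbf{g}_{\vec0}$, which is what is needed to deduce Proposition \ref{thm:k+1 uniqueness} and in turn that $M\to X$ is a fibration of order at most $k$.
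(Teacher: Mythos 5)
Your strategy---package the paper's sequence-of-integrals manipulations as a single continuous map $\Theta$ into the function bundle $\mathcal{L}(T^{k+1}(X)\xrightarrow{\omega}C_G^{k+1}(X),A)$, verify $\Theta\equiv[0]$ on $Q^{k+1}(M_U)$ via the tricube identity, and extend by density---is a clean reformulation of what the paper actually does, and the reduction via Lemma \ref{lem:subsequence converge to f} to showing $a=0$, plus the concluding argument that $\Theta(\mathbf{g})=\Theta(\mathbf{f})+a$, is fine.

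However, the continuity of $\Theta$ is exactly where the paper spends most of its effort, and your sketch of this step has a gap. You invoke the telescoping bound together with Lusin's theorem ``as in Lemmas \ref{fact:well defined and continuous} and \ref{lem:The-map-continuous}.'' In those two proofs the decisive simplification is that the pushforward measure $(\psi_v)_*\mu^x_{T^{k+1}(X)}=\mu_{C_G^{k+1}(X)}$ for $v\neq\vec0$ is \emph{independent of the base point} $x$; this is what makes the Lusin-approximant usable uniformly along the sequence and turns the estimate into an honest $\|\cdot\|_\infty$ bound. Here, by contrast, the relevant pushforward is $(\psi_v)_*\eta^c=\mu^{c_v}_{C_G^{k+1}(X)}$, which varies with the base cube $c$, and the function $\mathbf{f}_v$ in the limit lives on the varying fiber $\mathfrak{p}_0^{-1}(c_v)$ with only weak-$*$ (function-bundle) convergence of the $\mathbf{f}_v^{(n)}$, which is strictly weaker than the $L^1$-convergence the telescoping inequality would need. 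One cannot therefore replace $\chi_v\circ\mathbf{f}_v$ by a fixed continuous function and push the product through directly.

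The paper circumvents this by proving the analogue of your continuity claim one coordinate $v$ at a time, through the more careful CMD decomposition $\omega=\pi_L\circ\tilde{\pi}_v$ (Proposition \ref{CSM for tri cube}, Lemma \ref{lem:CSM circ}), expressing the tricube integral against an arbitrary continuous $H$ as an integral against a continuous kernel $F_H(c',c)$ and approximating $F_H$ uniformly by tensor products $F'(c')F(c)$; and then, crucially, it invokes a specific structural fact about function bundles --- that addition of two convergent sequences with the same converging base is convergent (\cite[Lemma 2.3.9, Lemma 2.3.15]{candela2016cpt_notes}) --- to assemble the coordinates. If you flesh out the continuity of $\Theta$ along these lines (pullback of each $\mathbf{f}_v$ through $\psi_v$ is a continuous bundle map by the CMD argument, and sums of convergent bundle sequences over a fixed base are convergent), your proof becomes a correct and arguably more transparent reorganization of the paper's argument. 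As stated, the continuity step is asserted rather than established, and the cited tools are not by themselves enough.
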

\begin{proof}
We suppose
\[
\lim_{n\rightarrow\infty}(-\rho_{b(n,\mathbf{f})_{v}}^{(n)}+a_{v,\mathbf{f}}^{(n)})_{v\in\{0,1\}^{k+1}}=\mathbf{f},\ \lim_{n\rightarrow\infty}(-\rho_{b(n,\mathbf{g})_{v}}^{(n)}+a_{v,\mathbf{g}}^{(n)})_{v\in\{0,1\}^{k+1}}=\mathbf{g},
\]
such that $(-\rho_{b(n,\mathbf{f})_{v}}^{(n)}+a_{v,\mathbf{f}}^{(n)})_{v\in\{0,1\}^{k+1}},(-\rho_{b(n,\mathbf{g})_{v}}^{(n)}+a_{v,\mathbf{g}}^{(n)})_{v\in\{0,1\}^{k+1}}\in  Q^{k+1}_U(M)$. Therefore $\lim_{n\rightarrow \infty} b(n,\mathbf{f})=\lim_{n\rightarrow \infty} b(n,\mathbf{g})\in C_G^{k+1}(X)$.
By Definition \ref{definition of Q(M)}, one has that
\begin{equation}
\begin{array}{l}
\rho(b(n,\mathbf{f}))=\sum_{\nu\in\{0,1\}^{k+1}}(-1)^{|\nu|}a_{v,\mathbf{f}}^{(n)};\\
\text{ }\rho(b(n,\mathbf{g}))=\sum_{\nu\in\{0,1\}^{k+1}}(-1)^{|\nu|}a_{v,\mathbf{g}}^{(n)}.
\end{array}\label{eq:uniqueness-2}
\end{equation}
To prove the lemma, we need
to show that
\begin{equation}
\mathbf{f}_{\vec{0}}=\lim_{n\rightarrow\infty}-\rho_{b(n,\mathbf{f})_{\vec{0}}}^{(n)}+a_{\vec{0},\mathbf{f}}^{(n)}=\lim_{n\rightarrow\infty}-\rho_{b(n,\mathbf{g})_{\vec{0}}}^{(n)}+a_{\vec{0},\mathbf{g}}^{(n)}=\mathbf{g}_{\vec{0}}.\label{eq:uniqueness}
\end{equation}
By the construction of $U$, as $U\subset V$, Equation \eqref{eq: a.e. Ck+1-1} holds for every $b\in U$:

\[
\rho(b)=\sum_{\nu\in\{0,1\}^{k+1}}(-1)^{|\nu|}\rho(\psi_{\nu}(\mathbf{t}))\text{ for }\eta^{b}\text{-a.e. }\mathbf{t}.
\]
By Equation \eqref{eq:uniqueness-2} , for $\bullet=\mathbf{f}$ or
$\mathbf{g}$,
\begin{equation}
\sum_{\nu\in\{0,1\}^{k+1}}(-1)^{|\nu|}\rho(\psi_{\nu}(\mathbf{t}))=\sum_{\nu\in\{0,1\}^{k+1}}(-1)^{|\nu|}a_{v,\bullet}^{(n)}\text{ for }\eta^{b(v,n,\bullet)}(\mathbf{t})\text{-a.e. }\mathbf{t}.\label{eq:the completion equation}
\end{equation}
By assumption,
\[
\lim_{n\rightarrow\infty}-\rho_{b(n,\mathbf{f})_{v}}^{(n)}+a_{v,\mathbf{f}}^{(n)}=\mathbf{f}_{v}=\mathbf{g}_{v}=\lim_{n\rightarrow\infty}-\rho_{b(n,\mathbf{g})_{v}}^{(n)}+a_{v,\mathbf{g}}^{(n)},
\]
for $v\in\{0,1\}_{*}^{k+1}$. Thus for any character $\chi$ and any
continuous function $F$,
\[
\begin{array}{l}
\ \ \underset{n\rightarrow\infty}{\lim}(\int_{\mathfrak{p}_{0}^{-1}(b(n,\mathbf{f})_{v})}\chi(-\rho_{b(n,\mathbf{f})_{v}}(c)+a_{v,\mathbf{f}}^{(n)})Fd\mu_{C_G^{k+1}(X)}^{b(n,\mathbf{f})_{v}}(c)\\
=\underset{n\rightarrow\infty}{\lim}\int_{\mathfrak{p}_{0}^{-1}(b(n,\mathbf{g})_{v})}\chi(-\rho_{b(n,\mathbf{g})_{v}}(c)+a_{v,\mathbf{g}}^{(n)})Fd\mu_{C_G^{k+1}(X)}^{b(n,\mathbf{g})_{v}}(c)).
\end{array}
\]
From Lemma \ref{measure-preserving}, $(\psi_{v})_{*}\eta^{b(n,\bullet)}=\mu_{C_G^{k+1}(X)}^{b(n,\bullet)_{v}}$, therefore for $v\in\{0,1\}_{*}^{k+1}$,
\begin{equation}
\begin{array}{l}
\ \ \underset{n\rightarrow\infty}{\lim}\int_{\omega^{-1}(b(n,\mathbf{f}))}\chi(-\rho(\psi_{v}(\mathbf{t}))+a_{v,\mathbf{f}}^{(n)})F\circ\psi_vd\eta^{b(n,\mathbf{f})}(\mathbf{t})\\
=\underset{n\rightarrow\infty}{\lim}\int_{\omega^{-1}(b(n,\mathbf{g}))}\chi(-\rho(\psi_{v}(\mathbf{t}))+a_{v,\mathbf{g}}^{(n)})F\circ\psi_v d\eta^{b(n,\mathbf{g})}(\mathbf{t}).
\end{array}\label{eq:uniqueness-3}
\end{equation}
We claim that for any character $\chi$ and continuous function $H:T^{k+1}(X)\rightarrow \mathbb{C}$, the following limits exist and are equal:
\begin{equation}
\begin{array}{ll}
 \lim_{n\rightarrow\infty}\int_{\omega^{-1}(b(n,\mathbf{f}))}\chi(-\rho(\psi_{v}(\mathbf{t}))+a_{v,\mathbf{f}}^{(n)})H(\mathbf{t})d\eta^{b(n,\mathbf{f})}(\mathbf{t})\\
  =\lim_{n\rightarrow\infty}\int_{\omega^{-1}(b(n,\mathbf{g}))}\chi(-\rho(\psi_{v}(\mathbf{t}))+a_{v,\mathbf{g}}^{(n)})H(\mathbf{t})d\eta^{b(n,\mathbf{g})}(\mathbf{t}).
 \end{array}
\label{eq:uniqueness-for tricube}
\end{equation}
To prove the claim, notice that by Proposition \ref{prop:CMD for tri cube}, $\tilde{\pi}_v=(\omega,\psi_v):T^{k+1}(X)\rightarrow C_G^{k+1}(X)\times_{X}^vC_G^{k+1}(X)$, $\pi_{L,v}:C_G^{k+1}(X)\times_{X}^vC_G^{k+1}(X)\rightarrow C_G^{k+1}(X)$ and $\omega=\pi_{L,v}\circ \tilde{\pi}_v$ are CMD factor maps. Therefore there exist continuous systems of measures $\gamma_1^{(c_1,c_2)}$ w.r.t.\ $\tilde{\pi}_v$ and $\gamma_2^{c_1}$ w.r.t.\ $\pi_{L,v}$. By Lemma \ref{lem:CSM circ}, $\eta^{c_1}=\int \gamma_1^{(c_1,c_2)}d\gamma_2^{c_1}$. For $\bullet=\mathbf{f}$ or $\mathbf{g}$, one has that
\begin{equation}\label{H->F_H}\begin{array}{l}\int_{\omega^{-1}(b(n,\bullet))}\chi(-\rho(\psi_{v}(\mathbf{t}))+a_{v,\bullet}^{(n)})H(\mathbf{t})d\eta^{b(n,\bullet)}(\mathbf{t})\\ 
=\int_{\pi_L^{-1}(b(n,\bullet))} \int_{\tilde{\pi}^{-1}(b(n,\bullet),c)}\chi(-\rho(c)+a_{v,\bullet}^{(n)}) H(\mathbf{t}) d\gamma_1^{(b(n,\bullet),c)}(\mathbf{t})d\gamma_2^{b(n,\bullet)}(c)\\
=\int_{\pi_L^{-1}(b(n,\bullet))}\chi(-\rho(c)+a_{v,\bullet}^{(n)})\left(\int_{\tilde{\pi}^{-1}(b(n,\bullet),c)}H(\mathbf{t})d\gamma_1^{(b(n,\bullet),c)}(\mathbf{t})\right)d\gamma_2^{b(n,\bullet)}(c)\\
=\int_{\pi_L^{-1}(b(n,\bullet))}\chi(-\rho(c)+a_{v,\bullet}^{(n)}) F_H(b(n,\bullet),c)d\gamma_2^{b(n,\bullet)}(c)\\
\stackrel{c=\psi_{v}(\mathbf{t})}{=}\int_{\omega^{-1}(b(n,\bullet))}\chi(-\rho(\psi_{v}(\mathbf{t}))+a_{v,\bullet}^{(n)})F_H(b(n,\bullet),\psi_{v}(\mathbf{t}))d\eta^{b(n,\bullet)}(\mathbf{t}),
\end{array}\end{equation}
where $F_H(c',c)=\int_{\tilde{\pi}^{-1}(c',c)}H(\mathbf{t})d\gamma_1^{(c',c)}(\mathbf{t})$ is a continuous function for $C_G^{k+1}(X)\times_X^v C_G^{k+1}(X)\rightarrow \mathbb{C}$ as $\{\gamma_1^{(c',c)}\}$ is a continuous system of measures. For any continuous functions $F',F:C_G^{k+1}(X)\rightarrow \mathbb{C}$, one uses the crucial fact that $\lim_{n\rightarrow \infty}b(n,\mathbf{f})=\lim_{n\rightarrow \infty}b(n,\mathbf{g})$ to conclude $\lim_{n\rightarrow \infty}F'(b(n,\mathbf{f}))=\lim_{n\rightarrow \infty}F'(b(n,\mathbf{g}))$. Notice the fact that for any (bounded) convergent sequences $e_n,f_n\in \mathbb{C}$, $\lim_{n\rightarrow\infty}e_nf_n=(\lim_{n\rightarrow\infty}e_n)(\lim_{n\rightarrow\infty}f_n)$. Therefore, by Equation \eqref{eq:uniqueness-3} we have the following equality:
$$\begin{array}{ll}\underset{n\rightarrow\infty}{\lim}F'(b(n,\mathbf{f}))\int_{\omega^{-1}(b(n,\mathbf{f})))}\chi(-\rho(\psi_{v}(\mathbf{t}))+a_{v,\mathbf{f}}^{(n)})F(\psi_{v}(\mathbf{t}))d\eta^{b(n,\mathbf{f})}(\mathbf{t})\\
=\underset{n\rightarrow\infty}{\lim}F'(b(n,\mathbf{g}))\int_{\omega^{-1}(b(n,\mathbf{g})))}\chi(-\rho(\psi_{v}(\mathbf{t}))+a_{v,\mathbf{g}}^{(n)})F(\psi_{v}(\mathbf{t}))d\eta^{b(n,\mathbf{g})}(\mathbf{t}).
\end{array}$$
As the continuous function $F_H(c',c)$ can be uniformly approximated by a finite sum of continuous functions of the form $F'(c')F(c)$, one has that 
$$\begin{array}{ll}\underset{n\rightarrow\infty}{\lim}\int_{\omega^{-1}(b(n,\mathbf{f})))}\chi(-\rho(\psi_{v}(\mathbf{t}))+a_{v,\mathbf{f}}^{(n)})F_H(b(n,\mathbf{f}),\psi_{v}(\mathbf{t}))d\eta^{b(n,\mathbf{f})}(\mathbf{t})\\
=\underset{n\rightarrow\infty}{\lim}\int_{\omega^{-1}(b(n,\mathbf{g})))}\chi(-\rho(\psi_{v}(\mathbf{t}))+a_{v,\mathbf{g}}^{(n)})F_H(b(n,\mathbf{g}),\psi_{v}(\mathbf{t}))d\eta^{b(n,\mathbf{g})}(\mathbf{t}).
\end{array}$$
By Equation \eqref{H->F_H}, Equation \eqref{eq:uniqueness-for tricube} holds for $v\in \{0,1\}^{k+1}_*$.

 As for arbitrary two convergent sequences
$\{h_{1}^{(n)}\}_{n\in\mathbb{N}},\{h_{2}^{(n)}\}_{n\in\mathbb{N}}$
in $\mathcal{L}(T^{k+1}(X)\stackrel{\omega}{\rightarrow}C_G^{k+1}(X),A)$,
$\{h_{1}^{(n)}+h_{2}^{(n)}\}_{n\in\mathbb{N}}$ converges (see \cite[Lemma 2.3.9, Lemma 2.3.15]{candela2016cpt_notes}
for more details), we have
\[
\begin{array}{l}
\ \ \underset{n\rightarrow\infty}{\lim}(\int_{\omega^{-1}(b(n,\mathbf{f}))}\chi(\sum_{\nu\in\{0,1\}_{*}^{k+1}}(-1)^{|v|}(-\rho(\psi_{v}(\mathbf{t}))+a_{v,\mathbf{f}}^{(n)}))Fd\eta^{b(n,\mathbf{f})}(\mathbf{t})\\
=\underset{n\rightarrow\infty}{\lim}\int_{\omega^{-1}(b(n,\mathbf{g}))}\chi(\sum_{\nu\in\{0,1\}_{*}^{k+1}}(-1)^{|v|}(-\rho(\psi_{v}(\mathbf{t}))+a_{v,\mathbf{g}}^{(n)}))Fd\eta^{b(n,\mathbf{g})}(\mathbf{t})).
\end{array}
\]
By Equation \eqref{eq:the completion equation} and Equation \eqref{eq:uniqueness-3},
one has that
\[
\begin{array}{l}
\ \ \underset{n\rightarrow\infty}{\lim}\int_{\omega^{-1}(b(n,\mathbf{f}))}\chi(-(\rho(\psi_{\vec{0}}(\mathbf{t}))-a_{\vec{0},\mathbf{f}}^{(n)}))Fd\eta^{b(n,\mathbf{g})}(\mathbf{t})\\
=\underset{n\rightarrow\infty}{\lim}\int_{\omega^{-1}(b(n,\mathbf{g}))}\chi(-((\rho(\psi_{\vec{0}}(\mathbf{t}))-a_{\vec{0},\mathbf{g}}^{(n)}))Fd\eta^{b(n,\mathbf{g})}(\mathbf{t})).
\end{array}
\]
From Lemma \ref{measure-preserving},  $\psi_{\vec{0}}:\omega^{-1}(b(n,\bullet))\rightarrow\mathfrak{p}_{0}^{-1}(b(n,\bullet)_{\vec{0}})$
is measure-preserving. It holds that
\[
\begin{array}{l}
\ \ \underset{n\rightarrow\infty}{\lim}\int_{\mathfrak{p}_{0}^{-1}(b(n,\mathbf{f})_{\vec{0}})}\chi(-(\rho_{b(n,\mathbf{f})_{\vec{0}}}(c)+a_{\vec{0},\mathbf{f}}^{(n)}))Fd\mu_{C_G^{k+1}(X)}^{b(n,\mathbf{f})_{\vec{0}}}(c)\\
=\underset{n\rightarrow\infty}{\lim}\int_{\mathfrak{p}_{0}^{-1}(b(n,\mathbf{g})_{\vec{0}})}\chi(-(\rho_{b(n,\mathbf{g})_{\vec{0}}}(c)+a_{\vec{0},\mathbf{g}}^{(n)}))Fd\mu_{C_G^{k+1}(X)}^{b(n,\mathbf{g})_{\vec{0}}}(c)).
\end{array}
\]
Thus Equation (\ref{eq:uniqueness}) holds.
\end{proof}
\begin{lem}
\label{lem:extension of order k-1} For any $\mathbf{g}\in\mathcal{HK}^{k+1}(G)$,
$\mathbf{g}(Q^{k+1}_U(M))\subset Q^{k+1}_U(M)$. 
\end{lem}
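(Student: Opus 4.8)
The plan is to unwind the definitions directly; no analysis is needed here, since everything has already been arranged measure-theoretically in the construction of $U$. Take $\mathbf{f}\in Q^{k+1}(M_U)$, so there exist $c\in U$ and $(\mathbf{a}_v)_{v\in\{0,1\}^{k+1}}$ with $\mathbf{f}_v=-\rho_{c_v}+\mathbf{a}_v$ and $\sum_{v\in\{0,1\}^{k+1}}(-1)^{|v|}\mathbf{a}_v=\rho(c)$. Since $U\subset V$, every coordinate $c_v$ lies in $X'$, so each $\mathbf{f}_v$ lies in $M'=\hat{\mathfrak{p}}_0^{-1}(X')$ and the action formula \eqref{definition of G on M'} applies coordinatewise: writing $\mathbf{g}=(\mathbf{g}_v)_{v\in\{0,1\}^{k+1}}\in\mathcal{HK}^{k+1}(G)\subset G^{[k+1]}$, one has $(\mathbf{g}\mathbf{f})_v=\mathbf{g}_v(-\rho_{c_v}+\mathbf{a}_v)=-\rho_{\mathbf{g}_v c_v}+\beta(\mathbf{g}_v,c_v)+\mathbf{a}_v$.

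Next I would set $c'=\mathbf{g}c$ and $\mathbf{a}'_v=\beta(\mathbf{g}_v,c_v)+\mathbf{a}_v$, so that $(\mathbf{g}\mathbf{f})_v=-\rho_{c'_v}+\mathbf{a}'_v$. Since $U$ was chosen $\mathcal{HK}^{k+1}(G)$-invariant, $c'\in U$; moreover $c'_v=\mathbf{g}_v c_v\in X'$ because $X'$ is $G$-invariant, so the right-hand side genuinely lies in $M'$. It then remains only to verify the alternating-sum constraint. Using the defining relation $\sum_v(-1)^{|v|}\mathbf{a}_v=\rho(c)$ for $\mathbf{f}$, together with the identity $\rho(\mathbf{g}c)=\rho(c)+\sum_{v\in\{0,1\}^{k+1}}(-1)^{|v|}\beta(\mathbf{g}_v,c_v)$ — which holds for every $c\in U$ and every $\mathbf{g}\in\mathcal{HK}^{k+1}(G)$ by the construction of $U$ in Definition \ref{definition of Q(M)} via Lemma \ref{lem:the action on M} — one computes
\begin{equation*}
\sum_{v\in\{0,1\}^{k+1}}(-1)^{|v|}\mathbf{a}'_v=\sum_{v\in\{0,1\}^{k+1}}(-1)^{|v|}\beta(\mathbf{g}_v,c_v)+\rho(c)=\rho(\mathbf{g}c)=\rho(c').
\end{equation*}
Hence $\mathbf{g}\mathbf{f}$ is exhibited as an element of $Q^{k+1}(M_U)$, which is the claim.

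The argument is essentially bookkeeping, so there is no substantial obstacle; the one point requiring care — and precisely the reason the set $U$ in Definition \ref{definition of Q(M)} is demanded to be simultaneously $\mathcal{HK}^{k+1}(G)$-invariant and $\sigma$-invariant — is that Lemma \ref{lem:the action on M} a priori furnishes the key identity only on a full-measure set that may depend on $\mathbf{g}$, whereas in the computation above it must be valid for all $c\in U$ and all $\mathbf{g}\in\mathcal{HK}^{k+1}(G)$ at once. Once that uniformity is granted, the verification proceeds exactly as sketched.
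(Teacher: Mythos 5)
Your proof is correct and takes essentially the same approach as the paper's: apply the coordinatewise $G$-action on $M'$ via Equation \eqref{definition of G on M'}, then use the identity $\rho(\mathbf{g}c)=\rho(c)+\sum_v(-1)^{|v|}\beta(\mathbf{g}_v,c_v)$, valid for all $c\in U$ and all $\mathbf{g}\in\mathcal{HK}^{k+1}(G)$ by the construction of $U$, to verify the alternating-sum constraint. You spell out the bookkeeping (the role of $U\subset V$, the $\mathcal{HK}^{k+1}(G)$-invariance of $U$, and the uniformity of the Lemma \ref{lem:the action on M} identity over $\mathbf{g}$, which $U$'s construction guarantees since $\mathcal{HK}^{k+1}(G)$ is countable) more explicitly than the paper does, but the argument is the same.
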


\begin{proof}
According to the definition of  $Q^{k+1}_U(M)$ in Equation \eqref{definiton of Q(MU)}, $Q^{k+1}_U(M)$ consists of $\mathbf{f}:\{0,1\}^{k+1}\rightarrow M^{[k+1]}:\nu\mapsto-\rho_{c_{\nu}}+\mathbf{a}_{\nu}$ such that $c\in U$ and $\sum_{\nu\in\{0,1\}^{k+1}}(-1)^{|\nu|}\mathbf{a}_{\nu}=\rho(c)$.
For any $\mathbf{f}=(-\rho_{c_{v}}+\mathbf{a}_{v})_{v\in\{0,1\}^{k+1}}\in Q^{k+1}_U(M)$,
by Lemma \ref{lem:the action on M},
\[
\rho(\mathbf{g}c)=\rho(c)+\sum_{v\in\{0,1\}^{k+1}}(-1)^{|v|}\beta(\mathbf{g}_{v},c_{v})=\sum_{v\in\{0,1\}^{k+1}}(-1)^{|v|}(\mathbf{a}_{v}+\beta(\mathbf{g},c_{v})).
\]
By the definition of the action of $G$ (see Section \ref{sec: continuous}),
\[
\mathbf{g}\mathbf{f}=(-\rho_{\mathbf{g}_{v}c_{v}}+\mathbf{a}_{v}+\beta(\mathbf{g}_{v},c_{v}))_{v\in\{0,1\}^{k+1}},
\]
for any $\mathbf{g}\in\mathcal{HK}^{k+1}(G)$, therefore $\mathbf{g}\mathbf{f}\in Q^{k+1}_U(M)$.
\end{proof}

Now we can prove Proposition \ref{thm:k+1 uniqueness}.

\begin{proof}[Proof of Proposition \ref{thm:k+1 uniqueness}]

We claim that $C_G^{k+1}(M)\subset\overline{Q^{k+1}_U(M)}$. Then from Lemma \ref{uniqueness for Q}, the desired result follows. To prove the claim, notice that for any cube isomorphism $\sigma$, $\sigma(U)=U$ and for any $\mathbf{a}\in A^{[k+1]}$,
$$\sum_{v\in\{0,1\}^{k+1}}(-1)^{|v|}\mathbf{a}_{v}=\sum_{v\in\{0,1\}^{k+1}}\sgn(\sigma)(-1)^{|v|}\mathbf{a}_{\sigma(v)}.$$
Therefore $\sigma(Q^{k+1}_U(M))=Q^{k+1}_U(M).$ As $\sigma$ is a continuous map, one has that $\sigma(\overline{Q^{k+1}_U(M)})=\overline{Q^{k+1}_U(M)}$.
Suppose $x\in X'$ then $\hat{\mathfrak{p}}_0^{-1}(x)=-\rho_x+A$. As $(C_G^{k+1}(M),\mathcal{HK}^{k+1}(G))$ is minimal,
\[
C_G^{k+1}(M)=\overline{\{\mathbf{g}{-\rho_{x}}^{[k+1]}:\ \mathbf{g}\in\mathcal{HK}^{k+1}(G)\}},
\]
where ${-\rho_{x}}^{[k+1]}=(-\rho_{x},\ldots,-\rho_{x})$. As $(\hat{\mathfrak{p}}_0)^{[k+1]}(\overline{Q^{k+1}_U(M)})=C_G^{k+1}(X)$, there exists $\mathbf{a}\in A^{[k+1]}$ such that $$(-\rho_{x}+\mathbf{a}_v)_{v\in\{0,1\}^{k+1}}\in \overline{Q^{k+1}_U(M)}.$$
Notice that for any $\mathbf{f}\in \sigma(Q^{k+1}_U(M))$ and $\mathbf{b}\in A^{[k+1]}$, if $\sum_{v\in\{0,1\}^{k+1}}(-1)^{|v|}\mathbf{b}_{v}=0$, then 
\begin{equation}\label{invariant for L}
(\mathbf{f}_v+\mathbf{b}_{v})_{v\in\{0,1\}^{k+1}}\in \overline{Q^{k+1}_U(M)}.
\end{equation}
Therefore we can assume that $(-\rho_x,\ldots,-\rho_x,-\rho_x+s)\in \overline{Q^{k+1}_U(M)}$ for some $s\in A$. Let $\mathbf{b}=(0,\ldots,0,-s,-s)$, i.e.\  $\mathbf{b}_{(1,\ldots,1,0)}=-s,$ $\mathbf{b}_{(1,\ldots,1,1)}=-s$, $\mathbf{b}_v=0$ for other $v$. Then $\sum_{v\in\{0,1\}^{k+1}}(-1)^{|v|}\mathbf{b}_{v}=0$ and 
$$(-\rho_x,\ldots,-\rho_x,-\rho_x-s,-\rho_x)=(-\rho_x,\ldots,-\rho_x,-\rho_x+s)+\mathbf{b}\in \overline{Q^{k+1}_U(M)}.$$
As there exists a cube isomorphism $\sigma$ such that $\sigma(-\rho_x,\ldots,-\rho_x,-\rho_x-s,-\rho_x)=(-\rho_x,\ldots,-\rho_x,-\rho_x-s)$, one has that $(-\rho_x,\ldots,-\rho_x,-\rho_x-s)\in \overline{Q^{k+1}_U(M)}$. From Lemma \ref{uniqueness for Q}, $s=0$. Therefore $(-\rho_x,\ldots,-\rho_x)\in \overline{Q^{k+1}_U(M)}$. From Lemma \ref{lem:extension of order k-1} and the definition of $C_G^{k+1}(M)$, $C_G^{k+1}(M)\subset \overline{Q^{k+1}_U(M)}$.
\end{proof}

\begin{thm}\label{thm:extension of order k} The map
$\mathfrak{p}_0:M\rightarrow X$ is a fibration of order at most $k$. In particular $\nrp_G^{[k]}(M\rightarrow X)=\triangle.$
\end{thm}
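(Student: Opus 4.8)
The plan is to derive Theorem~\ref{thm:extension of order k} from Proposition~\ref{thm:k+1 uniqueness} together with the fact, recorded in Theorem~\ref{strictly ergodic model}, that $(M,G)$ is strictly ergodic distal. First I would observe that, $(M,G)$ and $(X,G)$ both being minimal distal, Theorem~\ref{thm:distal->fibrant} makes $(M,C_G^{\bullet}(M))$ and $(X,C_G^{\bullet}(X))$ ergodic fibrant cubespaces, and that $\hat{\mathfrak{p}}_0$, being $G$-equivariant, intertwines the Host--Kra cube-group actions, so $\hat{\mathfrak{p}}_0^{[n]}(C_G^n(M))\subseteq C_G^n(X)$; since this image is compact, hence closed, and contains the set $\{\mathbf{g}(x,\ldots,x):\mathbf{g}\in\mathcal{HK}^{n}(G),\ x\in X\}$ which is dense in $C_G^n(X)$, the cubespace morphism $\hat{\mathfrak{p}}_0$ is in fact surjective at every cube level.

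Next I would check that $\hat{\mathfrak{p}}_0$ is a \emph{fibration}. Given an $\ell$-corner $\lambda$ in $M$ and a compatible $c\in C_G^{\ell}(X)$, use the surjectivity just noted to lift $c$ to some $c''\in C_G^{\ell}(M)$; then $\lambda$ and $c''|_{\{0,1\}^{\ell}\setminus\{\vec{1}\}}$ are two $\ell$-corners of $M$ lying over the same $\ell$-corner of $X$, and since $\hat{\mathfrak{p}}_0$ is a topological group extension by the abelian group $A$ (Proposition~\ref{prop:M->X is group ext}), their vertexwise difference is an $\ell$-corner in the associated fibrant ``difference cubespace'' on $A$, which extends there to a full $\ell$-cube; translating $c''$ by this cube produces a completion of $\lambda$ mapping onto $c$. (This is the standard transfer argument for group extensions; alternatively one may cite the general principle that a factor map of minimal distal systems induces a fibration of the associated dynamical cubespaces, cf.\ \cite{GGY2018,GMVIII}.)

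The order bound is then immediate from Proposition~\ref{thm:k+1 uniqueness} after a cube-isomorphism bookkeeping. Let $c,c'\in C_G^{k+1}(M)$ satisfy $\hat{\mathfrak{p}}_0^{[k+1]}(c)=\hat{\mathfrak{p}}_0^{[k+1]}(c')$ and $c(w)=c'(w)$ for all $w\in\{0,1\}^{k+1}\setminus\{\vec{1}\}$. The total reflection $\sigma_0\colon\{0,1\}^{k+1}\to\{0,1\}^{k+1}$, $\sigma_0(v)=\vec{1}-v$, is a cube isomorphism, so $c\circ\sigma_0,\ c'\circ\sigma_0\in C_G^{k+1}(M)$; these two cubes agree on $\{0,1\}_{*}^{k+1}=\{0,1\}^{k+1}\setminus\{\vec{0}\}$ and have equal $\hat{\mathfrak{p}}_0$-image at every coordinate. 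Proposition~\ref{thm:k+1 uniqueness} then yields $(c\circ\sigma_0)_{\vec{0}}=(c'\circ\sigma_0)_{\vec{0}}$, i.e.\ $c(\vec{1})=c'(\vec{1})$, hence $c=c'$; this is exactly the uniqueness demanded by the definition of a fibration of order at most $k$. Finally, for $NRP_G^{k}(M\to X)=\triangle$, suppose $x,y\in M$ with $\hat{\mathfrak{p}}_0(x)=\hat{\mathfrak{p}}_0(y)$ and $x\sim_{k}y$, i.e.\ $(x,\ldots,x,y)\in C_G^{k+1}(M)$; comparing with the constant cube $(x,\ldots,x)\in C_G^{k+1}(M)$, which agrees with it off $\vec{1}$ and has the same image (as $\hat{\mathfrak{p}}_0(y)=\hat{\mathfrak{p}}_0(x)$), the order-$\le k$ property just proved forces $y=x$.

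The step I expect to be the main obstacle is verifying that $\hat{\mathfrak{p}}_0$ is a fibration, i.e.\ that corners of $M$ compatible with cubes of $X$ can always be completed inside $C_G^{\bullet}(M)$; once the group-extension transfer argument (or the cited general principle) is in place, the rest is a direct application of Proposition~\ref{thm:k+1 uniqueness}.
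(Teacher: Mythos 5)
Your argument is correct and takes essentially the same route as the paper: both establish the fibration property by citing a general principle that factor maps between minimal distal systems are fibrations, and both derive the order bound and $NRP_G^{k}(M\to X)=\triangle$ from Proposition~\ref{thm:k+1 uniqueness}. The only cosmetic difference is that the paper invokes a characterization of $\sim_k$ from \cite{GGY2018} and \cite{GMVI} that already places the distinguished vertex at $\vec{0}$, whereas you relocate it there via the total reflection $\sigma_0(v)=\vec{1}-v$; both handle the same bookkeeping.
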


\begin{proof}
By Theorem \ref{thm:distal->fibrant}, as $(M,G)$ is distal and minimal, $(M,C_G^{\bullet}(M))$
is a fibrant cubespace. Moreover $(C_G^{n}(M),\mathcal{HK}^{n}(G))$ is minimal for any $n\in \mathbb{N}$, being the closure of a unique orbit in a distal system. By \cite[Theorem 1.29]{GL2019} a factor map between minimal distal systems is a fibration. Thus
 $\hat{\mathfrak{p}}_{0}:(M,C_G^{\bullet}(M))\rightarrow(X,C_G^{\bullet}(X))$
is a fibration.

Let $x,y\in M$. Recall from Definition \ref{NRP(M-G)} that $x\sim_{k}y$ if and only if
$c^{k+1}_y(x)\in C_G^{k+1}(M)$. Obviously $c^{k+1}_x(x)\in C_G^{k+1}(M)$. Note $c^{k+1}_y(x)_{\nu}=c^{k+1}_y(x)_{\nu}=x$ for $\nu\in\{0,1\}_{*}^{k+1}$. Thus if  $\hat{\mathfrak{p}}_0(x)=\hat{\mathfrak{p}}_0(y)$ and $x\sim_{k}y$
then $\hat{\mathfrak{p}}_0^{[k+1]}(c^{k+1}_y(x))=\hat{\mathfrak{p}}_0^{[k+1]}(c^{k+1}_x(x))\in C_G^{k+1}(X)$.
From Proposition \ref{thm:k+1 uniqueness}, $c^{k+1}_y(x)_{\vec{0}}=y=c^{k+1}_x(x)_{\vec{0}}=x$. As
$x=y$ we conclude $\nrp_G^{[k]}(M\rightarrow X)=\triangle.$
\end{proof}
\begin{rem}

Recall Definition \ref{k ergodic fibration}  of a $k$-ergodic fibration. It is easy to prove that if  the group $L$  from Definition \ref{def:nilcycle}, satisfies
$$ L=\{\mathbf{a}\in A^{[k+1]}:\theta_{k+1} (\mathbf{a})=0\},$$
then the extension $M\rightarrow X$ from the previous theorem is a $k$-ergodic fibration.
\end{rem}

\section{A new proof of the Host-Kra structure theorem.}\label{new proof of the Host-Kra structure theorem}

In this section, we assume that $G$ is a finitely generated abelian group (e.g., $G=\Z^k$, $k\in \N$) and that $(X,\mathcal{B},\mu, G)$ is an ergodic m.p.s. All cited lemmas generalize effortlessly to this setting. We need some facts about nilsystems:
\begin{prop}
 (\cite[Theorem B]{Leibman2005}; proof of \cite[Theorem 2.19]{Leibman2005} )\label{Nilsystem fact}
Let $(X,G)$ be a nilsystem.
\begin{enumerate}
       \item  Every subsystem of $(X,G)$ is a nilsystem. 
              
       \item  The nilsystem $(X,G)$ is uniquely ergodic iff it is minimal iff it is ergodic w.r.t.\  its  Haar measure (see Section \ref{subsec:Nilsystem-and-a system at most d}).
\end{enumerate}

\end{prop}

For the next proposition, recall the definition of $Z_{k}(X)$ in Subsection \ref{subsec:The Host-Kra factors}.

\begin{prop}(\cite[Chapter 18, Theorem 6]{HK2018})
\label{abelian group extension} Let $(X,\mu,G)$ be an ergodic system.
Then $Z_{k}=Z_{k}(X)$ is an abelian group extension of $Z_{k-1}=Z_{k-1}(X)$,
i.e., $Z_{k}=Z_{k-1}\times A$, where $A$ is a compact abelian group.
\end{prop}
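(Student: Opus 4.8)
\textbf{Proof proposal for Proposition \ref{abelian group extension}.}

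The plan is to import the structure already available for $\mathbb{Z}$-actions in \cite[Section 6.2]{HK05} and \cite[Chapter 9, Section 2.3]{HK2018}, and to observe that the only place where the cyclicity of the acting group enters is the application of the ergodic decomposition and of the basic properties of the measures $\mu^{[k]}$ and the $\sigma$-algebras $\mathcal{J}_*^{k+1}$, $\mathcal{Z}_k(X)$ — and all of these carry over verbatim to a finitely generated abelian $G$. Concretely, I would proceed as follows. First, recall from Definition \ref{subsec:Z_k} that $\mathcal{Z}_k(X)$ is characterized by the ``measurable unique completion'' property \eqref{eq:measurable unique completeness}: $B\in\mathcal{Z}_k(X)$ iff there is a $\mathcal{J}_*^{k+1}$-measurable set $A\subset X^{[k+1]^*}$ with $X\times A = B\times X^{[k+1]^*}$ up to $\mu^{[k+1]}$-null sets. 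By Lemma \ref{lem:muk is ergodic} and Lemma \ref{lem:muk is invariant under symmetries} the measure $\mu^{[k+1]}$ is ergodic for $\mathcal{HK}^{k+1}(G)$ and invariant under the symmetry group $\mathcal{S}_{k+1}$; these are precisely the two inputs used in \cite[Chapter 9]{HK2018} to run the argument.

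Next, I would show that two cubes of $C^{k+1}(Z_k)$ that agree on the punctured cube $\{0,1\}^{k+1}\setminus\{\vec 0\}$ and lie over the same point of $Z_{k-1}$ differ by an element of a fixed compact abelian group. The key step, following \cite[Chapter 9, Section 2.3]{HK2018}, is to consider the relation on $Z_k\times Z_k$ given by $(z,z')\in R$ iff $z,z'$ project to the same point of $Z_{k-1}$ and $(z',z,\dots,z)$ (suitably placed, with $z$ in the coordinates of $\{0,1\}^{k+1}\setminus\{\vec 0\}$ and $z'$ in the $\vec 0$ coordinate, or the measure-theoretic analogue of this) is a cube for $\mu^{[k+1]}_{Z_k}$. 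The ``measurable unique completion'' property for $\mathcal{Z}_{k-1}(X)$ shows $R$ defines an equivalence relation whose classes are the fibres of $Z_k\to Z_{k-1}$, and the symmetry invariance of $\mu^{[k+1]}$ together with the cocycle identity coming from the face-group structure equips each fibre with a transitive action of a single compact abelian group $U$; this is the content one extracts from \cite[Section 6.2]{HK05}. Since everything is phrased in terms of $\mu^{[k+1]}$, $\mathcal{HK}^{k+1}(G)$, $\mathcal{F}^{k+1}(G)$ and the symmetry group, and none of these constructions used that $G=\mathbb{Z}$ beyond the cited ergodicity and invariance lemmas (which we have for finitely generated abelian $G$), the argument goes through unchanged.

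Finally, I would assemble these pieces: the fibre-transitive $U$-action is free (again by unique completion), measurable, and the resulting identification $Z_k \cong Z_{k-1}\times U$ as measure spaces intertwines the $G$-action with a skew-product action $t(z,u) = (tz,\sigma(t,z)u)$ for a measurable cocycle $\sigma:G\times Z_{k-1}\to U$, exhibiting $Z_k\to Z_{k-1}$ as an abelian group extension in the sense of Subsection \ref{subsec:Dynamical-background.}. I expect the main obstacle to be purely expository: verifying that the compact abelian group structure on the fibres — which in \cite{HK2018} is obtained through a somewhat intricate manipulation of $\mu^{[k+1]}$ and the symmetries $\mathcal{S}_{k+1}$ — is genuinely insensitive to replacing $\mathbb{Z}$ by a finitely generated abelian $G$. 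Since the paper has already flagged (in the paragraph preceding the proposition) that ``it is easy to generalize the proof,'' the write-up should be a short remark pointing to \cite[Section 6.2]{HK05} and \cite[Chapter 9, Section 2.3]{HK2018} and noting that all ingredients — Lemma \ref{lem:muk is ergodic}, Lemma \ref{lem:muk is invariant under symmetries}, and the definition of $\mathcal{Z}_k(X)$ — are available in the required generality.
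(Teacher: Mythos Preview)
Your proposal is correct and matches the paper's approach exactly: the paper does not give a self-contained proof of this proposition at all, but simply states it after the remark that the $\mathbb{Z}$-case is \cite[Section 6.2]{HK05} and \cite[Chapter 9, Section 2.3]{HK2018} and that ``when $G$ is finitely generated and abelian, it is easy to generalize the proof.'' Your write-up is in fact more detailed than the paper's, correctly identifying Lemmas \ref{lem:muk is ergodic} and \ref{lem:muk is invariant under symmetries} as the only ingredients in the Host--Kra argument that need to be checked for general finitely generated abelian $G$.
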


\subsection{Overview of the proof.}\label{subsec:overview_proof}

Theorem \ref{thm:main theorem} allows us to present a new approach to the Host-Kra structure
theorem \cite[Theorem 10.1]{HK05} in the generality of finitely generated abelian groups. In this section, we will prove the following theorem.
\begin{thm}\label{thm:order k isomorphic as m.p.s. to nilsystem}
Let $G$ be a finitely generated abelian group (e.g., $G=\Z^k$, $k\in \N$). An ergodic system $(Y,\nu,G)$ of order $k$ is isomorphic
to an inverse limit of minimal nilsystems of degree at most $k$  as m.p.s.
\end{thm}

\begin{proof}[Proof of Theorem \ref{thm:order k isomorphic as m.p.s. to nilsystem}]

We will prove the statement by induction on $k$. For $k=1$, the statement follows as $Y=Z_1(Y)$ is a Kronecker system (\cite[Proposition 8, Chapter 9]{HK2018}).

Denote $X=Z_{k-1}(Y)$, then $Z_{k-1}(X)=X$ (\cite[Chapter 9, Corollary 9]{HK2018}). By the inductive assumption $X$ has a topological model which is an inverse limit of minimal  nilsystems of degree at most $(k-1)$. Abusing notation we still denote by $(X,G)$ the topological model of  $(X,\mu,G)$ such that  $(X,G)=\underleftarrow{\lim}(X_{m}=H_m/\Gamma_m,G)$, where $(X_{m},G)$ are minimal nilsystems of degree at most $(k-1)$. It is not hard to represent the nilsystems $(X_{m},G)$ as finite towers of distal extensions. Thus $(X_{m},G)$ are distal and so is their inverse limit $(X,G)$.  

For any $\ell\in \mathbb{N}$, it is easy to see that $(C_G^{\ell}(X),\mathcal{HK}^{\ell}(G))=\underleftarrow{\lim}(C_G^{\ell}(X_m),\mathcal{HK}^{\ell}(G))$. Indeed, note that $C_G^{\ell}(X_m)\subset H_m^{[\ell]}/\Gamma^{[\ell]}$ are closed and $\mathcal{HK}^{\ell}(G)$-invariant subsets and therefore by Proposition \ref{Nilsystem fact}-1, $(C_G^{\ell}(X_{m}),\mathcal{HK}^{\ell}(G))$ are minimal\footnote{Note that $C_G^\ell(X_m)$ is the orbit closure of one element and therefore by distality is minimal.} nilsystems of degree at most $(k-1)$. As $\mathcal{HK}^{\ell}(G)$ is a finitely generated abelian group, from  Proposition \ref{Nilsystem fact}-2, $(C_G^{\ell}(X_{m}),\mathcal{HK}^{\ell}(G))$ are uniquely ergodic. As an inverse limit of uniquely ergodic systems is uniquely ergodic (\cite[Proposition 3.5]{Lehrer87}),  $(C_G^{\ell}(X),\mathcal{HK}^{\ell}(G))$ is uniquely ergodic. Setting $\ell=2k+2$ we have that $(X,G)$ is a $(2k+2)$-cube uniquely ergodic system.

We denote $X=Z_{k-1}(Y)$ and $\mu=\nu_{k-1}$. Let $\pi_k :Y \rightarrow Y$ be the (measurable) factor map induced by this identification. Using Proposition \ref{abelian group extension}, we write
\begin{equation}\label{eq:ext}
Y =(X\times A,\nu=\mu\times m_{\haar(A)},G)\stackrel{\pi_k}{\rightarrow}(X,\mu,G),
\end{equation}
where A is a compact abelian group. 
 In Section \ref{subsec:Construction-of-a nilcycle from the HK extension} we show that \emph{there exists a nilcycle of degree $k$ for the extension \eqref{eq:ext}}. We now have the prerequisites to use Theorem \ref{thm:main theorem}. We conclude that $\pi_k$ has a strictly
ergodic topological model $r:(M,G)\rightarrow (X,G)$ which is a fibration
of order at most $k$. Let $(x,y)\in \nrp_G^{[k]}(M)$.  By Theorem \ref{thm:dynamical sturcture strong ver}, $\nrp_G^{[k+1]}(X)=\triangle$. As $(r\times~r)(\nrp_G^{[k]}(M))\subset \nrp_G^{[k]}(X)$, one has that $r(x)=r(y)$. As $(M,G)$ is an extension
of order $k$ of $(X,G)$, $x=y$. Thus $\nrp_G^{[k]}(M)=\triangle$. By Theorem \ref{thm:dynamical sturcture strong ver}, $(M,G)$ is an inverse limit of
nilsystems of degree at most $k$.
 \end{proof}

\subsection{Construction of a nilcycle for Host-Kra factors.\label{subsec:Construction-of-a nilcycle from the HK extension}}

In this section we construct a nilcycle $\rho:C_G^{k+1}(X)\rightarrow A$ for the extension \eqref{eq:ext}. This is made possible by analyzing the abelian group extension $\pi_k^{[k+1]}:(Y^{[k+1]},\nu^{[k+1]},G^{[k+1]})\rightarrow (X^{[k+1]},\mu^{[k+1]},G^{[k+1]})$ induced from the abelian group extension $\pi_{k}:(Y,\nu, G)\rightarrow (X,\mu,G)$. Identify $(Y^{[k+1]},\nu^{[k+1]})=(X^{[k+1]}\times A^{[k+1]},\nu^{[k+1]})$.

\begin{lem}
\label{lem:mu k+1 inv under g edge-1}\cite[Chapater 9, Proposition 3]{HK2018} The measure $\nu^{[k+1]}$
is invariant under $a^{\alpha}$ for all edges $\alpha$ and all $a\in A$.
\end{lem}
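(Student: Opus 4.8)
The plan is to deduce the invariance from the $S_{k+1}$-symmetry of $\nu^{[k+1]}$ together with the recursive definition of the Host--Kra measures, ultimately reducing everything to a single ``Haar fibre'' property of $\nu^{[k]}$ that is built into the structure of $Z_{k-1}$.

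First I would reduce to a single edge. The symmetry group $S_{k+1}$ acts transitively on the edges of $\{0,1\}^{k+1}$ and conjugation by $\sigma\in S_{k+1}$ sends $a^{\alpha}$ to $(\pm a)^{\sigma(\alpha)}$, so by the $S_{k+1}$-invariance of $\nu^{[k+1]}$ (Lemma \ref{lem:muk is invariant under symmetries}) it suffices to treat one edge, say $\alpha_0=\{(\vec 0,0),(\vec 0,1)\}$ in the direction of the last coordinate, with $\vec 0\in\{0,1\}^{k}$. Now split $\{0,1\}^{k+1}=\{0,1\}^{k}\times\{0\}\sqcup\{0,1\}^{k}\times\{1\}$; by Definition \ref{def: muk definition} (recall that $\mathcal{HK}_{+1}^{k}(G)=\triangle_{G}$, as $G$ is abelian) $\nu^{[k+1]}$ is the relative independent joining of two copies of $\nu^{[k]}$ over $\mathcal{I}_{\triangle_{G}}$ along this splitting, and under the resulting identification $Y^{[k+1]}\cong Y^{[k]}\times Y^{[k]}$ the transformation $a^{\alpha_0}$ becomes the diagonal $S\times S$, where $S=a^{\{\vec 0\}}\in A^{[k]}$ translates the $\vec 0$-coordinate fibre of $Y^{[k]}$. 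Since $A$ commutes with $G$ on $Y$, $S$ commutes with $\triangle_{G}$; hence if $S_{*}\nu^{[k]}=\nu^{[k]}$ then $S$ also preserves $\mathcal{I}_{\triangle_{G}}$, and then $S\times S$ preserves the relative independent joining (directly from its defining formula, since the two legs carry the \emph{same} automorphism). Thus it remains to prove that $\nu^{[k]}$ is invariant under $a^{\{\vec 0\}}$; using once more the $S_{k}$-invariance of $\nu^{[k]}$, this is the same as invariance of $\nu^{[k]}$ under the full translation action of $A^{[k]}$ on $Y^{[k]}=X^{[k]}\times A^{[k]}$, i.e.\ that the disintegration of $\nu^{[k]}$ over its $X^{[k]}$-marginal $\mu^{[k]}$ has the Haar measures of the fibres $A^{[k]}$.

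This ``Haar fibre'' statement is the crux, and it is precisely here that the hypothesis ``$Y$ is a system of order $k$'', i.e.\ $Z_{k-1}(Y)=X$, is used. Given a non-trivial character $\chi$ of $A$, the function $f(x,a)=\chi(a)$ on $Y$ is orthogonal to $X=Z_{k-1}(Y)$, and the characterization of $Z_{k-1}$ through the cube measure (\cite[Chapter 9]{HK2018}, applied to the factor of Definition \ref{subsec:Z_k}) gives $\mathbb{E}\bigl(f(\mathbf y_{\vec 0})\bigm| \sigma(\mathbf y_v:v\neq\vec 0)\bigr)=\mathbb{E}(f\mid\mathcal{Z}_{k-1})(\mathbf y_{\vec 0})=0$ for $\nu^{[k]}$-a.e.\ point; here one also uses that in the order-$(k-1)$ space $X$ the coordinate $x_{\vec 0}$ of a $k$-cube is a.e.\ determined by the other coordinates, so that $X^{[k]}\subseteq \sigma(\mathbf y_v:v\neq\vec 0)$. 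Multiplying by an arbitrary character of the remaining fibres (which is measurable in those coordinates) and taking conditional expectation over $X^{[k]}$ then shows that every non-trivial character of $A^{[k]}$ has vanishing conditional expectation over $X^{[k]}$; hence the fibres of $\nu^{[k]}$ over $\mu^{[k]}$ are Haar, and the proof is complete.

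The main obstacle is therefore the one invoked fact: that the level-$k$ cube measure of a system of order $k$ is relatively independent and Haar over $Z_{k-1}$ in the $A$-direction. Re-deriving it would require the functional equation satisfied by the cocycle of a system of order $k$ --- exactly the heavy part of \cite{HK05,HK2018} that the rest of the paper is designed to replace --- but here we are content to cite it, since in this section we only need to \emph{use} this input, not re-establish it.
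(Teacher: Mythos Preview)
The paper supplies no proof of this lemma; it is simply quoted from \cite[Chapter~9, Proposition~3]{HK2018}. Your argument is a correct derivation along standard lines: the reduction to a single edge by $S_{k+1}$-symmetry, the passage from $\nu^{[k+1]}$ to $\nu^{[k]}$ via the recursive definition (so that the edge action becomes a diagonal $S\times S$ with $S$ commuting with $\triangle_G$), and the final reduction to $A^{[k]}$-invariance of $\nu^{[k]}$ are all sound. The ``Haar fibre'' step is the substantive one, and your justification --- using that $\mathbb{E}(\chi(a_{\vec 0})\mid \sigma(\mathbf y_v:v\neq\vec 0))=\mathbb{E}(\chi\mid Z_{k-1})(\mathbf y_{\vec 0})=0$ for non-trivial $\chi\in\hat A$, together with the fact (Lemma~\ref{prop:There-exists-aisomorphism} applied one level down, to $X$) that $x_{\vec 0}$ is a.e.\ determined by the remaining coordinates so that the $X^{[k]}$-$\sigma$-algebra lies inside $\sigma(\mathbf y_v:v\neq\vec 0)$ --- is valid.

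One remark: the conditional-expectation identity you invoke is itself the combination of Definition~\ref{subsec:Z_k} with the seminorm characterization of $Z_{k-1}$, both taken from \cite{HK2018}. So your proof does not avoid citing Host--Kra; it rather rearranges which facts from \cite[Chapter~9]{HK2018} are treated as input. That is consistent with the spirit of Section~\ref{new proof of the Host-Kra structure theorem}, where the lighter structural facts (Proposition~\ref{abelian group extension}, Lemma~\ref{prop:There-exists-aisomorphism}, and this lemma) are used as black boxes and only the hard inverse-limit-of-nilsystems conclusion is re-derived.
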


Define
\begin{equation}\label{def:tilde_L}
\tilde{L}=\{\mathbf{u}\in A^{[k+1]}:\ \theta_{k+1}(\mathbf{u})=\sum_{v\in\{0,1\}^{k+1}}(-1)^{|v|}\mathbf{u}_{v}=0\}.
\end{equation}
\noindent
This group will play an important role in the analysis of $\nu^{[k+1]}$.

\begin{lem}
\label{lem:muk*-is-invariant} The measure $\nu^{[k+1]}$ is invariant under
$\tilde{L}$ and the measure $\nu^{[k+1]*}$ is invariant under $A^{[k+1]*}$ for
any $k\geq1$.
\end{lem}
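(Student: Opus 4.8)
The plan is to establish the two assertions in turn, the second being a formal consequence of the first.

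\emph{First assertion.} I would reduce it to the purely combinatorial fact that the subgroup $H\le A^{[k+1]}$ generated by the edge-elements $\{a^{\alpha}:\ \alpha\text{ an edge of }\{0,1\}^{k+1},\ a\in A\}$ coincides with $\tilde{L}=\ker\theta_{k+1}$. The inclusion $H\subseteq\tilde{L}$ is immediate: the two endpoints $v,v'$ of an edge have $|v'|=|v|\pm1$, so $\theta_{k+1}(a^{\alpha})=(-1)^{|v|}a+(-1)^{|v'|}a=0$. For the reverse inclusion I would use that the $1$-skeleton of $\{0,1\}^{k+1}$ is a connected bipartite graph with colour classes $E=\{v:|v|\text{ even}\}$ (which contains $\vec{0}$) and $O=\{v:|v|\text{ odd}\}$ (which, since $k\ge1$, contains $e_{1}=(1,0,\dots,0)$). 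Telescoping an edge-path $w_{0},w_{1},\dots,w_{m}$ as $\sum_{j=0}^{m-1}(-1)^{j}a^{\{w_{j},w_{j+1}\}}$ yields, for $v,v'$ in the same colour class, the element of $A^{[k+1]}$ supported on $\{v,v'\}$ with values $a$ and $-a$, and for $v,v'$ in different colour classes the element supported on $\{v,v'\}$ with values $a$ and $a$; all of these therefore lie in $H$. Given $\mathbf{u}\in\tilde{L}$, one adds to it suitable elements of $H$ to transport the mass sitting on $E\setminus\{\vec{0}\}$ onto $\vec{0}$ and the mass on $O\setminus\{e_{1}\}$ onto $e_{1}$; what survives is the element supported on $\{\vec{0},e_{1}\}$ with both values equal to $S:=\sum_{v\in E}\mathbf{u}_{v}=\sum_{v\in O}\mathbf{u}_{v}$, the last equality being exactly $\theta_{k+1}(\mathbf{u})=0$. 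Since $\{\vec{0},e_{1}\}$ is an edge this element lies in $H$, whence $\mathbf{u}\in H$ and $H=\tilde{L}$. Finally, by Lemma~\ref{lem:mu k+1 inv under g edge-1} the measure $\nu^{[k+1]}$ is invariant under each $a^{\alpha}$, and the set of elements of $A^{[k+1]}$ fixing $\nu^{[k+1]}$ is a subgroup; hence it contains $H=\tilde{L}$.

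\emph{Second assertion.} Let $p\colon Y^{[k+1]}\to Y^{[k+1]^{*}}$ be the coordinate projection deleting the $\vec{0}$-entry, so that $\nu^{[k+1]*}=p_{*}\nu^{[k+1]}$, and recall that $A^{[k+1]*}$ acts on $Y^{[k+1]^{*}}=X^{[k+1]^{*}}\times A^{[k+1]*}$ coordinatewise through the group-extension structure. Given $\mathbf{b}\in A^{[k+1]*}$, define $\tilde{\mathbf{b}}\in A^{[k+1]}$ by $\tilde{\mathbf{b}}_{v}=\mathbf{b}_{v}$ for $v\neq\vec{0}$ and $\tilde{\mathbf{b}}_{\vec{0}}=-\sum_{v\neq\vec{0}}(-1)^{|v|}\mathbf{b}_{v}$; then $\theta_{k+1}(\tilde{\mathbf{b}})=0$, i.e.\ $\tilde{\mathbf{b}}\in\tilde{L}$. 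Since the action of $\tilde{\mathbf{b}}$ alters only the $\vec{0}$-coordinate, one has $p\circ\tilde{\mathbf{b}}=\mathbf{b}\circ p$, and therefore $\mathbf{b}_{*}\nu^{[k+1]*}=\mathbf{b}_{*}p_{*}\nu^{[k+1]}=p_{*}\tilde{\mathbf{b}}_{*}\nu^{[k+1]}=p_{*}\nu^{[k+1]}=\nu^{[k+1]*}$, the third equality being the first assertion.

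The telescoping identities and the manipulation of pushforwards are routine; the one point requiring a little care is the combinatorial identity $H=\tilde{L}$, i.e.\ checking that the edge-elements generate the full kernel of $\theta_{k+1}$ over $\mathbb{Z}$ (and not merely after tensoring with $\mathbb{Q}$) — this is exactly what the transport-of-mass argument above is designed to handle. Once that is in place, everything else is bookkeeping.
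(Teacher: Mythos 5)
Your proposal is correct, and its overall architecture matches the paper's: both proofs reduce the first assertion to the purely algebraic fact that $\tilde L$ is the subgroup of $A^{[k+1]}$ generated by the edge elements $\{a^{\alpha}\}$, then invoke Lemma \ref{lem:mu k+1 inv under g edge-1} and the fact that the stabiliser of $\nu^{[k+1]}$ is a subgroup; and both handle the second assertion by extending $\mathbf{b}\in A^{[k+1]*}$ to an element $\tilde{\mathbf{b}}\in\tilde L$ whose $\vec{0}$-coordinate is chosen to make $\theta_{k+1}$ vanish, and pushing forward along $p$.

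Where you genuinely diverge is in the proof of the combinatorial generation statement $\tilde L\subseteq H$. The paper does this by induction on the ambient dimension: a base case in $\{0,1\}^2$ by an explicit three-edge decomposition, then an inductive step that subtracts one edge element $g_0^{\alpha_0}$ supported on the ``vertical'' edge at $\vec{0}$ so that the two faces $[k]\times\{0\}$ and $[k]\times\{1\}$ each carry an element of the lower-dimensional $\tilde L$, to which the inductive hypothesis applies independently. You instead telescope along paths in the bipartite $1$-skeleton of $\{0,1\}^{k+1}$ to manufacture, for any pair $v,v'$, an $H$-element supported on $\{v,v'\}$, then transport all the mass from the even vertices onto $\vec 0$ and from the odd vertices onto $e_1$; the surviving element on the edge $\{\vec 0,e_1\}$ has equal entries by $\theta_{k+1}(\mathbf{u})=0$ and is hence itself an edge element. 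Your telescoping identity and the resulting sign $(-1)^{m-1}$ check out, and the argument is non-inductive and somewhat more conceptual, making the role of the bipartite structure (i.e.\ the parity grading underlying $\theta_{k+1}$) explicit. The paper's induction is shorter to state but hides the geometry. Both establish the integral (not merely rational) generation of $\ker\theta_{k+1}$ by edge elements, which is the one step where care is needed, and you flag this correctly.
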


\begin{proof}
We claim that for any $\mathbf{u}\in \tilde{L}$, there exist finite collections of edges
$\{\alpha_{s}\}_{s}$ and group elements $\{g_s\}_s\subset A$ such that $\mathbf{u}=\sum_{s}g_{s}^{\alpha_{s}}$. From Lemma \ref{lem:mu k+1 inv under g edge-1}, $\nu^{[k+1]}$
is invariant under $g^{\alpha}$. If the claim holds, then $\nu^{[k+1]}$
is invariant under $\mathbf{u}$.

We proceed to prove the claim. For $k=1$, let $\alpha_{1}=\{(0,0),(0,1)\},\alpha_{2}=\{(0,1),(1,1)\},\alpha_{3}=\{(1,1),(1,0)\}$.
For any $\mathbf{u}\in L$, let $g_{1}=\mathbf{u}(0,0),\ g_{2}=\mathbf{u}(0,1)-g_{1},\ g_{3}=\mathbf{u}(1,1)-g_{2}$,
then $\mathbf{u}=g_{1}^{\alpha_{1}}+g_{2}^{\alpha_{2}}+g_{3}^{\alpha_{3}}$.

Inductively, we assume that the claim for $k$ holds. Now we prove
the claim for $k+1$. For $\mathbf{u}\in \tilde{L}$, $\sum_{v\in[k+1]}(-1)^{|v|}\mathbf{u}(v)=0$.
Let $\alpha_{0}=\{(\vec{0},0),(\vec{0},1)\}$ $g_{0}=\sum_{v\in[k]\times\{0\}}(-1)^{|v|}\mathbf{u}(v)$,
then $\mathbf{u}'=\mathbf{u}-g_{0}^{\alpha_{0}}\in \tilde{L}$ satisfies

\[
\sum_{v\in[k]\times\{0\}}(-1)^{|v|}\mathbf{u}(v)=0\text{ and }\sum_{v\in[k]\times\{1\}}(-1)^{|v|}\mathbf{u}(v)=0.
\]
By the inductive assumption, we know that there exist a finite collections of edges $\{\alpha_{s}\}_{s}$
such that $\mathbf{u}'=\sum_{s}g_{s}^{\alpha_{s}}$. Then $\mathbf{u}=g_{0}^{\alpha_{0}}+\sum_{s}g_{s}^{\alpha_{s}}$,
which proves the claim.

The measure $\nu^{[k+1]*}$ is induced by the projection $\mathfrak{p}^*:Y^{[k+1]}\rightarrow Y^{[k+1]*}:c\rightarrow C_G^{*}$.
For any $\mathbf{a}^{*}\in A^{[k+1]*}$, let $a_{0}=\sum_{v\in[k+1]*}(-1)^{|v|+1}\mathbf{a}^{*}(v)$,
then $(a_{0},\mathbf{a}^{*})\in \tilde{L}$.

As $\nu^{[k+1]}$ is invariant under $\tilde{L}$, $\nu^{[k+1]}$
is invariant under $(a_{0},\mathbf{a})$. One has that for any $\nu^{[k+1]*}$-measurable
set $B$,
$$
\nu^{[k+1]*}(\mathbf{a}^{*}B)=\nu^{[k+1]}(Y\times\mathbf{a}^{*}B)=\nu^{[k+1]}((a_{0},\mathbf{a}^{*})(Y\times B))=\nu^{[k+1]}(Y\times B)=\nu^{[k+1]*}(B).
$$
\end{proof}

 Let $\{\nu^{[k+1](c)}\}_{c\in X^{[k+1]}}$ be the measure disintegration w.r.t.\  the
projection $p_1=\pi_{k}^{[k+1]}:X^{[k+1]}\times A^{[k+1]}\rightarrow X^{[k+1]}:(c,\mathbf{a})\mapsto c$. The measures $\nu^{[k+1](c)}$ can be seen as measures
of $A^{[k+1]}.$ We will now analyze the measures $\{\nu^{[k+1](c)}\}$. 

\begin{lem}\cite[Chapter 9, Theorem 15(iii)]{HK2018}
\label{prop:There-exists-aisomorphism} Let $(Y,\nu,G)$ be a system of order $k$. Then the map
$p_{2}:Y^{[k+1]}\rightarrow Y^{[k+1]*}:(c,\mathbf{a})\rightarrow(c^{*},\mathbf{a}^{*})$ is a Borel isomorphism.
\end{lem}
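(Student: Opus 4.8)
The plan is to read the statement off the definition of a system of order $k$. Note first that, under the measurable identification $Y=X\times A$ (Proposition~\ref{abelian group extension}), one has $Y^{[k+1]}=X^{[k+1]}\times A^{[k+1]}$ and $Y^{[k+1]*}=X^{[k+1]*}\times A^{[k+1]*}$, and the map $p_{2}\colon(c,\mathbf a)\mapsto(c^{*},\mathbf a^{*})$ is exactly the projection $y\mapsto y^{*}$ of Definition~\ref{def:corner} that forgets the $\vec 0$-coordinate. Equip $Y^{[k+1]}$ with $\nu^{[k+1]}$ (Definition~\ref{def: muk definition}) and $Y^{[k+1]*}$ with $\nu^{[k+1]*}=(p_{2})_{*}\nu^{[k+1]}$; both are standard Borel probability spaces and $p_{2}$ is a Borel, measure-preserving map. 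Hence the induced map of measure algebras $A\mapsto p_{2}^{-1}(A)$ is well defined and injective, and the whole content of the lemma is that it is also \emph{surjective}: every $\nu^{[k+1]}$-measurable subset of $Y^{[k+1]}$ agrees, modulo a null set, with one depending only on the coordinates in $\{0,1\}^{k+1}\setminus\{\vec 0\}$.

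To prove this, let $\pi_{v}\colon Y^{[k+1]}\to Y$ denote the $v$-th coordinate projection, so that $\mathcal B(Y^{[k+1]})=\sigma(\pi_{v}:v\in\{0,1\}^{k+1})$ while $p_{2}^{-1}(\mathcal B(Y^{[k+1]*}))$ is the sub-$\sigma$-algebra $\mathcal A:=\sigma(\pi_{v}:v\in\{0,1\}^{k+1}\setminus\{\vec 0\})$. It is enough to show that $\sigma(\pi_{\vec 0})\subset\mathcal A$ modulo $\nu^{[k+1]}$-null sets, for then $\mathcal B(Y^{[k+1]})=\sigma(\pi_{\vec 0})\vee\mathcal A=\mathcal A$ mod null. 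So fix a measurable $B\subset Y$. Since $(Y,\nu,G)$ is a system of order $k$ we have $\mathcal Z_{k}(Y)=\mathcal B(Y)$ modulo $\nu$-null sets, and $\pi_{\vec 0}$ is measure preserving, so we may assume $B\in\mathcal Z_{k}(Y)$; by Definition~\ref{subsec:Z_k} there is a $\mathcal J_{*}^{k+1}$-measurable set $A\subset Y^{[k+1]*}$ (Definition~\ref{def:corner}) with $Y\times A=B\times Y^{[k+1]*}$ up to a $\nu^{[k+1]}$-null set. Reading these two cylinder sets inside $Y^{[k+1]}$, this equality says precisely that $1_{B}\circ\pi_{\vec 0}=1_{A}\circ p_{2}$ holds $\nu^{[k+1]}$-a.e.; hence $\pi_{\vec 0}^{-1}(B)=p_{2}^{-1}(A)$ mod null, an element of $\mathcal A$. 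As $B$ was arbitrary, $\sigma(\pi_{\vec 0})\subset\mathcal A$ mod null.

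It remains to upgrade this isomorphism of measure algebras to the asserted Borel isomorphism. This is the standard fact that a measure-preserving Borel map between standard Borel probability spaces which induces an isomorphism of measure algebras is an isomorphism of measure spaces: disintegrating $\nu^{[k+1]}$ over $(Y^{[k+1]*},\nu^{[k+1]*})$, the fact that $\mathcal A$ exhausts $\mathcal B(Y^{[k+1]})$ mod null forces the fibre measures to be Dirac masses, and a Borel selection of their atoms yields a Borel inverse of $p_{2}$ off a null set. Thus $p_{2}$ restricts to a Borel bijection, with Borel inverse, between conull Borel subsets of $Y^{[k+1]}$ and $Y^{[k+1]*}$, which is the assertion of the lemma.

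I do not expect a genuinely hard step here: granting the last, routine measure-space isomorphism fact, the proof is simply an unwinding of Definition~\ref{subsec:Z_k}. The one point deserving care is the bookkeeping that turns ``$Y\times A=B\times Y^{[k+1]*}$ mod null'' into ``$1_{B}\circ\pi_{\vec 0}=1_{A}\circ p_{2}$ a.e.''---the step that converts a defining property of $\mathcal Z_{k}(Y)$ into a statement about which coordinates a set depends on. Finally, as announced at the start of this section, the whole argument uses only the definitions of $\mu^{[k+1]}$ (Definition~\ref{def: muk definition}) and of $\mathcal Z_{k}$ (Definition~\ref{subsec:Z_k}), together with the fact that single-coordinate marginals of $\mu^{[k+1]}$ are $\mu$, all of which hold for a finitely generated abelian $G$, so no new input is needed to pass from the case $G=\mathbb Z$ treated in \cite{HK2018}.
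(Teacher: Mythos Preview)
The paper does not prove this lemma; it is quoted verbatim from \cite[Chapter 9, Theorem 15-(iii)]{HK2018} and used as a black box in the proof of Lemma~\ref{ergodic L}. Your argument is correct and is essentially the natural proof: the defining property of $\mathcal{Z}_k(Y)$ in Definition~\ref{subsec:Z_k} says precisely that $\pi_{\vec 0}^{-1}(B)$ agrees mod null with a set in $p_2^{-1}(\mathcal{B}(Y^{[k+1]*}))$, and the hypothesis $Z_k(Y)=Y$ upgrades this from $B\in\mathcal{Z}_k(Y)$ to all Borel $B$. The final passage from measure-algebra isomorphism to Borel isomorphism is indeed routine for standard Borel spaces. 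One cosmetic remark: the decomposition $Y=X\times A$ via Proposition~\ref{abelian group extension} is only needed to parse the notation $(c,\mathbf a)$ in the statement; your actual argument works directly with $Y$ and the coordinate projection, which is cleaner.
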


\begin{lem}
\label{ergodic L} For $\mu^{[k+1]}$-a.e. $c\in X^{[k+1]}$,
$(p_{1}^{-1}(c),\nu^{[k+1](c)},\tilde{L})$ is ergodic.
\end{lem}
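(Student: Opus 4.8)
The plan is to use the ergodic decomposition of $\nu^{[k+1]}$ relative to the $\sigma$-algebra generated by $p_1$ together with the $\tilde{L}$-action, and to identify the ergodic components by comparing them with the measure $\nu^{[k+1]*}$, whose symmetry properties are already established. First I would invoke Lemma \ref{lem:muk*-is-invariant}: the measure $\nu^{[k+1]}$ is $\tilde{L}$-invariant and $\nu^{[k+1]*}$ is $A^{[k+1]*}$-invariant. Since $p_1$ commutes with the $\tilde{L}$-action (the action of $\tilde{L}$ fixes the $X^{[k+1]}$-coordinate), each conditional measure $\nu^{[k+1](c)}$ is itself $\tilde{L}$-invariant for $\mu^{[k+1]}$-a.e.\ $c$, by a.s.\ uniqueness of measure disintegration. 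Thus, for $\mu^{[k+1]}$-a.e.\ $c$ we get a $\tilde{L}$-invariant probability measure on the fibre $p_1^{-1}(c)\cong A^{[k+1]}$, and the goal is to show it is $\tilde{L}$-ergodic.

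The key step is to bring in the Borel isomorphism $p_2:Y^{[k+1]}\to Y^{[k+1]*}$ of Lemma \ref{prop:There-exists-aisomorphism}, under which $\nu^{[k+1]}$ pushes to $\nu^{[k+1]*}$. Decompose $\nu^{[k+1]} = \int \nu^{[k+1](c)}\,d\mu^{[k+1]}(c)$ into its ergodic components for the $\tilde{L}$-action — or rather, further decompose each $\nu^{[k+1](c)}$ into $\tilde{L}$-ergodic components, obtaining an ergodic decomposition $\nu^{[k+1]} = \int \lambda_\omega \, d\kappa(\omega)$ with each $\lambda_\omega$ supported in a single fibre $p_1^{-1}(c(\omega))$ and $\tilde{L}$-ergodic. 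By Lemma \ref{lem:subgroup invariance}, each such $\lambda_\omega$ has the form $m_{Haar(\tilde{L})} + \mathbf{a}_\omega$ restricted to a coset of $\tilde{L}$ inside the fibre. Now project through $p_1$: the claim that $\nu^{[k+1](c)}$ is a.e.\ $\tilde{L}$-ergodic is equivalent to the statement that for a.e.\ $c$ the fibre $p_1^{-1}(c)\cong A^{[k+1]}$ carries only a single $\tilde{L}$-ergodic component, i.e.\ $\nu^{[k+1](c)}$ is (a translate of) the Haar measure of $\tilde{L}$ sitting in a single coset. To prove this, I would transport the question to $Y^{[k+1]*}$ via $p_2$: the composite $p_1 \circ p_2^{-1}: Y^{[k+1]*}\to X^{[k+1]}$ corresponds under the coordinate removal map to the factor $Y^{[k+1]*}\to X^{[k+1]*}$ precomposed with the full projection, and the fibres of $p_1$ correspond exactly to fibres on which $A^{[k+1]*}$ acts. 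Since $\nu^{[k+1]*}$ is $A^{[k+1]*}$-invariant and, because $(Y,\nu,G)$ is a system of order $k$ (so the coordinate $\vec 0$ is measurably determined by the others only up to the group — more precisely, one uses the $\mathcal{J}^{k+1}_*$-measurability built into Definition \ref{subsec:Z_k}), one deduces that the conditional measures of $\nu^{[k+1]*}$ over $X^{[k+1]*}$ are a.e.\ the Haar measure of $A^{[k+1]*}$. Pulling this back through $p_2$ and matching cosets gives that $\nu^{[k+1](c)}$ is a.e.\ the translate of $m_{Haar(\tilde{L})}$, hence $\tilde{L}$-ergodic.

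I expect the main obstacle to be the careful bookkeeping in identifying the fibres of $p_1$ with cosets of $\tilde{L}$ under the isomorphism $p_2$, and in particular verifying that ``fixing $c\in X^{[k+1]}$'' on the $Y^{[k+1]}$ side corresponds precisely to an orbit-closure of the $A^{[k+1]*}$-action (together with the single extra coordinate $\mathbf a_{\vec 0}$ being constrained to $\theta$) on the $Y^{[k+1]*}$ side, so that ergodicity under $A^{[k+1]*}$ translates to ergodicity under $\tilde L$. One must also check that the map $\mathbf a^* \mapsto (\,\sum_{v\in[k+1]^*}(-1)^{|v|+1}\mathbf a^*(v),\ \mathbf a^*\,)$ used in the proof of Lemma \ref{lem:muk*-is-invariant} is precisely the coordinate expression realizing this identification, which is where the constraint $\theta_{k+1}(\mathbf u)=0$ defining $\tilde L$ enters. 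Once this dictionary is set up, the ergodicity statement follows from the $A^{[k+1]*}$-invariance of $\nu^{[k+1]*}$ and Lemma \ref{lem:subgroup invariance} applied fibrewise, together with a.s.\ uniqueness of disintegration to pass between the $c$-parametrization and the ergodic-component parametrization.
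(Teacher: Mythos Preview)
Your approach is correct and uses the same key ingredients as the paper: the $\tilde L$-invariance of $\nu^{[k+1]}$ from Lemma~\ref{lem:muk*-is-invariant}, the Borel isomorphism $p_2$ from Lemma~\ref{prop:There-exists-aisomorphism}, and the group isomorphism $\tilde L\cong A^{[k+1]*}$ given by $\mathbf a'\mapsto \mathbf a'^*$.

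The paper's execution is more direct than yours, however, and avoids the detour through ergodic decomposition and Lemma~\ref{lem:subgroup invariance}. Rather than decomposing each $\nu^{[k+1](c)}$ into $\tilde L$-ergodic components and then arguing there is only one, the paper simply observes that for a.e.\ $c$ the map $p_2$ restricts to a Borel isomorphism from $(p_1^{-1}(c),\nu^{[k+1](c)},\tilde L)$ onto $(\{c^*\}\times A^{[k+1]*},(p_2)_*\nu^{[k+1](c)},A^{[k+1]*})$, intertwining the two actions. Since the pushforward measure is $A^{[k+1]*}$-invariant and supported on a single copy of $A^{[k+1]*}$, it must equal $\delta_{c^*}\times m_{Haar(A^{[k+1]*})}$, which is trivially ergodic under the full group. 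This sidesteps your concern about matching up the disintegration of $\nu^{[k+1]*}$ over $X^{[k+1]*}$ with the disintegration of $\nu^{[k+1]}$ over $X^{[k+1]}$: the paper never disintegrates $\nu^{[k+1]*}$ at all, it just pushes each fibre measure forward. Your invocation of the $\mathcal J^{k+1}_*$-measurability from Definition~\ref{subsec:Z_k} is also unnecessary; everything needed is already packaged in the statement that $p_2$ is a Borel isomorphism.
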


\begin{proof}
From Lemma \ref{prop:There-exists-aisomorphism}, $p_{2}$ is a Borel
isomorphism. As $(C^{k+1}_G(X),\mathcal{HK}^{k+1}(G),\mu^{[k+1]})$ is uniquely ergodic, one has that $p_1$ is measure-preserving. Thus for $\mu^{[k+1]}$-a.e. $c\in X^{[k+1]}$,
$p_{2}:(p_{1}^{-1}(c),\nu^{[k+1](c)})\rightarrow(p_{2}(p_{1}^{-1}(c)),(p_{2})_{*}\nu^{[k+1](c)})$
is a Borel isomorphism. As by Lemma \ref{lem:muk*-is-invariant} $\nu^{[k+1]}$ is $\tilde{L}$-invariant and $p_{1}^{-1}(c)$ is $\tilde{L}$-invariant,
it holds that $\nu^{[k+1](c)}$ is $\tilde{L}$-invariant for $\mu^{[k+1]}$-a.e. $c\in X^{[k+1]}$. Consider the m.p.s.\ $(p_{1}^{-1}(c),\nu^{[k+1](c)},\tilde{L}).$
 Notice that  $\tilde{L}\rightarrow A^{[k+1]*}: \mathbf{a'}\rightarrow \mathbf{a'}^*$ is a group isomorphism. One has that $p_{2}(p_{1}^{-1}(c))=\{c^{*}\}\times A^{[k+1]*}$  and for any $\mathbf{a}\in A^{[k+1]},\mathbf{a'}\in \tilde{L}$,
 $$p_2(\mathbf{a'}(c,\mathbf{a}))=p_2(c,\mathbf{a}+\mathbf{a'})=(c^*,\mathbf{a}^*+\mathbf{a'}^*)=\mathbf{a'}^*(c^*,\mathbf{a}^*).$$
Therefore for $\mu^{[k+1]}$-a.e. $c$,
$$p_2:(p_{1}^{-1}(c),\nu^{[k+1](c)},\tilde{L})\rightarrow (p_{2}(p_{1}^{-1}(c)),(p_{2})_{*}\nu^{[k+1](c)},A^{[k+1]*} )$$
is a Borel isomorphism and $(p_{2})_{*}\nu^{[k+1](c)}$
is $A^{[k+1]*}$-invariant. As $p_{2}(p_{1}^{-1}(c))=\{C_G^{*}\}\times A^{[k+1]*}$, for the Haar measure $m_{\haar(A^{[k+1]*})}$ for $A^{[k+1]*}$, it holds that
$$(p_{2})_*\nu^{[k+1](c)}=\delta_{C_G^*}\times m_{\haar(A^{[k+1]*})}.$$  In particular, $(p_{2}(p_{1}^{-1}(c))=\{c\}\times A^{[k+1]*},(p_{2})_{*}\nu^{[k+1](c)},A^{[k+1]*})  \text{ is ergodic.}$ We conclude that $\mu^{[k+1]}$-a.e. $c\in X^{[k+1]}$, $(p_{1}^{-1}(c),\nu^{[k+1](c)},\tilde{L})$
is ergodic.
\end{proof}

We are almost ready to define the nilcycle for the extension \eqref{eq:ext}. We start by two auxiliary lemmas.
The first lemma is proven in \cite[Lemma 5.6]{HKM10} for $\Z$-actions but easily generalizes to the context of finitely generated abelian actions, in particular $\Z^k$-actions. 
 \begin{lem}\label{fullsupport}\label{muk=mu_C} 
Let $G$ be a finitely generated abelian group. Suppose $(X,G)$ is a t.d.s.\ and $\mu$ is an ergodic measure of $(X,G)$. Then $\mu^{[k]}(C_G^{k}(X))=1$ for $k\geq 1$. In particular, if $(X,G)$ is a $(k+1)$-cube uniquely ergodic system, then $\mu_{C_G^{k+1}(X)}=\mu^{[k+1]}$.
\end{lem}

\begin{lem}
\label{lem:subgroup invariance} Let $\gamma$ be a measure on a metrizable
compact abelian group $A$ which is invariant and ergodic under the
action of a closed subgroup $H\subset A$, then $\gamma=m_{\haar(H)}+a$
for some $a\in A,$ where $m_{\haar(H)}$ is the Haar measure on $H$ and $m_{\haar(H)}+a$ is defined by $(m_{\haar(H)}+a)(B)=m_{\haar(H)}\left((B+a)\cap H\right)$
for any $B\subset A$.
\end{lem}

\begin{proof}
Let $\gamma'$ be the push forward of $\gamma$ by the canonical projection,
i.e.\  for any measurable set $B\subset A/H$, $\gamma'(B)=\gamma(\{a\in A:a+H\in B\})$.
If $\gamma'$ is not an atomic measure, as $A/H$ is a Polish space,
there exists $B_{0}\in\mathcal{B}(A/H)$ such that $0<\gamma'(B_{0})<1$
\cite[page 14]{RR1981}. But $\{a\in A:a+H\in B\}$ is $H$-invariant,
then by ergodicity, $\gamma'(B_{0})=\gamma(\{a\in A:a+H\in B_{0}\})=0$ or
$1$, which is a contradiction. Therefore $\gamma'$ is an atomic measure,
i.e.\  $\supp(\gamma)=-a+H$ for some $a\in A$. Since $m_{\haar(H)}$ is the unique $H$-invariant measure supported
on $H$, $m_{\haar(H)}+a$ is the unique $H$-invariant measure supported
on $-a+H$. Thus $\gamma=m_{\haar(H)}+a$ for some $a\in A.$
\end{proof}

By Lemma \ref{fullsupport}, $\mu^{[k+1]}$ is supported on $C^{k+1}_G(X)$. From Lemma \ref{ergodic L}, there is a Borel subset $V\subset C_G^{k+1}(X)$
such that $\mu^{[k+1]}(V)=1$ and for $c\in V$, $\nu^{[k+1](c)}$
is ergodic under the action of $\tilde{L}$. From Lemma \ref{lem:subgroup invariance}, for $c\in V$,
$\nu^{[k+1](c)}=\delta_{c}\times(m_{\haar(\tilde{L})}+\mathbf{a}_{c})$
(i.e.\  $\nu^{[k+1](c)}(B)=m_{\haar(\tilde{L})}(B+\mathbf{a}_{c})$ ) for
some $\mathbf{a}_{c}\in A^{[k+1]}$, where $\delta_{c}$ is the
Dirac measure. By Theorem \ref{thm:(measure-disintegration-theorem)}, the map $q:X^{[k+1]}\rightarrow \mathcal{M}(Y^{[k+1]}):c\rightarrow \nu^{[k+1](c)}$ is a Borel map.  Define the closed subset $H\subset \mathcal{M}(Y^{[k+1]})$ given by $H:=\{\delta_c\times(m_{\haar(\tilde{L})}+\mathbf{a}): \mathbf{a}\in A^{[k+1]},c\in C_G^{k+1}(X)\}$. Thus the map $q_{|V}:V\rightarrow H$ is Borel. Due the definition of $\tilde{L}$ (see \eqref{def:tilde_L}), the map $\theta_{k+1}:A^{[k+1]}\rightarrow A$ induces a well-defined and continuous map $\tilde{\theta}_{k+1}:H\rightarrow A$ given by $\tilde{\theta}(\delta_c\times(m_{\haar(\tilde{L})}+\mathbf{a})=\theta_{k+1}(\mathbf{a})$.  Let $\rho:C_G^{k+1}(X)\rightarrow A$ be defined by
\[
\rho(c)=\tilde{\theta}_{k+1}(q_{|V}(c))
\]
for $c\in V$ and $0_A$ otherwise. As $\rho$ is the composition of two Borel maps (up to measure zero), one has the following fact:
\begin{fact}
The map $\rho:\ C_G^{k+1}(X)\rightarrow A$ is
Borel.
\end{fact}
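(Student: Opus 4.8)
The plan is to exploit that $\{\nu^{[k+1](c)}\}_{c}$ is a genuine measure disintegration — hence a Borel function of $c$ — and to recover $\rho(c)$ from it by pushing forward along the continuous homomorphism $\theta_{k+1}$, which is what sidesteps the fact that $\mathbf a_{c}$ itself is only determined modulo $L$. First I would invoke Theorem \ref{thm:(measure-disintegration-theorem)} for $p_{1}=\pi_{k}^{[k+1]}\colon Y^{[k+1]}\to X^{[k+1]}$: it gives that $c\mapsto\nu^{[k+1](c)}$ is a Borel map $X^{[k+1]}\to\mathcal M(Y^{[k+1]})$, so that $c\mapsto\int F\,d\nu^{[k+1](c)}$ is Borel for every continuous $F$. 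I would also check that the set $V$ on which $\rho$ is defined may be taken Borel: the collection $\mathcal D\subset\mathcal M(Y^{[k+1]})$ of measures of the form $\delta_{c}\times(m_{Haar(\tilde L)}+\mathbf a)$ with $c\in X^{[k+1]}$, $\mathbf a\in A^{[k+1]}$ is the image of the compact space $X^{[k+1]}\times A^{[k+1]}$ under a continuous map, hence compact (so Borel), and its preimage under $c\mapsto\nu^{[k+1](c)}$ is a Borel set which, by Lemmas \ref{lem:muk*-is-invariant}, \ref{ergodic L} and \ref{lem:subgroup invariance}, has full $\mu^{[k+1]}$-measure; this preimage may serve as $V$. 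One then extends $\rho$ to all of $C_G^{k+1}(X)$ by, say, $\rho\equiv 0$ off $V$, and it remains only to show $\rho|_{V}$ is Borel.

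The crucial point is that $\theta_{k+1}\colon A^{[k+1]}\to A$ is a continuous homomorphism with $L\subseteq\tilde L=\ker\theta_{k+1}$, so $\theta_{k+1}$ is constant on cosets of $L$, which is exactly what makes $\rho(c)=\theta_{k+1}(\mathbf a_{c})$ well defined. For $c\in V$ the $A^{[k+1]}$-marginal of $\nu^{[k+1](c)}=\delta_{c}\times(m_{Haar(L)}+\mathbf a_{c})$ equals $m_{Haar(L)}+\mathbf a_{c}$, a measure supported on the coset $-\mathbf a_{c}+L$; pushing it forward along $\theta_{k+1}$ collapses that coset to the single point $\theta_{k+1}(-\mathbf a_{c})=-\rho(c)$, so $(\theta_{k+1})_{*}$ of this marginal is the point mass $\delta_{-\rho(c)}$. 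Consequently $c\mapsto\delta_{-\rho(c)}$ is a Borel map from $V$ into $\mathcal M(A)$ taking values in the set of Dirac measures, and since $A$ is Polish the bijection $a\mapsto\delta_{a}$ is a homeomorphism of $A$ onto that (closed) set; composing with its inverse and negating exhibits $\rho|_{V}$ as Borel. Equivalently, one may test against characters: for each $\chi\in\hat A$ the character $\chi\circ\theta_{k+1}$ of $A^{[k+1]}$ is trivial on $L$, so $\int\overline{(\chi\circ\theta_{k+1})(\mathbf a)}\,d\nu^{[k+1](c)}=\chi(\rho(c))$ is a Borel function of $c$ on $V$, and since $\hat A$ separates the points of $A$ and generates its Borel $\sigma$-algebra, $\rho|_{V}$ is Borel.

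I expect the only genuine subtlety to be precisely the one just flagged: because $\mathbf a_{c}$ is ambiguous modulo $L$, one cannot naively postcompose a measurable selection $c\mapsto\mathbf a_{c}$ with $\theta_{k+1}$; routing through the pushforward measure (or through the characters trivial on $L$) is what makes the construction canonical. Everything else — that $V$ is Borel, and the standard facts that integrating a fixed continuous function against a Borel family of measures is Borel and that $a\mapsto\delta_{a}$ is a Borel isomorphism onto its image — is routine bookkeeping about the weak-$*$ Borel structure on $\mathcal M(\cdot)$.
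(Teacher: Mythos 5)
The paper asserts this fact without proof (the text reads ``It is easy to establish the following fact''), so there is no official argument to compare against. Your proof is correct and is the natural way to make the assertion rigorous: the disintegration map $c\mapsto\nu^{[k+1](c)}$ is Borel into $\mathcal{M}(Y^{[k+1]})$ by Theorem~\ref{thm:(measure-disintegration-theorem)}; the collection of admissible fibre measures $\delta_{c}\times(m_{Haar(\tilde L)}+\mathbf a)$ is the continuous image of the compact space $X^{[k+1]}\times A^{[k+1]}$, hence compact, so its preimage $V$ under the disintegration map is Borel; and pushing the $A^{[k+1]}$-marginal forward along the continuous homomorphism $\theta_{k+1}$ --- equivalently, integrating characters trivial on $\tilde L=\ker\theta_{k+1}$ --- collapses the coset ambiguity of $\mathbf a_{c}$ to the single point $-\rho(c)$, exhibiting $\rho|_{V}$ as a composition of Borel maps with no need for a measurable selection of $\mathbf a_{c}$. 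You correctly identify that this postcomposition with $\theta_{k+1}$ is the step that makes both the definition of $\rho$ and its measurability go through. One minor point worth flagging: the paper's sentence defining $\rho$ writes $m_{Haar(L)}$, but Lemma~\ref{ergodic L} yields ergodicity under $\tilde L$, so your use of $\tilde L$ there is the intended reading (and in the Host--Kra construction of Section~\ref{subsec:Construction-of-a nilcycle from the HK extension} one takes $L=\tilde L$, so nothing turns on the distinction).
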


Now we will show that $\rho$ is a nilcycle. First we need an auxiliary lemma.

\begin{lem}
\label{lem:key joining} Let $\pi_{L},\pi_{U}:Y^{[k+1]}=Y^{[k]}\times Y^{[k]}\to Y^{[k]}$
be given by $\pi_{L}(d_{1},d_{2})=d_{1}$ and $\pi_{U}(d_{1},d_{2})=d_{2}$.
There exists a joining of $(Y^{[k+1]},\nu^{[k+1]})$ with itself $(Y^{[k+1]}\times Y^{[k+1]},\lambda)$
so that

\begin{enumerate}
\item \label{enu:edge map}For $\pi_{E}:Y^{[k+1]}\times Y^{[k+1]}\to Y^{[k]}\times Y^{[k]}=Y^{[k+1]}$
given by $\pi_{E}(c_{1},c_{2})=(\pi_{L}(c_{1}),\pi_{U}(c_{2}))$ it
holds $\pi_{E*}\lambda=\nu^{[k+1]}$.
\item \label{enu:support} For $S:=\{(c_{1},c_{2})\in(Y^{[k+1]})^{2}|\,\pi_{U}(c_{1})=\pi_{L}(c_{2})\}$, it holds $\lambda(S)=1$.

\item \label{enu:projection to gluespace}For $\pi_{k}^{[k+1]}\times\pi_{k}^{[k+1]}:C_G^{k+1}(X)\times A^{[k+1]}\times C_G^{k+1}(X)\times A^{[k+1]}\to C_G^{k+1}(X)\times C_G^{k+1}(X)$
it holds $(\pi_{k}^{[k+1]}\times\pi_{k}^{[k+1]})_{*}\lambda=\mu_{\mathcal{P}^{k+1}(X)}$.
\end{enumerate}
\end{lem}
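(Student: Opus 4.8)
The plan is to build the joining $\lambda$ explicitly by gluing two copies of $\nu^{[k+1]}$ along their common $Y^{[k]}$-coordinate. Concretely, view $Y^{[k+1]}=Y^{[k]}\times Y^{[k]}$ via the splitting $\{0,1\}^{k+1}=(\{0,1\}^k\times\{0\})\sqcup(\{0,1\}^k\times\{1\})$, so that $\pi_U$ of the first copy and $\pi_L$ of the second copy both land in $Y^{[k]}$. First I would disintegrate $\nu^{[k+1]}$ over $Y^{[k]}$ along $\pi_U$, obtaining $\nu^{[k+1]}=\int_{Y^{[k]}}\nu^{[k+1]}_{\pi_U=d}\,d\nu^{[k]}(d)$, and similarly disintegrate over $Y^{[k]}$ along $\pi_L$. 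Both disintegrations are over the same base measure $\nu^{[k]}$ because the two maps $\pi_U,\pi_L:(Y^{[k+1]},\nu^{[k+1]})\to(Y^{[k]},\nu^{[k]})$ are both measure-preserving; the latter is exactly the content of the definition of $\nu^{[k+1]}$ as a relatively independent self-joining of $\nu^{[k]}$ (Definition \ref{def: muk definition}). Then I would define
\[
\lambda=\int_{Y^{[k]}}\nu^{[k+1]}_{\pi_U=d}\otimes\nu^{[k+1]}_{\pi_L=d}\,d\nu^{[k]}(d),
\]
i.e.\ the relatively independent joining of the two copies of $\nu^{[k+1]}$ over their shared middle $Y^{[k]}$-factor. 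Property (\ref{enu:support}) is then immediate from the construction, since for $\nu^{[k]}$-a.e.\ $d$ the fiber measure $\nu^{[k+1]}_{\pi_U=d}$ is supported on $\{\pi_U=d\}$ and $\nu^{[k+1]}_{\pi_L=d}$ on $\{\pi_L=d\}$, forcing $\pi_U(c_1)=d=\pi_L(c_2)$ a.s.

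For property (\ref{enu:edge map}), I would compute the pushforward $\pi_{E*}\lambda$ where $\pi_E(c_1,c_2)=(\pi_L(c_1),\pi_U(c_2))\in Y^{[k]}\times Y^{[k]}=Y^{[k+1]}$. Integrating over $d$ first, the $c_1$-marginal of $\nu^{[k+1]}_{\pi_U=d}$ pushed forward by $\pi_L$, and independently the $c_2$-marginal of $\nu^{[k+1]}_{\pi_L=d}$ pushed forward by $\pi_U$, one recognizes exactly the recipe defining $\nu^{[k+1]}$ itself: the relatively independent joining over $\mathcal{I}^{[k]}$ of the distributions of the "lower half" and "upper half" of a $(k+1)$-cube conditioned on the shared middle face. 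Here the key input is that conditioning a $\nu^{[k+1]}$-cube on its $\pi_U$-coordinate (the upper $k$-face) makes the lower $k$-face relatively independent over the invariant $\sigma$-algebra $\mathcal{I}^{[k]}$ — this is precisely the defining property plus the $S_{k+1}$-symmetry of $\nu^{[k+1]}$ (Lemma \ref{lem:muk is invariant under symmetries}), which lets me freely swap the roles of lower/upper faces and of the two halves. I would spell this out by testing against product functions $\prod_v f_v$ and using Equation \eqref{def muk 2} to reduce both sides to the same conditional-expectation expression over $\mathcal{I}_{\mathcal{HK}_{+1}^k(G)}$.

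For property (\ref{enu:projection to gluespace}), I would push $\lambda$ forward by $\pi_k^{[k+1]}\times\pi_k^{[k+1]}$ to the space $C_G^{k+1}(X)\times C_G^{k+1}(X)$ and show the result equals $\mu_{\mathcal{P}^{k+1}(X)}$. First, $\lambda$ is supported on $S$ and $\pi_k$ is a factor map intertwining the $Y^{[k]}$-coordinates with the $X^{[k]}$-coordinates, so the image is supported on pairs $(b_1,b_2)\in C_G^{k+1}(X)^2$ with the upper $k$-face of $b_1$ equal to the lower $k$-face of $b_2$, i.e.\ on the glueable pairs $\mathcal{P}^{k+1}(X)$ (after the harmless reindexing identifying "$c(v1)=c'(v0)$-gluing" with the $\pi_U/\pi_L$ splitting). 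Since $(X,G)$ is a distal $(k+1)$-cube uniquely ergodic system, Proposition \ref{prop:glueing pair uniquely ergodic} tells us $(\mathcal{P}^{k+1}(X),\mathcal{P}^{k+1}(G))$ is uniquely ergodic, so it suffices to check that $(\pi_k^{[k+1]}\times\pi_k^{[k+1]})_*\lambda$ is $\mathcal{P}^{k+1}(G)$-invariant. This follows because $\nu^{[k+1]}$ is $\mathcal{HK}^{k+1}(G)$-invariant (Lemma \ref{HK-invariant}) and the joining $\lambda$ was built canonically from $\nu^{[k+1]}$ over $\nu^{[k]}$; an element of $\mathcal{HK}^{k+1}(G)\times_G\mathcal{HK}^{k+1}(G)$ agreeing on the glued face acts compatibly on both copies and preserves the disintegration structure, hence preserves $\lambda$ and its pushforward.

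The main obstacle I expect is property (\ref{enu:edge map}): making the "edge map" bookkeeping precise. The subtlety is that $\pi_E$ takes the \emph{lower} face of the first cube and the \emph{upper} face of the second, and one must verify that the joint law of these two $k$-cubes — which are at "opposite ends" of the glued $(k+2)$-configuration — is again exactly $\nu^{[k+1]}$ and not merely some other self-joining of $\nu^{[k]}$. This is where the full symmetry group $S_{k+1}$ (or rather $S_{k+2}$ on the glued object) and the tower structure of the Host-Kra measures must be invoked carefully; a clean way is to realize the whole construction inside $\nu^{[k+2]}$ restricted to an appropriate downward-closed sub-configuration and use the extension/projection consistency of the $\nu^{[n]}$, but one has to be careful that the relevant projection is measure-preserving, which again rests on Lemma \ref{lem:muk is invariant under symmetries} and the cube-group invariance. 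Once the identification of the relevant conditional independences is set up correctly, each of the three assertions reduces to a short computation with conditional expectations.
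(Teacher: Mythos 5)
Your proposal is correct and follows essentially the same route as the paper's proof: build $\lambda$ as the relatively independent (conditional product) joining of two copies of $\nu^{[k+1]}$ over the common $Y^{[k]}$-factor via $\pi_U$ and $\pi_L$, get property (2) for free, verify property (1) by testing against product sets and using Equation \eqref{def muk 2} to identify the conditional expectation given the opposite face with the conditional expectation given $\mathcal{I}_{\mathcal{HK}^k_{+1}(G)}$, and deduce property (3) from unique ergodicity of $(\mathcal{P}^{k+1}(X),\mathcal{P}^{k+1}(G))$ (Proposition \ref{prop:glueing pair uniquely ergodic}). One small over-complication: the invocation of $S_{k+1}$-symmetry is unnecessary here, since the defining formula for $\nu^{[k+1]}$ is already symmetric under swapping the lower and upper $k$-faces, and the $\nu^{[k+2]}$-based alternative you sketch is not needed — the direct conditional-expectation computation suffices.
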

\begin{proof}

Similarly to $\pi_{L},\pi_{U}$, we define
$\tilde{\pi}_{L},\tilde{\pi}_{U}:\mathcal{HK}^{k+1}(G)\rightarrow\mathcal{HK}^{k}(G)$. These maps are onto by \cite[Proposition 3.3 and Lemma 4.3]{GGY2018}. By Definition \ref{def: muk definition}, $\nu^{[k+1]}$ is defined for bounded measurable functions $F_{1}, F_{2}$ by
\begin{equation}\label{def of mu k}
\int_{Y^{[k+1]}} F_{1}(c_{1})F_{2}(c_{2})d\nu^{[k+1]}(c_{1},c_{2})=\int_{Y^{[k]}}\mathbb{E}(F_{1}|\mathcal{I}_{G}^{[k]})(c)\mathbb{E}(F_{2}|\mathcal{I}_{G}^{[k]})(c)d\nu^{[k]}(c).
\end{equation}
Notice that for any bounded measurable function $F:Y^{[k]}\rightarrow \mathbb{C},$ $F(\pi_U(c_1,c_2))=F(c_2)$, therefore
\[
\int_{Y^{[k+1]}} F(c_2)d\big ((\pi_{U})_{*}\nu^{[k+1]}\big )(c_2)=\int_{Y^{[k]}} F(c_2)d\nu^{[k+1]}(c_1,c_2)=\int_{Y^{[k]}} \mathbb{E}(F|\mathcal{I}_{G}^{[k]})(c)d\nu^{[k]}(c)=\int_{Y^{[k]}} F(c)d\nu^{[k]}(c).
\]
Thus one has that $(\pi_{U})_{*}\nu^{[k+1]}=\nu^{[k]}$. Therefore $(\pi_{U},\tilde{\pi}_{U}),(\pi_{L},\tilde{\pi}_{L})$
are factor maps. Consider the \emph{conditional product joining}\footnote{i.e.\  for any two functions $H_1,H_2:Y^{[k+1]}\rightarrow \mathbb{C}$, $\int_{Y^{[k+1]}} H_1(c_1)H_2(c_2)\lambda(c_1,c_2)=\int_{Y^{[k]}} \mathbb{E}(H_1|\pi_U(Y^{[k+1]}))(c)\mathbb{E}(H_2|\pi_L(Y^{[k+1]}))(c)d\nu^{[k]}(c)$. See \cite[Definition 9.1]{F77}.} $\lambda$
of two copies of $(Y^{[k+1]},\nu^{[k+1]},\mathcal{HK}^{k+1}(G))$
over $(Y^{[k]},\nu^{[k]},\mathcal{HK}^{k}(G))$ through the
maps $(\pi_{U},\tilde{\pi}_{U})$ and $(\pi_{L},\tilde{\pi}_{L})$.
Thus $\lambda(S)=1$,
which is Property \ref{enu:support}.

As $\mathcal{P}^{k+1}(G)=\{(\mathbf{g}_{1},\mathbf{g}_{2})\in\mathcal{HK}^{k+1}(G)\times\mathcal{HK}^{k+1}(G)|\tilde{\pi}_{U}(\mathbf{g}_{1})=\tilde{\pi}_{L}(\mathbf{g}_{2})\}$,
it holds that $\lambda$ is $\mathcal{P}^{k+1}(G)$-invariant. By Lemma \ref{fullsupport}, $\mu^{[k+1]}(C_G^{k+1}(X))=1$. As
$(\pi_{k}^{[k+1]})_{*}\nu^{[k+1]}=\mu^{[k+1]}$, $$\nu^{[k+1]}(C_G^{k+1}(X)\times A^{[k+1]})=\mu^{[k+1]}(C_G^{k+1}(X))=1.$$  As  $\lambda$ is a joining,  $\lambda((C_G^{k+1}(X)\times A^{[k+1]})^2)=1$.
 Property (\ref{enu:projection to gluespace}) holds as $\mu_{\mathcal{P}^{k+1}(X)}$
is the unique ergodic measure of $(\mathcal{P}^{k+1}(X) ,\mathcal{P}^{k+1}(G))$ by Proposition \ref{prop:glueing pair uniquely ergodic}, since $(X,G)$ is $(k+2)$-cube uniquely ergodic and $(X,G)$ is distal as noted in Subsection \ref{subsec:overview_proof}.

For Property (\ref{enu:edge map}) , let $\mathcal{U}=\{Y^{[k]}\times D'|\ D'\in\mathcal{B}(Y^{[k]})\}=\pi_{U}^{-1}(\mathcal{B}(Y^{[k]}))$ and 
$\mathcal{{L}}=\pi_{L}^{-1}(\mathcal{B}(Y^{[k]}))$ be
 sub $\sigma$-algebras of $\mathcal{B}(Y^{[k+1]})$. Fix $D\in\mathcal{B}(Y^{[k]})$. Let $g=\mathbb{E}(1_{D\times Y^{[k]}}|\mathcal{U})$. By Section \ref{Sec: Con exp}, for any $Y^{[k]}\times D'\in\mathcal{U}$, $1_{Y^{[k]}\times D'}(c_{1},c_{2})=1_{D'}(c_{2})$. Thus as $g$ is $\mathcal{U}$-measurable,
there exists a function $g':Y^{[k]}\rightarrow\mathbb{C}$ such that
for $\nu^{[k+1]}$-a.e. $(c_{1},c_{2})$, $g(c_{1},c_{2})=g'(c_{2})$. Notice that for any $F\in\mathcal{B}(Y^{[k]})$,
$$
\nu^{[k+1]}(D\times F)
=\int_{Y^{[k+1]}}1_{D\times Y^{[k]}}1_{Y^{[k]}\times F}d\nu^{[k+1]}=\int_{Y^{[k+1]}}\mathbb{E}(1_{D\times Y^{[k]}}1_{Y^{[k]}\times F}|\mathcal{U})d\nu^{[k+1]}.
$$
As $Y^{[k]}\times F\in\mathcal{U}$, $1_{Y^{[k]}\times F}(c_{1},c_{2})=1_{F}(c_{2})$ and $(\pi_U)_{*}\nu^{[k+1]}=\nu^{[k]}$, by Equation (\ref{con exp commute}),
\[
\begin{array}{l}
\nu^{[k+1]}(D\times F)
=\int_{Y^{[k+1]}}g1_{Y^{[k]}\times F}d\nu^{[k+1]}\\
=\int_{Y^{[k+1]}}(g'1_{F})(c_{2})d\nu^{[k+1]}(c_{1},c_{2})
=\int g'1_{F}d\nu^{[k]}.
\end{array}
\]
By the definition of $\nu^{[k+1]}$ and Equation \eqref{conditional expection} and \eqref{con exp commute},
\[
\begin{array}{l}
\nu^{[k+1]}(D\times F)
=\int_{Y^{[k]}}\mathbb{E}(1_{D}|\mathcal{I}_{G}^{[k]})\mathbb{E}(1_{F}|\mathcal{I}_{G}^{[k]})d\nu^{[k]}\\
=\int_{Y^{[k]}}\mathbb{E}\left(\mathbb{E}(1_{D}|\mathcal{I}_{G}^{[k]})1_{F}|\mathcal{I}_{G}^{[k]}\right)d\nu^{[k]}
=\int_{Y^{[k]}}\mathbb{E}(1_{D}|\mathcal{I}_{G}^{[k]})1_{F}d\nu^{[k]}.
\end{array}
\]
We conclude
\[
\int_{Y^{[k]}} g'1_{F}d\nu^{[k]}=\int_{Y^{[k]}}\mathbb{E}(1_{D}|\mathcal{I}_{G}^{[k]})1_{F}d\nu^{[k]}.
\]
As $F$ is arbitrary,
this implies
\[
g'(c)=\mathbb{E}(1_{D}|\mathcal{I}_{G}^{[k]})(c)\text{ for }\nu^{[k]}\text{-a.e. }c.
\]
By a similar argument to the one applied to $g'$ above, one may find a measurable function $f':Y^{[k]}\rightarrow\mathbb{C}$ so that $f'\circ \pi_L =\mathbb{E}(1_{Y^{[k]}\times F}|\mathcal{L})$. It holds that $f'(c)=\mathbb{E}(1_{F}|\mathcal{I}_{G}^{[k]})(c)$ for $\nu^{[k]}$-a.e. $c$.
Now we consider
\[
\begin{array}{l}
\pi_{E*}\lambda(D\times F)
=\int1_{\pi_{E}^{-1}(D\times F)}d\lambda
=\int1_{D\times Y^{[k]}}(c_{1})1_{Y^{[k]}\times F}(c_{2})d\lambda(c_{1},c_{2}).
\end{array}
\]
Recall that $\lambda$ is a conditional product, therefore
\[
\begin{array}{l}
\int1_{D\times Y^{[k]}}1_{Y^{[k]}\times F}d\lambda
=\int g'(c)f'(c)d\nu^{[k]}
=\int\mathbb{E}(1_{D}|\mathcal{I}_{G}^{[k]})(c)\mathbb{E}(1_{F}|\mathcal{I}_{G}^{[k]})(c)d\nu^{[k]}(c)\\
=\int1_{D}(c_{1})1_{F}(c_{2})\nu^{[k+1]}(c_{1},c_{2})
=\nu^{[k+1]}(D\times F),
\end{array}
\]
which gives $\pi_{E*}\lambda(D\times F)=\nu^{[k+1]}(D\times F)$.
\end{proof}

\begin{thm}
\label{thm: measure compatible weak cocycle} The map $\rho:C_G^{k+1}(X)\to A$
is a nilcycle.
\end{thm}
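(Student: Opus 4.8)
The plan is to verify the three defining properties of a nilcycle (Definition \ref{def:nilcycle}) for the map $\rho(c)=\theta_{k+1}(\mathbf{a}_c)$ constructed above, namely cube invariance, glueing, and measure decomposition, using the auxiliary structure just developed.

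\textit{Measure decomposition (Property 3).} This is essentially immediate from the construction. By Lemma \ref{ergodic L} and Lemma \ref{lem:subgroup invariance}, for $\mu^{[k+1]}$-a.e.\ $c$ the disintegration satisfies $\nu^{[k+1](c)}=\delta_c\times(m_{Haar(\tilde L)}+\mathbf{a}_c)$, where $\tilde L=\{\mathbf{u}\in A^{[k+1]}:\theta_{k+1}(\mathbf{u})=0\}$ is a closed subgroup of $A^{[k+1]}$ on which $\theta_{k+1}$ vanishes. Taking $L=\tilde L$ and the Borel map $c\mapsto\mathbf{a}_c$ (Borel by Fact \ref{prop:-is-a Borel} and the measurability of disintegration), we have $\rho(c)=\theta_{k+1}(\mathbf{a}_c)$ and $\{\mathbf{a}_c+m_{Haar(L)}\}_{c}$ is a disintegration of $\nu^{[k+1]}$, which is precisely Property 3. (Recall that by Remark \ref{muk=muC for countable group} and Theorem \ref{fullsupport}, $\mu^{[k+1]}$ is carried by $C_G^{k+1}(X)$, so the domain is as required.)

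\textit{Cube invariance (Property 1).} For a discrete cube isomorphism $\sigma$, the induced map on $Y^{[k+1]}$ permutes coordinates (with possible "reflections" in the digits), and by Lemma \ref{lem:muk is invariant under symmetries} $\nu^{[k+1]}$ is $\mathcal{S}_{k+1}$-invariant; hence $\sigma_*\nu^{[k+1](c)}=\nu^{[k+1](\sigma c)}$ a.e., which forces $\mathbf{a}_{\sigma c}=\sigma(\mathbf{a}_c)$ modulo $L$ (here I use that $\sigma(L)=L$, since $\theta_{k+1}\circ\sigma=sgn(\sigma)\theta_{k+1}$). Applying $\theta_{k+1}$ and the identity $\theta_{k+1}(\sigma(\mathbf{a}))=sgn(\sigma)\theta_{k+1}(\mathbf{a})$ gives $\rho(\sigma c)=sgn(\sigma)\rho(c)$ for a.e.\ $c$.

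\textit{Glueing (Property 2) --- the main obstacle.} This is where Lemma \ref{lem:key joining} is deployed, and it is the technical heart of the argument. Using the self-joining $\lambda$ of $(Y^{[k+1]},\nu^{[k+1]})$ supplied by that lemma, one works on pairs $(c_1,c_2)$ of glueable cubes in $C_G^{k+1}(X)$: by Property (\ref{enu:projection to gluespace}) the $X$-projection of $\lambda$ is $\mu_{\mathcal{P}^{k+1}(X)}$, by Property (\ref{enu:support}) the upper face of $c_1$ agrees with the lower face of $c_2$ $\lambda$-a.e., and by Property (\ref{enu:edge map}) the "edge map" pushes $\lambda$ to $\nu^{[k+1]}$. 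Disintegrating $\lambda$ over the $X$-coordinates and using that the $A$-fibres are Haar-affine over the subgroups $L$ (from Property 3), one computes that for a $A^{[k+1]}$-representative $\mathbf{a}_{c_1}$ of the fibre over $c_1$ and $\mathbf{a}_{c_2}$ over $c_2$, a representative of the fibre of $\nu^{[k+1]}$ over the glued cube $c_1\|c_2$ is obtained by stacking the lower half of $\mathbf{a}_{c_1}$ and the upper half of $\mathbf{a}_{c_2}$ (matching along the common face up to an element of $L$); applying $\theta_{k+1}$ and noting the telescoping of the common face gives $\rho(c_1\|c_2)=\theta_{k+1}(\mathbf{a}_{c_1})+\theta_{k+1}(\mathbf{a}_{c_2})=\rho(b)+\rho(c)$ for $\mu_{\mathcal{P}^{k+1}(X)}$-a.e.\ $(b,c)$. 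The delicate point to get right is the bookkeeping between the measure-theoretic identity $\pi_{E*}\lambda=\nu^{[k+1]}$ and the additive structure on $A^{[k+1]}$: one must check that the $L$-coset ambiguity in each $\mathbf{a}_{c_i}$ does not obstruct the additivity, which works precisely because $\theta_{k+1}$ kills $L$ and is additive under the stacking operation. Once this is in place, all three properties hold and $\rho$ is a nilcycle.
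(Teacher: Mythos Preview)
Your proof is correct and follows the same strategy as the paper: Property~3 is immediate from the construction, Property~1 comes from the $\mathcal{S}_{k+1}$-invariance of $\nu^{[k+1]}$ (Lemma~\ref{lem:muk is invariant under symmetries}) together with $\theta_{k+1}\circ\sigma=sgn(\sigma)\theta_{k+1}$, and Property~2 is obtained from the joining $\lambda$ of Lemma~\ref{lem:key joining} and a telescoping computation.

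One expository point worth noting: in the glueing step the paper works not with the fixed Borel representatives $\mathbf{a}_{c_i}$ but with a $\lambda$-generic point $((c_1,\mathbf a_1),(c_2,\mathbf a_2))$ lying in the full-measure set $D'=\tilde\pi_L^{-1}(D)\cap\tilde\pi_U^{-1}(D)\cap\pi_E^{-1}(D)\cap S$, where $D=\{(c,\mathbf a):\rho(c)=\theta_{k+1}(\mathbf a)\}$. Because such a point lies in $S$, the upper face of $\mathbf a_1$ equals the lower face of $\mathbf a_2$ \emph{exactly}, so the telescoping $\theta_{k+1}(\mathbf a_1)+\theta_{k+1}(\mathbf a_2)=\theta_{k+1}(\mathbf a_1\|\mathbf a_2)$ is literal, with no $L$-coset ambiguity to manage. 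Your phrasing ``matching along the common face up to an element of $L$'' is slightly off (an $L$-ambiguity lives in $A^{[k+1]}$, not in a single $k$-face), and working with generic $(c,\mathbf a)\in D$ rather than the selected $\mathbf a_c$ sidesteps this cleanly. This is a cosmetic difference, not a gap: once you pass to generic points of $\lambda$ your argument goes through verbatim.
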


\begin{proof}

Let $$D=\{(c,\mathbf{a})\in X^{[k+1]}\times A^{[k+1]}:\rho(c)=\theta_{k+1}(\mathbf{a})=\sum_{v\in\{0,1\}^{k+1}}(-1)^{|v|}\mathbf{a}_{v}\}.$$
We claim that $\nu^{[k+1]}(D)=1$. Indeed, we notice that $\nu^{[k+1]}(D)=\int_{c\in X^{[k+1]}}\nu^{[k+1](c)}(D)d\mu^{[k+1]}$.
Since $\nu^{[k+1](c)}(\tilde{L}+\mathbf{a}_{c})=1$ and $(c,\tilde{L}+\mathbf{a}_{c})\subset D$
for $\mu^{[k+1]}$ a.e. $c\in X^{[k+1]}$, $\nu^{[k+1]}(D)=1.$

Let $\sigma\colon\{0,1\}^{k+1}\to\{0,1\}^{k+1}$ be a discrete cube
isomorphism. The map $\sigma$ acts on $X^{[k+1]}$ and $A^{[k+1]}$ by $\sigma(c)=(c_{\sigma(v)})_{v\in\{0,1\}^{k+1}}$,
$\sigma(\mathbf{a})=(\mathbf{a}_{\sigma(v)})_{v\in\{0,1\}^{k+1}}$
and $\sigma(c,\mathbf{a})=(\sigma(c),\sigma(\mathbf{a}))$. First we show that $\rho\circ\sigma(c)=\sgn(\sigma)\cdot\rho(c)$ $\mu_{C_G^{k+1}(X)}$-a.s. Let $D_{\sigma}=D\cap\sigma^{-1}(D)$. From Lemma \ref{lem:muk is invariant under symmetries}
$D_{\sigma}$ has full $\nu^{[k+1]}$ measure. For $(c,\mathbf{a})\in D_{\sigma}$
one has $\theta_{k+1}(\mathbf{a})=\rho(c)$. Thus
\[
\rho(\sigma(c))\stackrel{(\sigma(c),\sigma(a))\in D}{=}\theta_{k+1}(\sigma(\mathbf{a}))=\sgn(\sigma)\theta_{k+1}(\mathbf{a})=\sgn(\sigma)\cdot\rho(c).
\]
Define $F\subset C_G^{k+1}(X)$ by $F_{\sigma}=\{c\in
C_G^{k+1}(X)|\,\rho\circ\sigma(c)=\sgn(\sigma)\cdot\rho(c)\}$.
By Lemma \ref{muk=mu_C}, $\mu_{C_G^{k+1}(X)}=\mu^{[k+1]}=(\pi_{k}^{[k+1]})_{*}\nu^{[k+1]}$. 
By the previous calculation, $D_{\sigma}\subset\pi_{k}^{-1}(F_{\sigma})$
therefore $\mu_{C_G^{k+1}(X)}(F_{\sigma})=1$ as desired. Let $\mathcal{S}$
be the set of discrete cube isomorphisms of $\{0,1\}^{k+1}$. Let
$F=\bigcap_{\sigma\in\mathcal{S}}F_{\sigma}$. As $\mathcal{S}$ is
finite, for any $c\in F$ and discrete cube isomorphism $\sigma$
one has
\begin{equation}
\rho(\sigma(c))=\sgn(\sigma)\cdot\rho(c)\text{ and }\mu_{C_G^{k+1}(X)}(F)=1.\label{eq:glueing set full measure}
\end{equation}

We now show that $\rho$ satisfies the glueing property, i.e.\  for a.e.
$\mu_{\mathcal{P}^{k+1}(X)}$ $(c_{1},c_{2})$, one has $\rho(c_{1}\bigparallel c_{2})=\rho(c_{1})+\rho(c_{2})$. We will be using freely notation from Lemma \ref{lem:key joining} and define $\tilde{\pi}_L,\tilde{\pi}_U:(Y^{k+1}\times Y^{k+1},\lambda)\rightarrow (Y^{k+1},\mu^{[k+1]})$ by $\tilde{\pi}_L(d_1,d_2)=d_1,\tilde{\pi}_U(d_1,d_2)=d_2$.

Let $D'=\tilde{\pi}_{L}^{-1}(D)\cap\tilde{\pi}_{U}^{-1}(D)\cap\pi_{E}^{-1}(D)\cap S$.
By the properties of $\lambda$, one has $\lambda(D')=1$. Notice that for any $((c_{1},\mathbf{a}_{1}),(c_{2},\mathbf{a}_{2}))\in S$,
 $$ \pi_{E}((c_{1},\mathbf{a}_{1}),(c_{2},\mathbf{a}_{2}))=(c_{1}\bigparallel c_{2},\mathbf{a}_{1}\bigparallel\mathbf{a}_{2}).$$
As $(\pi_E)_*\lambda=\mu^{[k+1]}$, $\rho(c_{1}\bigparallel c_{2})=\theta_{k+1}(\mathbf{a}_{1}\bigparallel\mathbf{a}_{2})$
for $\lambda$-a.e. $((c_{1},\mathbf{a}_{1}),(c_{2},\mathbf{a}_{2})).$

Since the factor map $\pi$ sends glueable pairs to glueable pairs,
$(c_{1},c_{2})\in\mathcal{P}^{k+1}(X)$ for $((c_{1},\mathbf{a}_{1}),(c_{2},\mathbf{a}_{2}))\in D'$.
For any $((c_{1},\mathbf{a}_{1}),(c_{2},\mathbf{a}_{2}))\in\pi_{L}^{-1}(D)\cap\pi_{U}^{-1}(D)$,
$\rho(c_{1})+\rho(c_{2})=\theta_{k+1}(\mathbf{a}_{1})+\theta_{k+1}(\mathbf{a}_{2})$;
For any $((c_{1},\mathbf{a}_{1}),(c_{2},\mathbf{a}_{2}))\in\pi_{E}^{-1}(D)\cap S$,
$\rho(c_{1}\bigparallel c_{2})=\theta_{k+1}(\mathbf{a}_{1}\bigparallel\mathbf{a}_{2})$.
For $((c_{1},\mathbf{a}_{1}),(c_{2},\mathbf{a}_{2}))\in S$, one has
$\pi_{U}((c_{1},\mathbf{a}_{1}))=\pi_{L}((c_{2},\mathbf{a}_{2}))$. Therefore 
$$\sum_{v\in\{0,1\}^{k}}(-1)^{|v|}(\mathbf{a}_{1}){}_{(v,1)}=\sum_{v\in\{0,1\}^{k}}(-1)^{|v|}(\mathbf{a}_{2}){}_{(v,0)}.$$
Notice that for $i=1,2$,
$$\theta_{k+1}(\mathbf{a}_{i})=\sum_{v\in\{0,1\}^{k}}(-1)^{|v|}(\mathbf{a}_{i}){}_{(v,0)}+\sum_{v\in\{0,1\}^{k}}(-1)^{|v|+1}(\mathbf{a}_{i}){}_{(v,1)}.$$
Therefore one has that
\[
\begin{array}{l}
\theta_{k+1}(\mathbf{a}_{1})+\theta_{k+1}(\mathbf{a}_{2})\\
=\sum_{v\in\{0,1\}^{k}}(-1)^{|v|}(\mathbf{a}_{1}){}_{(v,0)}+\sum_{v\in\{0,1\}^{k}}(-1)^{|v|+1}(\mathbf{a}_{2}){}_{(v,1)}\\
=\theta_{k+1}(\mathbf{a}_{1}\bigparallel\mathbf{a}_{2}).
\end{array}
\]
Thus for arbitrary $((c_{1},\mathbf{a}_{1}),(c_{2},\mathbf{a}_{2}))\in D',$
$\rho(c_{1}\bigparallel c_{2})=\rho(c_{1})+\rho(c_{2})$. From Lemma \ref{lem:key joining}, $(\pi_{k}^{[k+1]}\times\pi_{k}^{[k+1]})_{*}\lambda=\mu_{\mathcal{P}^{k+1}(X)}$. Let $U=\{(c_{1}, c_{2})\in \mathcal{P}^{k+1}(X): \rho(c_{1}\bigparallel c_{2})=\rho(c_{1})+\rho(c_{2})\}$. One has that $U$ is $\mu_{\mathcal{P}^{k+1}(X)}$-measurable and $D'\subset (\pi_{k}^{[k+1]}\times\pi_{k}^{[k+1]})^{-1}U$.  Therefore $\mu_{\mathcal{P}^{k+1}(X)}(U)=1$
 and for $\mu_{\mathcal{P}^{k+1}(X)}$-a.e. $(c_1,c_2)$, $\rho(c_{1}\bigparallel c_{2})=\rho(c_{1})+\rho(c_{2})$.
\end{proof}

\specialsectioning

\section{Glossary}

\subsection*{Spaces and sets.}
\begin{lyxlist}{00.00.0000}
\item [{$(X,G)$}] Topological dynamical system.
\item [$G$] Finitely generated abelian group with identity element $\id$.
\item [$A$] Compact metrizable abelian group with identity element $0_A$.

\item [{$Z_k(X)$}] Definition \ref{def:corner}, Page \pageref{def:corner}.
\item [{$[k]=\{0,1\}^{k}$.}] ~
\item [{$X^{[k]}=X^{\{0,1\}^{k}}.$}] ~
\item [$\mathcal{B}^{[k]}$] The Borel $\sigma$-algebra of $X^{[k]}$.
\item [{$[k]^{*}=\{0,1\}_{*}^{k}=\{0,1\}^{k}\setminus\{\vec{0}\}$.}] ~
\item [{$X^{[k]*}=X^{\{0,1\}^{k}\setminus\{\vec{0}\}}$.}] ~
\item [{$\ensuremath{\mathcal{HK}^{n}(G)}$}] The Host-Kra
cube group, Section \ref{subsec:Definition-of-the mu k}, Page \pageref{subsec:Definition-of-the mu k}.
\item [{$\mathcal{F}^{n}(G)$}] The face
group, Section \ref{subsec:Definition-of-the mu k}, Page \pageref{subsec:Definition-of-the mu k}.
\item [$C_G^{k}(X)=\overline{\{\mathbf{g}\mathbf{x}:\mathbf{x}=(x,\ldots,x)\in X^{[k]},\mathbf{g}\in\mathcal{HK}^{k}(G)\}},$] Definition \ref{def: k- equivalent}, Page \pageref{def: k- equivalent}.
\item [{$\text{Hom}(V,X)$}] The set of maps $\alpha:V\rightarrow X$
such that for all $v\in V$, $\alpha|_{\{v':\ v'\subset v\}}$ is
a cube of $X$. Section \ref{subsec:Cube-space,-nilspace},
Page \pageref{subsec:Cube-space,-nilspace}.
\item [{$(X,C^{\bullet}(X))$}] Cubespace, Section \ref{subsec:Cube-space,-nilspace},
Page \pageref{subsec:Cube-space,-nilspace}.
\item [{$C_G^{k+1}(X)\times_{X}C_G^{k+1}(X)=\{(c_{1},c_{2})\in C_G^{k+1}(X)\times C_G^{k+1}(X):(c_{1})_{\vec{0}}=(c_{2})_{\vec{0}}\}$.}] ~
\item [{$C_G^{k+1}(X)\times_{X}^vC_G^{k+1}(X)=\{(c_{1},c_{2})\in C_G^{k+1}(X)\times C_G^{k+1}(X):(c_{1})_{v}=(c_{2})_{\vec{0}}\}$.}] ~
\item [{$X'$}] Subsection \ref{subsec:Definitions}, Page
\pageref{subsec:Definitions}.
\item [{$\mathcal{P}^{k}(X),\mathcal{P}^{k}(G)$}] Space of glueable pairs, Definition
\ref{lem:RG is conditional product}, Page \pageref{lem:RG is conditional product}.
\item [{$T^{n}(X)$}] Tricube, Section \ref{subsec:The-tri-cube(3-cube)},
Page \pageref{subsec:The-tri-cube(3-cube)}.
\item [{$T^n(G)$}] Tricube group, Section \ref{subsec:The-tri-cube(3-cube)},
Page \pageref{subsec:The-tri-cube(3-cube)}.
\item [{$L$}] Definition \ref{def:nilcycle}, Page
\pageref{def:nilcycle}.
\item [{$\tilde{L}=\{\mathbf{a}\in A^{[k+1]}:\sum_{v\in\{0,1\}^{k+1}}(-1)^{|v|}\mathbf{a}_{v}=0\}$.}]~
\item [{$\mathcal{I}_{G}^{[k]}$}] The $\triangle_{G}^{[k]}$-invariant
$\sigma$-algebra.
\item [{$\mathcal{J}_{*}^{[k]}(X)$}] The $\sigma$-algebra of sets invariant
under $\mathcal{F}^{k}(G)$ on $X_{*}^{[k]}$.
\item [{$\mathcal{L}(X\xrightarrow{\pi}Y,A)$}] Function bundle, Section
\ref{subsec:Function-bundles}, Page \pageref{subsec:Function-bundles}.
\item [{$\mathcal{S}_{k}$}] The group of $k$-discrete cube isomorphisms.
\item [{$M'=\{-\rho_{x}+a:\ x\in X',a\in A\}$}]  $\rho_{x}=\rho|_{\mathfrak{p_{0}}^{-1}(x)}$, Subsection \ref{subsec:Definitions}, Page
\pageref{subsec:Definitions}.
\item [{$M=\overline{M'}$.}] Subsection \ref{subsec:Definitions}, Page
\pageref{subsec:Definitions}.
\item [{$\mathcal{M}(X),\,  \mathcal{M}_G(X)$}] The set of ($G$-invariant) Borel probability measures of
$X$, Section \ref{subsec:Dynamical-background.}, Page
\pageref{subsec:Dynamical-background.}.
\item [{$\nrp_G^{[k]}(X),\nrp_G^{[k]}(M\rightarrow X)$}] Definition \ref{NRP(M-G)},
Page \pageref{NRP(M-G)}.
\item [{$B^{*}=\{\mathbf{x}^{*}:\ (x_{0},\mathbf{x}^{*})\in B\text{ for some }x_{0}\in X\}$,}] where $B\subset X^{[k]}$.
\item [$Q^{k+1}_U(M)$] Definition \ref{definition of Q(M)},
Page \pageref{definition of Q(M)}.
\end{lyxlist}

\subsection*{Symbols.}
\begin{lyxlist}{00.00.0000}
\item [{$\overline{\alpha}_{j}=\{v\in\{0,1\}^{k}:v_j=1\}$}] The $j$-th
upper face of $\{0,1\}^{k}$.
\item [{$\underline{\alpha}_{j}=\{v\in\{0,1\}^{k}:v_j=0\}$}] The $j$-th
lower face of $\{0,1\}^{k}$.
\item [{$x^{[k]}=(x,\ldots,x)\in X^{[k]}$}] The diagonal
element in $X^{[k]}$. Similarly one defines ${g}^{[k]}$.
\item [{$\triangle(X)=\triangle=\{(x,x)|\,x\in X\}$.}]~
\item [{$\triangle_{G}^{[k]}=\{{g}^{[k]}:g\in G\}$.}]~
\item [{$t^{\overline{\alpha}_{j}}(v)=\begin{cases}
t & v\in\overline{\alpha}_{j}\\
\id& v\notin\overline{\alpha}_{j}
\end{cases}$.}] ~
\item [{$v_j\text{ for }v\in\{0,1\}^{k}$}] The $j$-th coordinate of
$v$.
\item [{$\mathbf{x}_{v}\text{ for \ensuremath{\mathbf{x}}}\in C^{k}(X)$}] The
$v$-coordinate of $\mathbf{x}\in C^{k}(X)$. Similarly, $\mathbf{t}_{v}$
is the $v$-coordinate of $\mathbf{t}\in T^{k}(X)$.
\item [{$|v|\text{ for }v\in\{0,1\}^{k}$}] $|v|=\#\{j:v_j=1\}$.
\item [{$\bigparallel$}] Glueing; Definition \ref{lem:RG is conditional product},
Page \pageref{lem:RG is conditional product}.
\item [{$[f]$}] The equivalence class in function bundle, Definition \ref{subsec:Function-bundles},
Page \pageref{subsec:Function-bundles}.
\item [{$d_{A}$}] A compatible metric on $A$.
\item [{$\sim_k$}] Definition \ref{def: k- equivalent}, Page
\pageref{def: k- equivalent}.
\item [{$\vec{1}=(1,\ldots,1).$}]

\end{lyxlist}

\subsection*{Maps.}
\begin{lyxlist}{00.00.0000}
\item [{$\sgn(\sigma)$}] Section \ref{subsec:Cube-space,-nilspace}, Page
\pageref{subsec:Cube-space,-nilspace}.
\item [{$\rho\colon C_G^{k+1}(X)\to A$}] Nilcycle, Definition \ref{def:nilcycle}, Page
\pageref{def:nilcycle}.
\item [{$\rho_{x}:\mathfrak{p}_0^{-1}(x)\rightarrow A$}] 
Subsection \ref{subsec:Definitions}, Page \pageref{subsec:Definitions}. Abbreviated as $\rho=\rho_x$ in Lemma \ref{fact:well defined and continuous}, Page \pageref{fact:well defined and continuous}.
\item [{$\beta$}] Cocycle, Section \ref{subsec:Dynamical-background.}, Page \pageref{subsec:Dynamical-background.} and Subsection \ref{sec: continuous}, Page \pageref{sec: continuous}.
\item [{$\pi_{k}:Y\rightarrow X$}] Section \ref{subsec:Construction-of-a nilcycle from the HK extension}, Page \pageref{subsec:Construction-of-a nilcycle from the HK extension}.
\item [{$f^{[k]}:Y^{[k]}\rightarrow X^{[k]}$}] Product map for $f:Y\rightarrow X$.
\item [{$\mathfrak{p}^{*}$}] The projection from $X^{[k]}$ to $X^{[k]*}$.
\item [{$\theta_{n},\theta:A^{[n]}\rightarrow A\text{ for abelian group }A$}] $\theta_{n}(\mathbf{a})=\sum_{v\in\{0,1\}^{n}}(-1)^{|v|}\mathbf{a}_{v}$, Definition \ref{def:nilcycle}, Page \pageref{def:nilcycle}.
Sometimes we use $\theta (\mathbf{a})$ if there no confusion arises .
\item [{$\mathfrak{p}_{0},\mathfrak{p}_{0,n}:C_G^{n}(X)\rightarrow X:\ \mathbf{x}\rightarrow\mathbf{x}_{\vec{0}}$}] The
projection from $C_G^{n}(X)$ to the first coordinate. Subsection \ref{subsec:Definitions}, Page
\pageref{subsec:Definitions}. This projection
also gives a natural map:
\item [{$\hat{\mathfrak{p}}_{0},\hat{\mathfrak{p}}_{0,n}:\mathcal{L}(C_G^{n}(X)\xrightarrow{\mathfrak{p}_{0,n}}X,A)\rightarrow X$},] Section
\ref{subsec:Function-bundles}, Page \pageref{subsec:Function-bundles}.
\item [{$\mathfrak{p}_{v},\mathfrak{p}_{v,n}:C_G^{n}(X)\rightarrow X$}] $\mathfrak{p}_{v,n}(c)=c_{v}$. Subsection \ref{subsec:Definitions}, Page
\pageref{subsec:Definitions}.
\item[$\hat{\pi}:\mathcal{L}(X\xrightarrow{\pi}Y,A)\rightarrow Y$] Definition \ref{def:Let--be function bundle}, Page \pageref{def:Let--be function bundle}.
\item [{$\pi':C_G^{k+1}(X)\times_{X}C_G^{k+1}(X)\rightarrow X$}] $\pi'((c_{1},c_{2}))=\mathfrak{p}_{0}(c_{1})=\mathfrak{p}_{0}(c_{2})$.
\item [{$\tilde{\pi}:T^{k+1}(X)\rightarrow C_G^{k+1}(X)\times_{X}C_G^{k+1}(X)$}] $\tilde{\pi}(\mathbf{t})=(\omega(\mathbf{t}),\psi_{\vec{0}}(\mathbf{t}))$.
\item [$\tilde{\pi}_v,\tilde{\phi}_v:\mathbf{t}\rightarrow(\omega(\mathbf{t}),\psi_{v}(\mathbf{t}))$] Definition \ref{def:definition of the product space}, Page \pageref{def:definition of the product space}.
\item [{$\mathfrak{q}_{v}:C_G^{k+1}(M)\rightarrow M$}] The projection from
$C_G^{k+1}(M)$ to the $v$ coordinate for $v\in\{0,1\}^{k+1}$.
\item [{$\pi_{L},\pi_{U}:X^{\{0,1\}^{k+1}}\rightarrow X^{\{0,1\}^{k}}$}] $\pi_L(c_1,c_2)=c_1$, $\pi_U(c_1,c_2)=c_2$, Definition
\ref{lem:RG is conditional product}, Page \pageref{lem:RG is conditional product}.
\item [$\pi_{L,v}, \phi_{L,v}$] Definition \ref{def:definition of the product space}, Page \pageref{def:definition of the product space}. 
\item [{$\Psi_{v},\Omega$}] Section \ref{subsec:The-tri-cube(3-cube)},
Page \pageref{subsec:The-tri-cube(3-cube)}.
\item [{$\psi_{v},\omega$}] Section \ref{subsec:The-tri-cube(3-cube)},
Page \pageref{subsec:The-tri-cube(3-cube)}.
\item [{$\pi_{T}:T^n(X)\rightarrow X$}] $\pi_{T}(\mathbf{t})=\mathbf{t}_{\vec{1}}$.
\item [{$\phi_{F_{1},F_{2}}:f\rightarrow\int_{\pi^{-1}(\hat{\pi}(f))}F_{1}(f(v))F_{2}(v)d\mu_{\pi}^{\hat{\pi}(f)}(v)$}] Section
\ref{subsec:Function-bundles}, Page \pageref{subsec:Function-bundles}.
\item [$D,i,g,g'$] Lemma \ref{lem:The-map-continuous}, Page \pageref{lem:The-map-continuous}.
\item [$p_1,p_2$] Lemma \ref{ergodic L}, Page \pageref{ergodic L}.
\item [{$e(x)=e^{2\pi i x}.$}] 
\item [{$e([c,d])=\{e(x):x\in [c,d]\}.$}]
\end{lyxlist}

\subsection*{Measures. }
\begin{lyxlist}{00.00.0000}
\item [{$\delta_{z}$}] The delta measure of $z\in Z$ for some space
$Z$.
\item [{$\mu$}] The $G$-unique ergodic measure of $X$.
\item [{$\nu=\mu\times m_{\haar(A)}$}] Introduction, Page \pageref{thm:main theorem}.
\item [{$m_{\haar(A)}$}] The Haar measure of a (compact) group $A$. If $H\subset A$ is a closed subgroup and $a\in A$, then we define the measure $m_{\haar(H)}+a$ by $(m_{\haar(H)}+a)(B)=m_{\haar(H)}((B+a)\cap H)$ for $B\subset A$.
\item [{$\mu_{k}$}] The measure on $Z_{k}$.
\item [{$\mu^{[k]}$}] The $k$-th Host-Kra cube
measure, Page \pageref{subsec:Definition-of-the mu k}.
\item [{$\mu_{C_G^{k}(X)}$}] The unique ergodic
measure of $(C_G^{k}(X),\mathcal{HK}^{k}(G))$, Definition \ref{k-cube uniquely ergodic},
Page \pageref{k-cube uniquely ergodic}.
\item [{$\mu_{\mathcal{P}^{k}(X)}$}] The unique
ergodic measure of $(\mathcal{P}^{k}(X),\mathcal{P}^{k}(G))$, Definition \ref{def:meaure on glueing pair},
Page \pageref{def:meaure on glueing pair}.
\item [{$\{\mu_{C_G^{k}(X)}^{x}\}_{x\in X}$}] The measure disintegration
of $\mu_{C_G^{k}(X)}$ w.r.t.\  $\mathfrak{p}_{0}$ (Definition \ref{CSM for p0}, Page \pageref{lem:CSM circ}).
\item [{$\mu_{C_G^{k}(X)\times_{X}C_G^{k}(X)}=\int_X\mu_{C_G^{k}(X)}^{x}\times\mu_{C_G^{k}(X)}^{x}d\mu(x)$}], Definition
\ref{definition of the measures for T(X) and CXC}, Page \pageref{definition of the measures for T(X) and CXC}.
\item [{$\mu_{T^{k}(X)}$}] The measure on $T^{k}(X)$ given by  Definition \ref{definition of the measures for T(X) and CXC},
Page \pageref{definition of the measures for T(X) and CXC}.
\item [{$\{\mu_{T^{k}(X)}^{x}\}_{x\in X}$}] The measure disintegration
of $\mu_{T^{k}(X)}$ w.r.t.\  $\pi_{T}$, Definition \ref{definition of eta and Txx}, Page \pageref{definition of eta and Txx}.
\item [{$\{\eta^{c}\}$}] The measure disintegration of $\mu_{T^{k}(X)}$
w.r.t.\  $\omega$, Definition \ref{definition of eta and Txx}, Page \pageref{definition of eta and Txx}.
\item [{$\{\mu^{[k+1](c)}\}$}] The measure disintegration of $\mu^{[k+1]}$
w.r.t.\  $\pi_{k}^{[k+1]}$.
\item [{$\{\mu_{C_G^{k+1}(M)}^{f}\}$}] The measure disintegration of $\mu_{C_G^{k+1}(M)}$
w.r.t.\  $\mathfrak{q}_{0}$. ~
\end{lyxlist}

\bibliographystyle{alpha}
\bibliography{references}

\Addresses
\end{document}